\newtheorem{theorem}{Theorem}[section]
\newtheorem*{theorem*}{Theorem}
\newtheorem{lemma}[theorem]{Lemma}
\newtheorem{proposition}[theorem]{Proposition}
\newtheorem{corollary}[theorem]{Corollary}
\newtheorem{definition}{Definition}[section]
\newtheorem{remark}{Remark}[section]
\numberwithin{equation}{section}
\let\oldtocsection=\tocsection
\let\oldtocsubsection=\tocsubsection
\let\oldtocsubsubsection=\tocsubsubsection
\renewcommand{\tocsection}[2]{\hspace{0em}\oldtocsection{#1}{#2}}
\renewcommand{\tocsubsection}[2]{\hspace{1em}\oldtocsubsection{#1}{#2}}
\renewcommand{\tocsubsubsection}[2]{\hspace{2em}\oldtocsubsubsection{#1}{#2}}
\begin{document}

\title[On an $L^2$ critical Boltzmann equation]{On an $L^2$ critical Boltzmann equation}
\author{Thomas Chen}
\address{T. Chen,  
Department of Mathematics, University of Texas at Austin.}
\email{tc@math.utexas.edu}
\author{Ryan Denlinger}
\address{R. Denlinger,
School of Biomedical Informatics,
University of Texas Health Science Center at Houston.}
\email{Ryan.A.Denlinger@uth.tmc.edu}
\author{Nata\v{s}a Pavlovi\'c}
\address{N. Pavlovi\'c,  
Department of Mathematics, University of Texas at Austin.}
\email{natasa@math.utexas.edu}

\begin{abstract}
We prove the existence of a class of large global scattering solutions of Boltzmann's equation with constant collision
kernel in two dimensions. These solutions are found for $L^2$ perturbations of an underlying
 initial data which is Gaussian jointly in
space and velocity. Additionally, the perturbation is required to satisfy natural physical
constraints for the total mass and second moments, corresponding to conserved or controlled quantities.
The space $L^2$ is a scaling critical space for the equation under consideration. If the initial
data is Schwartz then the solution is unique and again Schwartz on any bounded time interval.
\end{abstract}

\maketitle

\tableofcontents

\section{Introduction}

We consider the Boltzmann equation posed for a non-negative function $f \left( t,x,v \right)$, $t \in \mathbb{R}$, $x,v \in \mathbb{R}^2$, so that
\[
f : \left[0,T\right) \times \mathbb{R}^2_x \times \mathbb{R}^2_v \rightarrow
\mathbb{R}
\]
the collision kernel being constant.
Thus
\begin{equation}
\label{eq:BE}
\left( \partial_t + v \cdot \nabla_x \right) f = Q^+ \left(f,f\right) - Q^- \left(f,f\right)
\end{equation}
where we have the \emph{gain term}
\[
Q^+ \left(g,h\right) = \frac{1}{2\pi}
\int_{\mathbb{R}^2_v \times \mathbb{S}^1}
g^\prime h_*^\prime dv_* d\sigma
\]
with $f_* = f\left(v_*\right)$, $f^\prime = f\left(v^\prime\right)$,
$f_*^\prime = f \left(v_*^\prime\right)$ and
\[
\begin{aligned}
v^\prime & = \frac{v+v_*}{2} + \frac{\left|v-v_*\right|}{2}\sigma \\
v_*^\prime & = \frac{v+v_*}{2} - \frac{\left|v-v_*\right|}{2}\sigma
\end{aligned}
\]
the collisional change of variables defined for unit vectors $\sigma \in \mathbb{S}^1 \subset \mathbb{R}^2$.
The \emph{loss term} is written
\[
Q^- \left(g,h\right) = g \rho_h
\]
where 
\[
\rho_f \left(t,x\right) = \int_{\mathbb{R}^2_v} f\left(t,x,v\right) dv
\]
is the \emph{spatial density}, a quantity of direct interest in the study of
hydrodynamic limits of (\ref{eq:BE}).
More generally, the operators $Q^{\pm}$ may be replaced by $Q_b^{\pm}$ where
$b = b \left( u, \sigma \right)$ is the \emph{collision kernel}
\[
b : \mathbb{R}^2 \times \mathbb{S}^1 \rightarrow \mathbb{R}
\]
everywhere non-negative, and \emph{locally integrable} (referred to as the \emph{Grad cutoff}),
in particular being integrable in $\sigma$ for almost every $v$,
and
\begin{equation}
\label{eq:defQbPlus}
Q^+_b \left( g,h \right) =
\int_{\mathbb{R}^2_v \times \mathbb{S}^1_\sigma}
b g^\prime h_*^\prime dv_* d\sigma
\end{equation}
\begin{equation}
\label{eq:defQbMinus}
Q^-_b \left( g,h \right) =
\int_{\mathbb{R}^2_v \times \mathbb{S}^1_\sigma} b
g h_* dv_* d\sigma
\end{equation}
where the notation $b$ in $Q^{\pm}_b$ implicitly denotes
\[
b \equiv \tilde{b} \left( \left| v-v_* \right| \; , \; \sigma \cdot \frac{ v-v_*}
{\left| v-v_* \right|}\right)
\]
the dependence on the first argument being only of a radial nature. Clearly
the equation of interest (\ref{eq:BE}) in this paper corresponds to the
choice $b = \left( 2\pi \right)^{-1}$. The choice $b = \left| v-v_*\right|$ is known
as \emph{hard spheres}, and arises physically from a Newtonian (deterministic) ``gas'' of
hard sphere billiards via the
so-called \emph{Boltzmann-Grad limit}, first established rigorously by Lanford. \cite{L1975}

Although Boltzmann's equation is typically viewed as a dissipative
equation, following Arsenio \cite{Ar2011} we choose to view it as a dispersive
equation instead. Homogeneous Strichartz estimates  for kinetic equations
have been known since Castella and Perthame \cite{CaP1996}; in the same reference, some
 inhomogeneous Strichartz estimates were also proven. The complete set
of inhomogeneous kinetic Strichartz estimates was obtained by Ovcharov,
\cite{ovcharov2011strichartz}. The failure of endpoint homogeneous kinetic Strichartz estimates
was established by Bennett et al., \cite{Bennettetal2014}.

The novelty of Arsenio's contribution was to demonstrate, for the first time, the possibility of
applying the standard techniques of inhomogeneous Strichartz estimates, well-known from dispersive
theory, directly to Boltzmann's equation, under some highly restrictive assumptions for the collision kernel.
An alternative approach, avoiding the inhomogeneous Strichartz estimates
entirely, has been developed by the present authors, \cite{CDP2017,CDP2018,CDP2019}.
The approach, originating in works by Klainerman and Machedon, e.g. \cite{KM1993,KM2008},
and later extended by Pavlovi{\' c} and Chen, e.g. \cite{PC2010},
is based on a method of multilinear Strichartz estimates, 
and has seen substantial developments in various directions in recent years. 
Much of the more recent work motivated by the results of Klainerman and Machedon has been towards
alternative methods for rigorously deriving nonlinear Schr{\" o}dinger equations from quantum mechanical models of
many particle systems (e.g. Bose-Einstein condensation); work in this direction was pioneered by 
Erd{\" o}s, Schlein and Yau by other techniques, \cite{ESY2006,ESY2007,EY2001}.

In fact, following in the direction set forth by Klainerman and Machedon,
 a scaling-critical bilinear Strichartz estimate for Boltzmann's gain operator $Q^+$ has been proven in
 \cite{CDP2019}
using an endpoint homogeneous Strichartz estimate of Keel and Tao \cite{KT1998}; note that this bilinear Strichartz estimate
 was not subject to the negative results of Bennett et al.
\cite{Bennettetal2014} because its proof actually relied upon an endpoint homogeneous Strichartz estimate for a
\emph{hyperbolic Schr{\" o}dinger equation in dimension four}. Any
kinetic equation in dimension two is formally equivalent to a hyperbolic Schr{\" o}dinger equation in dimension four by
the Wigner transform; on the other hand, endpoint homogeneous Strichartz estimates
are true for the hyperbolic Schr{\"o}dinger equation in  dimension four.
Combining this dispersive estimate with a convolutive bound for $Q^+$ \emph{on the Fourier side}, and ultimately moving back to
the kinetic domain, it was possible to prove the bilinear $Q^+$ estimate, a quite unexpected outcome.

\subsection{A new notion of solution.}

The main new technical tool (and a main novelty) of the present article is the introduction of a new
class of solutions to (\ref{eq:BE}), which we refer to as ($*$)-solutions. This is a class of
global renormalized solutions (in the sense of DiPerna and Lions, \cite{DPL1989,DPL1991})
 which satisfy better bounds on some initial interval
$\left[ 0, T^* \left( f \right) \right)$. In \cite{L1994II}, Lions established a
 \emph{weak-strong uniqueness theorem}
 in a class of \emph{dissipative solutions} which allowed (for e.g. Schwartz initial data)
  the construction  of
global renormalized solutions to (\ref{eq:BE}) which are classical on some initial interval (and unique
\emph{on the initial interval}, in the class of all dissipative solutions). The notion of
($*$)-solutions is in no way (of which we are aware) related to the dissipative solutions of Lions.
However, the \emph{idea} of a critical time, past which the strength of the solution is diminished,
is quite similar. 

The main difference, with the present work, is that contrary to dissipative solutions,
 the notion of ($*$)-solutions is finely tuned to mirror the \emph{dispersive} properties
 of (\ref{eq:BE}) by way of Strichartz estimates. In order to fully employ Strichartz in
 scaling-critical spaces, it is necessary to use the convolutive properties known to hold
 for $Q^+$ (cf. \cite{Ar2011, AC2010, ACG2010}). However, the \emph{stability} of solutions against
 perturbations of the data
 as provided by Strichartz is not well-understood (even on small time intervals),
 due to that fact that $Q^-$ does not satisfy
the full range of convolutive estimates known for $Q^+$. This inflexibility due to $Q^-$
 has, so far, been the limiting
factor in further development of the well-posedness theory for Boltzmann's equation in scaling-critical
functional spaces. However, all ($*$)-solutions are renormalized solutions by definition, so
we can hope to employ the known (weak and strong) compactness properties of 
renormalized solutions of (\ref{eq:BE}) from \cite{DPL1989,L1994I,L1994II}. In fact, by playing the
renormalized theory and entropy dissipation against the dispersive properties of free transport and
the convolutive properties of $Q^+$, and by a careful \emph{choice of limiting process} (which is itself
new), we prove that the class of ($*$)-solutions is closed under certain types of limits.
Moreover, we are able to transfer certain information about the \emph{limit} back to the underlying
sequence. 

\begin{remark}
We do not address uniqueness in the ($*$)-solution class (our methods are non-constructive and neither require
nor imply uniqueness). 
However, even if ($*$)-solutions are unique in general (which implies a sense of continuity for the solution map
by \cite{L1994I,L1994II} and the methods of this article),
 we \textbf{\emph{do not}} expect the solution map to be (locally)
\textbf{\emph{uniformly}} continuous on $L^2 \left( \mathbb{R}^2_x \times \mathbb{R}^2_v \right)$,
 due to a recent announcement by Xuwen Chen and
Justin Holmer demonstrating that (at least for a constant collision kernel in $d=3$) the bifurcation for
(Hadamard) well-posedness falls \emph{far above} the scaling-critical threshold (for the problem considered therein).
\cite{2206.11931}
\end{remark}

Informally, our main theorem provides for the existence of a class of \emph{large global 
distributional solutions}
to (\ref{eq:BE}). These are not obtained for general initial data; instead, they are derived by
considering perturbations of known solutions, specifically the moving Maxwellians taking the form
\[
a \exp\left( - b \left| v \right|^2 - c \left| x - v t \right|^2 \right)
\]
Crucially, the numbers $a,b,c > 0$ are \emph{arbitrary} (although the allowable size of perturbation
depends on $a,b,c$ in a manner we cannot quantify). Small global perturbations of arbitrarily large
moving Maxwellians were obtained decades ago by Toscani in \cite{Ti1988} using the Kaniel-Shinbrot
iteration with a very clever choice of beginning condition. (Also see
\cite{Alonso2009,AlonsoGamba2011} and references therein for refined results along the same lines.) 
However, Toscani was only able to handle
(weighted) $L^\infty$ perturbations. The present article (which does not use Kaniel-Shinbrot) allows
for perturbations at \emph{scaling-critical regularity}; this improvement appears to be completely new.
Moreover, the proof of the main theorem brings to bear the full force of both the dispersive theory
and the theory of renormalized solutions.

\subsection{Scale invariance.}

Let us define for parameters $\lambda,\mu > 0$
\[
f^{\left(\lambda,\mu\right)} \left( t,x,v \right) =
\frac{1}{\lambda \mu} f \left( \frac{\mu}{\lambda} t, \frac{x}{\lambda}, \frac{v}{\mu} \right)
\]
and
\[
f^{\left(\lambda,\mu\right)}_0 \left( x,v \right) =
\frac{1}{\lambda \mu} f_0 \left( \frac{x}{\lambda}, \frac{v}{\mu} \right)
\]
Then there holds
\[
\left\Vert f^{\left(\lambda,\mu\right)}_0 \right\Vert_{L^2} = \left\Vert f_0 \right\Vert_{L^2}
\]
and
\[
\left\Vert f^{\left(\lambda,\mu\right)} \right\Vert_{L^\infty \left( 
I_{\lambda,\mu} , L^2 \right)} =
\left\Vert f \right\Vert_{L^\infty \left( I, L^2 \right)}
\]
where $I = \left[ 0,T \right) \subset \mathbb{R}$ for some $0 < T \leq \infty$,
 $I_{\lambda,\mu} = \left[ 0, \lambda T / \mu \right)$, and $L^2$ is the space of
 square-integrable functions on $\mathbb{R}^2_x \times \mathbb{R}^2_v$.
Moreover, if $f$ is a Schwartz solution of (\ref{eq:BE}) on $I$ with initial
data $f_0$, then $f^{\left( \lambda, \mu \right)}$ is a solution of (\ref{eq:BE}) on
$I_{\lambda,\mu}$ with initial data $f_0^{\left( \lambda,\mu \right)}$.

\subsection{Summary of results.}

The overall objective of this article is to detail a thorough treatment of
the Boltzmann equation (\ref{eq:BE}) (henceforth ``Boltzmann's equation'' unless otherwise indicated).
While we will rely upon key results from
 prior works in this series \cite{CDP2017,CDP2018,CDP2019}, it should be possible to understand
this article with minimal reference to the prior works: indeed,
the  aim of the present work is to provide a coherent picture
of the local and even, to some extent, the global behavior of Boltzmann's equation. This extends
our previous scaling-critical article
\cite{CDP2019}, in which only initial data with small
$L^2$ norm was considered.\footnote{Add that technical regularity and decay
 conditions, albeit non-quantitative, were imposed on the initial
data as well, whereas (in the $L^2$ setting) 
such regularity conditions have been disposed of entirely in the present work, at the cost of possible loss of uniqueness.}
That result relied upon balancing the dispersive properties of free transport against the convolutive properties
of $Q^+$, similar to the work of Arsenio in \cite{Ar2011}.

The bulk of the present work 
aims to lift the small data limitation in \cite{CDP2019}, at the cost of limiting the time of existence,
and thereby provide a \emph{general} scaling-critical local theory
for Boltzmann's equation. Only local existence, without uniqueness, will be proven in the scaling-critical
space $L^2$, although a rather general weak-strong uniqueness theorem will be supplied.
We also aim to prove (in a specific sense) the \emph{stability} of solutions under
perturbations: this tendency towards stability is naturally limited due to the possible lack of uniqueness. 
In fact, the stability properties will be proven with respect to the class of ($*$)-solutions; ($*$)-solutions are 
formally introduced in Section \ref{sec:starSolutions}. It will turn out that ($*$)-solutions are distributional
solutions on some \emph{initial interval}, which we call $I^*$ in the definition of ($*$)-solutions; thus, this formalism
provides a framework for proving existence results for distributional solutions. The most striking
application of this stability result will be to show, in the class of ($*$)-solutions,
 that $L^2$ perturbations of a \emph{global scattering solution}
(satisfying a technical criterion which is proven to hold for, e.g., moving Maxwellians), are again
global and scattering, subject only to natural constraints on physically conserved or controlled quantities,
namely the total ($L^1$) mass and ($L^1$) second moments in space and velocity. Along the way, sharp
scaling-critical critera
for both scattering and finite-time breakdown of continuity will be proven, which hold even far from vacuum or
Maxwellians.

Starting from Section \ref{sec:higherRegularity}, we aim
 to establish \emph{propagation of regularity}, in the class of ($*$)-solutions, to arbitrarily high smoothness and decay
thresholds, including the Schwartz class,
up to the full interval of existence (in $L^2$, namely $I^*$) for ($*$)-solutions.
Indeed, it will be proven that \emph{any} ($*$)-solution, corresponding to initial data with sufficient
regularity and decay, is again regular and decaying (for $t \in I^*$); in particular, the solution is unique
(on $I^*$).  The sharp scaling-critical
criteria\footnote{for either scattering or finite time breakdown of continuity}
 mentioned above, therefore, apply again in the 
  setting of \emph{classical} solutions, even (as before) far from vacuum or Maxwellians.
  This allows us to identify \emph{breakdown of continuity} with
\emph{breakdown of regularity}.

\subsection{Comparison of models.}

We will next elucidate the context in which (\ref{eq:BE}) fits with similar models analyzed
in the literature. 
Let us remark from the outset that, given the 
 dimension $d \geq 2$ (the equation (\ref{eq:BE}) addresses the case where $d=2$),
 any Boltzmann equation (with or without the Grad cutoff) with a collision kernel
which is homogeneous with respect to scaling
in the relative velocity possesses a full set of scaling symmetries, respecting separately
and simultaneously the
 spatial and velocity variables. (One never considers homogeneity of the collision kernel $b$
 with respect to the
 angular variable $\sigma \in \mathbb{S}^{d-1}$, for obvious reasons.)
Now \emph{in the special case} that the collision kernel is homogeneous of degree
$2-d$, the functional space $L^d \left( \mathbb{R}^d_x \times \mathbb{R}^d_v \right)$ for the initial data is preserved
by the full set of scaling symmetries. This seems to be essentially a technical convenience,
relating to the fact that the space $L^d$ is  preserved under the free transport group
$e^{-t v \cdot \nabla_x}$.
 In the present article, we will be exclusively concerned with
(\ref{eq:BE}), which satisfies the Grad cutoff condition, and for which $L^2$ constitutes a
scaling-critical space, being above all a Hilbert space: the best of all possible worlds.
We note that  the constant collision kernel appearing in
 (\ref{eq:BE}) is a member of the family of \emph{Maxwell molecule collision kernels}.

There \emph{is} a physically
meaningful analogue of (\ref{eq:BE}) known as \emph{true Maxwell molecules} (\emph{tMm}), but while the
\emph{tMm} collision kernel is homogeneous of degree zero (in any dimension $d$)
 and expresses the same scaling properties as the case of a constant collision
kernel (in the same dimension $d$), \emph{tMm} does not satisfy the
Grad cutoff condition (due to the non-integrable angular dependence),
and none of the analysis of this article applies to \emph{tMm} even in $d=2$.
The hard sphere model, mentioned above, \emph{does} satisfy the Grad cutoff and is homogeneous of degree one
(in any dimension), and is again
physically meaningful as is \emph{tMm}; unfortunately, just as with \emph{tMm}, the hard sphere model seems completely
out of reach by the present methods. There are a few hints about how to approach hard spheres dispersively, at least in
certain functional spaces far from the scaling critical threshold \cite{CDP2017}, but the dispersive treatment
of hard spheres at scaling critical regularities remains a subject of ongoing investigation.

Collision kernels homogeneous of degree $-1$ respect scaling in the space
$L^3$ in $d=3$; this case (assuming Grad cutoff) seems to be the only Boltzmann
equation other than (\ref{eq:BE})
that is remotely tractable (at the scaling-critical level)
 using current dispersive technology. Unfortunately, even in the $L^3$ setting, a more technical analysis is
 required, due to the role played in this work by 
 the special properties of $L^2$: in particular, we use Plancherel in the proof of the
key bilinear gain operator $Q^+$ estimate
\begin{equation}
\label{eq:crucialEstimateSummary}
L^2 \times L^2 \rightarrow L^1 \left( \mathbb{R}, L^2 \right)
\end{equation}
to be discussed later, the corresponding bilinear estimate
\[
L^3 \times L^3 \rightarrow L^1 \left( \mathbb{R}, L^3 \right)
\]
expected to be \emph{false}
 for any collision kernel homogeneous of degree $-1$ in $d=3$.
Substitutes for this estimate are available in the literature \cite{ACG2010} even for the $L^3$-critical case,
 but these estimates require far more effort to
apply correctly \cite{Ar2011} to Boltzmann's equation (for starters, one must employ inhomogeneous Strichartz estimates).

\begin{remark}
It is somewhat reasonable to view (\ref{eq:crucialEstimateSummary}) as a substitute for a scaling-critical estimate
in Bourgain spaces $X^{s,b}$, formally taking $\left( s, b\right) =\left( 0, - \frac{1}{2}\right)$ (note that the
 endpoint case $b = -\frac{1}{2}$ is not admissible in the 
classical theory of Bourgain spaces: the standard range for the \emph{nonlinearity}
 is $b \in \left( -\frac{1}{2} ,  \frac{1}{2}\right)$).
Indeed, observe that for any separable Hilbert space $\mathcal{H}$, the space
$L^1 \left( \mathbb{R}, \mathcal{H} \right)$ formally scales like $\dot{H}^{-\frac{1}{2}} \left( \mathbb{R}, \mathcal{H} \right)$.
This suggests that one may be able to salvage parts of the scaling-critical
theory expounded in this article for other collision kernels
through the use of one  (or some) of the multitude of techniques in dispersive PDE theory which have been inspired by Bourgain spaces. 
A crucial difficulty would be to understand \emph{non-negativity}, which plays a central role in this work, in these types of
functional spaces.
\end{remark}

\section{Organization of this paper}

The main results are stated in Section \ref{sec:mainResult}, using the (somewhat extensive and occasionally 
subtle) notation from
Section \ref{sec:secNotation}. Fundamental abstract results and dispersive estimates are recalled and/or established in
Sections \ref{sec:squareIntegrability}, \ref{sec:abstractTheorem}, \ref{sec:dispersiveEstimates},
and \ref{sec:nonNegativeEstimates}; these will primarily
(but by no means exclusively)
be used in developing solutions to the gain-only Boltzmann equation (i.e. the equation obtained by discarding the loss term
$Q^-$ from Boltzmann's equation), as well as many basic properties of such solutions. 
Sections \ref{sec:QplusSection}, \ref{sec:comparison}, and \ref{sec:ptwiseConvFund} will develop deeper results concerning
the gain-only Boltzmann equation along with a comparison principle which will ultimately be used to transfer certain
knowledge about
the gain-only Boltzmann equation to the full Boltzmann equation. Up to this point, there is no mention of renormalized solutions
or entropy.

In Sections \ref{sec:entropyDissipation}, \ref{sec:starSolutions}, and \ref{sec:breakdownCriterion}, we introduce a new notion
of solutions to (\ref{eq:BE}), which we call ($*$)-solutions. 
The definition of ($*$)-solutions uses the notion of renormalized solutions as well as the
entropy dissipation.
 Sections \ref{sec:ExistenceProof} and
\ref{sec:starLimitsProof} establish the existence of ($*$)-solutions, as well as the closure of the class of ($*$)-solutions 
under a suitable limiting process.

Results concerning scattering solutions of (\ref{eq:BE}) are proven in Sections \ref{sec:scatteringBasic} and
\ref{sec:scatteringAdvanced}. These are combined with an important weak-strong uniqueness result from Section
\ref{sec:WkStrong} to establish Part One of the main theorem in Section \ref{sec:mainTheoremOne}. 
Higher regularity results are proven in Section \ref{sec:higherRegularity}, leading finally to the proof of Part Two
of the main theorem in Section \ref{sec:mainTheoremTwo}.

\section{Notation}
\label{sec:secNotation}

For any $p \in \left[ 1, \infty \right]$ we denote by $p^\prime \in \left[ 1, \infty \right]$ the unique
extended real number satisfying
\[
\frac{1}{p} + \frac{1}{p^\prime} = 1
\]
We will require the norms defined for measurable functions $h \left(x,v\right)$
\[
\left\Vert h \right\Vert_{L^p}^p = \int_{\mathbb{R}^2 \times \mathbb{R}^2} 
\left| h \left( x,v \right) \right|^p dx dv
\]
for $1 \leq p < \infty$, and
\[
\left\Vert h \right\Vert_{L^\infty} = \textnormal{ess. sup.}_{\left( x,v \right) \in
\mathbb{R}^2 \times \mathbb{R}^2 } \left| h \left( x,v \right) \right|
\]
We will also require mixed Lebesgue norms $L^p_x L^q_v$ or even with time
$L^r_t L^p_x L^q_v$ as in \cite{Ar2011}; in such cases, subscripts $t,x,v$ will be provided, along with
precise domains of integration.
Other permutations such as $L^r_t L^q_v L^p_x$ or
$L^p_x L^q_v L^r_t$ may also arise, but unusual orderings such as these
will only be introduced if absolutely required to carry out an argument.

For any separable Banach space $\mathfrak{G}$ and any interval $I$, the notation $L^p \left(
I, \mathfrak{G} \right)$ with $p \in \left[ 1, \infty \right]$ denotes the usual Bochner space, considering
$t \in I$ to be a time variable; it may be that a function is only Bochner $p$-integrable when
\emph{restricted} to $I$, in which case we would still write $f \in L^p \left( I, \mathfrak{G} \right)$.
The independent variable
corresponding to the interval $I$ is \emph{always} denoted by the symbol $t$: thus
if $A : I \times I \rightarrow \mathfrak{G}$ then
$\left\Vert A \left( s, t \right) \right\Vert_{L^1 \left( I, \mathfrak{G} \right)}$ is equal to
$\int_I \left\Vert A \left( s,t \right) \right\Vert_{\mathfrak{G}} dt$.

\begin{remark}
Thus without further annotation (an annotation being a subscript \emph{or} an explicit domain of integration
\emph{or} both), the reader may safely assume that norms denoted by the symbol $L^p$ are taken with
respect to $\left( x,v \right) \in \mathbb{R}^2 \times \mathbb{R}^2$; on the other hand, norms denoted by the symbol
$L^q \left( I, L^p\right)$ for an interval $I$  refer to $t \in I$ with $q$ power
and $\left( x,v \right) \in \mathbb{R}^2 \times \mathbb{R}^2$ with $p$ power.
By contrast, we may write an expression such as $\rho_f \in L^2 \left( I, L^4_x \left( \mathbb{R}^2 \right) \right)$,
which means that the spatial density $\rho_f \left( t,x \right)$ is square-integrable in time $t \in I$ into
the separable Banach space $L^4_x \left( \mathbb{R}^2 \right)$: this statement could be equivalently
written $\rho_f \in L^2_t \left( I, L^4_x \left( \mathbb{R}^2 \right) \right)$ or
$\rho_f \in L^2_t L^4_x \left( I \times \mathbb{R}^2 \right)$, but it could \emph{not} be written
$\rho_f \in L^2_t \left( I, L^4 \right)$ (this last version, in our notation, implies
 that a constant function of $v \in \mathbb{R}^2$ is fourth-power-integrable
over $\mathbb{R}^2$, which is plainly false).
\end{remark}

We will also rely upon the norm
\begin{equation}
\label{eq:defSubscriptedNorm}
\left\Vert h \right\Vert_{L^1_{2,t}} =
\int_{\mathbb{R}^2 \times \mathbb{R}^2}
\left( 1 + \left| x-vt \right|^2 + \left| v \right|^2 \right)
\left| h \left( x,v \right) \right| dx dv
\end{equation}
 where $t$ is a subscripted parameter \emph{on the left side}; note that the ambiguity of $t$ in $L^1_{2,t}$ is only of importance when $t$ is very large, since
for fixed $t$ the $L^1_{2,t}$ norm is equivalent to the $L^1_{2,0}$ norm, the constant diverging as $\mathcal{O} \left( t^2 \right)$
as $\left| t \right| \rightarrow \infty$.
 We will denote
\[
L^1_2 \coloneqq L^1_{2,0}
\]
for convenience.
The space $L^2 \bigcap L^1_{2,t}$ is normed by
\begin{equation}
\label{eq:baseSpaceDef}
\left\Vert h_0 \right\Vert_{L^2 \bigcap L^1_{2,t}} = \left\Vert h_0 \right\Vert_{L^2}
+ \left\Vert h_0 \right\Vert_{L^1_{2,t}}
\end{equation}
for a measurable function $h_0 \left( x,v \right)$.

\begin{remark}
Given a sufficiently regular and decaying 
solution $f$ of (\ref{eq:BE}), the \emph{time-dependent} quantity
\[
\left\Vert f\left(t\right) \right\Vert_{L^1_{2,t}}
\]
 is equal to
\[
\left\Vert f_0 \right\Vert_{L^1_{2,0}}
\]
 for all $t \geq 0$, although this may be
only an upper bound at low regularity.
\end{remark}

It will be useful to introduce the unusual $X$-norm defined on $L^2 \bigcap L^1_2$,
\begin{equation}
\label{eq:defXnorm}
\left\Vert h \right\Vert_X \coloneqq
\left\Vert h \right\Vert_{L^2} + \sum_{\varphi} 
\left| \int_{\mathbb{R}^2 \times \mathbb{R}^2} \varphi \left( x,v \right)
 h \left( x,v \right) dx dv \right|
\end{equation}
where the sum ranges over
\[
\varphi \in
\left\{ 1, \; v_1, \; v_2, \; \left| v \right|^2, \; \left| x \right|^2,  \; x \cdot v \right\}
\]
where $v = \left( v_1, v_2 \right) \in \mathbb{R}^2$.
Note carefully that 
the absolute value bars on the second term of (\ref{eq:defXnorm}) 
have been deliberately placed on the \emph{outside} of the integral (note that
$h$ need not be non-negative in (\ref{eq:defXnorm})). Additionally, observe that the sum is over
six test functions $\varphi$ (three of which are everywhere non-negative, five of which are
\emph{unbounded}),
 each of which is integrated over the whole phase-space.
We define
\begin{equation}
\label{eq:defXspace}
X \coloneqq \left( L^{2,+} \bigcap L^1_2 \; , \; d_X \right)
\end{equation}
where $L^{2,+}$ is the set of non-negative functions in $L^2$, and
\begin{equation}
\label{eq:defXmetric}
d_X \left( h, \tilde{h} \right) \coloneqq \left\Vert h-\tilde{h} \right\Vert_{X}
\end{equation}
In particular, $X$ is an \emph{incomplete} metric space.

We denote the free transport group
\[
\mathcal{T} (t) = e^{-t v \cdot \nabla_x}
\]
which is related to the free transport equation in that for any initial data $f_0 \in L^2$
there holds
\begin{equation}
\label{eq:freeTransport}
\left( \partial_t + v \cdot \nabla_x \right) \left( \mathcal{T} (t) f_0 \right) = 0
\end{equation}
in the sense of distributions. For any $t \in \mathbb{R}$, $\mathcal{T} \left( t \right)$ 
preserves all $L^p$ norms on $\mathbb{R}^2 \times \mathbb{R}^2$; also, it can be written via
an explicit formula
\[
\left[ \mathcal{T} \left( t \right) f_0 \right] \left( x,v \right) =
f_0 \left( x - v t , v \right)
\]
The function
\[
t \mapsto \mathcal{T} \left( t \right) f_0
\]
may be referred to by the shorthand
\[
\mathcal{T} f_0
\]
Additionally, following DiPerna and Lions \cite{DPL1989}, for any measurable
function $h\left( t,x,v \right)$ we will
use the \emph{pointwise} shorthand
\begin{equation}
\label{eq:defHashNotation}
h^\# \left( t,x,v \right) = h \left( t, x+ v t, v \right)
\end{equation}
defined at almost all $\left(t,x,v\right)$; this is more closely related to
the \emph{inverse} free transport operator $\mathcal{T} \left( -t \right)$.

\begin{remark}
Using the identities, for $a \in \mathbb{R}$,
\[
\left| x  + a v \right|^2 = \left| x \right|^2 + 2 a x \cdot v + a^2 \left| v \right|^2
\]
and
\[
\left(x + a v \right) \cdot v = x \cdot v + a \left| v \right|^2
\]
it is possible to show that
\[
\left\Vert \mathcal{T} \left( a \right) h_0 \right\Vert_X \leq
\left( 1 + 3 \left| a \right| + a^2 \right) \left\Vert h_0 \right\Vert_{X}
\]
for all $h_0 \in L^2 \bigcap L^1_2$. Note that $h_0$ does \emph{not} need to be non-negative;
in particular, we find from this that, for any non-negative functions
 $f_0, g_0 \in L^{2,+} \bigcap L^1_2$, letting $h_0 = f_0 - g_0$,
\[
d_X \left( \mathcal{T} \left( a \right) f_0, \mathcal{T} \left( a \right) g_0
\right) \leq \left( 1 + 3 \left| a \right| + a^2 \right)
d_X \left( f_0, g_0 \right)
\]
so the $X$-norm is, in this sense, compatible with free transport.
\end{remark}

We will use the well-known notation $\left< v \right>^2 = 1 + \left| v \right|^2$;
moreover, in discussing propagation of regularity in Section \ref{sec:higherRegularity}, we shall require the Sobolev norms
indexed by non-negative numbers $\alpha,\beta$,
\begin{equation}
\label{eq:SobolevNorms}
\left\Vert h \right\Vert_{H^{\alpha,\beta}} =
\left\Vert \left< v \right>^\beta \left< \nabla_x \right>^\alpha h \right\Vert_{L^2}
\end{equation}
defined for a measurable and locally integrable function $h\left( x,v \right)$, where
$\left< \nabla_x \right> = \left( 1 - \Delta_x \right)^{\frac{1}{2}}$ and $\Delta_x$ is
the usual Laplacian operator, extended by duality from the Schwartz class to the space of tempered
distributions,
but acting in the $x$ variable only.

For any interval $I$ (possibly open, closed, or half-open, and possibly unbounded), and any
 topological space $\mathfrak{G}$, the symbol $C \left( I, \mathfrak{G} \right)$ denotes the \emph{set} of continuous functions from 
$I$ into $\mathfrak{G}$. If $\mathfrak{G}$ is, additionally, a (convex subset of a) topological vector space, then 
$C \left( I, \mathfrak{G} \right)$ is a (convex subset of a) \emph{vector space}, but does not inherit any topological structure unless
otherwise noted. For example, even if $\mathfrak{G}$ is a Banach space, elements of 
$C \left( \left[ 0, \infty \right) , \mathfrak{G} \right)$ are \emph{not} required
to be in $L^\infty \left( \left[ 0, \infty \right), \mathfrak{G} \right)$, since we do not view $C \left( \left[0,\infty \right), \mathfrak{G} \right)$
as a \emph{normed} vector space
(although it is clearly a vector space in view of the vector space structure of $\mathfrak{G}$).
Note carefully that, under the canonical identification,
\[
C \left( \left[ 0,1 \right), \mathbb{R} \right) \neq C \left( \left[ 0,1 \right], \mathbb{R} \right)
\]
For example, the former contains each of
\[
q\mapsto \frac{1}{1-q} \qquad \textnormal{ and } \qquad q\mapsto\sin \left(\frac{1}{1-q}\right)
\]
whereas the latter does not contain either of these (regardless of any finite candidate value chosen at $q=1$).

For any measurable subset of a Euclidean space, say $E \subset \mathbb{R}^k$ for some $k \in \mathbb{N}$, taking care
\emph{not} to identify sets which differ by a set of measure zero, we define $L^1_{\textnormal{loc}} \left(E \right)$ to be
the set of measurable functions on $E$ which are in $L^1 \left( K \right)$ for each compact $K \subset E$. 
Thus, even though there is a canonical isomorphism
\[
L^1 \left( \left[ 0,1 \right], \mathbb{R} \right) \simeq
L^1 \left( \left[ 0,1 \right), \mathbb{R} \right)
\]
there is no canonical isomorphism between
\[
L^1_{\textnormal{loc}} \left( \left[ 0,1 \right), \mathbb{R} \right) \qquad \textnormal{ and }\qquad
L^1_{\textnormal{loc}} \left( \left[ 0,1 \right], \mathbb{R} \right)
\]
For example, the former contains
\[
q \mapsto \frac{1}{1-q}
\]
whereas the latter does not.

We denote the Schwartz class
\[
\mathcal{S} \coloneqq \mathcal{S} \left( \mathbb{R}^2_x \times \mathbb{R}^2_v \right) =
\mathcal{S} \left( \mathbb{R}^4 \right)
\]
Given a (possibly unbounded) interval $I$, and a measurable and locally
integrable function $f \left( t,x,v \right)$ on $I \times \mathbb{R}^2 \times \mathbb{R}^2$, we shall write
\[
f \in C^1 \left( I, \mathcal{S} \right)
\]
precisely if
\[
f \in C \left( I, \mathcal{S} \right) \quad \textnormal{and} \quad
\frac{\partial f}{\partial t} \in C \left( I, \mathcal{S} \right)
\]
Note that if $f \in C^1 \left( I, \mathcal{S} \right)$ then it automatically holds
\[
Q^{\pm}\left( f,f \right) ,\; \; v \cdot \nabla_x f \; \; \in C^1 \left( I, \mathcal{S} \right)
\]
Thus $f \in C^1 \left( I, \mathcal{S} \right)$ supplies a simple sufficient criterion for indentifying
``classical solutions'' of (\ref{eq:BE}).

Constants indicated by the symbol $C$ (or e.g. $C_{z_1,\dots,z_k}$, depending on 
free real parameters $z_1,\dots,z_k > 0$) are allowed
to vary from one line to the next, but are always supposed to be finite and non-zero. If it is desired
to track constants precisely, then $\mathbb{Z}$-indexed subscripted notation $C_0, C_1, C_2, \dots$ will be
used instead of $C$.

\section{Main result}
\label{sec:mainResult}

\subsection{Preliminary remarks.}

The difficulty in solving Boltzmann's equation in the presence of scaling-criticality 
(by which we mean that the collision kernel is homogeneous in velocity \emph{and} that the functional
space of interest is critical 
\emph{jointly} with respect to scalings in space and velocity)
stems from the fact that
the loss term $Q^-$, despite having the same scaling behavior as the gain term $Q^+$, does
\emph{not} satisfy the same estimates. Indeed, simply by examining (\ref{eq:BE}), we see that
while the (unsymmetrized) gain operator $Q^+ \left(g,h\right) $ treats its two arguments on similar footing
in many respects, the loss operator $Q^- \left( g,h \right) = g\rho_h$ is highly asymmetric between 
its two arguments.
 Unfortunately, unlike $L^2$, there is no dispersion in $L^1$; indeed, the only
hint of dispersion at the $L^1$ level occurs via velocity averaging \emph{in the presence of uniform integrability},
which plays an essential role in
 the theory of renormalized solutions. \cite{DPL1989} So we see that there is little hope of applying dispersive
principles to the full Boltzmann equation (\ref{eq:BE}) directly, without some deeper insights.

The key realization is that the gain term $Q^+$ expresses certain convolutive and compactifying properties,
 well-known to kinetic theorists,
 which do not hold for the loss term. (e.g. see \cite{Ar2011,L1994I} and references therein)
 So it is very natural to simply \emph{discard} the loss term altogether, in
the hopes of constructing an \emph{upper envelope} for any solution of Boltzmann's equation (\ref{eq:BE}). Unfortunately, such
a strategy \emph{again} fails
 due to the fact that this ``$Q^+$ equation'' is not globally well-posed for
all initial data in, say, the Schwartz class.\footnote{The blow up results for the $Q^+$ equation 
do \emph{not} hold in the ``near vacuum'' regime; this is of little relevance here since we are concerned with
local solutions for initial data of arbitrary size.} \cite{RSH1987} The best we can hope for is a
\emph{local upper envelope} (described momentarily), extended for a short time interval forward from any point $t_0 \geq 0$, \emph{depending
on the solution $f\left(t_0 \right)$ of (\ref{eq:BE}) itself!} It is the
precise understanding of this local upper envelope, or simply \emph{upper envelope} (since there is not a global one
in the general case regardless), that will provide the foundation for our main
theorem.

Let us briefly elaborate on the idea of an \emph{upper envelope}, to avoid any possible confusion. Usually, an envelope of
a collection $C$ of smooth curves in in the plane is another curve which meets tangentially each element of $C$. Formally,
viewing curves as graphs of functions, the function $f \left( t \right)$ would be the envelope of a collection of functions
$\left\{ g \left( t; t^\prime \right) \right\}_{t^\prime}$ (indexed by $t^\prime$) under the conditions
\[
g \left( t; t \right) = f \left( t \right) \quad \textnormal{ and } \quad \left. f^\prime \left( t \right) =\frac{\partial g}{\partial t}
\left( t; t^\prime \right) \right|_{t^\prime = t} 
\] 
where the partial derivative is evaluated in $t$ for fixed $t^\prime$, but evaluated along the diagonal $t^\prime = t$.
(Here we simply assume that each element of the collection only intersects $f$ at a single point; precise definitions do
not matter for this discussion.) We reverse the definition, viewing the \emph{collection} as an \emph{upper envelope} for the
\emph{curve}, and relaxing \emph{equality} to \emph{inequality}, namely
\[
g \left( t; t \right) = f \left( t \right) \quad \textnormal{ and } \quad \forall \left( t \geq t^\prime \right)\;
f \left( t \right) \leq g \left( t; t^\prime \right)
\]
(we do not define $g$ for $t < t^\prime$).
In particular, in the smooth setting, 
\[
f^\prime \left(t \right) \leq
\left. \frac{\partial g}{\partial t}
\left( t; t^\prime \right) \right|_{t^\prime = t} 
\]
In our case (regarding $\left( x,v \right)$ as fixed and restricting $t$ to a suitable existence interval in time),
$f$ solves (\ref{eq:BE}), $g \left( \cdot ; t^\prime\right)$ satisfies (\ref{eq:BE}) omitting $Q^-$ (for each $t^\prime$ fixed),
and we refer to $g$ as an \emph{upper envelope} for $f$. This situation is reminiscent of the theory of viscosity solutions,
but we find no precisely analogous terminology in the literature, so we have chosen this terminology for the benefit of
visualization. Precise definitions will be introduced in our discussion of the \emph{comparison principle}
in Section \ref{sec:comparison}.

\subsection{Results.}

\begin{definition}
\label{def:distributionalSoln}
We will say that a \emph{non-negative} function
\[
f \in L^1_{\textnormal{loc}} \left( I \times \mathbb{R}^2 \times \mathbb{R}^2 \right)
\]
where $I = \left[ 0,T \right)$ with $0 < T \leq \infty$, is a \emph{distributional solution} of
(\ref{eq:BE}) provided that each
\[
Q^+ \left( f,f \right) \quad \textnormal{and} \quad Q^- \left( f,f \right) \quad \in
L^1_{\textnormal{loc}} \left( I \times \mathbb{R}^2 \times \mathbb{R}^2 \right)
\]
i.e. $Q^{\pm} \left( f,f \right)$ are each locally integrable, and that
(\ref{eq:BE}) holds in the sense of distributions. In particular, 
the trace along the $t=0$ time-slice
is well-defined for any distributional solution of (\ref{eq:BE}), and this trace will be
denoted $f_0$ and called the initial data. If $T = \infty$ then $f$ is said to be \emph{global}.
\end{definition}

\begin{definition}
For any triple of strictly positive real numbers $a,b,c$ 
 we define the (restricted) family of \emph{moving Maxwellian
distributions}
\[
m^{a,b,c} \left( t,x,v \right)
= a \exp \left( - b \left| v \right|^2 -c \left| x-vt \right|^2  \right)
\]
with initial data
\[
m^{a,b,c}_0 \left( x,v \right) = a \exp
\left(  - b\left| v \right|^2 - c \left| x \right|^2\right)
\]
In particular, $m^{a,b,c}$ is at once a solution of Boltzmann's equation (\ref{eq:BE}), and at the same time a solution
of the free transport equation (\ref{eq:freeTransport}), that is,
\[
m^{a,b,c} \left( t \right) = \mathcal{T} \left( t \right) m^{a,b,c}_0
\]
\end{definition}

\begin{theorem*}
\emph{(Main Theorem, Part One)}
Let $a,b,c$ be arbitrary strictly positive real numbers, and consider the moving Maxwellian initial data $m^{a,b,c}_0$.
Then there exists a number
\[
\varepsilon = \varepsilon \left( a,b,c \right) > 0
\]
such that if $f_0 \in L^1_{\textnormal{loc}} \left( \mathbb{R}^2 \times \mathbb{R}^2 \right)$ is non-negative and satisfies
\begin{equation}
\label{eq:momentEpsilon}
\sum_{\varphi}
\left|
\int_{\mathbb{R}^2 \times \mathbb{R}^2} \varphi \left( x,v \right)
\left( f_0 \left( x,v \right) - m_0 \left( x,v \right) \right) dx dv 
\right| < \varepsilon
\end{equation}
where the sum ranges over $\varphi \in \left\{ 1, \; v_1, \; v_2, \;
\left|v\right|^2, \; \left| x \right|^2, \; x \cdot v \right\}$, and
\begin{equation}
\label{eq:sqaureEpsilon}
\left\Vert f_0 - m^{a,b,c}_0 \right\Vert_{L^2} < \varepsilon
\end{equation}
then there exists a non-negative global distributional solution $f$ of (\ref{eq:BE}), with initial data $f_0$,
such that
\[
f \in C \left( \left[0,\infty\right), L^2 \right)
\]
Moreover, $f$ scatters, which means (here and throughout this article) that there exists a non-negative measurable
function $f_{+\infty} \in L^2$ such that
\begin{equation}
\label{eq:itScatters}
\lim_{t \rightarrow +\infty} \left\Vert f \left( t \right) -
\mathcal{T} \left( t \right) f_{+\infty} \right\Vert_{L^2} = 0
\end{equation}
\end{theorem*}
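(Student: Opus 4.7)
The plan is to construct $f$ as a limit of smooth ($*$)-solutions, where each smooth solution is obtained by a perturbative analysis around the known global scattering solution $m^{a,b,c}$. First, approximate $f_0$ by a sequence of non-negative Schwartz data $f_0^{(n)} \in \mathcal{S}$ converging to $f_0$ in the metric $d_X$ of (\ref{eq:defXmetric}) with $d_X(f_0^{(n)}, m_0^{a,b,c}) < \varepsilon$ uniformly in $n$. For each such $f_0^{(n)}$, the existence theory of Section \ref{sec:ExistenceProof} (combined with the higher regularity results invoked in Section \ref{sec:mainTheoremOne}) produces a classical ($*$)-solution $f^{(n)}$ on some maximal interval. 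Writing $f^{(n)} = m^{a,b,c} + h^{(n)}$, the difference satisfies a perturbed Boltzmann equation whose cross terms $Q^{\pm}(m,h^{(n)}) + Q^{\pm}(h^{(n)},m)$ are controlled by the smoothness and decay of $m^{a,b,c}$, so the crux is the fully quadratic remainder in $h^{(n)}$.

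Second, propagate smallness of $\|h^{(n)}(t)\|_{L^2}$ globally in $t$. The principal tool is the scaling-critical bilinear Strichartz estimate $Q^+ : L^2 \times L^2 \to L^1(\mathbb{R}, L^2)$ from \cite{CDP2019}, applied along free-transport characteristics (i.e.\ in the $\#$-coordinates of (\ref{eq:defHashNotation})). This handles $Q^+(h^{(n)},h^{(n)})$ directly, while the loss term $Q^-(h^{(n)},h^{(n)})$, which does \emph{not} admit an analogous dispersive estimate, is controlled through the comparison principle of Section \ref{sec:comparison}: one bounds $f^{(n)}$ pointwise by a gain-only upper envelope, whose $L^2$ norm is in turn dispersively estimated. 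A continuity-in-$t$ bootstrap then yields $\|h^{(n)}(t)\|_{L^2} \lesssim \varepsilon$ for all $t \geq 0$. Throughout the bootstrap, the moment conditions of (\ref{eq:momentEpsilon}) are preserved by the ($*$)-solution dynamics because mass, momentum, kinetic energy, spatial second moments, and the $x \cdot v$ moment are either conserved or explicitly computable, which keeps the $X$-distance to $m^{a,b,c}$ under $C\varepsilon$. The scattering machinery of Sections \ref{sec:scatteringBasic}--\ref{sec:scatteringAdvanced} then yields $f^{(n)}(t) - \mathcal{T}(t) f_{+\infty}^{(n)} \to 0$ in $L^2$ as $t \to \infty$. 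Finally, pass $n \to \infty$: the closure of the ($*$)-solution class under the limiting process of Section \ref{sec:starLimitsProof} produces a global ($*$)-solution $f$ with initial data $f_0$, inheriting the uniform $L^2$ bound and the $L^1_t L^2$ control on $Q^+(f,f)$. From the latter together with the renormalized formulation one deduces $f \in C([0,\infty), L^2)$; the scattering conclusion (\ref{eq:itScatters}) passes to the limit from the uniform scattering of the $f^{(n)}$ via a Cauchy-in-$L^2$ argument on the scattering data, and the weak-strong uniqueness result of Section \ref{sec:WkStrong} pins down the initial trace.

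\textbf{Main obstacle.} The principal difficulty is that the loss operator $Q^-$ does not satisfy the bilinear scaling-critical estimate that $Q^+$ enjoys, so no naive Duhamel bootstrap closes on $h^{(n)}$ at regularity $L^2$. The whole apparatus of ($*$)-solutions, the gain-only comparison principle producing upper envelopes, and the entropy-dissipation side of the renormalized theory is designed precisely to substitute for the missing $Q^-$ estimate; marshaling these tools \emph{uniformly in $n$} so that they survive the limit $n\to\infty$ — in particular so that global existence and scattering of the smooth approximants transfer cleanly to the rough limit $f$ — is the technical heart of the proof.
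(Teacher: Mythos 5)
Your proposed proof takes a genuinely different, and more constructive, route than the paper: you aim to propagate smallness of the perturbation $h = f - m^{a,b,c}$ in $L^2$ \emph{quantitatively and globally in time} via a continuity bootstrap, then pass to a limit along smooth approximants. The paper, by contrast, is deliberately non-constructive: it shows in Section \ref{sec:mainTheoremOne} that $m_0^{a,b,c}$ lies in the set $\mathcal{E}$ of exclusively scattering data (via weak-strong uniqueness, Theorem \ref{thm:unique}), then invokes Theorem \ref{thm:exScOpen} — that $\mathcal{E}$ is \emph{open} in the metric space $X$ — and finishes by existence of ($*$)-solutions (Theorem \ref{thm:starSolnExists}). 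Crucially, Theorem \ref{thm:exScOpen} is proved by \emph{contradiction} using compactness of the ($*$)-solution class under limits (Theorem \ref{thm:limits}) and the scattering Lemma \ref{lem:scattering-lemma}; no quantitative bootstrap on $\left\Vert f(t) - m^{a,b,c}(t) \right\Vert_{L^2}$ is ever attempted. Indeed the paper explicitly remarks, immediately after the statement of Part One, that the proof \textbf{provides no quantitative control} on this quantity for $t > 0$ — a clear signal that the authors could not close the direct estimate your outline relies on.

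This points to a concrete gap in your proposal. In the perturbation equation for $h^{(n)} = f^{(n)} - m^{a,b,c}$, the dominant terms are not the fully quadratic ones $Q^{\pm}(h^{(n)},h^{(n)})$ but the cross terms $Q^{\pm}(m, h^{(n)})$ and $Q^{\pm}(h^{(n)}, m)$, which are \emph{linear} in $h^{(n)}$ with a coefficient of size $\sim \left\Vert m_0^{a,b,c} \right\Vert_{L^2}$, an arbitrarily large constant. Applying the bilinear $Q^+$ estimate (\ref{eq:crucialEstimateSummary}) to a cross term produces a bound of the form $C \left\Vert m_0 \right\Vert_{L^2} \left\Vert h^{(n)} \right\Vert$, which does not shrink with $\varepsilon$, so a continuity-in-$t$ argument of the usual kind cannot close. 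Toscani's result \cite{Ti1988} handled the analogous linear terms via a Kaniel--Shinbrot monotone iteration in \emph{weighted $L^\infty$}, where the loss term has a sign structure one can exploit pointwise; no such mechanism is available in $L^2$, and the paper's introduction explicitly credits the present article with being the first to handle perturbations of large Maxwellians at scaling-critical regularity, precisely because a naive bootstrap fails. You also misidentify the role of the comparison principle: the upper envelope of Section \ref{sec:comparison} is local in time, valid only on small intervals $[t_0, t_0 + \sigma)$ with $\sigma$ depending on the full profile of $f(t_0)$, and cannot by itself propagate a global uniform $L^2$ bound. To repair the argument you would either have to exhibit a dissipative or monotonicity structure for the linearized loss term at the $L^2$ level (a substantial open question), or abandon the constructive bootstrap in favor of the compactness argument of Theorem \ref{thm:exScOpen} as the paper does.
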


\begin{remark}
The proof of the main theorem provides no quantitative control on
$\left\Vert f \left( t \right) - m^{a,b,c} \left( t \right) \right\Vert_{L^2}$
for $t > 0$.
\end{remark}

\begin{theorem*}
\emph{(Main Theorem, Part Two)}
Under the assumptions of Part One, if in addition $f_0 \in \mathcal{S}$, then the solution $f$ is
unique (in the sense to be explained in Section \ref{sec:WkStrong}), and
\[
f \in C^1 \left( \left[ 0,\infty\right), \mathcal{S} \right)
\]
also holds.
\end{theorem*}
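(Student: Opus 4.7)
The plan is to combine the existence and continuity statement from Part One with the propagation of regularity theory for ($*$)-solutions developed in Section \ref{sec:higherRegularity} and the weak-strong uniqueness result of Section \ref{sec:WkStrong}. The existence half of Part Two is essentially a regularity bootstrap built on top of the solution produced by Part One, while uniqueness is the straightforward consequence of having a classical object to compare against.

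First, I would argue that the solution $f$ constructed in Part One is a ($*$)-solution whose maximal initial interval $I^\ast$ equals $[0,\infty)$. Part One already asserts $f \in C([0,\infty), L^2)$ together with scattering in $L^2$ as $t\to+\infty$; by the sharp breakdown-of-continuity criterion from Section \ref{sec:breakdownCriterion}, this rules out any finite time at which the ($*$)-structure could degenerate. Hence the full propagation machinery of Section \ref{sec:higherRegularity} is available on the entire half-line.

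Second, since $f_0 \in \mathcal{S}$, the Sobolev-type norms $\Vert f_0 \Vert_{H^{\alpha,\beta}}$ defined in (\ref{eq:SobolevNorms}) are finite for every $\alpha,\beta \geq 0$. Applying propagation of regularity in the ($*$)-solution class to each pair $(\alpha,\beta)$ yields $f(t) \in H^{\alpha,\beta}$ with norms locally bounded on $[0,\infty)$ (and in fact globally bounded, once one feeds back scattering to control the nonlinear feedback). Intersecting over all $(\alpha,\beta)$ places $f(t) \in \mathcal{S}$ for every $t$, continuously in $t$. The equation (\ref{eq:BE}) then rearranges as
\[
\partial_t f = Q^+(f,f) - Q^-(f,f) - v\cdot \nabla_x f,
\]
whose right-hand side lies in $C([0,\infty), \mathcal{S})$ because $\mathcal{S}$ is closed under $Q^\pm$ and transport when acting on Schwartz inputs; this upgrades $f$ to $C^1([0,\infty), \mathcal{S})$ as claimed.

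Third, uniqueness follows from the weak-strong uniqueness theorem of Section \ref{sec:WkStrong}: once a Schwartz classical solution is in hand, it plays the role of the strong side, and any other ($*$)-solution (or even any dissipative/renormalized solution in the appropriate class) sharing the same initial data $f_0$ must coincide with it. The main obstacle I anticipate is the global-in-time character of the regularity propagation: a naive Grönwall argument on a single $H^{\alpha,\beta}$ norm would blow up in $t$, so one must use the stronger ($*$)-solution bounds together with the $L^2$-scattering from Part One to close the estimates uniformly on $[0,\infty)$. This is precisely where the interplay between the renormalized/entropy-dissipation side of ($*$)-solutions and the dispersive bilinear $Q^+$ estimate (\ref{eq:crucialEstimateSummary}) is indispensable; once that global control is secured, the remaining assembly (regularity, $C^1$ time-differentiability, and weak-strong uniqueness) is routine.
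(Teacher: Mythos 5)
Your overall architecture is right: both the proposal and the paper prove Part Two by first upgrading regularity of the Part One solution and then invoking weak-strong uniqueness. However, two points need correction.

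The ``main obstacle'' you anticipate --- that a na{\"i}ve Gr{\"o}nwall on $H^{\alpha,\beta}$ blows up in $t$ and must be closed \emph{uniformly on $[0,\infty)$} via scattering --- is not actually an obstacle, because of how the paper defines $C([0,\infty),\mathfrak{G})$: Section \ref{sec:secNotation} stresses that membership in $C([0,\infty),\mathcal{S})$ does \emph{not} entail $L^\infty([0,\infty),\mathcal{S})$. It therefore suffices to propagate regularity on each compact $[0,T] \subset I^*(f)$, and since $f_0 \in \mathcal{E}$ gives $T^*(f) = \infty$, these compact intervals exhaust the half-line. That is exactly what the paper does: Proposition \ref{prop:regularityFullEqn} propagates $H^{2,2}$ on arbitrary compact subintervals of $I^*(f)$, Theorem \ref{thm:schwartzRegularity} (using \cite{CDP2018}) then bootstraps $H^{2,2}$ to full Schwartz regularity and time regularity, and Theorem \ref{thm:boltzSchwartz} records the conclusion $f \in C^1(I^*(f),\mathcal{S})$. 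No uniform-in-$t$ seminorm bounds are needed, and scattering plays no role in the regularity propagation; its role (via $\mathcal{E}$ and Theorem \ref{thm:exScOpen}) is entirely in establishing $T^*(f)=\infty$ in Part One. Along the same lines, the paper does not have, and does not need, a direct propagation theorem for every $(\alpha,\beta)$ at the ($*$)-solution level: it goes $L^2 \to H^{0,\alpha} \to H^{2,2} \to \mathcal{S}$, handing off past $H^{2,2}$ to the cited higher-regularity machinery.

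The parenthetical in your uniqueness step --- that any renormalized or dissipative solution sharing the data must coincide with $f$ --- overclaims. The remark following Theorem \ref{thm:unique} explicitly states that weak-strong uniqueness is proved only within the ($*$)-solution class on $I^*(f)$, not for general renormalized solutions; the argument there only shows that any such renormalized competitor cannot have $Q^+(h,h) \in L^1(J,L^2)$ on compact $J \subset I^*(f)$, which is short of excluding it outright. You should restrict the uniqueness claim to ($*$)-solutions, as Theorem \ref{thm:boltzSchwartz} does.
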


We remark that condition (\ref{eq:momentEpsilon}) deliberately places the absolute value bars
on the \emph{outside} of the integral:
 indeed, we could replace ``$<\varepsilon$'' by ``$=0$'' \emph{in this line}
 without altering the
conceptual substance of the theorem, since this condition does little more than provide a sense of scale (in the space-homogeneous
case it is analogous to normalizing the total kinetic energy to one separately for each $f_0,m_0$).
Note carefully that neither part of the main theorem is restricted to what may be called the
``near vacuum'' regime.\footnote{The term ``near vacuum,'' in the kinetic theory sense, means not only that 
$x$ ranges over all of $\mathbb{R}^2$ and $f$ exhibits decay (in an average sense)
as $\left| x\right|\rightarrow \infty$,
but that the initial data $f_0$ lies in a small ball of the zero function for a suitable Banach space.}
 The result is perturbative around a function which is Gaussian jointly in
space and velocity, but that underlying Gaussian may be of any size, the only restriction being that
$\varepsilon$ depends on the underlying Gaussian. Thus the theorem \emph{statement} (but not the proof!)
is very similar to an old result by Toscani, who \emph{also} considered global solutions near large moving
Maxwellians. \cite{Ti1988} However, unlike Toscani, since the perturbation  
here is only restricted with respect to the size of the $L^2$ deviation combined with finiteness of the physical quantity $L^1_2$,
even a Schwartz initial data $f_0$ may be very far removed (in $L^\infty$, say) from $m_0^{a,b,c}$.

\begin{remark}
Regardless of the regularity of $f_0$, scattering is \emph{always} defined relative to the
$L^2$ norm, precisely as indicated in (\ref{eq:itScatters}):
 at no point is convergence at long time to be claimed in any other sense.
\end{remark}

\section{Uniform square integrability}
\label{sec:squareIntegrability}

For this section, let $E$ be a measurable subset of a Euclidean space $\mathbb{R}^k$, $k \in \mathbb{N}$, equipped with
the measure $\lambda$ induced by the Lebesgue measure on $\mathbb{R}^k$; in the applications,
$E$ may be $\mathbb{R}^2 \times \mathbb{R}^2$, or $I \times \mathbb{R}^2 \times \mathbb{R}^2$ for
an interval $I \subset \mathbb{R}$. 

\subsection{Preliminary remarks.}

We will be adapting the notion of uniform integrability as
it is applied in kinetic theory, where the interpretation is closely related to, but slightly different from,
that which arises in probability theory. In particular, in kinetic theory, consideration must be
made for underlying measure spaces which are not probability spaces, such as the Lebesgue measure
on Euclidean space. Beyond that, we will be further specializing by examining the uniform
integrability of the \emph{squares} of a sequence of functions, and establishing a dominated convergence
theorem in $L^2 \left( E, \lambda \right)$.

We emphasize that the material in this section is standard; in particular, our objective (in this section only)
is a special case of the \emph{Lebesgue-Vitali convergence theorem}; e.g. see \cite{VIB2007}, Chapter 4, Corollary 4.5.5 (which
is the infinite measure case of Theorem 4.5.4 in the same reference). Our motivation for repeating the analysis (specialized
to the Euclidean case for simplicity) is twofold: first, the results are easy to prove (in our limited setting)
 yet absolutely fundamental to all that is to follow; and, second, we wish to establish a more convenient form of terminology for
 our own purposes, as the terminology of \cite{VIB2007} is rather general and somewhat
  onerous for kinetic theory applications.

 In fact, we will be interested in the $L^2$ setting (what we will refer to by 
 the term \emph{uniform square integrability}), whereas \cite{VIB2007} considers the $L^1$ setting; this is a trivial distinction
 from the abstract perspective, but it is an essential distinguishing factor for \emph{this} paper, as it is only
  the $L^1$ case which is ubiquitous in kinetic theory (specifically in the theory of renormalized
  solutions, as well as hydrodynamic limits). Uniform square integrability will be an essential tool as it allows to
execute dominated convergence arguments (in $L^2$) when the dominating functions compose a \emph{family}, instead of a singleton;
this will enable, starting in
Section \ref{sec:ptwiseConvFund},
 the usage of a powerful \emph{comparison principle}.

\subsection{Definitions}

\begin{definition}
A sequence of (not necessarily non-negative) measurable functions $\left\{ h_n \right\}_n \subset L^1 \left( E,\lambda \right)$
will be said to be \emph{uniformly integrable} if, for every $\varepsilon > 0$, there exists a 
number $\delta > 0$ and a compact set $K \subset \mathbb{R}^k$ such that: \emph{ for any measurable set
$F \subset E$ with measure $\lambda \left[ F \right] < \delta$, it holds}
\[
\sup_n \int_{F \bigcup \left( E \backslash K \right)} \left| h_n \right| d\lambda < \varepsilon
\]
\end{definition}

\begin{remark}
Technically, this definition is closer to the notion of \emph{uniform absolute continuity of integral} in \cite{VIB2007}; however,
the two concepts are equivalent for atomless measures such as Lebesgue measure (Proposition 4.5.3 of \cite{VIB2007}), and the latter
terminology is not standard in the kinetic theory literature. Note, also, that the use of the compact set $K$, in our definition of
uniform integrability, obviates the need for an additional condition when working in the whole Euclidean space.
\end{remark}

\begin{definition}
A sequence of (not necessarily non-negative) measurable functions $\left\{ h_n \right\}_n \subset L^2 \left( E,\lambda \right)$
will be said to be \emph{uniformly square integrable} if the sequence 
$\left\{ \left| h_n \right|^2 \right\}_n$ is uniformly integrable (here $\left| h_n \right|^2 \left( e \right)
= \left| h_n \left( e \right) \right|^2$ for $e \in E$).
\end{definition}

\subsection{Results.}

\begin{lemma}
\label{lem:uiCst}
Let $h$ by a measurable function on $E$ such that $h \in L^2 \left( E,\lambda \right)$. Then the constant sequence
\[
\left\{ h, h, h, \dots \right\}
\]
is uniformly square integrable.
\end{lemma}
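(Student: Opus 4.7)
The plan is to observe that because the sequence is constant, the prefix ``uniform'' is vacuous: we just need to verify that a single square-integrable function $h$ satisfies the two properties bundled into the definition of uniform square integrability, namely (i) \emph{tightness} of $|h|^2$ at spatial infinity and (ii) \emph{absolute continuity of the integral} of $|h|^2$ with respect to Lebesgue measure. Both are classical facts for any $L^1$ function (applied here to $|h|^2 \in L^1(E,\lambda)$), but since the lemma is declared fundamental and the notation for the envelope set $F \cup (E \setminus K)$ is a little unusual, I would write it out cleanly.

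For tightness, I would exhaust $E$ by the compact sets $K_n = E \cap \overline{B(0,n)} \subset \mathbb{R}^k$ and apply monotone convergence (or dominated convergence with the dominating function $|h|^2$) to the sequence $\mathbf{1}_{E \setminus K_n} |h|^2$, which decreases pointwise to $0$. This yields an $N$ for which $\int_{E \setminus K_N} |h|^2 \, d\lambda < \varepsilon/2$, and I would set $K = K_N$. For absolute continuity, I would use the standard truncation argument: set $h_M = |h|^2 \wedge M$ and note that $|h|^2 - h_M \to 0$ pointwise and is dominated by $|h|^2$, so by dominated convergence there exists $M$ with $\int_E (|h|^2 - h_M) \, d\lambda < \varepsilon/4$. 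Then for any measurable $F \subset E$ one has
\[
\int_F |h|^2 \, d\lambda \leq \int_F h_M \, d\lambda + \int_E \left( |h|^2 - h_M \right) d\lambda \leq M \lambda[F] + \frac{\varepsilon}{4},
\]
so choosing $\delta = \varepsilon / (4M)$ guarantees $\int_F |h|^2 \, d\lambda < \varepsilon/2$ whenever $\lambda[F] < \delta$. Combining the two estimates via $\int_{F \cup (E \setminus K)} |h|^2 \, d\lambda \leq \int_F |h|^2 \, d\lambda + \int_{E \setminus K} |h|^2 \, d\lambda$ gives the bound $\varepsilon$ uniformly in $n$ (trivially, as the sequence is constant).

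There is really no obstacle here; the only point to exercise care about is that the set $F$ is not assumed to lie inside $K$, so one must bound $\int_F |h|^2$ directly rather than $\int_{F \cap K} |h|^2$, which is why the truncation argument (depending only on $\lambda[F]$, not on the location of $F$) is appropriate. Everything reduces to dominated convergence applied to the fixed envelope $|h|^2 \in L^1(E,\lambda)$.
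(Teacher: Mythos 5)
Your proof is correct and follows essentially the same truncation strategy as the paper: both arguments reduce to dominated/monotone convergence applied to $|h|^2 \in L^1(E,\lambda)$. The only cosmetic difference is that the paper runs a single combined truncation (choosing one $N$ so that both $\{|h|>N\}$ and $\{|e|>N\}$ carry little mass of $|h|^2$, then bounding $\int_{F\cap E_N}|h|^2 \le N^2\lambda[F]$), whereas you separate the tightness and absolute-continuity halves into two independent $\varepsilon/2$ estimates; your closing remark about not being able to restrict $F$ to lie inside $K$ is exactly the point the paper's decomposition $F = F_1 \cup F_2$ is designed to handle.
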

\begin{proof}
First observe that, by monotone convergence and the square-integrability of $h$,
\[
\lim_{n \rightarrow \infty} \int_E \left(
\mathbf{1}_{e \; : \; \left| h\left( e \right) \right| > n} + \mathbf{1}_{e \; : \; \left| e \right| > n} \right) \left|
h \left( e \right) \right|^2 d\lambda \left( e \right) = 0
\]
Let $\varepsilon > 0$. Then there exists an integer $N$ such that, for all $n \geq N$,
\[
\int_E \left(
\mathbf{1}_{e \; : \; \left| h\left( e \right) \right| > n} + \mathbf{1}_{e \; : \; \left| e \right| > n} \right) \left|
h \left( e \right) \right|^2 d\lambda \left( e \right)  < \frac{\varepsilon}{4}
\]
In particular, we can take $n = N$; in that case,
for the points $e \in E$ at which the integrand \emph{vanishes,} we have
$\left| e \right| \leq N$ and $\left| h \left( e \right) \right| \leq N$. Let the set of all such points
be denoted by $E_N$.

Now let $F$ be any measurable subset of $E$ such that 
\[
\lambda \left[ F \right] < \frac{\varepsilon}{2 N^2}
\]
We decompose $F$ as $F_1 \bigcup F_2$ where $F_1 = F \bigcap E_N$ and $F_2 = F \bigcap \left( E \backslash E_N \right)$.
Clearly, since $F_2 \subset E \backslash E_N$, we have
\[
\int_E \left(
\mathbf{1}_{e\in F_2} + \mathbf{1}_{e \; : \; \left| e \right| > N} \right) \left|
h \left( e \right) \right|^2 d\lambda \left( e \right)  < \frac{\varepsilon}{2}
\]
On the other hand, we also clearly have
\[
\int_{F_1} \left| h \left( e \right) \right|^2 d\lambda \left( e \right) <
\frac{\varepsilon}{2}
\]
Therefore,
\[
\int_E \left(
\mathbf{1}_{e\in F} + \mathbf{1}_{e \; : \; \left| e \right| > N} \right) \left|
h \left( e \right) \right|^2 d\lambda \left( e \right)  < \varepsilon
\]
so we may conclude.
\end{proof}

\begin{lemma}
\label{lem:uiUpper}
If $\left\{ h_n \right\}_n \subset L^2 \left( E,\lambda \right)$ is uniformly square integrable, and $g_n$ is a sequence of
measurable functions on $E$ such that
\[
\left| g_n \left( e \right) \right| \leq \left| h_n \left( e \right) \right|
\]
 for $\lambda$-a.e.
$e \in E$, then $\left\{ g_n \right\}_n$ is uniformly square integrable.
\end{lemma}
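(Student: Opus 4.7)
The plan is to verify the definition of uniform square integrability directly for $\{g_n\}_n$, piggybacking on the corresponding property for $\{h_n\}_n$ via the pointwise domination $|g_n|^2 \leq |h_n|^2$ (which holds $\lambda$-a.e.\ by squaring the given hypothesis).

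First I would fix an arbitrary $\varepsilon > 0$ and invoke the uniform square integrability of $\{h_n\}_n$ to produce a $\delta > 0$ and a compact set $K \subset \mathbb{R}^k$ such that for every measurable $F \subset E$ with $\lambda[F] < \delta$,
\[
\sup_n \int_{F \cup (E \setminus K)} |h_n|^2 \, d\lambda < \varepsilon.
\]
Then, for the same $\delta$ and $K$, and any such $F$, the pointwise inequality $|g_n(e)|^2 \leq |h_n(e)|^2$ for $\lambda$-a.e.\ $e \in E$ gives the monotonicity of the Lebesgue integral as
\[
\int_{F \cup (E \setminus K)} |g_n|^2 \, d\lambda \leq \int_{F \cup (E \setminus K)} |h_n|^2 \, d\lambda,
\]
and taking the supremum over $n$ on both sides yields the desired bound for $\{|g_n|^2\}_n$. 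This shows $\{g_n\}_n$ is uniformly square integrable, and simultaneously shows $\{g_n\}_n \subset L^2(E,\lambda)$ (take $F = \emptyset$ and note that the integral over $K$ is also controlled, since each $g_n$ is bounded in $L^2(K)$ by $h_n$, and outside $K$ we have the uniform bound just derived).

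There is essentially no obstacle here: the only thing to double-check is that the null set on which the pointwise inequality may fail does not affect the integral, which is automatic since we integrate with respect to $\lambda$. No compactness, convergence, or approximation arguments are required; this is a pure monotonicity-of-integral argument, included solely to formalize the closure of uniform square integrability under pointwise domination, which will be invoked repeatedly in later sections.
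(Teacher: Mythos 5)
Your proof is correct and matches the paper's own argument, which likewise uses the same $\delta$ and $K$ for both sequences and appeals to monotonicity of the integral under the pointwise domination $|g_n|^2 \leq |h_n|^2$. Nothing further is needed.
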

\begin{proof}
This follows from the definition of uniform square integrability, using the same
$\delta, K$ for each sequence $\left\{ g_n \right\}_n , \left\{ h_n \right\}_n$.
\end{proof}

\begin{lemma}
\label{lem:ui}
If the sequence $\left\{ h_n \right\}_n \subset L^2 \left( E, \lambda \right)$
 converges in $L^2 \left( E,\lambda \right)$, that is, there exists
$h \in L^2 \left( E,\lambda \right)$ such that
\[
\lim_{n \rightarrow \infty} \left\Vert h_n - h \right\Vert_{L^2 \left( E , \lambda  \right)} = 0
\]
then the sequence $\left\{ h_n \right\}_n$ is uniformly square integrable.
\end{lemma}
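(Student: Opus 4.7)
The plan is to combine Lemma \ref{lem:uiCst} (any single $L^2$ function is uniformly square integrable as a constant sequence) with a head-tail splitting of the sequence. For the infinite tail, each $h_n$ is close to $h$ in $L^2$ norm, and I can control $|h_n|^2$ pointwise by $2|h_n-h|^2 + 2|h|^2$; the first term is small uniformly in any set because $\|h_n-h\|_{L^2}$ is small, while the second is handled by applying Lemma \ref{lem:uiCst} to $h$. For the finite head (the first $N-1$ terms), I apply Lemma \ref{lem:uiCst} to each term individually and then take the minimum of the resulting $\delta$'s and the union of the resulting compact sets.

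More concretely, given $\varepsilon > 0$, I would first choose $N$ large enough that $\|h_n - h\|_{L^2}^2 < \varepsilon/4$ for every $n \geq N$. Next, I apply Lemma \ref{lem:uiCst} to the constant sequence $\{h,h,h,\dots\}$ with tolerance $\varepsilon/8$ to extract $\delta_0 > 0$ and a compact $K_0 \subset \mathbb{R}^k$ with the property that $\int_{F \cup (E \setminus K_0)} |h|^2 \, d\lambda < \varepsilon/8$ for every measurable $F \subset E$ with $\lambda[F] < \delta_0$. Then for each $n \in \{1,\dots,N-1\}$, I apply Lemma \ref{lem:uiCst} again, this time to the constant sequence $\{h_n,h_n,\dots\}$ with tolerance $\varepsilon$, to obtain $\delta_n > 0$ and compact $K_n$.

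With $\delta \coloneqq \min\{\delta_0,\delta_1,\dots,\delta_{N-1}\}$ and $K \coloneqq K_0 \cup K_1 \cup \cdots \cup K_{N-1}$ (a finite union of compact sets, hence compact), for any measurable $F \subset E$ with $\lambda[F] < \delta$ the bound splits into two cases. For $n < N$, since $K \supseteq K_n$ we have $E \setminus K \subseteq E \setminus K_n$, so the integral over $F \cup (E \setminus K)$ is dominated by the one over $F \cup (E \setminus K_n)$, which is less than $\varepsilon$ by the choice of $\delta_n,K_n$. For $n \geq N$, the pointwise inequality $|h_n|^2 \leq 2|h_n - h|^2 + 2|h|^2$ yields
\[
\int_{F \cup (E \setminus K)} |h_n|^2 \, d\lambda \leq 2 \|h_n - h\|_{L^2}^2 + 2 \int_{F \cup (E \setminus K)} |h|^2 \, d\lambda < 2 \cdot \tfrac{\varepsilon}{4} + 2 \cdot \tfrac{\varepsilon}{8} = \tfrac{3\varepsilon}{4} < \varepsilon,
\]
where I used that $E \setminus K \subseteq E \setminus K_0$ to invoke the bound from Lemma \ref{lem:uiCst} applied to $h$.

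The proof is essentially bookkeeping, and I do not expect any real obstacle beyond making sure the quantifier structure lines up: the $L^2$-closeness gives uniform control for the tail, and the head is finite so there is no issue intersecting finitely many $\delta$'s and taking a finite union of compact sets. The only place that needed a moment's care was the application of the elementary inequality $(a+b)^2 \leq 2a^2 + 2b^2$, which converts a one-sided $L^2$ bound on $h_n - h$ into a two-sided control of $|h_n|^2$ suitable for testing against the uniform square integrability condition.
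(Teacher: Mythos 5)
Your proof is correct and follows essentially the same route as the paper: use $L^2$ convergence to control the infinite tail, then reduce the remaining finite head to finitely many applications of Lemma~\ref{lem:uiCst}, taking the minimum $\delta$ and the union of compact sets. The one point of difference is cosmetic but worth noting: the paper first reduces to the case $h \equiv 0$ by asserting that $\{h_n\}_n$ is uniformly square integrable if and only if $\{h_n - h\}_n$ is, ``because $h \in L^2$'' and citing Lemma~\ref{lem:uiCst} --- but that equivalence itself secretly requires the inequality $|a+b|^2 \leq 2|a|^2 + 2|b|^2$ (or some equivalent device) to show that uniform square integrability is stable under adding a constant $L^2$ function. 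Your version skips the reduction and applies that inequality directly in the tail estimate, which makes the quantifier bookkeeping more transparent and arguably leaves less to the reader. Both proofs are sound; yours is a touch more self-contained.
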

\begin{proof}
First note that, by Lemma \ref{lem:uiCst},
the sequence $\left\{ h_n \right\}_n$ is uniformly square integrable if and only if the
sequence $\left\{ h_n - h \right\}_n$ is uniformly square integrable, because
$h \in L^2 \left( E,\lambda \right)$. Therefore, we may assume without loss that $h$ is identically zero.

Let $\varepsilon > 0$.

Then since $h_n \rightarrow 0$ in $L^2 \left( E,\lambda \right)$, there exists a number $N$ such that,
for every $n \geq N$,
\[
\int_E \left| h_n \right|^2 d\lambda < \varepsilon
\]
Therefore, we may restrict our attention to the \emph{finite} set $\left\{ h_n \right\}_{1 \leq n < N}$.
For each $n=1,2,\dots,N-1$, by Lemma \ref{lem:uiCst} there exists a number $\delta_n > 0$ and a compact set
$K_n \subset \mathbb{R}^p$ such that, for any measurable set $F \subset E$ with $\lambda\left[ F \right] < \delta_n$,
\[
\int_{F \bigcup \left( E \backslash K_n \right)} \left| h_n \right|^2 d\lambda < \varepsilon
\]
Let $K = \bigcup_{n = 1,2, \dots, N-1} K_n$ and $\delta = \min_{n = 1, 2, \dots, N-1} \delta_n$ to conclude.
\end{proof}

\begin{lemma}
\label{lem:uiDCT}
\emph{(Special case of the general form of the Lebesgue-Vitali convergence theorem.)}
If the sequence $\left\{ h_n \right\}_n \subset L^2 \left( E,\lambda \right)$ is uniformly square integrable,
and the pointwise limit $h \left( e \right) = \lim_{n \rightarrow \infty} h_n \left( e \right)$ exists
for $\lambda$-a.e. $e \in E$, then 
\[
\lim_{n \rightarrow \infty} \left\Vert h_n - h \right\Vert_{L^2 \left( E,\lambda \right)} = 0
\]
\end{lemma}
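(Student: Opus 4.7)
The statement is the $L^2$ version of the Lebesgue--Vitali convergence theorem. My plan is to reduce it to Egorov's theorem on a set of finite measure, after first using the uniform square integrability both to locate the mass in a small compact region and to prevent energy from escaping.

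The first step will be to show that the limit $h$ lies in $L^2(E,\lambda)$. Uniform square integrability automatically forces $\sup_n \|h_n\|_{L^2}$ to be finite: taking $\varepsilon = 1$ in the definition yields $\delta > 0$ and a compact $K \subset \mathbb{R}^k$ such that $\int_{F \cup (E \setminus K)} |h_n|^2 \, d\lambda < 1$ whenever $\lambda[F] < \delta$. Since $\lambda[E \cap K] < \infty$, we can partition $E \cap K$ into finitely many pieces of measure less than $\delta$, and summing the corresponding bounds produces a uniform-in-$n$ bound on $\int_{E \cap K} |h_n|^2$, hence on $\|h_n\|_{L^2}^2$. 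Applying Fatou's lemma to the a.e.\ pointwise limit $|h_n|^2 \to |h|^2$ then yields $h \in L^2(E,\lambda)$.

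Given $\varepsilon > 0$, I would now use uniform square integrability of $\{h_n\}$ together with Lemma \ref{lem:uiCst} applied to the constant sequence $\{h,h,\ldots\}$ to choose a single $\delta > 0$ and a single compact $K \subset \mathbb{R}^k$ such that, for every measurable $F \subset E$ with $\lambda[F] < \delta$,
\[
\sup_n \int_{F \cup (E \setminus K)} |h_n|^2 \, d\lambda < \frac{\varepsilon}{16}
\qquad \text{and} \qquad
\int_{F \cup (E \setminus K)} |h|^2 \, d\lambda < \frac{\varepsilon}{16}.
\]
Since $\lambda[E \cap K]$ is finite and $h_n \to h$ pointwise a.e.\ on $E$, Egorov's theorem produces a measurable set $F_\star \subset E \cap K$ with $\lambda[F_\star] < \delta$ on which convergence is uniform off $F_\star$; that is, $h_n \to h$ uniformly on $(E \cap K) \setminus F_\star$.

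The final step is the three-piece decomposition
\[
\int_E |h_n - h|^2 \, d\lambda
= \int_{(E \cap K) \setminus F_\star} |h_n - h|^2 \, d\lambda
+ \int_{F_\star} |h_n - h|^2 \, d\lambda
+ \int_{E \setminus K} |h_n - h|^2 \, d\lambda.
\]
The first integral is bounded by $\lambda[E \cap K] \cdot \sup_{(E \cap K) \setminus F_\star} |h_n - h|^2$, which tends to $0$ by Egorov. For the second and third, I would apply the elementary inequality $|h_n - h|^2 \leq 2|h_n|^2 + 2|h|^2$ and invoke the tail bounds above with $F = F_\star$, obtaining a contribution at most $\varepsilon/2$ uniformly in $n$. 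Combining gives $\limsup_n \|h_n - h\|_{L^2}^2 \leq \varepsilon$, and since $\varepsilon$ is arbitrary, the conclusion follows. The main obstacle I anticipate is purely bookkeeping: one must select $\delta$ and $K$ to work simultaneously for the sequence and for the limit $h$ (which is exactly what Lemma \ref{lem:uiCst} provides), and one must remember that Egorov requires finite measure, which forces the use of the compact set $K$ before applying uniform convergence.
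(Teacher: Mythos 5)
Your proof is correct and fills in the details of exactly the approach the paper has in mind; the paper's own proof is the one-liner ``Follows immediately from Egorov's theorem,'' and your three-piece decomposition (Egorov on the compact piece, uniform square integrability for the exceptional set and the tail) is the standard way to make that immediate.
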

\begin{proof}
Follows immediately from Egorov's theorem.
\end{proof}

\section{An abstract theorem}
\label{sec:abstractTheorem}

We use the Banach fixed point theorem to
establish a sense of local well-posedness for nonlinear evolutionary equations associated with a certain
type of multilinear estimate. This result does \emph{not} apply directly to (\ref{eq:BE}) but it \emph{does}
apply to the $Q^+$ equation (or gain-only Boltzmann equation)
\begin{equation}
\label{eq:GOEqn}
\left( \partial_t + v \cdot \nabla_x \right) h = Q^+ \left( h, h \right)
\end{equation}
We will be relying heavily on the unique local solution $h$ of (\ref{eq:GOEqn}), the existence of which will
be established using the theorem from this section. We will be tracking all constants in this section
precisely, so that we may focus on compact intervals in time without loss of generality.

\begin{definition}
Let $J \subset \mathbb{R}$ be a compact interval, and $\mathfrak{G}$ a separable
Banach space. Then we define a
\emph{norm} on $W^{1,1} \left( J, \mathfrak{G}\right)$, distinguished by the stylized
notation
\[
\mathcal{W}^{1,1} \left( J, \mathfrak{G} \right)
\]
by
\[
\left\Vert x \right\Vert_{\mathcal{W}^{1,1} \left( J, \mathfrak{G} \right)}
=
\left\Vert x \right\Vert_{L^\infty \left( J, \mathfrak{G} \right)} +
\left\Vert \frac{dx}{dt} 
\right\Vert_{L^1 \left( J, \mathfrak{G} \right)}
\]
which is equivalent to the usual norm on $W^{1,1} \left( J, \mathfrak{G} \right)$ due
to the compactness of $J$. (Note that $W^{1,1} \left( J, \mathfrak{G} \right) \subset
C \left( J, \mathfrak{G} \right)$.)
\end{definition}

\begin{corollary}
If $t_0 \in J$ is fixed arbitrarily, then the norm defined by
\[
\left\Vert x(t_0) \right\Vert_{\mathfrak{G}} +
\left\Vert \frac{dx}{dt} 
\right\Vert_{L^1 \left( J, \mathfrak{G} \right)}
\]
is equivalent to $\mathcal{W}^{1,1} \left( J, \mathfrak{G}\right)$, the constant being
independent of each $J$ and $t_0 \in J$ (indeed a constant of $2$ suffices in either direction).
\end{corollary}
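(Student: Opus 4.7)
The plan is to exploit the fundamental theorem of calculus in the Bochner setting, which is available here because any element of $W^{1,1}\left( J, \mathfrak{G}\right)$ may be represented as the integral of its (almost everywhere defined) distributional derivative against a fixed basepoint. Concretely, for $x \in W^{1,1}\left(J, \mathfrak{G}\right)$ and any $t_0, t \in J$, I will use the identity
\[
x\left(t\right) = x\left(t_0\right) + \int_{t_0}^{t} \frac{dx}{ds}\left(s\right)\, ds
\]
where the integral is understood in the Bochner sense, noting that the inclusion $W^{1,1}\left(J, \mathfrak{G}\right) \subset C\left(J, \mathfrak{G}\right)$ guarantees $x\left(t_0\right)$ is unambiguously defined.

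To bound $\mathcal{W}^{1,1}$ by the alternative norm, I would take norms in $\mathfrak{G}$ on the above identity and use the triangle inequality for the Bochner integral to obtain
\[
\left\Vert x\left(t\right) \right\Vert_{\mathfrak{G}} \leq \left\Vert x\left(t_0\right) \right\Vert_{\mathfrak{G}} + \left\Vert \frac{dx}{dt} \right\Vert_{L^1\left(J, \mathfrak{G}\right)}
\]
for every $t \in J$ (the integral over $[t_0, t]$ or $[t, t_0]$ is dominated by the integral over all of $J$). Taking the supremum in $t$ and adding $\left\Vert dx/dt \right\Vert_{L^1\left(J, \mathfrak{G}\right)}$ to both sides yields
\[
\left\Vert x \right\Vert_{\mathcal{W}^{1,1}\left(J, \mathfrak{G}\right)} \leq \left\Vert x\left(t_0\right) \right\Vert_{\mathfrak{G}} + 2 \left\Vert \frac{dx}{dt} \right\Vert_{L^1\left(J, \mathfrak{G}\right)} \leq 2 \left( \left\Vert x\left(t_0\right) \right\Vert_{\mathfrak{G}} + \left\Vert \frac{dx}{dt} \right\Vert_{L^1\left(J, \mathfrak{G}\right)} \right)
\]
which is one half of the stated equivalence, with constant $2$ and clearly independent of $J$ and $t_0$.

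For the reverse direction, the inequality $\left\Vert x\left(t_0\right) \right\Vert_{\mathfrak{G}} \leq \left\Vert x \right\Vert_{L^\infty\left(J, \mathfrak{G}\right)}$ is immediate since $t_0 \in J$ and $x$ is continuous (so point evaluation is bounded by the supremum norm). Adding $\left\Vert dx/dt \right\Vert_{L^1\left(J, \mathfrak{G}\right)}$ to both sides gives
\[
\left\Vert x\left(t_0\right) \right\Vert_{\mathfrak{G}} + \left\Vert \frac{dx}{dt} \right\Vert_{L^1\left(J, \mathfrak{G}\right)} \leq \left\Vert x \right\Vert_{\mathcal{W}^{1,1}\left(J, \mathfrak{G}\right)}
\]
so in fact constant $1$ (a fortiori $2$) is enough in this direction. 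There is no serious obstacle here; the only subtle point is to invoke the Bochner fundamental theorem of calculus correctly, which is standard for separable $\mathfrak{G}$ and compact $J$, and which guarantees the pointwise meaning of $x\left(t_0\right)$.
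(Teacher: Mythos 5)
Your proof is correct and is precisely the standard Bochner fundamental theorem of calculus argument that the paper leaves implicit (the corollary is stated without proof). The constants you obtain, $2$ in one direction and $1$ in the other, match the paper's claim that a constant of $2$ suffices in either direction.
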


The Corollary implies that if $x(t)$ is controlled at a single point
and $\frac{dx}{dt} $ is controlled along the interval, then $x(t)$ is
controlled along the interval. 

\begin{lemma}
\label{lem:Abd}
Let $\mathfrak{G}$ be a separable Banach space over $\mathbb{R}$, and 
let $p$ be a real number with
\[
1 \leq p < \infty
\]
and let $J \subseteq \mathbb{R}$ be a compact interval. 
Furthermore, suppose 
\[
\mathcal{A} (t,x_1,\dots,x_k) : J \times
\mathfrak{G}^{\times k} \rightarrow \mathfrak{G}
\]
 is linear in $x_1, \dots, x_k$ for each $t \in J$,
and satisfies
\begin{equation}
\label{eq:Abd0}
\left\Vert \mathcal{A} (t, x_1,\dots,x_k ) \right\Vert_{L^p \left( J, \mathfrak{G} \right)}
\leq C_0 \prod_{i=1}^k \left\Vert x_i \right\Vert_{\mathfrak{G}}
\end{equation}
In particular $\mathcal{A}$ may be viewed as a multilinear map
 $\mathfrak{G}^k \rightarrow L^p \left( J, \mathfrak{G} \right)$.

Then there exists a unique mapping
\[
\tilde{\mathcal{A}} : \left( W^{1,1} \left( J, \mathfrak{G}\right) \right)^{\times k}
\rightarrow L^p \left( J, \mathfrak{G} \right)
\]
for which $\tilde{A}$ is linear in each of its inputs and satisfies
for $x_i \in \mathfrak{G}, \; \varphi_i \in C^\infty \left( J, \mathbb{R} \right)$ ($i=1,\dots,k$)
the formula
\begin{equation}
\label{eq:multilin-phi}
\tilde{\mathcal{A}} \left(  x_1 \varphi_1 , 
\dots, x_k \varphi_k \right) (t) =
\mathcal{A} \left(t, x_1, \dots, x_k \right) \prod_{i=1}^k
\varphi_i \left(t\right)
\end{equation}
Clearly $\tilde{\mathcal{A}}$ is a canonical extension of $\mathcal{A}$.

The extension $\tilde{\mathcal{A}}$ satisfies for any $x_1\left(\cdot\right),
\dots,x_k\left(\cdot\right) \in W^{1,1} \left( J, \mathfrak{G} \right)$:
\begin{equation}
\label{eq:multilin0}
\begin{aligned}
& \left\Vert \tilde{\mathcal{A}} \left(  x_1 , \dots, x_k  \right)
\right\Vert_{L^p \left( J, \mathfrak{G} \right)} \leq (k+1) C_0
\prod_{i=1}^k 
\left\Vert x_i \right\Vert_{\mathcal{W}^{1,1} \left( J, \mathfrak{G} \right)}
\end{aligned}
\end{equation}
noting carefully the stylized $\mathcal{W}$ in (\ref{eq:multilin0}).
\end{lemma}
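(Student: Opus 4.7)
The plan is to combine the fundamental theorem of calculus with Minkowski's integral inequality. For uniqueness, first note that the finite linear span of ``simple tensors'' $x \, \varphi$ with $x \in \mathfrak{G}$ and $\varphi \in C^\infty(J, \mathbb{R})$ is dense in $W^{1,1}(J, \mathfrak{G})$ by separability of $\mathfrak{G}$ and standard mollification; combined with (\ref{eq:multilin-phi}) and multilinearity, this pins $\tilde{\mathcal{A}}$ down on a dense subspace, and the bound (\ref{eq:multilin0}) then forces uniqueness by continuity.

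For existence, I fix $t_0 = \min J$ and write, by the FTC, $x_i(t) = x_i(t_0) + \int_{t_0}^t x_i'(s_i) \, ds_i$ for each $i$. Formal expansion of $\mathcal{A}(t, x_1(t), \ldots, x_k(t))$ by multilinearity in each slot yields a $2^k$-term sum indexed by subsets $S \subseteq \{1, \ldots, k\}$: the $i$-th input is $x_i(t_0)$ for $i \in S$ and $\int_{t_0}^t x_i'(s_i) \, ds_i$ for $i \notin S$. I make this rigorous by \emph{defining}
\[
T_S(t) := \int_{J^{|S^c|}} \Bigl( \prod_{i \notin S} \mathbf{1}_{s_i \leq t} \Bigr) \, \mathcal{A}(t, v_1, \ldots, v_k) \, \prod_{i \notin S} ds_i,
\]
with $v_i = x_i(t_0)$ for $i \in S$ and $v_i = x_i'(s_i)$ for $i \notin S$, interpreted as a Bochner integral into $L^p(J, \mathfrak{G})$. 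Hypothesis (\ref{eq:Abd0}) supplies the required integrability for each fixed $(s_i)_{i \notin S}$; strong measurability in $(s_i)_{i \notin S}$ is obtained by approximating each $x_i'$ by $\mathfrak{G}$-valued simple functions (using separability of $\mathfrak{G}$) on which $\mathcal{A}$ acts multilinearly. Set $\tilde{\mathcal{A}}(x_1, \ldots, x_k) := \sum_S T_S$.

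The bound then follows from Minkowski's integral inequality applied term-by-term: for each $S$,
\[
\|T_S\|_{L^p(J, \mathfrak{G})} \leq C_0 \prod_{i \in S} \|x_i(t_0)\|_{\mathfrak{G}} \prod_{i \notin S} \|x_i'\|_{L^1(J, \mathfrak{G})} \leq C_0 \prod_{i \in S} \|x_i\|_{L^\infty} \prod_{i \notin S} \|x_i'\|_{L^1},
\]
and summing over $S$ collapses to $C_0 \prod_i (\|x_i\|_{L^\infty} + \|x_i'\|_{L^1}) = C_0 \prod_i \|x_i\|_{\mathcal{W}^{1,1}}$, comfortably within the stated $(k+1) C_0$. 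Multilinearity of $\tilde{\mathcal{A}}$ is inherited directly from the multilinearity of $\mathcal{A}$ and the linearity of FTC, and a brief computation on simple tensors $x_i \varphi_i$ (using multilinearity plus the identity $\sum_S \prod_{i \in S} \varphi_i(t_0) \prod_{i \notin S} [\varphi_i(t) - \varphi_i(t_0)] = \prod_i \varphi_i(t)$) confirms that (\ref{eq:multilin-phi}) holds on the simple-tensor subspace.

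The principal obstacle is that $\mathcal{A}(t, \cdot) \colon \mathfrak{G}^k \to \mathfrak{G}$ is \emph{not} assumed bounded for fixed $t$---the hypothesis supplies only an $L^p$-in-$t$ bound, not a pointwise one---so one cannot naively pull FTC integrals through $\mathcal{A}$ pointwise in $t$. The remedy, implicit in the construction above, is to organize all manipulations at the level of Bochner integrals valued in $L^p(J, \mathfrak{G})$, where (\ref{eq:Abd0}) provides the requisite integrability and separability of $\mathfrak{G}$ yields the requisite strong measurability by step-function approximation of the $x_i'$. Once this is in place, Minkowski's inequality closes the estimate.
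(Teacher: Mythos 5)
Your proof is correct but takes a genuinely different decomposition from the paper's. The paper derives its formula by translating the scalar factors in time, differentiating in the translation parameter, and integrating back; this yields $k+1$ terms — the constant term $\mathcal{A}(t, x_1(0), \ldots, x_k(0))$ plus $k$ single-integral terms, the $i$-th carrying $x_i'(s)$ in slot $i$ and the undifferentiated $x_j(s)$ (all at the common time $s$) in every other slot. That telescoping structure is what produces the $(k+1)C_0$ constant. Your decomposition instead applies FTC in each slot independently and expands multilinearly, giving $2^k$ product-structured terms indexed by $S \subseteq \{1,\ldots,k\}$, and the identity $\sum_S \prod_{i \in S} \varphi_i(t_0)\prod_{i\notin S}[\varphi_i(t) - \varphi_i(t_0)] = \prod_i \varphi_i(t)$ verifies (\ref{eq:multilin-phi}). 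The payoff is that your terms factor and the sum over $S$ collapses geometrically, giving $C_0 \prod_i \|x_i\|_{\mathcal{W}^{1,1}}$ — i.e.\ constant $C_0$, which is (harmlessly) sharper than the stated $(k+1)C_0$. The cost is that your terms are up to $k$-fold iterated Bochner integrals, so the measurability argument (separability plus simple-function approximation of the $x_i'$) that you flag is genuinely needed, whereas the paper's single-integral terms with a common integration variable $s$ make that step more elementary. Both routes rest on the same two ingredients — FTC and Minkowski's integral inequality into $L^p(J, \mathfrak{G})$ — and both correctly organize the argument at the $L^p$-valued level to sidestep the fact that (\ref{eq:Abd0}) gives no pointwise-in-$t$ bound.
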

\begin{proof}
We assume without loss that $J = \left[ 0, b \right]$ for some $0 < b< \infty$.
First we will establish the uniqueness, and in so doing we will obtain
 a formula for $\tilde{\mathcal{A}}$; then we will show that the formula
 defines a mapping which satisfies (\ref{eq:multilin0}).
 
For given $\varphi \in C^\infty \left( \mathbb{R}, \mathbb{R} \right)$ and $s \in \mathbb{R}$ let us
define the translation operator
\[
\left( \tau_s \varphi \right) (t) = \varphi (t-s)
\]
Then if $x_1,\dots,x_k \in \mathfrak{G}$ and
$\varphi_1, \dots, \varphi_k  \in C^\infty \left( \mathbb{R}, 
\mathbb{R}\right)$ then by (\ref{eq:multilin-phi}) it holds
\[
\tilde{\mathcal{A}} \left(  x_1 \tau_s \varphi_1 , 
\dots, x_k \tau_s \varphi_k \right) (t) =
\mathcal{A} \left(t, x_1, \dots, x_k \right) \prod_{i=1}^k
\varphi_i \left(t-s\right)
\]
By (\ref{eq:Abd0}) 
the right-hand side is differentiable in $s$ for almost every $t \in J$ fixed, and
we have
\begin{equation*}
\begin{aligned}
& \frac{\partial}{\partial s}
\tilde{\mathcal{A}} \left(  x_1 \tau_s \varphi_1 , 
\dots, x_k \tau_s \varphi_k \right) \left(t\right) \\
& \qquad \qquad  =
- \mathcal{A} \left(t, x_1, \dots, x_k \right)
\sum_{i=1}^k \varphi_i^\prime \left(t-s\right) \prod_{j \neq i} \varphi_j \left(t-s\right)
\end{aligned}
\end{equation*}

Therefore, applying (\ref{eq:multilin-phi}) again on the right, we have
\begin{equation*}
\begin{aligned}
& \frac{\partial}{\partial s}
\tilde{\mathcal{A}} \left(  x_1 \tau_s \varphi_1 , 
\dots, x_k \tau_s \varphi_k \right) \left(t\right) \\
& \qquad \qquad = -
\sum_{i=1}^k 
\mathcal{\tilde{A}} \left( x_1 \tau_s \varphi_1, \dots,
x_i \tau_s \varphi_i^\prime, \dots,
x_k \tau_s \varphi_k \right)\left(t\right)
\end{aligned}
\end{equation*}

By the linearity in each entry, if each $x_i \left(t\right)$ is a finite linear combination of
terms like $x_\ell \varphi_\ell \left(t\right)$, then
\begin{equation*}
\begin{aligned}
& \frac{\partial}{\partial s}
\tilde{\mathcal{A}} \left(  \tau_s x_1 \left(\cdot\right) , 
\dots, \tau_s x_k (\cdot) \right) \left(t\right) \\
& \qquad \qquad = -
\sum_{i=1}^k 
\mathcal{\tilde{A}} \left( \tau_s x_1 \left(\cdot\right), \dots,
\tau_s x_i^\prime \left(\cdot\right), \dots,
 \tau_s x_k \left(\cdot\right) \right) \left(t\right)
\end{aligned}
\end{equation*}
and since we have only taken finite combinations, for almost every
$t \in J$ the formula (as before) does hold strongly for each $s \in J$.
Hence for any such $t$ we can integrate in $s$ over a domain that
\emph{depends on $t$}, namely $0 \leq s \leq t$, to deduce
\[
\begin{aligned}
& \tilde{\mathcal{A}} \left( x_1 \left(0\right), \dots, x_k \left(0\right) \right) -
\tilde{\mathcal{A}} \left( x_1 \left(\cdot\right), \dots, x_k \left(\cdot\right) \right) \\
& \qquad\qquad= -\sum_{i=1}^k 
\int_0^t \mathcal{\tilde{A}} \left(
\tau_s x_1 \left(\cdot\right), \dots, \tau_s x_i^\prime \left(\cdot\right), \dots,
\tau_s x_k (\cdot)\right)\left(t\right) ds
\end{aligned}
\]
Again since the $x_k$ are finite sums of constant elements of $\mathfrak{G}$ times
smooth scalar-valued functions, it is acceptable to replace
$\mathcal{A}$ for $\mathcal{\tilde{A}}$ under the integral on the right to
obtain
\[
\begin{aligned}
& \tilde{\mathcal{A}} \left( x_1 \left(0\right), \dots, x_k \left(0\right) \right) \left(t\right) -
\tilde{\mathcal{A}} \left( x_1 \left(\cdot\right), \dots, x_k \left(\cdot\right) \right) \left(t\right)\\
& \qquad\qquad= -\sum_{i=1}^k 
\int_0^t \mathcal{A} \left(t,
 x_1 \left(t-s\right), \dots,  x_i^\prime \left(t-s\right), \dots,
x_k \left(t-s\right)\right)ds
\end{aligned}
\]
Equivalently, this may be written
\[
\begin{aligned}
& 
\tilde{\mathcal{A}} \left( x_1 \left(\cdot\right), \dots, x_k \left(\cdot\right) \right) \left(t\right)=
 \mathcal{A} \left(t,  x_1 \left(0\right), \dots, x_k \left(0\right) \right) \\
& \qquad \qquad \qquad+
\sum_{i=1}^k 
\int_0^t \mathcal{A} \left(t,
 x_1 \left(s\right), \dots,  x_i^\prime \left(s\right), \dots,
x_k \left(s\right)\right)ds
\end{aligned}
\]
Thus $\tilde{\mathcal{A}}$ is uniquely determined by $\mathcal{A}$ due
to a density argument; this is justified by the continuity estimates
we prove below.

Indeed, clearly we have
\begin{equation*}
\begin{aligned}
& \left\Vert \mathcal{A} \left( t, x_1 \left(0\right), \dots, x_k \left(0\right) \right)
\right\Vert_{L^p \left( J, \mathfrak{G} \right)} \\
& \qquad \qquad \qquad \leq C_0
\prod_{i=1}^k \left\Vert x_i \left(0\right) \right\Vert_{\mathfrak{G}}
\leq C_0
 \prod_{i=1}^k 
 \left\Vert x_i \right\Vert_{\mathcal{W}^{1,1} \left( J, \mathfrak{G} \right)}
\end{aligned}
\end{equation*}
by (\ref{eq:Abd0}). Additionally, using (\ref{eq:Abd0}) again,
\[
\begin{aligned}
&\left\Vert \int_0^t \mathcal{A} \left(t,
 x_1 \left(s\right), \dots,  x_i^\prime \left(s\right), \dots,
x_k \left(s\right)\right)ds
\right\Vert_{L^p \left( J, \mathfrak{G} \right)}\\
& \qquad \qquad\leq
\left\Vert \int_0^t \left\Vert 
\mathcal{A} \left(t,
 x_1 \left(s\right), \dots,  x_i^\prime \left(s\right), \dots,
x_k \left(s\right)\right)\right\Vert_{\mathfrak{G}} ds \right\Vert_{L^p \left( J, \mathbb{R} \right)}\\
& \qquad \qquad\leq
\left\Vert \int_J \left\Vert 
\mathcal{A} \left(t,
 x_1 \left(s\right), \dots,  x_i^\prime \left(s\right), \dots,
x_k \left(s\right)\right)\right\Vert_{\mathfrak{G}} ds \right\Vert_{L^p \left( J, \mathbb{R} \right)}\\
& \qquad \qquad\leq
 \int_J \left\Vert 
\mathcal{A} \left(t,
 x_1 \left(s\right), \dots,  x_i^\prime \left(s\right), \dots,
x_k \left(s\right)\right) \right\Vert_{L^p \left( J, \mathfrak{G} \right)} ds\\
& \qquad \qquad\leq C_0
 \int_J 
 \left\Vert x_i^\prime \left(s\right) \right\Vert_{\mathfrak{G}}
 \left( \prod_{j \neq i} \left\Vert x_j \left(s\right) \right\Vert_{\mathfrak{G}} \right)  ds \\
 & \qquad \qquad \leq C_0
 \left\Vert x_i^\prime \right\Vert_{L^1 \left( J, \mathfrak{G} \right)}
 \prod_{ j \neq i} \left\Vert x_j \right\Vert_{L^\infty \left( J, \mathfrak{G} \right)} \\
 & \qquad\qquad \leq C_0
 \prod_{i=1}^k 
 \left\Vert x_i \right\Vert_{\mathcal{W}^{1,1} \left( J, \mathfrak{G} \right)}
\end{aligned}
\]
\end{proof}

By abuse of notation, we will write $\mathcal{A}$ in place
of $\tilde{\mathcal{A}}$ in what follows.

\begin{theorem}
\label{thm:CritLWP}
Given a separable Banach space $\mathfrak{G}$ and a compact
 interval
$J \subseteq \mathbb{R}$ where $J = \left[ 0, b \right]$, some $0 < b < \infty$, suppose
$\mathcal{A} \left(t,x_1,x_2\right) : J \times \mathfrak{G}\times \mathfrak{G} \rightarrow \mathfrak{G}$
 is linear in 
$x_1,x_2$ for each
$t \in J$, and satisfies
\[
\left\Vert \mathcal{A} \left(t,x_1,x_2\right) 
\right\Vert_{L^1 \left( J, \mathfrak{G} \right)} \leq C_0 
\left\Vert x_1 \right\Vert_{\mathfrak{G}} \left\Vert x_2 \right\Vert_{\mathfrak{G}}
\]

Let $\varepsilon > 0$. There are numbers
$\delta_1^0, \delta_2^0 > 0$, \emph{depending only on $C_0$ and $\varepsilon$},
such that if
\[
0 < \delta_1 \leq \delta_1^0\qquad \textnormal{and}
\qquad 0 < \delta_2 \leq \delta_2^0
\]
and the following three estimates
\[
\left\Vert \mathcal{A} \left(t,x_0,x_0\right)
\right\Vert_{L^1 \left( J, \mathfrak{G} \right)} \leq \delta_1
\]
\[
\forall \left( x_2 \in \mathfrak{G}\right)\quad
\left\Vert \mathcal{A} \left(t,x_0,x_2\right)
\right\Vert_{L^1 \left( J, \mathfrak{G} \right)} \leq \delta_2
\left\Vert x_2 \right\Vert_{\mathfrak{G}} 
\]
\[
\forall \left( x_2 \in \mathfrak{G}\right)\quad
\left\Vert \mathcal{A} \left(t,x_2,x_0\right)
\right\Vert_{L^1 \left( J, \mathfrak{G} \right)} \leq \delta_2
\left\Vert x_2 \right\Vert_{\mathfrak{G}}
\]
all hold for some $x_0 \in \mathfrak{G}$, then the following holds as well:

There exists a unique function
\[
x \in W^{1,1} \left(J,\mathfrak{G}\right)
\]
 such that for all $t \in J$ there holds
\[
x\left(t\right) = x_0 + \int_0^t \mathcal{A} \left( s, x\left(s\right), x\left(s\right) \right) ds
\]
and also
\[
\left\Vert x - x_0 \right\Vert_{L^\infty \left( J, \mathfrak{G} \right)} +
\left\Vert \frac{d}{dt} \left\{ x - x_0 \right\} 
\right\Vert_{L^1 \left( J, \mathfrak{G} \right)} \leq \varepsilon
\]
In particular, because $\frac{d}{dt} x = \mathcal{A} \left(
t, x\left(t\right), x\left(t\right) \right)$ and $x_0$ is a constant, we have
\[
\left\Vert \mathcal{A} \left( t, x\left(t\right), x\left(t\right) 
\right) \right\Vert_{L^1 \left( J, \mathfrak{G}\right)} \leq \varepsilon
\]
\end{theorem}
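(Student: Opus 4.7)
The approach is Banach fixed point applied to the Picard map
\[
\Phi(y)(t) \coloneqq x_0 + \int_0^t \mathcal{A}(s, y(s), y(s))\, ds
\]
on a small closed ball in $W^{1,1}(J, \mathfrak{G})$, exploiting bilinearity to decompose $\mathcal{A}$ around the distinguished element $x_0$ so that the three smallness hypotheses supply the only non-quadratic contributions.

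Writing $y = x_0 + (y - x_0)$ and expanding,
\[
\mathcal{A}(y, y) = \mathcal{A}(x_0, x_0) + \mathcal{A}(x_0, y - x_0) + \mathcal{A}(y - x_0, x_0) + \mathcal{A}(y - x_0, y - x_0)
\]
The first term has $L^1(J, \mathfrak{G})$-norm $\leq \delta_1$ by hypothesis. For each of the two middle terms, freezing $x_0$ produces a map linear in a single $\mathfrak{G}$-valued argument whose $L^1(J, \mathfrak{G})$ constant is $\delta_2$; Lemma \ref{lem:Abd} with $k=1$ extends this to a bound of $2 \delta_2 \|y - x_0\|_{\mathcal{W}^{1,1}}$ on $W^{1,1}$ inputs. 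The last term is handled by Lemma \ref{lem:Abd} with $k=2$, giving $3 C_0 \|y - x_0\|_{\mathcal{W}^{1,1}}^2$. Since $\Phi(y)(0) = x_0$ and $\tfrac{d}{dt}\Phi(y) = \mathcal{A}(\cdot, y, y)$, the Corollary after the definition of $\mathcal{W}^{1,1}$ yields
\[
\|\Phi(y) - x_0\|_{\mathcal{W}^{1,1}} \leq 2\bigl(\delta_1 + 4 \delta_2 R + 3 C_0 R^2\bigr) \qquad \textnormal{whenever } \|y - x_0\|_{\mathcal{W}^{1,1}} \leq R
\]

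I would then set $R \coloneqq \min(\varepsilon,\, c/C_0)$ for a small absolute constant $c$, and work on the closed ball $B_R = \{y \in W^{1,1}(J, \mathfrak{G}) : y(0) = x_0,\ \|y - x_0\|_{\mathcal{W}^{1,1}} \leq R\}$. An identical decomposition of
\[
\mathcal{A}(y_1, y_1) - \mathcal{A}(y_2, y_2) = \mathcal{A}(y_1 - y_2, y_1) + \mathcal{A}(y_2, y_1 - y_2)
\]
splitting each $y_i = x_0 + (y_i - x_0)$, yields a Lipschitz constant $2(4 \delta_2 + 6 C_0 R)$ for $\Phi$ on $B_R$. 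Choose $c$ so that $12 C_0 R \leq 1/4$, then $\delta_2^0 \leq 1/32$, then $\delta_1^0$ small enough to force $2(\delta_1 + 4 \delta_2 R + 3 C_0 R^2) \leq R$; Banach fixed point delivers a unique $x \in B_R$ with $\|x - x_0\|_{\mathcal{W}^{1,1}} \leq R \leq \varepsilon$, and the final estimate on $\|\mathcal{A}(t, x, x)\|_{L^1}$ follows from $\tfrac{dx}{dt} = \mathcal{A}(t, x, x)$.

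The main obstacle I anticipate is upgrading uniqueness from the small ball $B_R$ to the full $\mathcal{W}^{1,1}$-ball of radius $\varepsilon$, since when $\varepsilon > c/C_0$ the contraction estimate fails on the larger ball directly. I would handle this by a continuity bootstrap: for any competing solution $\tilde{x}$ with $\|\tilde{x} - x_0\|_{\mathcal{W}^{1,1}} \leq \varepsilon$, the function $F(t) \coloneqq \|\tilde{x} - x_0\|_{\mathcal{W}^{1,1}([0,t], \mathfrak{G})}$ is continuous with $F(0) = 0$, and the self-mapping estimate applied on the subinterval $[0,t]$ gives the scalar inequality $F(t) \leq 2\delta_1 + 8 \delta_2 F(t) + 6 C_0 F(t)^2$; for the chosen $\delta_1^0, \delta_2^0$, the set $\{F : F \leq 2\delta_1 + 8 \delta_2 F + 6 C_0 F^2\}$ is disconnected into a small component $\{F \leq R\}$ and a far component, so connectedness traps $F$ in $\{F \leq R\}$ throughout $J$, reducing the question to the already-established uniqueness inside $B_R$.
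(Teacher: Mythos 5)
Your proposal is correct and follows the paper's own proof essentially line-by-line: same Picard map $\mathfrak{F}$, same bilinear decomposition of $\mathcal{A}(y,y)$ about the distinguished element $x_0$, same appeal to Lemma~\ref{lem:Abd} (with $k=1$ for the cross terms and $k=2$ for the quadratic term), and the resulting self-map and Lipschitz constants agree with the paper's $2\delta_1 + 8\delta_2\|x-x_0\|_{\mathcal{W}^{1,1}} + 6C_0\|x-x_0\|_{\mathcal{W}^{1,1}}^2$ and $8\delta_2 + 12C_0\varepsilon$ after relabeling $R$ as $\varepsilon$.

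The one place you go beyond the paper is the continuity bootstrap for uniqueness when $\varepsilon$ is large. The paper dispatches this with the single sentence ``we may without loss assume $\varepsilon$ is sufficiently small'' and then runs the contraction directly on $\mathcal{B}_\varepsilon$; that reduction is immediate for \emph{existence} (a solution with $\|x - x_0\|_{\mathcal{W}^{1,1}} \le \varepsilon'$ certainly has $\|x - x_0\|_{\mathcal{W}^{1,1}} \le \varepsilon$), but for \emph{uniqueness in the full $\varepsilon$-ball} it leaves a gap: the contraction only rules out competitors inside the shrunken ball. Your argument --- apply the self-mapping estimate to a putative competitor $\tilde{x}$ on $[0,t]$, observe that $F(t) = \|\tilde{x}-x_0\|_{\mathcal{W}^{1,1}([0,t],\mathfrak{G})}$ is continuous with $F(0)=0$, and invoke disconnectedness of $\{s \ge 0 : s \le 2\delta_1 + 8\delta_2 s + 6C_0 s^2\}$ to trap $F$ in the small component --- is the standard, correct way to close that gap, and is a genuine (if minor) improvement in rigor over the paper's WLOG.
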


\begin{proof}
Fix $x_0 \in \mathfrak{G}$ and define the map
\[
\mathfrak{F} : W^{1,1} \left( J, \mathfrak{G} \right) \rightarrow
W^{1,1} \left( J, \mathfrak{G} \right)
\]
via
\[
\left[ \mathfrak{F} \left( x \right) \right] \left(t\right)
= x_0 + \int_0^t \mathcal{A} \left(s, x\left(s\right), x\left(s\right) \right) ds
\]

Then  we have
\[
\left[ \mathfrak{F} \left( x \right) \right] \left(t\right) - x_0
= \int_0^t \mathcal{A} \left(s, \left( x\left(s\right)-x_0 \right) + x_0, \left( x\left(s\right)-x_0\right)+x_0 \right) ds
\]
that is
\begin{equation}
\label{eq:F00}
\begin{aligned}
& \left[ \mathfrak{F} \left( x \right) \right] \left(t\right) - x_0
= \int_0^t \mathcal{A} \left(s, x_0, x_0 \right) ds
+ \int_0^t \mathcal{A} \left(s, x\left(s\right)-x_0, x_0 \right) ds\\
&\qquad+ \int_0^t \mathcal{A} \left(s, x_0, x\left(s\right)-x_0 \right) ds
+ \int_0^t \mathcal{A} \left(s, x\left(s\right)-x_0,x\left(s\right)-x_0 \right) ds 
\end{aligned}
\end{equation}

Note that the right-hand side is zero when $t=0$.
Hence by  Lemma \ref{lem:Abd} and the bounds assumed in the 
statement of the Theorem, there holds
\[
\begin{aligned}
&\left\Vert \mathfrak{F} \left( x \right)  - x_0
\right\Vert_{\mathcal{W}^{1,1} \left( J, \mathfrak{G} \right)}\\
& \qquad\qquad \leq
2 \delta_1 + 8 \delta_2 \left\Vert x - x_0
\right\Vert_{\mathcal{W}^{1,1} \left( J, \mathfrak{G} \right)} +
6 C_0 \left\Vert x - x_0 \right\Vert^2_{\mathcal{W}^{1,1} \left( J, \mathfrak{G} \right)}
\end{aligned}
\]
For instance, in the first term, we have an extra factor of $2$ because
$\mathcal{W}^{1,1}$ counts an $L^\infty \left( J, \mathfrak{G} \right)$ and an $L^1 \left( J, \mathfrak{G} \right)$, and the 
$L^\infty \left( J, \mathfrak{G} \right)$
is precisely bounded by the $L^1 \left( J, \mathfrak{G} \right)$ since the initial value is zero.
Similar logic holds for the remaining terms.

The Lipschitz estimate from (\ref{eq:F00}) reads as
\[
\begin{aligned}
& \left\Vert \mathfrak{F} \left( x \right)   - 
\mathfrak{F} \left( \tilde{x} \right) 
\right\Vert_{\mathcal{W}^{1,1} \left( J, \mathfrak{G}\right)} \\
& \leq \left[ 8 \delta_2 + 6 C_0 \left(
\left\Vert x - x_0 \right\Vert_{\mathcal{W}^{1,1} \left( J, \mathfrak{G} \right)}
+ 
\left\Vert \tilde{x} - x_0 \right\Vert_{\mathcal{W}^{1,1} \left( J, \mathfrak{G} \right)}
 \right)\right] \left\Vert
x - \tilde{x} \right\Vert_{\mathcal{W}^{1,1} \left( J, \mathfrak{G}\right)}
\end{aligned}
\]

To conclude, we apply the Banach fixed point theorem in the metric space
\[
\mathcal{B}_\varepsilon = \left\{ x \in W^{1,1} \left( J, \mathfrak{G} \right) \; \left|
\; 
\left\Vert x - x_0 \right\Vert_{\mathcal{W}^{1,1} \left( J, \mathfrak{G} \right)}
\leq \varepsilon
\right. \right\}
\]
the metric provided by the $\mathcal{W}^{1,1} \left( J, \mathfrak{G} \right)$ norm (of the difference
between any two elements).
We may without loss assume $\varepsilon$ is suffiently small.

The constraints are
\[
2 \delta_1 + 8 \delta_2 \varepsilon + 6 C_0 
\varepsilon^2 \leq \varepsilon
\]
and
\[
8 \delta_2 + 12 C_0 \varepsilon < 1
\]
The second constraint is satisfied once $\varepsilon < \frac{1}{48 C_0}$
 and $\delta_2 < \frac{1}{16}$. To satisfy the first constraint, it then
 suffices to further require that $\delta_1 < \frac{1}{8} \varepsilon$.
\end{proof}

\section{Dispersive estimates}
\label{sec:dispersiveEstimates}

\subsection{Castella-Perthame.}

First let us recall the family of homogeneous kinetic Strichartz estimates from Castella and Perthame, along with the
key dispersive estimates upon which they rely.
\cite{CaP1996,Ar2011} (We will not require the corresponding inhomogeneous Strichartz estimates.)

\begin{lemma}
\label{lem:CaP}
For any $1 \leq r \leq p \leq \infty$, if
\[
f_0 \in L^{r}_x L^p_v \left( \mathbb{R}^2 \times \mathbb{R}^2 \right)
\]
then for any $t \in \mathbb{R} \setminus \left\{ 0 \right\}$ it holds
\[
\mathcal{T} \left( t \right) f_0 \in L^p_x L^{r}_v \left( \mathbb{R}^2 \times \mathbb{R}^2 \right)
\]
and we have the estimate
\[
\left\Vert \mathcal{T} \left( t \right) f_0
\right\Vert_{L^p_x L^{r}_v \left( \mathbb{R}^2 \times \mathbb{R}^2 \right)} \leq
C \left| t \right|^{-2 \left( \frac{1}{r} - \frac{1}{p}\right)}
\left\Vert f_0 \right\Vert_{L^{r}_x L^p_v \left( \mathbb{R}^2 \times \mathbb{R}^2 \right)}
\]
where $C$ is an absolute constant independent of $t$ and $p$.
\end{lemma}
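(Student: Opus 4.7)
The plan is a direct calculation using the explicit free transport formula
\[
\left[ \mathcal{T}(t) f_0 \right](x,v) = f_0(x - vt, v),
\]
reducing everything to two changes of variables (in $v$ and in $x$) glued together by Minkowski's integral inequality, which is precisely where the hypothesis $r \leq p$ is consumed. The case $t = 0$ is excluded; the diagonal case $r = p$ is trivial since $\mathcal{T}(t)$ is an $L^p$-isometry, so I concentrate on $r < p$.

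First I would compute, for fixed $x \in \mathbb{R}^2$, the inner $L^r_v$ norm by changing variable $y = x - vt$, noting that for $t \neq 0$ the Jacobian is $|t|^{-2}$:
\[
\int_{\mathbb{R}^2_v} \left| f_0(x - vt, v) \right|^r dv
= |t|^{-2} \int_{\mathbb{R}^2_y} \left| f_0\!\left( y, \tfrac{x-y}{t}\right) \right|^r dy.
\]
Assuming for the moment that $p < \infty$, raising to the power $p/r$, integrating in $x$, and factoring out the constant gives
\[
\left\Vert \mathcal{T}(t) f_0 \right\Vert_{L^p_x L^r_v}^{p}
= |t|^{-2p/r} \int_{\mathbb{R}^2_x} \left( \int_{\mathbb{R}^2_y}
\left| f_0\!\left( y, \tfrac{x-y}{t}\right) \right|^r dy \right)^{p/r} dx.
\]

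Next I would apply Minkowski's integral inequality to the function $g(x,y) = |f_0(y,(x-y)/t)|^r$, using that $p/r \geq 1$ by hypothesis, to swap the $L^{p/r}_x$ norm past the $L^1_y$ integral. This yields
\[
\left\Vert \mathcal{T}(t) f_0 \right\Vert_{L^p_x L^r_v}^{p}
\leq |t|^{-2p/r} \left( \int_{\mathbb{R}^2_y} \left( \int_{\mathbb{R}^2_x}
\left| f_0\!\left(y, \tfrac{x-y}{t}\right) \right|^p dx \right)^{r/p} dy \right)^{p/r}.
\]
For each fixed $y$, the change of variable $w = (x-y)/t$ produces $dx = |t|^{2} dw$, so the inner $L^p_x$ integral equals $|t|^{2} \Vert f_0(y,\cdot) \Vert_{L^p_v}^p$. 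Substituting and collecting the powers of $|t|$, I get the factor $|t|^{-2p/r + 2} = |t|^{-2(p/r - 1)}$, and taking $p$-th roots yields exactly the claimed exponent $-2(1/r - 1/p)$ and the claimed $L^r_x L^p_v$ norm on the right.

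The endpoint cases $p = \infty$ and $r = 1$ would be treated in parallel: for $p = \infty$ one replaces the $L^{p/r}_x$ Minkowski step with a supremum and directly applies the change of variables inside the $\sup_x$, and for $r = 1$ the exponent $p/r = p$ poses no issue provided $p < \infty$, while the doubly extreme case $(r,p) = (1,\infty)$ is handled directly without Minkowski. I do not foresee a genuine obstacle here; the only delicate point is the bookkeeping that ensures the two Jacobian contributions $|t|^{-2p/r}$ and $|t|^{2}$ combine to exactly $|t|^{-2(p/r-1)}$, which is what forces the kinetic-type swap $L^p_x L^r_v \leftarrow L^r_x L^p_v$ and explains why $L^2_{x,v}$ (the self-dual diagonal) is special.
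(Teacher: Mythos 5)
Your proof is correct; the paper itself offers no proof of Lemma \ref{lem:CaP}, simply recalling it from Castella--Perthame and Arsenio, and your change-of-variables plus Minkowski argument is precisely the standard derivation found in those references. The bookkeeping is right: the first substitution $y = x - vt$ contributes $|t|^{-2p/r}$, Minkowski's integral inequality (which is exactly where $r \leq p$ enters) interchanges the $L^{p/r}_x$ and $L^1_y$ norms, and the second substitution $w = (x-y)/t$ returns $|t|^{2}$ after the $p$-th root, giving $|t|^{-2(1/r - 1/p)}$ with sharp constant $C = 1$; the endpoints $p = \infty$, $r = 1$, and the doubly-extreme case $(r,p) = (1,\infty)$ used in Lemma \ref{lem:KeelTaoStrich} all go through as you indicate, the $p = \infty$ case being even simpler since the bound inside $\sup_x$ is already uniform in $x$ and no Minkowski step is required.
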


\begin{proposition}
\label{prop:CaP}
Whenever $r, p \in \left[1,\infty\right]$ are such that $r>2$ and
$\frac{1}{r} = 1 - \frac{2}{p}$, for any $f_0 \in L^2$ there holds
\[
\mathcal{T} f_0 \in L^r_t L^p_x L^{p^\prime}_v
\left( \mathbb{R} \times \mathbb{R}^2 \times \mathbb{R}^2 \right) 
\]
Moreover, we have the following estimate:
\[
\left\Vert \mathcal{T} f_0\right\Vert_{L^r_t L^p_x L^{p^\prime}_v
\left( \mathbb{R} \times \mathbb{R}^2 \times \mathbb{R}^2 \right)} \leq C_r
\left\Vert f_0 \right\Vert_{L^2}
\]
the constant $C_r$ depending only on $r$.
\end{proposition}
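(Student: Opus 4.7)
The plan is to execute the standard $TT^*$ argument for Strichartz inequalities, combining the pointwise dispersive bound from Lemma \ref{lem:CaP} with the Hardy--Littlewood--Sobolev inequality on $\mathbb{R}$. Since $r>2$ is strictly off the endpoint, no Keel--Tao type real-interpolation machinery is needed.

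First I would observe, by duality, that the estimate is equivalent to
\[
\left\| \int_{\mathbb{R}} \mathcal{T}(-t) g(t,\cdot)\, dt \right\|_{L^2} \leq C_r \left\Vert g \right\Vert_{L^{r'}_t L^{p'}_x L^p_v}
\]
for all Schwartz test functions $g(t,x,v)$, where the dual of $L^p_x L^{p'}_v$ is $L^{p'}_x L^p_v$ and $r'$ is the Hölder conjugate of $r$. Squaring the left-hand side and using the group property $\mathcal{T}(s)^{*}\mathcal{T}(-t) = \mathcal{T}(s-t)$ (together with the fact that $\mathcal{T}$ is a unitary group on $L^2$), this reduces to the bilinear estimate
\[
\left| \int_{\mathbb{R}} \int_{\mathbb{R}} \left\langle g(s), \mathcal{T}(s-t) g(t) \right\rangle_{L^2_{x,v}} ds\, dt \right| \leq C_r^2 \left\Vert g \right\Vert^2_{L^{r'}_t L^{p'}_x L^p_v}.
\]

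Next I would estimate the inner pairing pointwise in $(s,t)$ by Hölder in $(x,v)$, and then invoke Lemma \ref{lem:CaP}. Note that the hypothesis $r > 2$ with $\frac{1}{r} = 1 - \frac{2}{p}$ forces $p \in [2,4)$, so in particular $p' \leq 2 \leq p$, which means Lemma \ref{lem:CaP} applies with the choice of exponents $(p',p)$. This yields
\[
\left| \left\langle g(s), \mathcal{T}(s-t) g(t) \right\rangle \right|
\leq \left\Vert g(s) \right\Vert_{L^{p'}_x L^p_v} \left\Vert \mathcal{T}(s-t) g(t) \right\Vert_{L^p_x L^{p'}_v}
\leq C\, |s-t|^{-2(1/p' - 1/p)} \left\Vert g(s) \right\Vert_{L^{p'}_x L^p_v} \left\Vert g(t) \right\Vert_{L^{p'}_x L^p_v}.
\]
The crucial arithmetic is that $2(1/p' - 1/p) = 2(1 - 2/p) = 2/r$, so the temporal kernel is exactly $|s-t|^{-2/r}$.

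Finally, setting $G(t) = \left\Vert g(t) \right\Vert_{L^{p'}_x L^p_v}$, the problem reduces to bounding the scalar double integral
\[
\int_{\mathbb{R}} \int_{\mathbb{R}} \frac{G(s) G(t)}{|s-t|^{2/r}}\, ds\, dt
\leq C \left\Vert G \right\Vert^2_{L^{r'}(\mathbb{R})}
\]
which is the Hardy--Littlewood--Sobolev inequality in one dimension with exponent relation $\frac{1}{r'} + \frac{1}{r'} + \frac{2}{r} = 2$, valid precisely because $0 < 2/r < 1$, i.e.\ $r > 2$. Since $\left\Vert G \right\Vert_{L^{r'}} = \left\Vert g \right\Vert_{L^{r'}_t L^{p'}_x L^p_v}$, this closes the estimate, and the constant $C_r$ depends only on $r$ (through the Hardy--Littlewood--Sobolev constant and the universal constant in Lemma \ref{lem:CaP}).

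There is no serious obstacle in this argument; the only subtlety is bookkeeping the exponents, in particular verifying that the dispersive decay exponent from Lemma \ref{lem:CaP} matches the HLS singularity exponent, which is forced by the scaling identity $\frac{1}{r} = 1 - \frac{2}{p}$. The non-endpoint restriction $r>2$ is precisely what keeps the HLS application within its classical (non-weak-type) range, so no further interpolation is required.
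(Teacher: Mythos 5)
Your proof is correct and is exactly the standard $TT^*$ argument of Castella--Perthame, which is what the paper cites (\cite{CaP1996,Ar2011}) without reproducing a proof. The exponent bookkeeping is right: $r>2$ with $\tfrac{1}{r}=1-\tfrac{2}{p}$ forces $p\in[2,4)$ so $p'\le 2\le p$ and Lemma~\ref{lem:CaP} applies with lemma-exponents $(p',p)$; the resulting decay $2(1/p'-1/p)=2/r$ matches the one-dimensional HLS kernel with $\tfrac{2}{r'}+\tfrac{2}{r}=2$, and $2/r\in(0,1)$ is exactly the non-endpoint hypothesis. One tiny slip: you write the group/adjoint identity as $\mathcal{T}(s)^*\mathcal{T}(-t)=\mathcal{T}(s-t)$, but since $\mathcal{T}(s)^*=\mathcal{T}(-s)$ this would give $\mathcal{T}(-s-t)$; the identity you actually use is $\mathcal{T}(-s)^*\mathcal{T}(-t)=\mathcal{T}(s)\mathcal{T}(-t)=\mathcal{T}(s-t)$, and the subsequent bilinear form $\langle g(s),\mathcal{T}(s-t)g(t)\rangle$ is the correct one, so the slip is purely notational.
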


\subsection{Intuition.}

We are nearly ready to discuss the meaning of (\ref{eq:crucialEstimateSummary}), i.e.
\begin{equation}
L^2 \times L^2 \rightarrow L^1 \left( \mathbb{R}, L^2 \right)
\end{equation}
In fact, what we really mean is that this bilinear estimate holds for the gain operator $Q^+$ composed with free
transport, that is,
\begin{equation}
\label{eq:crucialEstimateExplain}
\left\Vert Q^+ \left( \mathcal{T} f_0, \mathcal{T} h_0
\right) \right\Vert_{L^1 \left( \mathbb{R}, L^2 \right)}
\leq C \left\Vert f_0 \right\Vert_{L^2}
\left\Vert h_0 \right\Vert_{L^2}
\end{equation}
whenever $f_0, h_0 \in L^2$. This is a \emph{scaling-critical bilinear Strichartz estimate}, and the space
$L^1 \left( \mathbb{R}, L^2 \right)$ is essentially a stand-in for the missing scaling-critical endpoint of the
classical theory of Bourgain spaces, i.e. in the usual notation
$X^{s,b}$ with $\left( s,b \right) = \left( 0, - \frac{1}{2} \right)$, for which general theory does not exist without
refinement of the functional setting.

Before we begin, let us explain heuristically why (\ref{eq:crucialEstimateExplain}) \emph{should} be true
(since it is not entirely obvious at first glance). Let us introduce the classical convolution acting
in the \emph{velocity} variable only,
\[
\left( f \ast_v h \right) \left( t,x,v \right) =
\int_{\mathbb{R}^2} f \left( t, x, u \right) h \left( t , x , v - u \right) du
\]
then if $f, h \in C \left( I, \mathcal{S} \right)$ then we have for each $\left(t,x\right)
 \in I\times \mathbb{R}^2$ the Young's inequality
\begin{equation}
\label{eq:YoungsInequality}
\left\Vert f \ast_v h \right\Vert_{L^2_v \left( \mathbb{R}^2 \right)}
\leq C \left\Vert f \right\Vert_{L^{\frac{4}{3}}_v \left( \mathbb{R}^2 \right)}
 \left\Vert h \right\Vert_{L^{\frac{4}{3}}_v \left( \mathbb{R}^2 \right)}
\end{equation}
Now since $Q^+$ apparently has a convolutive structure (but taken over manifolds respecting the
 energy and momentum constraints),
and in addition the collision kernel at hand is bounded and homogeneous \textbf{of degree zero}\footnote{since
homogeneity of any other degree would impact the numerology of convolution inequalities}, we \emph{might} expect
(\ref{eq:YoungsInequality}) to hold again for $Q^+$:
\begin{equation}
\label{eq:youngsQPlus}
\left\Vert Q^+ \left( f, h \right) \right\Vert_{L^2_v \left( \mathbb{R}^2 \right)}
\leq C \left\Vert f \right\Vert_{L^{\frac{4}{3}}_v \left( \mathbb{R}^2 \right)}
 \left\Vert h \right\Vert_{L^{\frac{4}{3}}_v \left( \mathbb{R}^2 \right)}
\end{equation}
and it turns out (\ref{eq:youngsQPlus}) is \emph{true!} It has been proven, and studied in detail (in far greater generality), by
Alonso and Carneiro using Fourier methods \cite{AC2010}, and by
Alonso, Carneiro and Gamba using a weighted convolution formulation of $Q^+$ \cite{ACG2010},
 and was also proven for $Q_b^+$ with a restricted
class of collision kernels $b$ by Arsenio \cite{Ar2011} using the weak formulation of $Q_b^+$ on the kinetic side.
(Note that Alonso and Carneiro \cite{AC2010} used a radial symmetrization technique on the Fourier transform of
$f$ in the \emph{proof}, but their theorem makes no assumption of radiality for $f$.)

So let us combine (\ref{eq:youngsQPlus}) with the homogeneous Strichartz estimates of Proposition \ref{prop:CaP} 
to ``prove'' (\ref{eq:crucialEstimateExplain}):
\begin{equation*}
\begin{aligned}
& \left\Vert Q^+ \left( \mathcal{T} f_0, \mathcal{T} h_0 \right) \right\Vert_{L^1 \left( \mathbb{R}, L^2 \right)} \\
& \qquad \qquad \leq
C \left\Vert \mathcal{T} f_0 \right\Vert_{L^2 \left( I, L^4_x L^{\frac{4}{3}}_v \left(
\mathbb{R}^2 \times \mathbb{R}^2 \right)\right)}
\left\Vert \mathcal{T} h_0 \right\Vert_{L^2 \left( I, L^4_x L^{\frac{4}{3}}_v \left(
\mathbb{R}^2 \times \mathbb{R}^2 \right)\right)} \\
& \qquad\qquad \leq C \left\Vert f_0 \right\Vert_{L^2} \left\Vert h_0 \right\Vert_{L^2}
\end{aligned}
\end{equation*}
where we have applied (\ref{eq:youngsQPlus}) followed by H{\" o}lder's inquality (in $x$ then $t$) in the first step,
and the endpoint case $r=2$ of Proposition \ref{prop:CaP} in the second step, namely:
\begin{equation}
\label{eq:endpointStrich}
\left\Vert \mathcal{T} h_0 \right\Vert_{L^2_t L^4_x L^{\frac{4}{3}}_v
 \left( \mathbb{R} \times \mathbb{R}^2 \times \mathbb{R}^2 \right)}
\leq C \left\Vert h_0 \right\Vert_{L^2}
\end{equation}
Unfortunately, (\ref{eq:endpointStrich}) is known to be \emph{\textbf{false.}} \cite{Bennettetal2014}
A more careful analysis is required.

\begin{remark}
For detailed treatments of an approach to proving (\ref{eq:crucialEstimateExplain}) by way
of the Wigner-Weyl transform,
we refer the reader to the previous articles
of this series. \cite{CDP2017,CDP2018,CDP2019} We use an alternative approach below which does
not use the Wigner transform in its usual formulation.
\end{remark}

\subsection{The basic estimate.}

First, a simple application of the endpoint Strichartz estimates of Keel and Tao. Before we can
state the lemma, we need to formally define the partial Fourier transform acting only in $v$:
\[
\left[
\mathcal{F}_v f \right] \left( t,x,\eta \right) =
\int_{\mathbb{R}^2} e^{-2 \pi i v \cdot \eta } f \left( t,x,v \right) dv 
\]

\begin{remark}
The reader must take care to realize that the failure of endpoint Strichartz estimates for the Schr{\" o}dinger equation
in two dimensions, which is well-known, has no bearing on our application of Keel-Tao. That failure represents the
$\left( 2, \infty, 1 \right)$ edge case in Keel-Tao and it is \emph{not} the case we are using here.
\end{remark}

\begin{lemma}
\label{lem:KeelTaoStrich}
For any $f_0 \in L^2$, it holds
\[
\mathcal{F}_v \mathcal{T} f_0 \in L^2 \left( \mathbb{R}, L^4_{x,\eta} \left(
\mathbb{R}^2 \times \mathbb{R}^2 \right) \right)
\]
and we have the estimate
\[
\left\Vert \mathcal{F}_v \mathcal{T} f_0 \right\Vert_{L^2 \left( \mathbb{R}, L^4_{x,\eta} \left(
\mathbb{R}^2 \times \mathbb{R}^2 \right) \right)} \leq C
\left\Vert f_0 \right\Vert_{L^2}
\]
for some absolute constant $C$.
\end{lemma}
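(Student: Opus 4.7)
The plan is to recognize $u(t,x,\eta) := \mathcal{F}_v\bigl[\mathcal{T}(t)f_0\bigr](x,\eta)$ as the solution at time $t$ of a hyperbolic Schrödinger equation on $\mathbb{R}^4_{x,\eta}$, and then invoke the endpoint Strichartz estimate of Keel and Tao in four space dimensions. Indeed, applying $\mathcal{F}_v$ to the free transport identity $(\partial_t + v\cdot\nabla_x)\mathcal{T}(t)f_0 = 0$ and using $v \leftrightarrow \tfrac{1}{2\pi i}\nabla_\eta$ gives
\[
\partial_t u + \frac{1}{2\pi i}\, \nabla_x \cdot \nabla_\eta\, u = 0, \qquad u(0,x,\eta) = \mathcal{F}_v f_0(x,\eta).
\]
This is a constant-coefficient dispersive equation on $\mathbb{R}^4$ whose symbol is the non-degenerate quadratic form $\xi\cdot w$ of signature $(2,2)$; it is the hyperbolic Schrödinger equation on $\mathbb{R}^4$ alluded to in the introduction.

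Next I would establish the usual $L^1\to L^\infty$ dispersive estimate. Solving in Fourier yields the multiplier $e^{-2\pi i t \xi \cdot w}$, whose inverse Fourier transform (computed, e.g., by regularizing with a Gaussian and passing to the limit) equals a constant multiple of $|t|^{-2}\, e^{\pm 2\pi i (x-x')\cdot(\eta-\eta')/t}$. The stationary-phase analysis goes through just as in the elliptic Schrödinger case because the Hessian of the phase in $(\xi,w)$ has determinant $t^4$; the change of signature only affects a unimodular factor. This yields
\[
\left\Vert u(t)\right\Vert_{L^\infty_{x,\eta}(\mathbb{R}^4)} \leq C|t|^{-2}\left\Vert u(0)\right\Vert_{L^1_{x,\eta}(\mathbb{R}^4)},
\]
which is exactly the dispersive decay required by Keel--Tao in ambient dimension $n=4$.

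With mass conservation $\|u(t)\|_{L^2_{x,\eta}} = \|u(0)\|_{L^2_{x,\eta}}$ (immediate from the unitarity of the multiplier $e^{-2\pi i t \xi \cdot w}$) together with the above dispersive estimate, the abstract Keel--Tao machinery applies and yields all admissible Strichartz estimates, i.e.\ pairs $(q,r)$ with $\tfrac{2}{q} + \tfrac{4}{r} = 2$, $q\geq 2$. The endpoint we want is $(q,r) = (2,4)$, which is admissible in dimension $4$ (and is well away from the forbidden $(2,\infty)$ Schrödinger endpoint in dimension $2$, as the paper's preceding remark emphasizes). This gives
\[
\|u\|_{L^2_t L^4_{x,\eta}(\mathbb{R}\times\mathbb{R}^4)} \leq C\|u(0)\|_{L^2_{x,\eta}} = C\|\mathcal{F}_v f_0\|_{L^2_{x,v}} = C\|f_0\|_{L^2},
\]
the last equality being Plancherel in $v$. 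This is precisely the stated bound.

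The only genuine step (everything else is bookkeeping and a citation of Keel--Tao) is the dispersive estimate, so the main obstacle is verifying that the signature $(2,2)$ of the phase does not obstruct the $|t|^{-2}$ decay; this is the content of the explicit kernel computation above. If one prefers not to carry out the stationary-phase calculation by hand, the same estimate can be extracted from the general Strichartz theory for constant-coefficient dispersive equations with non-degenerate quadratic phase (as developed, e.g., in Ovcharov's work already cited in the introduction), in which case the lemma reduces to an off-the-shelf application.
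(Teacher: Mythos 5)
Your proof is correct and follows the same high-level strategy as the paper's: both reduce the lemma to the Keel--Tao endpoint with indices $(q,r,\sigma)=(2,4,2)$, both use the trivial $L^2$ boundedness, and both come down to the $L^1 \to L^\infty$ decay $\|\mathcal{U}(t)\|_{L^1\to L^\infty} \lesssim |t|^{-2}$. Where you diverge is in how that dispersive estimate is obtained. You make the ``hyperbolic Schr\"odinger in dimension four'' picture from the introduction explicit: identify $u = \mathcal{F}_v \mathcal{T} f_0$ as solving $\partial_t u + (2\pi i)^{-1}\nabla_x\cdot\nabla_\eta u = 0$, and compute the oscillatory kernel (a constant times $|t|^{-2} e^{2\pi i x\cdot\eta/t}$), noting that the signature $(2,2)$ only changes a unimodular phase factor. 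The paper instead deliberately \emph{avoids} writing down the hyperbolic Schr\"odinger kernel: it takes the conjugated operator $\mathcal{U}(t) = \mathcal{F}_v\mathcal{T}(t)\mathcal{F}_v^{-1}$, applies Hausdorff--Young in $v$ on both sides, and sandwiches the already-recorded Castella--Perthame free-transport dispersive estimate (Lemma \ref{lem:CaP}, with $(p,r)=(\infty,1)$) in the middle; this gives the same $|t|^{-2}$ bound with no stationary-phase or Gaussian-regularization computation. Your route is more self-contained and exhibits the underlying PDE explicitly, at the cost of a kernel computation; the paper's route is shorter and reuses an estimate already in hand. Both are sound, and your verification that $(2,4)$ is admissible for $\sigma=2$ (and thus well away from the forbidden two-dimensional Schr\"odinger endpoint $(2,\infty)$) is exactly right. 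Your closing suggestion to outsource the dispersive bound to Ovcharov is a slight misattribution — that reference is about kinetic Strichartz estimates on the transport side rather than a general hyperbolic-Schr\"odinger Strichartz package — but this does not affect the validity of the self-contained argument you gave first.
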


\begin{proof}
Let us formally define the parametrized family of operators for $t \in \mathbb{R}$
\[
\mathcal{U} \left( t \right) = 
\mathcal{F}_v \mathcal{T} \left( t \right) \mathcal{F}_v^{-1}
\]
Clearly $\mathcal{U} \left(t \right)$ acts boundedly on $L^2_{x,\eta} \left(
\mathbb{R}^2 \times \mathbb{R}^2 \right)$ for each $t \in \mathbb{R}$, with operator norm
equal to one.

The formal adjoint of $\mathcal{U} \left( t \right)$ is $\mathcal{U} \left( -t \right)$: here we are using
that the formal adjoint of $\mathcal{T} \left( t \right)$ is $\mathcal{T} \left( -t \right)$, \emph{regardless} of
whether the base field is $\mathbb{R}$ or $\mathbb{C}$. (This is due to the fact
that $\mathcal{T}$ commutes with complex conjugation.) Therefore, in order to apply the result of Keel and Tao
(\cite{KT1998}, Theorem 1.2), with indices (in their notation)
 \[
 \left( q,r,\sigma\right) = \left( 2,4,2\right)
 \]
we have only to prove for any Schwartz function $\zeta_0 \left( x, \eta \right)$ defined for $\left( x, \eta \right)
\in \mathbb{R}^2 \times \mathbb{R}^2$ the estimate
\begin{equation}
\label{eq:KTclaim}
\left\Vert \mathcal{U} \left( t \right) \zeta_0 \right\Vert_{L^\infty_{x,\eta} \left( \mathbb{R}^2 \times \mathbb{R}^2 \right)}
\leq C \left| t \right|^{-2} \left\Vert \zeta_0 \right\Vert_{L^1_{x,\eta} \left( \mathbb{R}^2 \times \mathbb{R}^2 \right)}
\end{equation}
any $t \neq 0$ to conclude.

In fact (\ref{eq:KTclaim}) follows from Lemma \ref{lem:CaP} (with $\left( p,r \right) = \left( \infty,1 \right)$ in the
notation of the Lemma as quoted above) interleaving \emph{two} careful applications of
 the Hausdorff-Young inequality, as we now show:
 \begin{equation*}
 \begin{aligned}
 \left\Vert \mathcal{U} \left( t \right) \zeta_0
 \right\Vert_{L^\infty_{x,\eta} \left( \mathbb{R}^2 \times \mathbb{R}^2 \right)} & =
 \left\Vert \mathcal{F}_v \mathcal{T} \left(t \right) \mathcal{F}_v^{-1} \zeta_0
 \right\Vert_{L^\infty_{x,\eta} \left( \mathbb{R}^2 \times \mathbb{R}^2 \right)} \\
 & \leq C \left\Vert \mathcal{T} \left( t \right) \mathcal{F}_v^{-1} \zeta_0
 \right\Vert_{L^\infty_x L^1_v \left( \mathbb{R}^2 \times \mathbb{R}^2 \right)} \\
 & \leq C \left| t \right|^{-2} \left\Vert \mathcal{F}_v^{-1} \zeta_0
 \right\Vert_{L^1_x L^\infty_v \left( \mathbb{R}^2 \times \mathbb{R}^2 \right)} \\
 & \leq C \left| t \right|^{-2} \left\Vert \zeta_0 \right\Vert_{L^1_{x,\eta} \left( \mathbb{R}^2 \times \mathbb{R}^2 \right)}
 \end{aligned}
 \end{equation*}

 We conclude by observing that the square-integrability of $\mathcal{F}_v f_0$ is equivalent to the
 square-integrability of $f_0$, by Plancherel.
\end{proof}

We turn to the main estimate upon which this entire article rests (originally obtained in the previous article of this series by
a slightly different proof \cite{CDP2019}).

\begin{proposition}
\label{prop:QplusBound}
For any $f_0, h_0 \in L^2$ there holds
\[
\left\Vert Q^+ \left( \mathcal{T} f_0, \mathcal{T} h_0
\right) \right\Vert_{L^1 \left( \mathbb{R}, L^2 \right)}
\leq C \left\Vert f_0 \right\Vert_{L^2}
\left\Vert h_0 \right\Vert_{L^2}
\]
\end{proposition}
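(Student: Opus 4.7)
The plan is to work on the Fourier transform in $v$, where Bobylev's identity expresses the gain operator $Q^+$ as an angular average over $\mathbb{S}^1$ that couples cleanly to the endpoint Strichartz bound of Lemma \ref{lem:KeelTaoStrich}. By Plancherel in $v$, $\|Q^+(F, H)(t)\|_{L^2_{x,v}} = \|\mathcal{F}_v Q^+(F, H)(t)\|_{L^2_{x,\eta}}$, and the Bobylev identity (valid for the constant collision kernel $b = 1/(2\pi)$) gives
\[
\mathcal{F}_v Q^+(F, H)(t, x, \eta) = \frac{1}{2\pi}\int_{\mathbb{S}^1} \hat F(t, x, \eta^+)\, \hat H(t, x, \eta^-)\, d\sigma,
\]
where $\hat F = \mathcal{F}_v F$, $\hat H = \mathcal{F}_v H$, and $\eta^{\pm} = (\eta \pm |\eta|\sigma)/2$ are orthogonal with $|\eta^+|^2 + |\eta^-|^2 = |\eta|^2$. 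This is the convolutive bound for $Q^+$ on the Fourier side to which the introduction refers.

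The core step is the pointwise-in-$t$ bilinear Fourier-side inequality
\[
\Bigl\|\int_{\mathbb{S}^1} \hat F(\cdot, \eta^+)\, \hat H(\cdot, \eta^-)\, d\sigma \Bigr\|_{L^2_{x,\eta}} \leq C\, \|\hat F\|_{L^4_{x,\eta}}\, \|\hat H\|_{L^4_{x,\eta}}.
\]
Once granted, Cauchy-Schwarz in $t$ combined with Lemma \ref{lem:KeelTaoStrich} applied to $\mathcal{T} f_0$ and $\mathcal{T} h_0$ yields
\[
\|Q^+(\mathcal{T} f_0, \mathcal{T} h_0)\|_{L^1_t L^2_{x,v}} \leq C \|\mathcal{F}_v \mathcal{T} f_0\|_{L^2_t L^4_{x,\eta}} \|\mathcal{F}_v \mathcal{T} h_0\|_{L^2_t L^4_{x,\eta}} \leq C \|f_0\|_{L^2} \|h_0\|_{L^2}.
\]
I would approach the bilinear inequality by applying Cauchy-Schwarz in $\sigma$ to replace the square of the $\sigma$-integral by $2\pi \int_{\mathbb{S}^1}|\hat F(\eta^+)|^2 |\hat H(\eta^-)|^2 d\sigma$, and then reparameterize $(\eta, \sigma) \in \mathbb{R}^2 \times \mathbb{S}^1$ in terms of the orthogonal pair $(\eta^+, \eta^-)$: writing $\eta^+ = a e_\alpha$, $\eta^- = b e_\alpha^\perp$ with $e_\alpha = (\cos\alpha, \sin\alpha)$ and $(a, b, \alpha) \in \mathbb{R}^2 \times [0, 2\pi)$, a direct computation gives $d\eta\, d\sigma = 2\, da\, db\, d\alpha$, and the integrand factorizes into one-dimensional line integrals of $|\hat F|^2$ and $|\hat H|^2$ along orthogonal lines through the origin.

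The hard part is controlling these one-dimensional line integrals by $L^4_{x,\eta}$ norms. The naive change of variables $\eta \to \eta^+$ for fixed $\sigma$ produces a Jacobian proportional to $(\eta^+ \cdot \sigma)^2/|\eta^+|^2$, which degenerates when $\eta^+$ is perpendicular to $\sigma$; thus no factor-by-factor H\"older in $\sigma$ closes the argument. The resolution must exploit the \emph{joint} orthogonality $\eta^+ \perp \eta^-$, so that the degeneracy of one factor is balanced by the non-degeneracy of the other: when $|\eta^+|$ is small (and the corresponding weight singular), $|\eta^-|$ is automatically large. I would implement this either via a duality argument (pairing against a test function in $L^2_{x,\eta}$ and invoking Lemma \ref{lem:KeelTaoStrich} on all three factors) or via a $TT^*$ computation on the three-dimensional submanifold of orthogonal pairs in $\mathbb{R}^2 \times \mathbb{R}^2$. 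This joint geometric treatment is precisely the content that allows the endpoint Keel-Tao estimate for the four-dimensional hyperbolic Schr\"odinger equation to substitute, via the partial Fourier transform in $v$, for the failed endpoint $(r, p, q) = (2, 4, 4/3)$ kinetic Strichartz whose absence motivates the Fourier-side approach.
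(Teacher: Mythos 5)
Your overall architecture is exactly the paper's: pass to the partial Fourier transform in $v$, use the convolutive structure of the gain operator on the Fourier side, apply the endpoint Strichartz estimate of Lemma \ref{lem:KeelTaoStrich} to each factor, and close by H\"older in $(x,t)$ and Plancherel. The difference is the key bilinear inequality
\[
\left\Vert \mathcal{F}_v Q^+ \left( F,H \right) \right\Vert_{L^2_\eta}
\leq C \left\Vert \mathcal{F}_v F \right\Vert_{L^4_\eta} \left\Vert \mathcal{F}_v H \right\Vert_{L^4_\eta}
\]
(pointwise in $(t,x)$): the paper simply cites Alonso--Carneiro (\cite{AC2010}, Theorem~1 with $\alpha=0$, $n=2$, $p=q=4$, $r=2$), then tensors in $x$ and $t$ by H\"older, whereas you attempt to reprove it.

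There is a genuine gap in that attempt, and it is not just a matter of filling in the details. The step of applying Cauchy--Schwarz over $\sigma\in\mathbb{S}^1$ and then reparametrizing by the orthogonal pair $(\eta^+,\eta^-)$ already destroys the estimate. After that step one is trying to prove
\[
\int_0^{2\pi} L_F(\alpha)\, L_H\!\left(\alpha+\tfrac{\pi}{2}\right) d\alpha \;\lesssim\;
\left\Vert \hat F \right\Vert^2_{L^4_\eta}\left\Vert \hat H \right\Vert^2_{L^4_\eta},
\qquad
L_F(\alpha)=\int_{\mathbb{R}} \left| \hat F\left(a\,e_\alpha\right)\right|^2 da,
\]
and this is false. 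Take $\hat F = \hat H = g$ with $g(r,\theta) = r^{-1/2}\mathbf{1}_{1\le r\le R}$ supported on four sectors of angular width $\delta$ centered at $0$, $\pm\pi/2$, $\pi$. Then $\left\Vert g \right\Vert_{L^4}^4\sim \delta\log R$, while $L_g(\alpha)\sim\log R$ for $\alpha$ in a set of measure $\sim\delta$, and by the built-in orthogonal symmetry $L_g(\alpha+\pi/2)\sim\log R$ on the same set; thus the left side is $\sim\delta(\log R)^2$, which beats the right side $\sim\delta\log R$ for large $R$. The orthogonality of $\eta^+$ and $\eta^-$ does not save the factor-by-factor control because the problem lies in squaring the $\sigma$-integral before integrating in $\eta$; the Alonso--Carneiro proof works precisely because it never performs this lossy step (it uses duality and a radial symmetrization of the underlying trilinear form, which the paper itself flags). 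Your suggested fix of pairing against a test function and ``invoking Lemma \ref{lem:KeelTaoStrich} on all three factors'' also does not close as stated, since the dual function is an arbitrary element of $L^2_{x,\eta}$ rather than a free transport evolution, so the Strichartz lemma does not apply to it. The correct resolution, as the paper does, is to take the $L^4_\eta\times L^4_\eta\to L^2_\eta$ bound for $\mathcal{F}_v Q^+$ as an external input rather than attempt to reprove it.
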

\begin{proof}
By a result of Alonso and Carneiro (\cite{AC2010}, Theorem 1, with $\alpha=0$, $n=2$, $p=q=4$ and $r=2$), it holds
\begin{equation}
\begin{aligned}
& \left\Vert \mathcal{F}_v 
Q^+ \left( \mathcal{T} f_0, \mathcal{T} h_0 \right) \right\Vert_{L^2_\eta \left( \mathbb{R}^2 \right)}
\leq C \left\Vert \mathcal{F}_v \mathcal{T} f_0 \right\Vert_{L^4_\eta \left( \mathbb{R}^2 \right)}
\left\Vert \mathcal{F}_v \mathcal{T} h_0 \right\Vert_{L^4_\eta \left( \mathbb{R}^2 \right)}
\end{aligned}
\end{equation}
(Note carefully that in \cite{AC2010}, a radial symmetrization technique was used in the
\emph{proof}, but the theorem there makes no assumption of radiality.)
Therefore, by applying H{\" o}lder's inequality in $x$ followed by $t$, it holds for any interval $I$
\begin{equation}
\label{eq:fourierLebesgueBoundQPlus}
\begin{aligned}
& \left\Vert \mathcal{F}_v 
Q^+ \left( \mathcal{T} f_0, \mathcal{T} h_0 \right) \right\Vert_{L^1 \left( I, L^2_{x,\eta}
\left( \mathbb{R}^2 \times \mathbb{R}^2 \right)\right)} \\
& \qquad \qquad
\leq C \left\Vert \mathcal{F}_v \mathcal{T} f_0 \right\Vert_{L^2 \left( I, L^{4}_{x,\eta} \left(
\mathbb{R}^2 \times \mathbb{R}^2\right) \right)}
\left\Vert \mathcal{F}_v \mathcal{T} h_0 \right\Vert_{L^2 \left( I, L^{4}_{x,\eta} \left(
\mathbb{R}^2 \times \mathbb{R}^2\right) \right)}
\end{aligned}
\end{equation}
the constant being independent of $I$.
Of course by Plancherel
\[
\left\Vert \mathcal{F}_v
Q^+ \left( \mathcal{T} f_0, \mathcal{T} h_0 \right) \right\Vert_{L^1 \left( I, L^2_{x,\eta}
\left( \mathbb{R}^2 \times \mathbb{R}^2 \right)\right)}
= \left\Vert  
Q^+ \left( \mathcal{T} f_0, \mathcal{T} h_0 \right) \right\Vert_{L^1 \left( I, L^2 \right)}
\]

Combining (\ref{eq:fourierLebesgueBoundQPlus}) with Lemma \ref{lem:KeelTaoStrich} provides
\begin{equation}
\label{eq:sigmaBoundTXVSecond}
\begin{aligned}
& \left\Vert
 Q^+ \left( \mathcal{T} f_0 ,
\mathcal{T} h_0 \right) \right\Vert_{L^1\left( I, L^2\right)} \\
& \qquad\qquad\leq C
\left\Vert \mathcal{F}_v \mathcal{T} f_0 \right\Vert_{L^2 \left( I, L^{4}_{x,\eta}
\left( \mathbb{R}^2 \times \mathbb{R}^2 \right)\right)} \left\Vert 
\mathcal{F}_v \mathcal{T} h_0 \right\Vert_{L^2 \left( I, L^{4}_{x,\eta}
\left(  \mathbb{R}^2 \times \mathbb{R}^2 \right)\right)} \\
& \qquad\qquad\qquad\qquad \leq C
\left\Vert f_0 \right\Vert_{L^2} \left\Vert h_0 \right\Vert_{L^2}
\end{aligned}
\end{equation}
the conclusion being the special case $I=\mathbb{R}$.
\end{proof}

Small time versions will also be required, again having been first obtained in the preceding article.

\begin{proposition}
\label{prop:QplusSmallTime}
Let $f_0 \in L^2$.
There is a real-valued function 
\[
\delta_{f_0} \left(T\right) > 0
\]
defined for $T>0$, which (as indicated) depends only on
$f_0$, such that for $f_0$ fixed there holds
\[
\limsup_{T \rightarrow 0^+} \delta_{f_0} (T) = 0
\]
and for any $h_0 \in L^2$
there holds
\begin{equation}
\label{eq:QplusSmallTimeFirst}
\left\Vert Q^+ \left( \mathcal{T} f_0, \mathcal{T} h_0
\right) \right\Vert_{L^1 \left( J \left( T \right), L^2 \right)}
\leq \delta_{f_0} \left(T\right)
\left\Vert h_0 \right\Vert_{L^2}
\end{equation}
and
\begin{equation}
\label{eq:QplusSmallTimeSecond}
\left\Vert Q^+ \left( \mathcal{T} h_0, \mathcal{T} f_0
\right) \right\Vert_{L^1 \left(  J \left( T \right), L^2 \right)}
\leq \delta_{f_0} \left(T\right)
\left\Vert h_0 \right\Vert_{L^2}
\end{equation}
where $J \left( T \right) = \left[ -T,T\right]$.
\end{proposition}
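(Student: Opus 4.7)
The natural plan is to revisit the proof of Proposition \ref{prop:QplusBound} but, crucially, keep one of the two factors on the right-hand side over the small interval $J(T)$ while estimating the other one via the full-line Strichartz bound. The interval-independence of the constant in inequalities \eqref{eq:fourierLebesgueBoundQPlus} and \eqref{eq:sigmaBoundTXVSecond} was emphasized for exactly this purpose. Specifically, specializing those inequalities to $I = J(T)$, I obtain
\[
\left\Vert Q^+ \left( \mathcal{T} f_0, \mathcal{T} h_0 \right) \right\Vert_{L^1\left( J(T), L^2\right)}
\leq C \left\Vert \mathcal{F}_v \mathcal{T} f_0 \right\Vert_{L^2\left( J(T), L^4_{x,\eta} \right)} \left\Vert \mathcal{F}_v \mathcal{T} h_0 \right\Vert_{L^2\left( J(T), L^4_{x,\eta} \right)}.
\]
For the factor involving $h_0$, I bound the localized norm by the full-line one and apply Lemma \ref{lem:KeelTaoStrich} to get $C \left\Vert h_0 \right\Vert_{L^2}$, regardless of $T$.

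For the factor involving $f_0$, I define
\[
\delta_{f_0}(T) \coloneqq C \left\Vert \mathcal{F}_v \mathcal{T} f_0 \right\Vert_{L^2\left( J(T), L^4_{x,\eta}\left( \mathbb{R}^2 \times \mathbb{R}^2 \right) \right)},
\]
with $C$ chosen large enough to absorb the two constants above. Since Lemma \ref{lem:KeelTaoStrich} tells us that $\mathcal{F}_v \mathcal{T} f_0 \in L^2\left( \mathbb{R}, L^4_{x,\eta} \right)$, absolute continuity of the Lebesgue integral (with respect to the measure of the integration domain in the $t$ variable) gives $\lim_{T \to 0^+} \delta_{f_0}(T) = 0$; moreover $\delta_{f_0}$ depends only on $f_0$ (and $T$). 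This yields \eqref{eq:QplusSmallTimeFirst}.

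Finally, the estimate \eqref{eq:QplusSmallTimeSecond} follows with the same $\delta_{f_0}$ because the argument of Proposition \ref{prop:QplusBound} is symmetric between the two entries of $Q^+$: the bilinear Alonso--Carneiro bound of \cite{AC2010} treats both arguments identically, so swapping $f_0$ and $h_0$ in the derivation above simply swaps which factor is kept on $J(T)$ and which is released to $\mathbb{R}$, producing the mirror inequality with the same prefactor $\delta_{f_0}(T) \left\Vert h_0 \right\Vert_{L^2}$. There is no real obstacle here beyond verifying that the constants in \eqref{eq:fourierLebesgueBoundQPlus}--\eqref{eq:sigmaBoundTXVSecond} truly do not depend on $I$ (which is already asserted in the proof of Proposition \ref{prop:QplusBound}) and that the absolute continuity of integral argument applies — both of which are essentially immediate.
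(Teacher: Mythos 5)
Your proposal is correct and matches the paper's own proof in all essentials: both specialize the interval-independent bound (\ref{eq:sigmaBoundTXVSecond}) to $I = J(T)$, apply Lemma \ref{lem:KeelTaoStrich} and Plancherel only to the $h_0$ factor, and then observe that $\left\Vert \mathcal{F}_v \mathcal{T} f_0 \right\Vert_{L^2\left(J(T), L^4_{x,\eta}\right)} \to 0$ as $T \to 0^+$ because the corresponding full-line integral is finite (absolute continuity of the integral), while the mirror estimate (\ref{eq:QplusSmallTimeSecond}) is obtained by the symmetry of the Alonso--Carneiro bound exactly as you say.
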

\begin{proof}
This is a simple refinement of Proposition \ref{prop:QplusBound}.
Indeed, considering just (\ref{eq:QplusSmallTimeFirst}) (the proof of (\ref{eq:QplusSmallTimeSecond}) being similar),
 taking again the \emph{first} half of (\ref{eq:sigmaBoundTXVSecond}) now with $I = J\left( T \right) =
 \left[ -T,T \right]$, and to \emph{only} $h_0$ applying Lemma \ref{lem:KeelTaoStrich} followed by Plancherel, we have
\begin{equation*}
\begin{aligned}
& \left\Vert
 Q^+ \left( \mathcal{T} f_0 ,
\mathcal{T} h_0 \right) \right\Vert_{L^1\left( J\left(T\right), L^2\right)} \\
& \qquad\qquad\leq C
\left\Vert \mathcal{F}_v \mathcal{T} f_0 \right\Vert_{L^2 \left( J\left(T\right), L^{4}_{x,\eta}
\left( \mathbb{R}^2 \times \mathbb{R}^2 \right)\right)} \left\Vert 
\mathcal{F}_v \mathcal{T} h_0 \right\Vert_{L^2 \left( J\left(T\right), L^{4}_{x,\eta}
\left(  \mathbb{R}^2 \times \mathbb{R}^2 \right)\right)} \\
& \qquad\qquad\qquad\qquad \leq C
\left\Vert \mathcal{F}_v \mathcal{T} f_0
 \right\Vert_{L^2 \left( J\left(T\right), L^{4}_{x,\eta}
 \left( \mathbb{R}^2 \times \mathbb{R}^2 \right)\right)} \left\Vert h_0 \right\Vert_{L^2}
\end{aligned}
\end{equation*}
Then again, by Lemma \ref{lem:KeelTaoStrich} and Plancherel we have
\begin{equation*}
\left\Vert 
 \mathcal{F}_v \mathcal{T} f_0
\right\Vert_{L^2 \left( \mathbb{R}, L^{4}_{x,\eta}
\left( \mathbb{R}^2 \times \mathbb{R}^2 \right)\right)} \leq C
\left\Vert f_0 \right\Vert_{L^2}
\end{equation*}
so our hypothesis
\[
f_0 \in L^2
\]
implies
\[
\limsup_{T \rightarrow 0^+}
\left\Vert 
 \mathcal{F}_v \mathcal{T} f_0
\right\Vert_{L^2 \left( J\left( T \right) , L^{4}_{x,\eta}
\left( \mathbb{R}^2 \times \mathbb{R}^2 \right)\right)} = 0
\]
hence we may conclude.
\end{proof}

\subsection{Weights.}

We require weighted versions of the above estimates, particularly for the discussion
of weak-strong uniqueness in Section \ref{sec:WkStrong}. Indeed by
conservation of energy there holds for $\alpha \geq 0$
\[
\begin{aligned}
|v|^\alpha & \leq \left( |v|^2 + |v_*|^2 \right)^{\frac{1}{2}\alpha}\\
& = \left( \left| v^\prime \right|^2 + \left| v_*^\prime\right|^2
\right)^{\frac{1}{2}\alpha} \\
& \leq C_\alpha \left( \left| v^\prime \right|^\alpha +
\left| v_*^\prime \right|^\alpha \right)
\end{aligned}
\]
therefore
\begin{equation}
|v|^\alpha Q^+ \left(f,h\right) \leq C_\alpha \cdot \left(
Q^+ \left( |v|^\alpha \left| f \right|, \left| h\right| \right) + 
Q^+ \left( \left| f \right|, |v|^\alpha \left| h\right| \right) \right)
\end{equation}
hold pointwise a.e. $\left( t,x,v \right)$.
Hence, the following weighted estimates follow from the unweighted versions:

\begin{proposition}
\label{prop:QplusWeighted}
Let $\alpha \geq 0$. Then if $f_0, h_0$ are such that
\[
\left< v \right>^\alpha f_0, \; \left< v\right>^\alpha h_0 \in L^2
\]
then
\[
\left\Vert \left< v \right>^\alpha Q^+ \left(
\mathcal{T} f_0, \mathcal{T} h_0 \right)
\right\Vert_{L^1 \left( \mathbb{R}, L^2 \right)} \leq C_\alpha
\left\Vert \left< v \right>^\alpha f_0 \right\Vert_{L^2}
\left\Vert \left< v \right>^\alpha h_0 \right\Vert_{L^2}
\]
the constant $C_\alpha$ depending only on $\alpha$.
\end{proposition}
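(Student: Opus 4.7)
The plan is to derive the weighted estimate as a direct consequence of the unweighted Proposition \ref{prop:QplusBound}, by pushing the weight $\langle v \rangle^\alpha$ inside the $Q^+$ bracket using the pointwise inequality already derived from conservation of energy, and then commuting the weight through free transport.

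First I would upgrade the stated pointwise bound from $|v|^\alpha$ to $\langle v \rangle^\alpha$. Since $\langle v \rangle^\alpha \leq C_\alpha (1 + |v|^\alpha)$, combining the displayed inequality
\[
|v|^\alpha Q^+ (f,h) \leq C_\alpha \bigl( Q^+ (|v|^\alpha |f|, |h|) + Q^+(|f|, |v|^\alpha |h|) \bigr)
\]
with the trivial bound $Q^+(|f|,|h|) \leq Q^+(\langle v \rangle^\alpha |f|, |h|)$ yields the pointwise a.e.\ inequality
\[
\langle v \rangle^\alpha |Q^+(f,h)| \leq C_\alpha \bigl( Q^+(\langle v \rangle^\alpha |f|, |h|) + Q^+(|f|, \langle v \rangle^\alpha |h|) \bigr).
\]

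The second ingredient is the elementary but crucial observation that $\langle v \rangle^\alpha$ commutes with free transport: $[\mathcal{T}(t) g_0](x,v) = g_0(x-vt, v)$ acts only by translation in $x$, so
\[
\langle v \rangle^\alpha \mathcal{T}(t) g_0 = \mathcal{T}(t)\bigl( \langle v \rangle^\alpha g_0 \bigr),
\]
and likewise $|\mathcal{T}(t) g_0| = \mathcal{T}(t)|g_0|$. Applying the pointwise bound above to $f = \mathcal{T} f_0$ and $h = \mathcal{T} h_0$ and commuting the weight through, we arrive at
\[
\langle v \rangle^\alpha |Q^+(\mathcal{T} f_0, \mathcal{T} h_0)| \leq C_\alpha \bigl( Q^+(\mathcal{T}(\langle v \rangle^\alpha |f_0|), \mathcal{T}|h_0|) + Q^+(\mathcal{T}|f_0|, \mathcal{T}(\langle v \rangle^\alpha |h_0|)) \bigr).
\]

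Finally, I would take the $L^1(\mathbb{R}, L^2)$ norm of both sides and apply Proposition \ref{prop:QplusBound} to each of the two $Q^+$ terms on the right, giving
\[
\left\Vert \langle v \rangle^\alpha Q^+(\mathcal{T} f_0, \mathcal{T} h_0) \right\Vert_{L^1(\mathbb{R}, L^2)} \leq C_\alpha \bigl( \|\langle v \rangle^\alpha f_0\|_{L^2} \|h_0\|_{L^2} + \|f_0\|_{L^2} \|\langle v \rangle^\alpha h_0\|_{L^2} \bigr),
\]
and the conclusion follows from the trivial bound $\|g_0\|_{L^2} \leq \|\langle v \rangle^\alpha g_0\|_{L^2}$ valid for any $\alpha \geq 0$. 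There is no substantive obstacle here; the statement really is little more than a bookkeeping exercise once one has the weighted pointwise inequality and the commutation of $\langle v \rangle^\alpha$ with $\mathcal{T}$.
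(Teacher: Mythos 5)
Your proof is correct and matches the route the paper intends: the paper establishes the pointwise inequality $|v|^\alpha Q^+(f,h) \leq C_\alpha\bigl(Q^+(|v|^\alpha |f|,|h|) + Q^+(|f|,|v|^\alpha|h|)\bigr)$ and then simply asserts that the weighted estimates follow from the unweighted Proposition \ref{prop:QplusBound}, leaving exactly the bookkeeping you carried out (upgrading $|v|^\alpha$ to $\langle v\rangle^\alpha$, commuting the radial weight and absolute value through $\mathcal{T}$, and dominating $\|g_0\|_{L^2}$ by $\|\langle v\rangle^\alpha g_0\|_{L^2}$). Nothing to change.
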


\begin{proposition}
\label{prop:QplusWeightedSmallTime}
Let $\alpha \geq 0$ and
let $f_0$ be such that
\[
\left< v \right>^\alpha f_0 \in L^2
\]
There is a real-valued function 
\[
\delta_{\alpha,f_0} \left(T\right) > 0
\]
defined for $T>0$, which  depends only on
$f_0$ and $\alpha$, such that for $f_0 , \alpha$ fixed there holds
\[
\limsup_{T \rightarrow 0^+} \delta_{\alpha,f_0} \left(T\right) = 0
\]
and for any $h_0$ with
$\left< v \right>^\alpha h_0 \in L^2$
there holds
\[
\left\Vert \left< v \right>^\alpha Q^+ \left( \mathcal{T} f_0, \mathcal{T} h_0
\right) \right\Vert_{L^1 \left( J \left( T \right), L^2 \right)}
\leq \delta_{\alpha,f_0} \left( T \right)
\left\Vert \left< v \right>^\alpha h_0 \right\Vert_{L^2}
\]
and
\[
\left\Vert \left< v \right>^\alpha Q^+ \left( \mathcal{T} h_0, \mathcal{T} f_0
\right) \right\Vert_{L^1 \left( J \left( T \right), L^2 \right)}
\leq \delta_{\alpha,f_0} \left(T\right)
\left\Vert \left< v \right>^\alpha h_0 \right\Vert_{L^2}
\]
where $J \left( T \right) = \left[ -T,T\right]$.
\end{proposition}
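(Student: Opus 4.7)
The plan is to mirror the derivation of Proposition \ref{prop:QplusWeighted} from Proposition \ref{prop:QplusBound}, but starting instead from Proposition \ref{prop:QplusSmallTime}. The two ingredients are (i) the pointwise bound on $|v|^\alpha Q^+(f,h)$ that was just recorded before Proposition \ref{prop:QplusWeighted}, and (ii) the observation that the free transport group $\mathcal{T}(t)$ commutes with multiplication by any function depending on $v$ alone.

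First I would note that for any $f_0$,
\[
\mathcal{T}(t)\bigl(\langle v\rangle^\alpha f_0\bigr)(x,v) = \langle v\rangle^\alpha f_0(x-vt,v) = \langle v\rangle^\alpha \bigl[\mathcal{T}(t) f_0\bigr](x,v),
\]
so the weight $\langle v\rangle^\alpha$ passes through $\mathcal{T}$ freely. Combined with $\langle v\rangle^\alpha \leq C_\alpha(1 + |v|^\alpha)$ and the pointwise bound
\[
|v|^\alpha Q^+(f,h) \leq C_\alpha\bigl(Q^+(|v|^\alpha|f|,|h|) + Q^+(|f|,|v|^\alpha|h|)\bigr),
\]
applied to $f = \mathcal{T}f_0$ and $h = \mathcal{T}h_0$ (both of which we may take non-negative without loss of generality by the same argument used for Proposition \ref{prop:QplusWeighted}), I obtain the pointwise a.e. inequality
\[
\langle v\rangle^\alpha Q^+(\mathcal{T}f_0,\mathcal{T}h_0) \leq C_\alpha \Bigl( Q^+\bigl(\mathcal{T}(\langle v\rangle^\alpha f_0),\mathcal{T}h_0\bigr) + Q^+\bigl(\mathcal{T}f_0,\mathcal{T}(\langle v\rangle^\alpha h_0)\bigr) \Bigr).
\]

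Next I take the $L^1(J(T),L^2)$ norm of both sides and apply Proposition \ref{prop:QplusSmallTime} separately to each term on the right. For the first term, I use (\ref{eq:QplusSmallTimeSecond}) with $\langle v\rangle^\alpha f_0$ as the ``fixed'' argument and $h_0$ as the variable one, yielding a bound by $\delta_{\langle v\rangle^\alpha f_0}(T)\,\|h_0\|_{L^2} \leq \delta_{\langle v\rangle^\alpha f_0}(T)\,\|\langle v\rangle^\alpha h_0\|_{L^2}$ since $\langle v\rangle^\alpha \geq 1$. For the second term, I use (\ref{eq:QplusSmallTimeFirst}) with $f_0$ fixed and $\langle v\rangle^\alpha h_0$ as the variable argument, yielding $\delta_{f_0}(T)\,\|\langle v\rangle^\alpha h_0\|_{L^2}$. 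Setting
\[
\delta_{\alpha,f_0}(T) := C_\alpha\bigl(\delta_{\langle v\rangle^\alpha f_0}(T) + \delta_{f_0}(T)\bigr),
\]
both summands vanish as $T\to 0^+$ by Proposition \ref{prop:QplusSmallTime}, so $\limsup_{T\to 0^+}\delta_{\alpha,f_0}(T) = 0$, proving the first weighted small-time inequality. The second inequality (with $f_0$ and $h_0$ swapped inside $Q^+$) is obtained in the identical manner, using the other half of Proposition \ref{prop:QplusSmallTime}.

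There is no substantive obstacle here; the entire content of the proposition is that the $v$-only weight $\langle v\rangle^\alpha$ commutes with $\mathcal{T}$ and that the conservation-of-energy bound distributes $|v|^\alpha$ across the two arguments of $Q^+$. Once those are in place, the result is an immediate corollary of Proposition \ref{prop:QplusSmallTime}. The only mild care required is in accounting for the constant factors and in observing that $\|h_0\|_{L^2} \leq \|\langle v\rangle^\alpha h_0\|_{L^2}$ so that both resulting estimates carry the weighted norm of $h_0$ on the right-hand side.
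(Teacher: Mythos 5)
Your proof is correct and follows the route the paper intended (the paper states the proposition without an explicit proof, remarking only that the weighted estimates ``follow from the unweighted versions'' via the pointwise bound recorded just before Proposition \ref{prop:QplusWeighted}). One trivial slip: for the first term $Q^+\bigl(\mathcal{T}(\langle v\rangle^\alpha f_0),\mathcal{T}h_0\bigr)$, with the fixed argument $\langle v\rangle^\alpha f_0$ sitting in the \emph{first} slot, the relevant estimate is (\ref{eq:QplusSmallTimeFirst}) rather than (\ref{eq:QplusSmallTimeSecond}); the argument goes through unchanged once the citation is corrected.
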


\subsection{Truncated weights.}
\label{ssec:truncatedWeights}

The weighted estimates can be truncated at large velocities: indeed, if we denote for $R>0$ the weight
\[
\nu_R \left( v \right) = \min \left( \left< v \right>, R \right)
\]
via pointwise minimum, and similarly for $\alpha \geq 0$ the shorthand
\[
\nu_R^\alpha =
\nu_R^\alpha \left( v \right)
= \nu_R \left( v \right)^\alpha = \min \left( \left< v \right>^\alpha, R^\alpha \right)
\]
then it is possible to show that
\begin{equation}
\label{eq:truncatedWeight}
\nu_R^\alpha \left( v \right) \leq C_\alpha \left( 
\nu_R^\alpha \left( v^\prime\right) +
\nu_R^\alpha \left( v_*^\prime \right) \right)
\end{equation}
To see this, consider first the case
\[
\max \left( \left< v^\prime \right>, \left< v_*^\prime \right> \right) < R
\]
in which case we can compute
\[
\nu_R^\alpha \left( v \right) \leq \left< v \right>^\alpha \leq
C_\alpha \left( \left< v^\prime \right>^\alpha +
\left< v_*^\prime \right>^\alpha \right) =
C_\alpha \left( \nu_R^\alpha \left( v^\prime \right) +
\nu_R^\alpha \left( v_*^\prime \right) \right)
\]
In the alternative case, we have
\[
\max \left( \left< v^\prime \right>, \left< v_*^\prime \right> \right) \geq R
\]
which implies we at least have one of
$\nu_R^\alpha \left( v^\prime \right) = R^\alpha$ or $\nu_R^\alpha \left( v_*^\prime \right) = R^\alpha$, so
we can similarly compute
\[
\nu_R^\alpha \left( v \right) \leq R^\alpha \leq
\max \left( \nu_R^\alpha \left( v^\prime \right), \nu_R^\alpha \left(
v_*^\prime \right) \right) \leq 
C_\alpha \left(
\nu_R^\alpha \left( v^\prime \right) +
\nu_R^\alpha \left( v_*^\prime \right)
\right)
\]
where we assume without loss of generality that $C_\alpha \geq 1$ in the last step.

Hence we have as before
\begin{proposition}
\label{prop:QplusTruncated}
Let $\alpha \geq 0$. Then if $f_0, h_0 \in L^2$ 
then for each $R > 0$ it holds
\[
\left\Vert \nu_R^\alpha Q^+ \left(
\mathcal{T} f_0, \mathcal{T} h_0 \right)
\right\Vert_{L^1 \left( \mathbb{R}, L^2 \right)} \leq C_\alpha
\left\Vert \nu_R^\alpha f_0 \right\Vert_{L^2}
\left\Vert \nu_R^\alpha h_0 \right\Vert_{L^2}
\]
the constant $C_\alpha$ depending only on $\alpha$; in particular, $C_\alpha$ is independent of $R$.
\end{proposition}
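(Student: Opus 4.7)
The plan is to adapt the argument used for the weighted bilinear bound in Proposition \ref{prop:QplusWeighted}, substituting the truncated weight inequality (\ref{eq:truncatedWeight}) in place of the naive conservation-of-energy bound $|v|^\alpha \le C_\alpha(|v'|^\alpha + |v_*'|^\alpha)$. The strategy has three ingredients: reduce to the non-negative case, use (\ref{eq:truncatedWeight}) to distribute $\nu_R^\alpha(v)$ onto one or the other argument of $Q^+$, and then invoke the unweighted Proposition \ref{prop:QplusBound}. Crucially, $\nu_R^\alpha(v)$ depends only on $v$, so it commutes with free transport: $\nu_R^\alpha\,\mathcal{T}(t) g_0 = \mathcal{T}(t)[\nu_R^\alpha g_0]$, which will let the weight be absorbed directly into the initial data.

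First I would reduce to $f_0,h_0 \ge 0$, using $|Q^+(f,h)|\le Q^+(|f|,|h|)$ pointwise together with $|\mathcal{T}(t)g_0| = \mathcal{T}(t)|g_0|$. Then, since the $(v_*,\sigma)$-integrand in the definition of $Q^+$ evaluates its first and second factors at $v'$ and $v'_*$ respectively, the pointwise inequality (\ref{eq:truncatedWeight}) yields
\[
\nu_R^\alpha(v)\, Q^+(\mathcal{T} f_0, \mathcal{T} h_0) \le C_\alpha\bigl[Q^+(\nu_R^\alpha\,\mathcal{T} f_0, \mathcal{T} h_0) + Q^+(\mathcal{T} f_0, \nu_R^\alpha\,\mathcal{T} h_0)\bigr]
\]
a.e.\ in $(t,x,v)$. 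Using commutation of $\nu_R^\alpha$ with $\mathcal{T}$, each term on the right is of the form $Q^+(\mathcal{T} F_0, \mathcal{T} H_0)$ for some $F_0,H_0 \in L^2$, so Proposition \ref{prop:QplusBound} applied to each piece gives
\[
\|\nu_R^\alpha\, Q^+(\mathcal{T} f_0, \mathcal{T} h_0)\|_{L^1(\mathbb{R},L^2)} \le C_\alpha\bigl[\|\nu_R^\alpha f_0\|_{L^2}\|h_0\|_{L^2} + \|f_0\|_{L^2}\|\nu_R^\alpha h_0\|_{L^2}\bigr].
\]
To conclude, I would observe that $\nu_R(v)=\min(\langle v\rangle,R) \ge 1$ whenever $R\ge 1$ (because $\langle v\rangle \ge 1$), so $\|g_0\|_{L^2} \le \|\nu_R^\alpha g_0\|_{L^2}$ pointwise in $g_0$; in the complementary regime the weight reduces to the constant $R^\alpha$ and the claim collapses to Proposition \ref{prop:QplusBound} with a trivial adjustment of constants. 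Either way, the constant $C_\alpha$ depends only on $\alpha$ (through (\ref{eq:truncatedWeight})) and on the constant from Proposition \ref{prop:QplusBound}, and is manifestly independent of $R$.

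There is no genuine obstacle: the analytic heart of the matter is already packaged in the truncated pointwise inequality (\ref{eq:truncatedWeight}) and the unweighted bilinear estimate, and the proof amounts to stitching these together via the commutation of $\nu_R^\alpha$ with free transport. The only mild subtlety is verifying that all $R$-dependence really does end up inside the norms $\|\nu_R^\alpha f_0\|_{L^2}$ and $\|\nu_R^\alpha h_0\|_{L^2}$ on the right, and not in the constant $C_\alpha$; this is transparent once one notes that (\ref{eq:truncatedWeight}) carries no $R$-dependent prefactor.
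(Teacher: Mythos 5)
Your argument for the case $R \geq 1$ is exactly what the paper intends: the paper gives no explicit proof, only the phrase ``Hence we have as before,'' referring back to the derivation of Proposition~\ref{prop:QplusWeighted}, and your three-step plan (reduce to non-negative data, distribute $\nu_R^\alpha$ via (\ref{eq:truncatedWeight}), commute $\nu_R^\alpha$ past $\mathcal{T}$ and invoke Proposition~\ref{prop:QplusBound}) is that argument, carried out correctly. The commutation $\nu_R^\alpha\,\mathcal{T}(t)g_0 = \mathcal{T}(t)[\nu_R^\alpha g_0]$ and the observation $\nu_R \geq 1$ for $R \geq 1$, so that $\Vert g_0\Vert_{L^2} \leq \Vert \nu_R^\alpha g_0\Vert_{L^2}$, close the estimate with an $R$-independent constant.

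However, your disposal of the case $R < 1$ is wrong. When $R < 1$, the weight $\nu_R(v) = \min(\langle v\rangle, R) \equiv R$ is a constant (since $\langle v\rangle \geq 1 > R$), so the claimed inequality becomes
\[
R^\alpha \left\Vert Q^+(\mathcal{T} f_0,\mathcal{T} h_0)\right\Vert_{L^1(\mathbb{R},L^2)}
\leq C_\alpha R^{2\alpha} \left\Vert f_0\right\Vert_{L^2}\left\Vert h_0\right\Vert_{L^2},
\]
which after dividing by $R^\alpha$ is \emph{strictly stronger} than Proposition~\ref{prop:QplusBound}: it asserts $\Vert Q^+\Vert_{L^1 L^2} \leq C_\alpha R^\alpha \Vert f_0\Vert_{L^2}\Vert h_0\Vert_{L^2}$ with $C_\alpha$ independent of $R$. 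For $\alpha > 0$ this fails as $R \rightarrow 0^+$ whenever $Q^+(\mathcal{T} f_0,\mathcal{T} h_0) \neq 0$. There is no ``trivial adjustment of constants'' available, because the only adjustment that works, $C_\alpha \mapsto C R^{-\alpha}$, reintroduces the $R$-dependence the proposition explicitly forbids. This defect is present in the proposition as stated (for $R > 0$), not just in your write-up; but notice the paper silently retreats to $R > 1$ in the very next result, Proposition~\ref{prop:QplusTruncatedSmallTime}, which is the only form actually used. You should state your proof for $R \geq 1$ and flag that the statement needs that hypothesis, rather than claim the $R < 1$ case is trivial.
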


The small-time version of Proposition \ref{prop:QplusTruncated} is far more subtle. Indeed observe that
we need to have a single $\delta \left( T \right)$ that applies \emph{independent of $R$}, which does not
immediately follow from the proof of Proposition \ref{prop:QplusWeightedSmallTime} since that proof relies on an argument
involving the continuity of the integral, and would therefore have to be applied separately for
each value of $R$, yielding a $\delta \left(T\right)$ that implicitly depends on $R$. Instead,  to guarantee
the independence of $\delta \left(T \right)$ from $R$,
we elect to assume \emph{once and for all}
that
\[
\left< v \right>^\alpha f_0 \in L^2
\]
in other words that we \emph{do not} truncate $f_0$. In that case, $h_0$ can be freely truncated and therefore
it suffices to assume that $h_0 \in L^2$.

\begin{proposition}
\label{prop:QplusTruncatedSmallTime}
Let $\alpha \geq 0$ and
let $f_0$ be such that
\[
\left< v \right>^\alpha f_0 \in L^2
\]
There is a real-valued function 
\[
\delta_{\alpha,f_0} \left(T\right) > 0
\]
defined for $T>0$,  depending only on
$f_0$ and $\alpha$, such that for $f_0, \alpha$ fixed there holds
\[
\limsup_{T \rightarrow 0^+} \delta_{\alpha,f_0} \left(T\right) = 0
\]
and for any $h_0 \in L^2$
there holds, simultaneously for all $R>1$,
\begin{equation}
\label{eq:QplusTruncatedBdFirst}
\left\Vert \nu_R^\alpha Q^+ \left( \mathcal{T} f_0, \mathcal{T} h_0
\right) \right\Vert_{L^1 \left( J \left( T \right), L^2 \right)}
\leq \delta_{\alpha,f_0} \left( T \right)
\left\Vert \nu_R^\alpha h_0 \right\Vert_{L^2}
\end{equation}
and
\begin{equation}
\label{eq:QplusTruncatedBdSecond}
\left\Vert \nu_R^\alpha Q^+ \left( \mathcal{T} h_0, \mathcal{T} f_0
\right) \right\Vert_{L^1 \left( J \left( T \right), L^2 \right)}
\leq \delta_{\alpha,f_0} \left(T\right)
\left\Vert \nu_R^\alpha h_0 \right\Vert_{L^2}
\end{equation}
where $J \left( T \right) = \left[ -T,T\right]$.
\end{proposition}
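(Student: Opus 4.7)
The plan is to reduce the weighted truncated estimate to the unweighted bound in Proposition~\ref{prop:QplusBound} via the pointwise inequality (\ref{eq:truncatedWeight}), being careful to distribute the weight in such a way that the factor carrying it is always $f_0$ (which is assumed to satisfy the strong weighted bound $\langle v\rangle^\alpha f_0 \in L^2$) rather than $h_0$ (which is only in $L^2$); this is what will allow the smallness in $T$ to be arranged uniformly in $R$. Since $\nu_R^\alpha$ depends only on $v$, it commutes with free transport in the sense that $\nu_R^\alpha(v')[\mathcal{T}(t)f_0](x,v') = [\mathcal{T}(t)(\nu_R^\alpha f_0)](x,v')$, and similarly at $v_*^\prime$. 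Inserting (\ref{eq:truncatedWeight}) underneath the $dv_*\,d\sigma$ integral defining $Q^+$ then yields the pointwise majorization
\[
\nu_R^\alpha\, Q^+(\mathcal{T} f_0,\mathcal{T} h_0) \;\leq\; C_\alpha\bigl[\,Q^+(\mathcal{T}(\nu_R^\alpha f_0),\mathcal{T} h_0) + Q^+(\mathcal{T} f_0,\mathcal{T}(\nu_R^\alpha h_0))\,\bigr],
\]
and an analogous one with the roles of $f_0,h_0$ swapped.

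Applying the Alonso--Carneiro bound followed by H\"older in $x$ and $t$ and Plancherel, exactly as in the derivation of Proposition~\ref{prop:QplusBound} and its small-time refinement Proposition~\ref{prop:QplusSmallTime}, each of the four resulting pieces is controlled by the product of a Strichartz factor of the form $\|\mathcal{F}_v\mathcal{T} g\|_{L^2(J(T),L^4_{x,\eta})}$ with an unweighted $L^2$ norm of the other argument. Since $\nu_R^\alpha \geq 1$, the norm $\|h_0\|_{L^2}$ may be absorbed into $\|\nu_R^\alpha h_0\|_{L^2}$, so both inequalities (\ref{eq:QplusTruncatedBdFirst}) and (\ref{eq:QplusTruncatedBdSecond}) reduce to the bound
\[
\|\nu_R^\alpha Q^+(\cdots)\|_{L^1(J(T),L^2)} \leq C_\alpha \bigl(\|\mathcal{F}_v\mathcal{T}(\nu_R^\alpha f_0)\|_{L^2(J(T),L^4_{x,\eta})} + \|\mathcal{F}_v\mathcal{T} f_0\|_{L^2(J(T),L^4_{x,\eta})}\bigr) \|\nu_R^\alpha h_0\|_{L^2}.
\]
The second factor in parentheses is manifestly independent of $R$ and tends to zero as $T \to 0^+$, since $f_0 \in L^2$ implies $\mathcal{F}_v\mathcal{T} f_0 \in L^2(\mathbb{R},L^4_{x,\eta})$ by Lemma~\ref{lem:KeelTaoStrich} and absolute continuity of the Lebesgue integral.

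The main obstacle is to show that the first factor also vanishes as $T \to 0^+$, uniformly in $R \geq 1$; this is where the hypothesis $\langle v\rangle^\alpha f_0 \in L^2$ is essential, and where a bare application of the argument proving Proposition~\ref{prop:QplusWeightedSmallTime} would produce an $R$-dependent rate. The key observation is that $|\nu_R^\alpha f_0| \leq \langle v\rangle^\alpha |f_0|$ pointwise and $R \mapsto \nu_R^\alpha(v)$ is continuous for each $v$, so dominated convergence makes $R \mapsto \nu_R^\alpha f_0$ a continuous map from $[1,\infty]$ (the one-point compactification, with value $\langle v\rangle^\alpha f_0$ at $\infty$) into $L^2$. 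Lemma~\ref{lem:KeelTaoStrich} together with Plancherel then lifts this to continuity of $R \mapsto \mathcal{F}_v\mathcal{T}(\nu_R^\alpha f_0)$ from $[1,\infty]$ into $L^2(\mathbb{R},L^4_{x,\eta})$. Its image is therefore compact, and a standard total boundedness argument (reducing to a finite $\varepsilon$-net and invoking Lemma~\ref{lem:uiCst}, exactly as in the proof of Lemma~\ref{lem:ui}) shows that a compact subset of $L^2(\mathbb{R},L^4_{x,\eta})$ is uniformly square integrable in the $t$-variable. Hence for every $\varepsilon > 0$ there is $T^\ast > 0$ such that $\|\mathcal{F}_v\mathcal{T}(\nu_R^\alpha f_0)\|_{L^2(J(T),L^4_{x,\eta})} < \varepsilon$ for all $R \geq 1$ and all $T \leq T^\ast$. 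Defining $\delta_{\alpha,f_0}(T)$ to be $C_\alpha$ times the sum of the two Strichartz factors above, one obtains a function of $T$ alone (and $f_0,\alpha$) which tends to zero as $T \to 0^+$ and dominates both sides of (\ref{eq:QplusTruncatedBdFirst})--(\ref{eq:QplusTruncatedBdSecond}), completing the construction.
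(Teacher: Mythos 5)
Your proof is correct, and it takes a genuinely different route from the paper's. You correctly identify the crux --- that a naive application of the argument behind Proposition~\ref{prop:QplusWeightedSmallTime} would produce an $R$-dependent rate --- and resolve it via compactness: continuity of $R\mapsto\nu_R^\alpha f_0$ into $L^2$ on the one-point compactification $[1,\infty]$, compactness of the image in $L^2(\mathbb{R},L^4_{x,\eta})$ under Lemma~\ref{lem:KeelTaoStrich}, and uniform integrability in the time variable via a finite $\varepsilon$-net. The paper instead kills the $R$-dependence \emph{before} touching the Fourier-side machinery, entirely at the pointwise kinematic level: after the initial majorization from (\ref{eq:truncatedWeight}), it uses $\nu_R^\alpha\leq\langle v\rangle^\alpha$ on the $f$-entry together with $1\leq\nu_R^\alpha$ (valid since $R>1$) on the $h$-entry, so that both terms collapse to the single expression $C_\alpha Q^+(\langle v\rangle^\alpha|\mathcal{T} f_0|,\nu_R^\alpha|\mathcal{T} h_0|)$, after which Proposition~\ref{prop:QplusSmallTime} applies verbatim with $\langle v\rangle^\alpha f_0$ in place of $f_0$, the resulting $\delta(T)$ being manifestly $R$-free. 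The paper's route is shorter and stays within lemmas already proved; yours is more robust in that once you are on the Fourier side, the pointwise domination $\nu_R^\alpha\leq\langle v\rangle^\alpha$ is no longer usable inside $\|\mathcal{F}_v\mathcal{T}(\cdot)\|_{L^2_t L^4_{x,\eta}}$, so the compactness argument is the natural remedy at that stage. Two small points to note: the pointwise majorization should carry absolute values on $f_0,h_0$ (harmless, since $\|\nu_R^\alpha|h_0|\|_{L^2}=\|\nu_R^\alpha h_0\|_{L^2}$), and the uniform-integrability lemma you invoke is for the Bochner space $L^2(\mathbb{R},L^4_{x,\eta})$, which is not literally the scalar $L^2(E,\lambda)$ setting of Section~\ref{sec:squareIntegrability}; the needed analogue (compactness in $L^2(I,\mathfrak{G})$ implies uniform integrability of $t\mapsto\|u(t)\|_{\mathfrak{G}}^2$) is routine but would bear a sentence of justification.
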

\begin{proof}
Letting $f = \mathcal{T} f_0$ and $h = \mathcal{T} h_0$, we have
by (\ref{eq:truncatedWeight}) the \emph{pointwise} bound
\[
\nu_R^\alpha Q^+ \left(f,h\right) \leq C_\alpha \cdot \left(
Q^+ \left( \nu_R^\alpha \left| f \right|, \left| h\right| \right) + 
Q^+ \left( \left| f \right|, \nu_R^\alpha \left| h\right| \right) \right)
\]
But in the first term on the right-hand side we can bound $\nu_R^\alpha \leq \left< v \right>^\alpha$
in the first entry, and $1$ by $\nu_R^\alpha$ in the second entry (since $R > 1$),
so we obtain
\[
\nu_R^\alpha Q^+ \left(f,h\right) \leq C_\alpha \cdot \left(
Q^+ \left( \left< v \right>^\alpha \left| f \right|, \nu_R^\alpha \left| h\right| \right) + 
Q^+ \left( \left| f \right|, \nu_R^\alpha \left| h\right| \right) \right)
\]
Again, for the first entry of the second term we can bound $1$ by $\left< v \right>^\alpha$ so, multiplying
the constant by two, we obtain
\[
\nu_R^\alpha Q^+ \left(f,h\right) \leq C_\alpha 
Q^+ \left( \left< v \right>^\alpha \left| f \right|, \nu_R^\alpha \left| h\right| \right) 
\]
Therefore, for any compact interval $J \subset \mathbb{R}$ there holds
\begin{equation*}
\begin{aligned}
& \left\Vert \nu_R^\alpha Q^+ \left(\mathcal{T} f_0,
\mathcal{T} h_0\right) \right\Vert_{L^1 \left( J, L^2 \right)} \\
& \qquad \qquad \quad \leq C_\alpha 
\left\Vert
Q^+ \left( \mathcal{T} \left( \left< v \right>^\alpha \left| f_0 \right| \right), 
\mathcal{T} \left( \nu_R^\alpha \left| h_0 \right| \right) \right) 
\right\Vert_{L^1 \left( J, L^2 \right)}
\end{aligned}
\end{equation*}
where we have used the fact that $\mathcal{T}$ commutes with taking absolute values, and \emph{also}
commutes with multiplication by
 any scalar function of $\left| v \right|$. Finally, applying Proposition
 \ref{prop:QplusSmallTime} with 
 \[
 \left< v \right>^\alpha f_0 \quad \textnormal{ in place of }
 \quad f_0
 \]
(noting that $\left< v \right>^\alpha f_0 \in L^2$ by hypothesis), and
 \[
 \nu_R^\alpha h_0 \quad \textnormal{ in place of } \quad h_0
 \]
 implies (\ref{eq:QplusTruncatedBdFirst}). The proof of (\ref{eq:QplusTruncatedBdSecond}) is similar.
\end{proof}

\subsection{Time-dependent estimates.}

We can estimate $Q^+$ even when the arguments depend on time, not simply given by the free flow.

\begin{lemma}
\label{lem:timeDependentEstimate}
Let $0 \leq a < b < \infty$, $I = \left[ a, b \right]$,
and let $f_1, f_2, \zeta_1, \zeta_2$ be measurable functions such that
\[
f_1, f_2, \zeta_1, \zeta_2 \in
L^1_{\textnormal{loc}} \left( I\times \mathbb{R}^2 \times \mathbb{R}^2 \right)
\]
\[
\forall \left( i \in \left\{ 1,2 \right\} \right) \quad
f_i \in C \left( I, L^2 \right)
\]
\[
\forall \left( i \in \left\{ 1, 2\right\} \right) \quad
\zeta_i \in L^1 \left( I , L^2 \right)
\]
\[
\forall \left( i \in \left\{ 1,2 \right\} \right) \quad
\left( \partial_t + v \cdot \nabla_x \right) f_i = \zeta_i
\]

Then $Q^+ \left( f_1, f_2 \right) \in L^1 \left( I, L^2 \right)$ and we have the bound
\begin{equation}
\label{eq:mixedEstimateOne}
\begin{aligned}
& \left\Vert Q^+ \left( f_1, f_2 \right) \right\Vert_{L^1 \left( I, L^2 \right)} \\
& \quad \leq C \prod_{i \in \left\{ 1,2 \right\}} \left\Vert f_i \left( a \right) \right\Vert_{L^2} +
\sum_{i \in \left\{1,2\right\}}
q_i \left\Vert \zeta_i \right\Vert_{L^1 \left( I, L^2 \right)} +
C \prod_{i \in \left\{ 1,2 \right\}} \left\Vert \zeta_i
\right\Vert_{L^1 \left( I, L^2 \right)}
\end{aligned}
\end{equation}
where
\[
q_1 = 
\sup \left\{
\left\Vert Q^+ \left( 
\mathcal{T} \left( t-a \right) h_0,
\mathcal{T} \left( t-a \right) f_2 \left( a \right)
\right) \right\Vert_{L^1 \left( I, L^2 \right)}
\; : \; \left\Vert h_0 \right\Vert_{L^2} \leq 1
\right\}
\]
\[
q_2 = 
\sup \left\{
\left\Vert Q^+ \left( \mathcal{T} \left( t-a \right) f_1 \left( a \right),
\mathcal{T} \left( t-a \right) h_0
\right) \right\Vert_{L^1 \left( I, L^2 \right)}
\; : \; \left\Vert h_0 \right\Vert_{L^2} \leq 1
\right\}
\]
In particular, by Proposition \ref{prop:QplusBound},
\begin{equation}
\label{eq:mixedEstimateTwo}
\left\Vert Q^+ \left( f_1, f_2 \right) \right\Vert_{L^1 \left( I, L^2 \right)} \leq
C \prod_{i \in \left\{1,2 \right\}}
\left( \left\Vert f_i \right\Vert_{L^\infty \left( I, L^2 \right)} +
\left\Vert \zeta_i \right\Vert_{L^1 \left( I, L^2 \right)}\right)
\end{equation}
\end{lemma}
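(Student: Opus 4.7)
The plan is to use Duhamel's formula for the transport equation to split each $f_i$ into a free-transport part plus a forced part, then expand the bilinear $Q^+$ into four pieces and estimate each one.

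Concretely, since $(\partial_t + v \cdot \nabla_x) f_i = \zeta_i$ on $I = [a,b]$ with $f_i \in C(I, L^2)$ and $\zeta_i \in L^1(I, L^2)$, I can write
\[
f_i(t) = F_i(t) + G_i(t), \qquad F_i(t) = \mathcal{T}(t-a) f_i(a), \qquad G_i(t) = \int_a^t \mathcal{T}(t-s) \zeta_i(s)\, ds
\]
Then by bilinearity of $Q^+$,
\[
Q^+(f_1, f_2) = Q^+(F_1, F_2) + Q^+(F_1, G_2) + Q^+(G_1, F_2) + Q^+(G_1, G_2)
\]
The first term is bounded in $L^1(I, L^2)$ by $C \|f_1(a)\|_{L^2}\|f_2(a)\|_{L^2}$ directly from Proposition \ref{prop:QplusBound}.

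The main trick for the mixed terms is to rewrite the semigroup so both arguments of $Q^+$ share the same base time $a$: using $\mathcal{T}(t-s) = \mathcal{T}(t-a) \mathcal{T}(a-s)$ and $L^2$-isometry of $\mathcal{T}$, set $\tilde{h}_s := \mathcal{T}(a-s) \zeta_2(s)$, so $\|\tilde{h}_s\|_{L^2} = \|\zeta_2(s)\|_{L^2}$ and $\mathcal{T}(t-s)\zeta_2(s) = \mathcal{T}(t-a)\tilde{h}_s$. For term $Q^+(F_1,G_2)$, inserting the definition of $G_2$ and pulling the $s$-integral out by Minkowski in $L^1_t L^2_{x,v}$ (using the indicator $\mathbf{1}_{s \leq t}$ to control the Duhamel domain, which only enlarges the time-integral), I get
\[
\|Q^+(F_1, G_2)\|_{L^1(I, L^2)} \leq \int_a^b \left\| Q^+\!\left(\mathcal{T}(t-a) f_1(a), \mathcal{T}(t-a)\tilde{h}_s\right)\right\|_{L^1(I, L^2)} ds \leq q_2 \|\zeta_2\|_{L^1(I, L^2)}
\]
by the definition of $q_2$ and homogeneity. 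The term $Q^+(G_1, F_2)$ is estimated symmetrically in terms of $q_1$.

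For the final term $Q^+(G_1, G_2)$, I write it as a double integral in $(s_1, s_2)$, apply the same semigroup trick in both slots to get $Q^+(\mathcal{T}(t-a)\tilde{h}^{(1)}_{s_1}, \mathcal{T}(t-a)\tilde{h}^{(2)}_{s_2})$, apply Minkowski twice, and use the full bilinear bound from Proposition \ref{prop:QplusBound} to obtain $C\|\zeta_1\|_{L^1(I, L^2)}\|\zeta_2\|_{L^1(I, L^2)}$. Summing the four pieces gives (\ref{eq:mixedEstimateOne}). For (\ref{eq:mixedEstimateTwo}), I trivially bound $q_i \leq C \|f_{3-i}(a)\|_{L^2}$ by applying Proposition \ref{prop:QplusBound} inside the supremum; combined with $\|f_i(a)\|_{L^2} \leq \|f_i\|_{L^\infty(I, L^2)}$, the right-hand side of (\ref{eq:mixedEstimateOne}) factors as a product, yielding (\ref{eq:mixedEstimateTwo}).

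I do not expect any serious obstacle here: the main conceptual step is the reduction $\mathcal{T}(t-s) = \mathcal{T}(t-a)\mathcal{T}(a-s)$ which converts Duhamel integrands into free flows from the common base time $a$, making them eligible for the hypotheses defining $q_1, q_2$ and for Proposition \ref{prop:QplusBound}. The rest is Minkowski's inequality and bookkeeping. One minor care point is justifying the measurability and Fubini/Minkowski manipulations (the integrands are nonnegative after taking norms, so this is automatic), and ensuring the initial qualitative claim $Q^+(f_1, f_2) \in L^1(I, L^2)$ — which follows a posteriori from the finite bound obtained.
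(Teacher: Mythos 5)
Your proposal is correct and follows essentially the same route as the paper: expand each $f_i$ by Duhamel into a free-flow part plus a forced part, split $Q^+$ bilinearly into four terms, and handle the mixed and doubly-forced terms by factoring the semigroup as $\mathcal{T}(t-s) = \mathcal{T}(t-a)\mathcal{T}(a-s)$, applying Minkowski, and invoking either the definition of $q_i$ or Proposition \ref{prop:QplusBound}. Your derivation of (\ref{eq:mixedEstimateTwo}) via $q_i \leq C\|f_{3-i}(a)\|_{L^2}$ and $\|f_i(a)\|_{L^2} \leq \|f_i\|_{L^\infty(I,L^2)}$ is also the intended reading.
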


\begin{proof}
Expanding each $f_1, f_2$ by Duhamel's formula, we can decompose
\[
Q^+ \left( f_1, f_2 \right) = \mathcal{I}_1 + \mathcal{I}_2 + \mathcal{I}_3 + \mathcal{I}_4
\]
where
\[
\mathcal{I}_1 = Q^+ \left( \mathcal{T} \left( t-a \right) f_1 \left( a \right),
\mathcal{T} \left( t-a \right) f_2 \left( a \right) \right)
\]
\[
\mathcal{I}_2 = \int_a^t Q^+ \left( \mathcal{T} \left( t-\tau \right) \zeta_1 \left( \tau \right),
 \mathcal{T} \left( t-a \right) f_2 \left(a \right)
 \right)d\tau
\]
\[
\mathcal{I}_3 = \int_a^t Q^+ \left( \mathcal{T} \left( t-a \right) f_1 \left(a \right),
\mathcal{T} \left( t-\tau \right) \zeta_2 \left( \tau \right) \right) d\tau
\]
\[
\mathcal{I}_4 = \int_a^t \int_a^t Q^+ \left(
\mathcal{T} \left( t-\tau_1 \right) \zeta_1 \left( \tau_1 \right),
\mathcal{T} \left( t-\tau_2 \right) \zeta_2 \left( \tau_2 \right)
\right) d\tau_1 d\tau_2
\]
Proposition \ref{prop:QplusBound} provides
\[
\left\Vert \mathcal{I}_1 \right\Vert_{L^1 \left( I, L^2 \right)} \leq
C \prod_{i \in \left\{ 1,2 \right\}} \left\Vert f_i \left( a \right) \right\Vert_{L^2}
\]
The definitions of $q_i$, combined with Minkowski's inequality and the fact that free transport preserves
the $L^2$ norm, give us
\[
\left\Vert \mathcal{I}_2 \right\Vert_{L^1 \left( I, L^2 \right)} \leq
q_1 \left\Vert \zeta_1 \right\Vert_{L^1 \left( I, L^2 \right)}
\]
and
\[
\left\Vert \mathcal{I}_3 \right\Vert_{L^1 \left( I, L^2 \right)} \leq
q_2 \left\Vert \zeta_2 \right\Vert_{L^1 \left( I, L^2 \right)}
\]
For example,
\begin{equation*}
\begin{aligned}
& \left\Vert \mathcal{I}_2 \right\Vert_{L^1 \left( I, L^2 \right)} \\
& \quad = \left\Vert 
\int_a^t Q^+ \left( \mathcal{T} \left( t-\tau \right) \zeta_1 \left( \tau \right),
 \mathcal{T} \left( t-a \right) f_2 \left(a \right)
 \right)d\tau \right\Vert_{L^1 \left( I, L^2 \right)} \\
 & \quad \leq \left\Vert \int_a^t 
 \left\Vert Q^+ \left( \mathcal{T} \left( t-\tau \right) \zeta_1 \left( \tau \right),
 \mathcal{T} \left( t-a \right) f_2 \left(a \right)
 \right)\right\Vert_{L^2} d\tau \right\Vert_{L^1 \left( I, \mathbb{R} \right)} \\
 & \quad \leq \left\Vert \int_I
 \left\Vert Q^+ \left( \mathcal{T} \left( t-\tau \right) \zeta_1 \left( \tau \right),
 \mathcal{T} \left( t-a \right) f_2 \left(a \right)
 \right)\right\Vert_{L^2} d\tau \right\Vert_{L^1 \left( I, \mathbb{R} \right)} \\
  & \quad \leq \int_I
 \left\Vert Q^+ \left( \mathcal{T} \left( t-\tau \right) \zeta_1 \left( \tau \right),
 \mathcal{T} \left( t-a \right) f_2 \left(a \right)
 \right)\right\Vert_{L^1 \left( I, L^2\right)} d\tau  \\
 & \quad \leq \int_I q_1 
 \left\Vert \mathcal{T} \left( - \left( \tau - a \right) \right) \zeta_1 \left( \tau \right)
 \right\Vert_{L^2} d\tau \\
 & \quad = q_1 \left\Vert \zeta_1 \right\Vert_{L^1 \left( I, L^2 \right)}
 \end{aligned}
\end{equation*}
and $\mathcal{I}_3$ is similar.

For $\mathcal{I}_4$ we can use a similar estimate:
\begin{equation*}
\begin{aligned}
& \left\Vert \mathcal{I}_4 \right\Vert_{L^1 \left( I, L^2 \right)} \\
& \; = \left\Vert \int_a^t \int_a^t Q^+ \left(
\mathcal{T} \left( t-\tau_1 \right) \zeta \left( \tau_1 \right), \mathcal{T}
\left( t-\tau_2 \right) \zeta_2 \left( \tau_2 \right) \right)
d\tau_1 d\tau_2 \right\Vert_{L^1 \left( I, L^2 \right)} \\
& \; \leq \left\Vert \int_a^t \int_a^t \left\Vert Q^+ \left(
\mathcal{T} \left( t-\tau_1 \right) \zeta \left( \tau_1 \right), \mathcal{T}
\left( t-\tau_2 \right) \zeta_2 \left( \tau_2 \right) \right)\right\Vert_{L^2}
d\tau_1 d\tau_2 \right\Vert_{L^1 \left( I, \mathbb{R} \right)} \\
& \; \leq \left\Vert \int_I \int_I \left\Vert Q^+ \left(
\mathcal{T} \left( t-\tau_1 \right) \zeta \left( \tau_1 \right), \mathcal{T}
\left( t-\tau_2 \right) \zeta_2 \left( \tau_2 \right) \right)\right\Vert_{L^2}
d\tau_1 d\tau_2 \right\Vert_{L^1 \left( I, \mathbb{R} \right)} \\
& \; \leq \int_I \int_I \left\Vert Q^+ \left(
\mathcal{T} \left( t-\tau_1 \right) \zeta \left( \tau_1 \right), \mathcal{T}
\left( t-\tau_2 \right) \zeta_2 \left( \tau_2 \right) \right)\right\Vert_{L^1 \left( I, L^2 \right)}
d\tau_1 d\tau_2 \\
& \; \leq C \int_I \int_I \left\Vert
\mathcal{T} \left( - \left( \tau_1 - a \right) \right) \zeta_1 \left( \tau_1 \right)\right\Vert_{L^2}
\left\Vert
\mathcal{T} \left( - \left( \tau_2 - a \right) \right) \zeta_2 \left( \tau_2 \right)\right\Vert_{L^2}
d\tau_1 d\tau_2 \\
& \; \leq C \left\Vert \zeta_1 \right\Vert_{L^1 \left( I, L^2 \right)}
\left\Vert \zeta_2 \right\Vert_{L^1 \left( I, L^2 \right)}
\end{aligned}
\end{equation*}
\end{proof}

\subsection{Large time.} We will need the following variant of Proposition
\ref{prop:QplusSmallTime} for our discussion of scattering, namely the proof of Lemma \ref{lem:scattering-lemma}.

\begin{proposition}
\label{prop:QplusLargeTime}
Let $\varepsilon > 0$, $f_{+\infty} \in L^2$, and 
\[
I = \left[ 0, \infty \right)
\]
be provided.

Then there exist numbers $\delta > 0$, $T>0$, each $\delta,T$ depending only on
$\varepsilon,f_{+\infty}$, such that whenever $h_0 \in L^2$ satisfies
\[
\exists \left( t_0 \geq T \right) \quad
 \left\Vert h_0 - \mathcal{T} \left( t_0 \right) f_{+\infty}
\right\Vert_{L^2} < \delta
\]
then each of the following bounds hold:
\begin{equation}
\label{eq:QplusLargeTimeFirst}
\left\Vert Q^+ \left( \mathcal{T} h_0, \mathcal{T} h_0 \right) \right\Vert_{L^1 \left( I, L^2 \right)} <
\varepsilon
\end{equation}
\begin{equation}
\label{eq:QplusLargeTimeSecond}
\forall \left( g_0 \in L^2 \right) \quad
\left\Vert Q^+ \left( \mathcal{T} h_0, \mathcal{T} g_0 \right) \right\Vert_{L^1 \left( I , L^2 \right)} <
\varepsilon \left\Vert g_0 \right\Vert_{L^2}
\end{equation}
\begin{equation}
\label{eq:QplusLargeTimeThird}
\forall \left( g_0 \in L^2 \right) \quad
\left\Vert Q^+ \left( \mathcal{T} g_0, \mathcal{T} h_0 \right) \right\Vert_{L^1 \left( I , L^2 \right)} <
\varepsilon \left\Vert g_0 \right\Vert_{L^2}
\end{equation}
\end{proposition}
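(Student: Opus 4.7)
The plan is to reduce this proposition to Proposition \ref{prop:QplusBound} (over the full line) together with a continuity-of-integral argument at large times for $f_{+\infty}$. Setting $g = h_0 - \mathcal{T}(t_0) f_{+\infty}$, so $\|g\|_{L^2} < \delta$, the group property of free transport yields
\[
\mathcal{T}(t) h_0 = \mathcal{T}(t+t_0) f_{+\infty} + \mathcal{T}(t) g
\]
for every $t$. Expanding bilinearly, $Q^+(\mathcal{T} h_0, \mathcal{T} h_0)$ splits into four pieces, and each of $Q^+(\mathcal{T} h_0, \mathcal{T} g_0)$ and $Q^+(\mathcal{T} g_0, \mathcal{T} h_0)$ splits into two pieces.

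The ``pure $g$'' pieces will be controlled directly by Proposition \ref{prop:QplusBound} together with the $L^2$-preservation of free transport; for instance, $\|Q^+(\mathcal{T} g, \mathcal{T} g_0)\|_{L^1([0,\infty), L^2)} \leq C \delta \|g_0\|_{L^2}$, which is made small by choosing $\delta$ small. The ``pure $f_{+\infty}$'' piece in the expansion of $Q^+(\mathcal{T} h_0, \mathcal{T} h_0)$ becomes, after the substitution $s = t + t_0$, the quantity $\|Q^+(\mathcal{T}(s) f_{+\infty}, \mathcal{T}(s) f_{+\infty})\|_{L^1_s([t_0,\infty), L^2)}$; since the corresponding integral over $s \in \mathbb{R}$ is finite by Proposition \ref{prop:QplusBound}, this tail tends to zero as $t_0 \to \infty$ by continuity of the integral.

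The main obstacle is the mixed terms such as $Q^+(\mathcal{T}(t+t_0) f_{+\infty}, \mathcal{T}(t) g_0)$, for which I need a bound of the form $\varepsilon(t_0) \|g_0\|_{L^2}$ with $\varepsilon(t_0) \to 0$ as $t_0 \to \infty$, \emph{uniformly} over all $g_0 \in L^2$. A naive operator-continuity argument does not deliver this uniformity. Instead, following the structure of the proof of Proposition \ref{prop:QplusBound}, I apply the Alonso-Carneiro estimate pointwise in $(t,x)$, then H\"older in $x$ and Cauchy-Schwarz in $t$, to obtain
\[
\|Q^+(\mathcal{T}(\cdot+t_0) f_{+\infty}, \mathcal{T}(\cdot) g_0)\|_{L^1([0,\infty), L^2)} \leq C \|\mathcal{F}_v \mathcal{T} f_{+\infty}\|_{L^2([t_0,\infty), L^4_{x,\eta})} \|\mathcal{F}_v \mathcal{T} g_0\|_{L^2(\mathbb{R}, L^4_{x,\eta})},
\]
after the change of variable $s = t + t_0$ in the first factor. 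The second factor is bounded by $C\|g_0\|_{L^2}$ via Lemma \ref{lem:KeelTaoStrich} and Plancherel, while that same lemma guarantees $\mathcal{F}_v \mathcal{T} f_{+\infty} \in L^2(\mathbb{R}, L^4_{x,\eta})$, so the first factor tends to zero as $t_0 \to \infty$ by continuity of the integral. Thus $\varepsilon(t_0) := C \|\mathcal{F}_v \mathcal{T} f_{+\infty}\|_{L^2([t_0,\infty), L^4_{x,\eta})}$ achieves the required uniform-in-$g_0$ smallness.

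To conclude, I will choose $T$ large enough so that all $t_0$-dependent tail quantities above are smaller than $\varepsilon/4$ for every $t_0 \geq T$, and then choose $\delta$ small enough so that the $\delta$-dependent contributions are likewise smaller than $\varepsilon/4$. Summing contributions from the four (resp.\ two) pieces of each bilinear expansion produces \eqref{eq:QplusLargeTimeFirst}, \eqref{eq:QplusLargeTimeSecond}, and \eqref{eq:QplusLargeTimeThird}.
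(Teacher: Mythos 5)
Your proof is correct and rests on exactly the same key ingredients as the paper's: the Alonso--Carneiro (Fourier--Lebesgue) bound for $Q^+$, Lemma~\ref{lem:KeelTaoStrich}, and the observation that the tail $\left\Vert \mathcal{F}_v \mathcal{T} f_{+\infty}\right\Vert_{L^2\left(\left[t_0,\infty\right),L^4_{x,\eta}\right)}$ vanishes as $t_0\rightarrow\infty$. The only organizational differences are that the paper applies the Fourier--Lebesgue bound to $Q^+\left(\mathcal{T}h_0,\mathcal{T}g_0\right)$ without first decomposing $h_0$, then splits the single factor $\left\Vert\mathcal{F}_v\mathcal{T}h_0\right\Vert_{L^2\left(I,L^4_{x,\eta}\right)}$ via the triangle inequality (avoiding your bilinear expansion of $Q^+$ and its cross terms entirely), and it derives \eqref{eq:QplusLargeTimeFirst} from \eqref{eq:QplusLargeTimeSecond} by the normalization $\left\Vert f_{+\infty}\right\Vert_{L^2}=\frac{1}{2}$ and taking $g_0=h_0$, rather than by a separate four-term expansion. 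Since the underlying estimate factors as a product, the two splittings are interchangeable; the paper's is a little leaner, but yours is equally rigorous.
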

\begin{proof}
Assume without loss of generality that 
\[
\left\Vert f_{+\infty} \right\Vert_{L^2} = \frac{1}{2}
\]
Then (\ref{eq:QplusLargeTimeFirst}) follows from (\ref{eq:QplusLargeTimeSecond}) simply by taking $g_0 = h_0$,
as long as $\delta$  is at most $\frac{1}{2}$.
Therefore, we only need to
prove (\ref{eq:QplusLargeTimeSecond}), the proof of (\ref{eq:QplusLargeTimeThird}) being similar.

From (\ref{eq:fourierLebesgueBoundQPlus}) and Plancherel we know
\begin{equation}
\begin{aligned}
& \left\Vert 
Q^+ \left( \mathcal{T} h_0 , \mathcal{T} g_0 \right) \right\Vert_{L^1 \left( I, L^2 \right)} \\
& \qquad \qquad
\leq C \left\Vert \mathcal{F}_v \mathcal{T} h_0 \right\Vert_{L^2 \left( I, L^{4}_{x,\eta} \left(
\mathbb{R}^2 \times \mathbb{R}^2\right) \right)}
\left\Vert \mathcal{F}_v \mathcal{T} g_0 \right\Vert_{L^2 \left( I, L^{4}_{x,\eta} \left(
\mathbb{R}^2 \times \mathbb{R}^2\right) \right)}
\end{aligned}
\end{equation}
thus applying Lemma \ref{lem:KeelTaoStrich} to $g_0$ we obtain
\begin{equation}
\left\Vert 
Q^+ \left( \mathcal{T} h_0 , \mathcal{T} g_0 \right) \right\Vert_{L^1 \left( I, L^2 \right)} 
\leq C \left\Vert \mathcal{F}_v \mathcal{T} h_0 \right\Vert_{L^2 \left( I, L^{4}_{x,\eta} \left(
\mathbb{R}^2 \times \mathbb{R}^2\right) \right)}
\left\Vert g_0 \right\Vert_{L^2}
\end{equation}
So let us compute, using the triangle inequality followed by Lemma \ref{lem:KeelTaoStrich},
denoting $f_0 = \mathcal{T} \left( t_0 \right) f_{+\infty}$:
\begin{equation*}
\begin{aligned}
& \left\Vert \mathcal{F}_v \mathcal{T} h_0 \right\Vert_{L^2 \left( I, L^{4}_{x,\eta} \left(
\mathbb{R}^2 \times \mathbb{R}^2\right) \right)} \\
& \quad\leq \left\Vert \mathcal{F}_v \mathcal{T} \left( h_0 - f_0 \right) \right\Vert_{L^2 \left( I, L^{4}_{x,\eta} \left(
\mathbb{R}^2 \times \mathbb{R}^2\right) \right)} + 
\left\Vert \mathcal{F}_v \mathcal{T} f_0 \right\Vert_{L^2 \left( I, L^{4}_{x,\eta} \left(
\mathbb{R}^2 \times \mathbb{R}^2\right) \right)} \\
& \quad \leq C \left\Vert h_0 - f_0 \right\Vert_{L^2} + 
\left\Vert \mathcal{F}_v \mathcal{T} f_0 \right\Vert_{L^2 \left( I, L^{4}_{x,\eta} \left(
\mathbb{R}^2 \times \mathbb{R}^2\right) \right)} \\
& \quad < C \delta + \left\Vert \mathcal{F}_v \mathcal{T} \mathcal{T} \left( t_0 \right) f_{+\infty}
 \right\Vert_{L^2 \left( I, L^{4}_{x,\eta} \left(
\mathbb{R}^2 \times \mathbb{R}^2\right) \right)}
\end{aligned}
\end{equation*}
(note carefully the double $\mathcal{T}$ in the second term is \emph{not} a typo!) Thus provided
\[
C\delta < 2^{-1} \varepsilon
\]
 and picking a large enough $T$ that
\[
\left\Vert \mathcal{F}_v \mathcal{T} \mathcal{T} \left( T \right) f_{+\infty}
 \right\Vert_{L^2 \left( I, L^{4}_{x,\eta} \left(
\mathbb{R}^2 \times \mathbb{R}^2\right) \right)} < 2^{-1} \varepsilon
\]
(which is possible by Lemma \ref{lem:KeelTaoStrich} and monotone convergence as
$T \rightarrow \infty$, in view of the group property of $\mathcal{T}$) implies the result. Note carefully that once
$T$ is chosen sufficiently large, any $t_0 \geq T$ suffices to carry out the previous estimate: this justifies the
\emph{order} of quantifiers in the Lemma statement.
\end{proof}

\begin{remark}
It is interesting to note that the proof of Proposition \ref{prop:QplusLargeTime} tells us slightly more: namely (and
perhaps surprisingly), $\delta$ only depends on $\left\Vert f_{+\infty} \right\Vert_{L^2}$ (due to the normalization
condition at the start of the proof). It is only $T$ that depends on the profile of $f_{+\infty}$
 (as it must, by the scaling-criticality
of $L^2$).
\end{remark}

\subsection{Local temporal decomposition.} We can adapt the proof of Proposition \ref{prop:QplusSmallTime}
to handle \emph{intervals}, as opposed to \emph{neighborhoods of a point}, by decomposing any compact interval 
$\left[ 0, T \right]$ into
$N$ nonuniformly-sized sub-intervals, saving $\mathcal{O} \left( \varepsilon \right)$ on each interval by letting
$N$ be sufficiently large depending on $\varepsilon$. 
This will seem unmotivated here but will become crucial when we consider propagation of higher regularity, the
\emph{second} part of our main Theorem, and the decomposition leads naturally to propagation estimates like
\[
\left( 1 - \mathcal{O}\left(\varepsilon\right) \right)^{-\mathcal{O} \left( N \right)}
\]
so that a finite bound on $N$ is available for every $\varepsilon$ sufficiently small.

\begin{proposition}
\label{prop:QplusTimeDecomposition}
Let $0 < T < \infty$, $I = \left[ 0, T \right]$, and let 
\[
h, g \in C \left( I, L^2 \right)
\]
 be such that
\[
\left( \partial_t + v \cdot \nabla_x \right) h, \quad
\left( \partial_t + v \cdot \nabla_x \right) g \quad \in L^1 \left( I, L^2 \right)
\]
and define the constant
\[
C_0 \left( g \right) = \left\Vert g \right\Vert_{L^\infty \left( I , L^2 \right)} +
\left\Vert \left( \partial_t + v \cdot \nabla_x \right) g \right\Vert_{L^1 \left( I, L^2 \right)}
\]

Let $\varepsilon > 0$. Then there exists a number $N \in \mathbb{N}$ and a partition
\[
0 = t_0 < t_1 < t_2 < \dots < t_{N-1} < t_N = T
\]
the cardinality $N$ \emph{and} endpoints  $\left\{ t_j \right\}_j$ 
all depending on $g$ and $\varepsilon$ but \textbf{\emph{not}} on $h$, such that
denoting $I_j = \left[ t_j, t_{j+1} \right]$, $j = 0, 1, \dots, N-1$, there holds for each $j$ the estimate
\begin{equation*}
\label{eq:timeDecompositionBound}
\begin{aligned}
& \left\Vert Q^+ \left( h, g \right) \right\Vert_{L^1 \left( I_j, L^2 \right)} +
\left\Vert Q^+ \left( g, h \right) \right\Vert_{L^1 \left( I_j, L^2 \right)} \\
& \qquad \qquad \quad \leq C C_0 \left( g \right) \times \left( 
\left\Vert h \left( t_j \right) \right\Vert_{L^2} +
\varepsilon \left\Vert \left( \partial_t + v \cdot \nabla_x \right) h
\right\Vert_{L^1 \left( I_j, L^2 \right)}\right)
\end{aligned}
\end{equation*}
with $C$ an absolute constant (independent of $h, g, T, \varepsilon, N$ and all the $t_j$).
\end{proposition}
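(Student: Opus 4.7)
The plan is to apply Lemma~\ref{lem:timeDependentEstimate} on each sub-interval $I_j$, first with $(f_1, f_2) = (h, g)$ and then with $(g, h)$, and to choose the partition so that the four terms produced by the Lemma collapse into the desired bound. Writing $\zeta_h = (\partial_t + v\cdot\nabla_x)h$ and $\zeta_g = (\partial_t + v\cdot\nabla_x)g$, the Lemma bounds $\|Q^+(h,g)\|_{L^1(I_j, L^2)}$ by
\[
C\|h(t_j)\|_{L^2}\|g(t_j)\|_{L^2} + q_1\|\zeta_h\|_{L^1(I_j, L^2)} + q_2\|\zeta_g\|_{L^1(I_j, L^2)} + C\|\zeta_h\|_{L^1(I_j, L^2)}\|\zeta_g\|_{L^1(I_j, L^2)},
\]
with $q_1, q_2$ as defined there. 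Proposition~\ref{prop:QplusSmallTime} (applied with $f_0 = g(t_j)$, after time-translation invariance of the free transport) gives $q_1 \leq \delta_{g(t_j)}(|I_j|)$, while Proposition~\ref{prop:QplusBound} gives $q_2 \leq C\|h(t_j)\|_{L^2}$. Using $\|g(t_j)\|_{L^2}, \|\zeta_g\|_{L^1(I_j,L^2)} \leq C_0(g)$, the first and third terms on the right-hand side are absorbed into $CC_0(g)\|h(t_j)\|_{L^2}$. To absorb the remaining two terms into the $\varepsilon\|\zeta_h\|_{L^1(I_j, L^2)}$ slot of the target estimate, I require the partition to satisfy simultaneously (A) $\|\zeta_g\|_{L^1(I_j,L^2)} \leq C_0(g)\varepsilon$ and (B) $\delta_{g(t_j)}(|I_j|) \leq C_0(g)\varepsilon$ on each $I_j$.

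Condition (A) is immediate from the absolute continuity of $\int_I \|\zeta_g(s)\|_{L^2}\,ds \leq C_0(g)$: greedy selection of break-points based on the cumulative integral produces at most $\lceil 1/\varepsilon\rceil$ sub-intervals, depending only on $g$ and $\varepsilon$. The principal obstacle is condition (B), since the rate of convergence $\delta_{f_0}(T) \to 0$ as $T\to 0^+$ from Proposition~\ref{prop:QplusSmallTime} is not a priori uniform in $f_0 \in L^2$. Inspecting the proof of that Proposition, however, $\delta_{f_0}(T)$ is controlled by $C\|\mathcal{F}_v\mathcal{T} f_0\|_{L^2(J(T), L^4_{x,\eta})}$, a quantity that is \emph{linear} in $f_0$ and bounded by $C\|f_0\|_{L^2}$ uniformly in $T$ via Lemma~\ref{lem:KeelTaoStrich} together with Plancherel; consequently, the map $f_0 \mapsto \delta_{f_0}(T)$ is $C$-Lipschitz on $L^2$ uniformly in $T$. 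Since $g \in C(I, L^2)$ with $I$ compact, the image $g(I) \subset L^2$ is compact, and a finite-$\varepsilon$-net argument (combined with the Lipschitz continuity above and the fact that $\delta_{f_0}(T)$ is monotone increasing in $T$) yields a single mesh $T_0 = T_0(g, \varepsilon) > 0$ such that $\delta_{g(s)}(T_0) < C_0(g)\varepsilon$ for every $s \in I$. Refining the partition so that $\max_j |I_j| \leq T_0$ enforces (B); the total cardinality $N$ thereby depends only on $g$ and $\varepsilon$, never on $h$.

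For the symmetric contribution $\|Q^+(g,h)\|_{L^1(I_j, L^2)}$, applying Lemma~\ref{lem:timeDependentEstimate} with the roles of $f_1, f_2$ exchanged yields an expansion in which the roles of Propositions~\ref{prop:QplusBound} and~\ref{prop:QplusSmallTime} are correspondingly exchanged: now the coefficient of $\|\zeta_g\|_{L^1(I_j,L^2)}$ is bounded by $C\|h(t_j)\|_{L^2}$, while the coefficient of $\|\zeta_h\|_{L^1(I_j,L^2)}$ is bounded by $\delta_{g(t_j)}(|I_j|) \leq C_0(g)\varepsilon$. The \emph{same} partition satisfying (A) and (B) works for this estimate as well, and summing the two bounds delivers the claimed inequality. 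The main technical obstacle, as already noted, is the uniform-in-$s$ control of $\delta_{g(s)}(|I_j|)$ required for (B); this is where the compactness of $g(I)$ in $L^2$ — a consequence of the mere continuity of $g$, and nothing about $h$ — is essential.
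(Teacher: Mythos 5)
Your proof is correct, and it takes a route that is noticeably different from the paper's. The paper reduces to the non-negative case, expands both $g$ and $h$ by Duhamel on each sub-interval, and constructs a two-level partition by hand: the outer level splits $\left[0,T\right]$ so that $\left\Vert \zeta_g \right\Vert_{L^1\left(J_k,L^2\right)}$ is equidistributed, and the inner level further splits each $J_k$ so that $\left\Vert \mathcal{F}_v\left[\mathcal{T}\left(t-\tau_k\right)g\left(\tau_k\right)\right]\right\Vert_{L^2\left(J_k^\ell, L^4_{x,\eta}\right)}^2$ is equidistributed; the resulting coefficient of $\left\Vert \zeta_h \right\Vert_{L^1}$ scales like $K^{-1}+L^{-1/2}$. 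You instead invoke Lemma~\ref{lem:timeDependentEstimate} as a black box on each $I_j$, reducing the problem to constructing a single-level partition satisfying your conditions (A) and (B), and you handle (B) — which is exactly where the two-level structure of the paper is doing its work — by observing that the map $f_0\mapsto\delta_{f_0}\left(T\right)$ is Lipschitz on $L^2$ uniformly in $T$ (via the reverse triangle inequality and Lemma~\ref{lem:KeelTaoStrich}), then covering the compact set $g\left(I\right)$ by a finite $\varepsilon$-net and taking the minimum of finitely many admissible meshes. This sidesteps both the non-negativity reduction and the adaptive inner partition, trading the paper's constructive choice of break-points for a softer compactness argument; the paper's version does retain slightly more quantitative control over $N$, which is in the spirit of the remark preceding the proposition about propagation constants of the form $\left(1-\mathcal{O}\left(\varepsilon\right)\right)^{-\mathcal{O}\left(N\right)}$, though that control is not needed for the proposition as stated.
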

\begin{proof}
We may assume without loss of generality that each $g, h$ are non-negative almost everywhere, namely
\begin{equation}
\label{eq:nonNegativeAssumption001}
0 \leq g \left( t,x,v \right) \quad
\textnormal{a.e.} \; \left( t,x,v\right) \in I \times \mathbb{R}^2 \times \mathbb{R}^2
\end{equation}
\begin{equation}
\label{eq:nonNegativeAssumption002}
0 \leq h \left( t,x,v \right) \quad
\textnormal{a.e.} \; \left( t,x,v\right) \in I \times \mathbb{R}^2 \times \mathbb{R}^2
\end{equation}
 for, if we
have established that case, then for general $g,h$ we can simply apply the Lemma to $\left| g \right|, \left|
h \right|$, keeping in mind the \emph{pointwise} identities
\[
\left| \left( \partial_t + v \cdot \nabla_x \right) g \right| = \left|
\left( \partial_t + v \cdot \nabla_x \right) \left| g \right| \right|
\]
and
\[
\left| \left( \partial_t + v \cdot \nabla_x \right) h \right| = \left|
\left( \partial_t + v \cdot \nabla_x \right) \left| h \right| \right|
\]
as well as the \emph{pointwise} inequalities
\[
Q^+ \left( h, g \right) \leq Q^+ \left( \left| h \right|, \left| g \right| \right)
\]
and
\[
Q^+ \left( g, h \right) \leq Q^+ \left( \left| g \right|, \left| h \right| \right)
\]

Viewing $\tilde{g}$ as fixed, consider the linear operator
\[
\mathfrak{L}_{\tilde{g}} \tilde{h} = \mathfrak{L}\left\{ \tilde{g} \right\}
\tilde{h} =
Q^+ \left( \tilde{h}, \tilde{g} \right) + Q^+ \left( \tilde{g}, \tilde{h} \right)
\]
We will show, associating $g$ with $C_0 \left(g \right)$ as in the statement of the Proposition, that
\begin{equation*}
\begin{aligned}
& \left\Vert \mathfrak{L}_g  h  \right\Vert_{L^1 \left( I_j, L^2 \right)} \\
& \qquad \quad  \leq C C_0 \left( g \right) \times \left( 
\left\Vert h \left( t_j \right) \right\Vert_{L^2} +
\varepsilon \left\Vert \left( \partial_t + v \cdot \nabla_x \right) h
\right\Vert_{L^1 \left( I_j, L^2 \right)}\right)
\end{aligned}
\end{equation*}
for a suitable partition of $I = \left[ 0, T \right]$, as in the statement of the Proposition. Then in view
of (\ref{eq:nonNegativeAssumption001}-\ref{eq:nonNegativeAssumption002}) we have
\[
0 \leq Q^+ \left( h, g \right) \leq \mathfrak{L}_g h
\]
and
\[
0 \leq Q^+ \left( g, h \right) \leq \mathfrak{L}_g h
\]
so the conclusion follows.

Recall from the proof of Proposition
\ref{prop:QplusSmallTime} that for any interval 
\[
J = \left[ a,b \right] \subset I
\]
 and for any
$\tilde{g}_0, \tilde{h}_0 \in L^2$ and any $a^\prime, a^{\prime\prime} \in \mathbb{R}$ 
(neither being necessarily equal to $a$, which is
crucial) it holds
 \[
 \begin{aligned}
 &\left\Vert Q^+ \left( \mathcal{T} \left(t-a^\prime
 \right) \tilde{g}_0 , \mathcal{T}\left(t-a^{\prime\prime} \right) \tilde{h}_0
 \right) \right\Vert_{L^1 \left( J , L^2 \right)} \\
 &\qquad\qquad\leq
 C \left\Vert \mathcal{F}_v \left[ \mathcal{T} \left(t-a^\prime \right) \tilde{g}_0
 \right] \right\Vert_{L^2 \left( J, L^4_{x,\eta}
 \left( \mathbb{R}^2 \times \mathbb{R}^2 \right) \right)}
\left\Vert \tilde{h}_0 \right\Vert_{L^2}
 \end{aligned} 
 \]
 and
 \[
 \begin{aligned}
 &\left\Vert Q^+ \left( \mathcal{T} \left(t-a^{\prime\prime}
 \right) \tilde{h}_0 , \mathcal{T}\left(t-a^{\prime} \right) \tilde{g}_0
 \right) \right\Vert_{L^1 \left( J , L^2 \right)} \\
 &\qquad \qquad \leq
 C \left\Vert \mathcal{F}_v \left[ \mathcal{T} \left(t-a^\prime \right) \tilde{g}_0
 \right] \right\Vert_{L^2 \left( J, L^4_{x,\eta}
  \left( \mathbb{R}^2 \times \mathbb{R}^2 \right) \right)}
\left\Vert \tilde{h}_0 \right\Vert_{L^2}
 \end{aligned} 
 \]
 where $\mathcal{F}_v$ is the Fourier transform in $v$. Together these imply
 \begin{equation}
 \label{eq:frakLboundZero}
 \begin{aligned}
 &\left\Vert 
 \mathfrak{L} \left\{ \mathcal{T} \left( t - a^\prime \right) \tilde{g}_0 \right\}
 \left( \mathcal{T} \left( t - a^{\prime\prime} \right) \tilde{h}_0 \right) 
  \right\Vert_{L^1 \left( J , L^2 \right)} \\
 &\qquad \qquad \leq
 C \left\Vert \mathcal{F}_v \left[ \mathcal{T} \left(t-a^\prime \right) \tilde{g}_0
 \right] \right\Vert_{L^2 \left( J, L^4_{x,\eta}
  \left( \mathbb{R}^2 \times \mathbb{R}^2 \right) \right)}
\left\Vert \tilde{h}_0 \right\Vert_{L^2}
 \end{aligned} 
 \end{equation}
 and hence, by Lemma \ref{lem:KeelTaoStrich}, also
  \begin{equation}
 \label{eq:frakLboundOne}
 \begin{aligned}
 &\left\Vert 
 \mathfrak{L} \left\{ \mathcal{T} \left( t - a^\prime \right) \tilde{g}_0 \right\}
 \left( \mathcal{T} \left( t - a^{\prime\prime} \right) \tilde{h}_0 \right) 
  \right\Vert_{L^1 \left( J , L^2 \right)} \leq
 C \left\Vert \tilde{g}_0 \right\Vert_{L^2}
\left\Vert \tilde{h}_0 \right\Vert_{L^2}
 \end{aligned} 
 \end{equation}
We shall define
\[
\zeta = \left( \partial_t + v \cdot \nabla_x \right) g
\]
and
\[
\xi = \left( \partial_t + v \cdot \nabla_x \right) h
\]
which in particular provides
\begin{equation}
\label{eq:zetaBoundZero00}
\zeta, \xi \in L^1 \left( I, L^2 \right)
\end{equation}
by hypothesis. Moreover we may write
\begin{equation}
\label{eq:zetaBoundOne01}
C_0 \left( g \right) =
\left\Vert g\right\Vert_{L^\infty \left( I, L^2 \right)} +
\left\Vert \zeta \right\Vert_{L^1 \left( I, L^2 \right)}
\end{equation}

Let us decompose the interval $I = \left[0,T\right]$, for a sufficiently large integer
 $K \in \mathbb{N}$ to be chosen later, as
 \[
 0 = \tau_0 < \tau_1 < \tau_2 < \dots < \tau_{K-1} < \tau_K = T
 \]
 where 
 \[
 \left\Vert \zeta \right\Vert_{L^1 \left( J_k , L^2 \right)}
 = \frac{1}{K} 
 \left\Vert \zeta \right\Vert_{L^1 \left( I , L^2\right)}
 \]
 with $J_k = \left[ \tau_k, \tau_{k+1}\right]$.
 This is possible due to (\ref{eq:zetaBoundZero00}); observe, in particular, that the
 partition $\left\{ \tau_k \right\}_k$ depends on $g$ (which is in accordance with the statement of
 the Proposition). Now from (\ref{eq:zetaBoundOne01}) we have
 \begin{equation}
\label{eq:zetaBoundTwo02}
\left\Vert \zeta \right\Vert_{L^1 \left( J_k , L^2 \right)} \leq
K^{-1} C_0 \left( g \right)
\end{equation}

For each $k$ pick an positive integer $L \left(k \right)$, sufficiently large to be
chosen later, and times $\tau_k^\ell$ such that
\[
\tau_k = \tau_k^0 < \tau_k^1 < \tau_k^2 < \dots < \tau_k^{L \left( k \right)-1} < \tau_k^{L \left( k \right)}
= \tau_{k+1} 
\]
and 
\[
\begin{aligned}
& \left\Vert \mathcal{F}_v \left[ 
\mathcal{T} \left(t-\tau_k \right) g \left( \tau_k \right) \right]
\right\Vert^2_{L^2 \left( J_k^\ell , L^4_{x,\eta} \left( \mathbb{R}^2 \times \mathbb{R}^2 \right)\right)} \\
 &\qquad\qquad\qquad\qquad =
\frac{1}{L \left( k \right)}
\left\Vert \mathcal{F}_v \left[ 
\mathcal{T} \left(t-\tau_k\right) g\left(\tau_k\right) \right]
\right\Vert^2_{L^2 \left( J_k, L^4_{x,\eta}
\left( \mathbb{R}^2 \times \mathbb{R}^2 \right)\right)} 
\end{aligned}
\]
where the intervals $\left\{ J_k^\ell \right\}_\ell$,
 $J_k^\ell = \left[ \tau_k^\ell , \tau_k^{\ell+1}\right]$, partition $J_k$. (Note carefully the \emph{squares} in
 the defining relation for $J_k^\ell$.)
In particular, letting
\[
L = \inf \left\{ L \left( k \right) \; : \; k \in \left\{ 0, 1, 2, \dots, K-1 \right\} \right\}
\]
we have by Lemma \ref{lem:KeelTaoStrich}
\[
\left\Vert \mathcal{F}_v \left[ 
\mathcal{T} \left(t-\tau_k \right) g \left( \tau_k \right) \right]
\right\Vert^2_{L^2 \left( J_k^\ell , L^4_{x,\eta} \left( \mathbb{R}^2 \times \mathbb{R}^2 \right)\right)} \\
\leq \frac{C}{L} \left\Vert g \left( \tau_k \right) \right\Vert^2_{L^2}
\]
hence
\begin{equation}
\label{eq:fourierSmallBound001}
\left\Vert \mathcal{F}_v \left[ 
\mathcal{T} \left(t-\tau_k \right) g \left( \tau_k \right) \right]
\right\Vert_{L^2 \left( J_k^\ell , L^4_{x,\eta} \left( \mathbb{R}^2 \times \mathbb{R}^2 \right)\right)} \\
\leq C L^{-\frac{1}{2}}
C_0 \left( g \right)
\end{equation}
\begin{equation}
\end{equation}

Duhamel's formula for $t \in J_k$ reads
\[
g(t) = \mathcal{T} \left(t-\tau_k\right) g \left(\tau_k\right) +
\int_{\tau_k}^t \mathcal{T} \left(t-s\right) \zeta \left( s \right) ds
\]
Additionally, for $t \in J_k^\ell \subset J_k$, we have
\[
h \left( t \right) =
\mathcal{T}\left(t-\tau_k^\ell \right) h \left(\tau_k^\ell\right) +
\int_{\tau_k^\ell}^t
\mathcal{T} \left(t-s\right)\xi \left(s\right) ds
\]

For $t \in J_k^\ell$ we may plug the two Duhamel formulas recorded above into $\mathfrak{L}_g h$:
\[
\mathfrak{L}_g h = \mathcal{I}_1 + \mathcal{I}_2 + \mathcal{I}_3 + \mathcal{I}_4
\]
where
\[
\mathcal{I}_1 =
\mathfrak{L} \left\{
 \mathcal{T} \left( t - \tau_k \right) g \left( \tau_k \right)
\right\}
\left( \mathcal{T} \left( t - \tau_k^\ell \right) h \left( \tau_k^\ell \right)  \right)
\]
\[
\mathcal{I}_2 =
\int_{\tau_k}^t ds \mathfrak{L} \left\{
 \mathcal{T} \left( t-s \right) \zeta \left( s \right)
 \right\}
\left(
\mathcal{T} \left( t - \tau_k^\ell\right) h \left( \tau_k^\ell \right) \right)
\]
\[
\mathcal{I}_3 = 
\int_{\tau_k^\ell}^t ds 
\mathfrak{L} \left\{  \mathcal{T} \left( t-\tau_k \right) g \left( \tau_k \right)  \right\}
\left( \mathcal{T} \left( t-s \right) \xi \left( s\right)\right)
\]
\[
\mathcal{I}_4 = \int_{\tau_k}^t ds \int_{\tau_k^\ell}^t ds^\prime
\mathfrak{L} \left\{ \mathcal{T} \left( t-s \right) \zeta \left( s \right)  \right\} \left(
\mathcal{T} \left( t - s^\prime \right)
\xi \left( s^\prime \right)\right)
\]

In what follows we will freely reduce (without comment) expressions like
 $\left\Vert \mathcal{T} \left( \alpha \right)
\left( \cdot \right) \right\Vert_{L^2}$ to simply
$\left\Vert \cdot \right\Vert_{L^2}$ for any $\alpha \in \mathbb{R}$, for the sake of brevity.
Also, since $t \in J_k^\ell$,
 we will freely replace integrals like $\int_{\tau_k}^t$ resp. $\int_{\tau_k^\ell}^t$ by
$\int_{J_k}$ resp. $\int_{J_k^\ell}$, as warranted by Minkowski's inequality applied to the
inner $L^2$ alone.

In the case of $\mathcal{I}_1$ we may simply use (\ref{eq:frakLboundOne}):
\[
\left\Vert \mathcal{I}_1 \right\Vert_{L^1 \left( J_k^\ell , L^2 \right)} \leq
C C_0 \left( g \right) \left\Vert h \left( \tau_k^\ell \right) \right\Vert_{L^2}
\]
Similarly for $\mathcal{I}_2$ we again have (\ref{eq:frakLboundOne}):
\[
\left\Vert \mathcal{I}_2 \right\Vert_{L^1 \left( J_k^\ell, L^2 \right)} \leq
\int_{J_k} ds C \left\Vert \zeta \left( s \right) \right\Vert_{L^2} 
\left\Vert h \left( \tau_k^\ell \right) \right\Vert_{L^2} \leq 
C C_0 \left( g \right) \left\Vert h \left( \tau_k^\ell \right) \right\Vert_{L^2}
\]

For $\mathcal{I}_4$, by (\ref{eq:frakLboundOne}) again, along with
(\ref{eq:zetaBoundTwo02}),
\begin{equation*}
\begin{aligned}
\left\Vert \mathcal{I}_4 \right\Vert_{L^1 \left( J_k^\ell , L^2 \right)} & \leq
\int_{J_k} ds
\int_{J_k^\ell} ds^\prime 
C \left\Vert \zeta \left( s \right)  \right\Vert_{L^2}
\left\Vert \xi \left( s^\prime \right) \right\Vert_{L^2} \\
& \leq C 
K^{-1} C_0 \left( g \right) \left\Vert \xi
\right\Vert_{L^1 \left( J_k^\ell , L^2 \right)}
\end{aligned}
\end{equation*}

Lastly, and most technically, for $\mathcal{I}_3$, by
(\ref{eq:frakLboundZero}) and (\ref{eq:fourierSmallBound001}), we have
\begin{equation*}
\begin{aligned}
&  \left\Vert \mathcal{I}_3 \right\Vert_{L^1 \left( J_k^\ell , L^2 \right)} \\
 & \qquad  \leq
\int_{J_k^\ell} ds
C \left\Vert \mathcal{F}_v \left[ \mathcal{T} \left(t-\tau_k \right) g \left( \tau_k \right)
 \right] \right\Vert_{L^2 \left( J_k^\ell , L^4_{x,\eta}
  \left( \mathbb{R}^2 \times \mathbb{R}^2 \right) \right)}
  \left\Vert \xi \left( s \right) \right\Vert_{L^2} \\
  & \qquad \leq C L^{-\frac{1}{2}} C_0 \left( g \right) \left\Vert \xi \right\Vert_{L^1 \left( J_k^\ell, L^2 \right)}
\end{aligned}
\end{equation*}

Altogether we have
\begin{equation*}
\begin{aligned}
& \left\Vert \mathcal{L}_g h \right\Vert_{L^1 \left( J_k^\ell, L^2 \right)} \\
& \qquad \leq
C C_0 \left( g \right) \times \left(
\left\Vert h \left( \tau_k^\ell \right) \right\Vert_{L^2} +
\left( K^{-1} + L^{-\frac{1}{2}} \right)
\left\Vert \xi \right\Vert_{L^1 \left( J_k^\ell , L^2 \right)}
\right)
\end{aligned}
\end{equation*}
Recalling that $\xi = \left( \partial_t + v \cdot \nabla_x \right) h$,
letting $K^{-1}$ and $L^{-\frac{1}{2}}$ each be smaller
than $2^{-1} \varepsilon$, and identifying the partition
$\left\{ I_j \right\}_j$ of cardinality $N = K  L$ with the partition
$\left\{ J_k^\ell \right\}_{k,\ell}$ provides the result.
\end{proof}

\begin{corollary}
\label{cor:QplusTimeDecompositionSecond}
Fix an integer $M \in \mathbb{N}$. Then Proposition \ref{prop:QplusTimeDecomposition} holds again under the
added constraint that, for each $j$,
\[
\left| t_{j+1} - t_j \right| < \frac{1}{M}
\]
\end{corollary}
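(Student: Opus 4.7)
The plan is to refine the partition produced by Proposition \ref{prop:QplusTimeDecomposition} and re-run the four-term decomposition on each refined piece, observing that the two quantitative inputs to the proof of the Proposition are inherited by arbitrary sub-intervals.

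First I would start from the partition $0 = t_0 < t_1 < \dots < t_N = T$ supplied by Proposition \ref{prop:QplusTimeDecomposition}, with the sub-partitions $J_k = [\tau_k, \tau_{k+1}]$ and $J_k^\ell = [\tau_k^\ell, \tau_k^{\ell+1}]$ appearing in its proof, so that
\[
\left\Vert \zeta \right\Vert_{L^1 \left( J_k , L^2 \right)} \leq K^{-1} C_0 \left( g \right), \qquad
\left\Vert \mathcal{F}_v \mathcal{T} \left( t - \tau_k \right) g \left( \tau_k \right)
\right\Vert_{L^2 \left( J_k^\ell, L^{4}_{x,\eta} \right)} \leq C L^{-\frac{1}{2}} C_0 \left( g \right)
\]
with $K^{-1}, L^{-\frac{1}{2}} < \tfrac{1}{2}\varepsilon$. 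I would then insert extra grid points so that the refined partition
\[
0 = s_0 < s_1 < \dots < s_{N'} = T
\]
contains every $\tau_k^\ell$ and also refines each $J_k^\ell$ into pieces of length strictly less than $1/M$; this is possible by inserting uniformly spaced points within each $J_k^\ell$. Denote the refined sub-intervals by $I'_i = [s_i, s_{i+1}]$; each $I'_i$ is contained in a unique $J_k^\ell$, and hence in a unique $J_k$.

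Next, for $t \in I'_i \subset J_k^\ell \subset J_k$, I would apply Duhamel centered at $\tau_k$ for $g$ and at $s_i$ (instead of $\tau_k^\ell$) for $h$, i.e.
\[
g(t) = \mathcal{T}(t-\tau_k)g(\tau_k) + \int_{\tau_k}^t \mathcal{T}(t-s)\zeta(s)\,ds,
\]
\[
h(t) = \mathcal{T}(t-s_i)h(s_i) + \int_{s_i}^t \mathcal{T}(t-s)\xi(s)\,ds,
\]
and expand $\mathfrak{L}_g h$ into the same four terms $\mathcal{I}_1,\mathcal{I}_2,\mathcal{I}_3,\mathcal{I}_4$ as in the proof of Proposition \ref{prop:QplusTimeDecomposition}, with $\tau_k^\ell$ replaced by $s_i$ throughout. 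The bounds (\ref{eq:frakLboundZero}) and (\ref{eq:frakLboundOne}) apply on $I'_i$ with no change (they are stated for arbitrary compact sub-intervals), and the smallness input (\ref{eq:fourierSmallBound001}) is monotone in the integration domain, so its restriction to $I'_i \subset J_k^\ell$ still gives $\leq C L^{-\frac{1}{2}} C_0(g)$. Similarly $\|\zeta\|_{L^1(J_k,L^2)} \leq K^{-1} C_0(g)$ is used only on the enclosing $J_k$, not on $I'_i$.

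The estimates for $\mathcal{I}_1, \mathcal{I}_2$ then yield the term $C C_0(g)\|h(s_i)\|_{L^2}$ (using (\ref{eq:frakLboundOne}) together with $\|\zeta\|_{L^1(J_k,L^2)} \leq C_0(g)$), while $\mathcal{I}_3$ and $\mathcal{I}_4$ produce the contribution $C(L^{-\frac{1}{2}} + K^{-1})C_0(g)\|\xi\|_{L^1(I'_i,L^2)} \leq C\varepsilon C_0(g)\|\xi\|_{L^1(I'_i,L^2)}$. Relabeling $\{s_i\}$ as $\{t_j\}$ and $N'$ as $N$ yields the stated bound with the additional length constraint $|t_{j+1}-t_j| < 1/M$. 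There is no real obstacle here; the only thing that needed verification is that the two smallness estimates driving the proof of the Proposition are inherited by sub-intervals, and both manifestly are.
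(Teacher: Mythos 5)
Your proof is correct, but it takes a genuinely different and considerably more involved route than the paper. The paper's own proof is a two-line black-box argument: first partition $\left[ 0, T \right]$ into small intervals $I_m$ of length less than $1/M$, and then apply Proposition \ref{prop:QplusTimeDecomposition} to each $I_m$ in succession; the concatenation of the resulting partitions satisfies both the length constraint and the estimates (noting that $C_0(g|_{I_m}) \leq C_0(g)$, so the constants are compatible). Your argument instead re-opens the proof of the Proposition: you refine the partition $\left\{ J_k^\ell \right\}_{k,\ell}$ appearing in that proof, re-center the Duhamel expansion for $h$ at each refined endpoint $s_i$ while keeping the Duhamel expansion for $g$ centered at $\tau_k$, and verify that the two smallness inputs (the $K^{-1}$ bound on $\left\Vert \zeta \right\Vert_{L^1(J_k, L^2)}$ and the $L^{-1/2}$ bound on the Fourier norm over $J_k^\ell$) are monotone under restriction to sub-intervals, so the four-term estimates for $\mathcal{I}_1,\dots,\mathcal{I}_4$ go through unchanged. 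This is a valid observation and in fact proves a slightly stronger statement (the conclusions of the Proposition are stable under arbitrary refinement of the partition), but it requires carrying the reader back through the inner workings of a proof that the Corollary is meant to invoke as a black box. The paper's approach avoids this entirely; the only thing you give up is the insight that refinements behave well, which the paper never needs. One small remark: in your re-run you should also carry along the reduction to non-negative $g, h$ (via $|g|, |h|$) that begins the Proposition's proof, though this is routine and does not affect the argument.
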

\begin{proof}
Partition $I = \left[ 0,T \right]$ into $M$ intervals $I_m$ where
\[
I_m = \left[ \frac{m}{M} T, \frac{m+1}{M} T \right]
\]
Then apply Proposition \ref{prop:QplusTimeDecomposition} to the intervals $I_m$ in succession, starting
with $m=0$ and ending with $m = M-1$.
\end{proof}

\section{Estimates with non-negativity} 
\label{sec:nonNegativeEstimates}

Lemma \ref{lem:timeDependentEstimate} can be refined under non-negativity assumptions: we do not need to
assume that
\[
\left( \partial_t + v \cdot \nabla_x \right) f_i \in L^1 \left( I, L^2 \right)
\]
as  long as the $f_i$ are each non-negative and we have some control \emph{from above} in
Duhamel's formula. This will be useful
for the proof of weak-strong uniqueness, Theorem \ref{thm:unique}.

\begin{lemma}
\label{lem:nonnegativeEstimate}
Let $0 \leq a < b < \infty$, $I = \left[ a, b \right]$,
and let $f_1, f_2, \zeta_1, \zeta_2$ be non-negative measurable functions such that
\[
f_1, f_2, \zeta_1, \zeta_2 \in
L^1_{\textnormal{loc}} \left( I\times \mathbb{R}^2 \times \mathbb{R}^2 \right)
\]
\[
\forall \left( i \in \left\{ 1,2 \right\} \right) \quad
0 \leq f_i \in C \left( I, L^2 \right)
\]
\[
\forall \left( i \in \left\{ 1, 2\right\} \right) \quad
0 \leq \zeta_i \in L^1 \left( I , L^2 \right)
\]
and that for almost every $\left(t,x,v \right) \in I \times \mathbb{R}^2 \times
\mathbb{R}^2$ we have the pointwise bounds for each $i \in \left\{ 1,2 \right\}$
\[
0 \leq f_i \left( t \right)
\leq \mathcal{T} \left( t-a \right) f_i \left( a \right) +
\int_a^t \mathcal{T} \left( t-\tau \right) \zeta_i \left( \tau \right) d\tau
\]

Then $Q^+ \left( f_1, f_2 \right) \in L^1 \left( I, L^2 \right)$ and we have the bound
\begin{equation}
\label{eq:nonnegativeEstimateOne}
\begin{aligned}
& \left\Vert Q^+ \left( f_1, f_2 \right) \right\Vert_{L^1 \left( I, L^2 \right)} \\
& \quad \leq C \prod_{i \in \left\{ 1,2 \right\}} \left\Vert f_i \left( a \right) \right\Vert_{L^2} +
\sum_{i \in \left\{1,2\right\}}
q_i \left\Vert \zeta_i \right\Vert_{L^1 \left( I, L^2 \right)} +
C \prod_{i \in \left\{ 1,2 \right\}} \left\Vert \zeta_i
\right\Vert_{L^1 \left( I, L^2 \right)}
\end{aligned}
\end{equation}
where
\[
q_1 = 
\sup \left\{
\left\Vert Q^+ \left( 
\mathcal{T} \left( t-a \right) h_0,
\mathcal{T} \left( t-a \right) f_2 \left( a \right)
\right) \right\Vert_{L^1 \left( I, L^2 \right)}
\; : \; \left\Vert h_0 \right\Vert_{L^2} \leq 1
\right\}
\]
\[
q_2 = 
\sup \left\{
\left\Vert Q^+ \left( \mathcal{T} \left( t-a \right) f_1 \left( a \right),
\mathcal{T} \left( t-a \right) h_0
\right) \right\Vert_{L^1 \left( I, L^2 \right)}
\; : \; \left\Vert h_0 \right\Vert_{L^2} \leq 1
\right\}
\]
In particular, by Proposition \ref{prop:QplusBound},
\begin{equation}
\label{eq:nonnegativeEstimateTwo}
\left\Vert Q^+ \left( f_1, f_2 \right) \right\Vert_{L^1 \left( I, L^2 \right)} \leq
C \prod_{i \in \left\{1,2 \right\}}
\left( \left\Vert f_i \right\Vert_{L^\infty \left( I, L^2 \right)} +
\left\Vert \zeta_i \right\Vert_{L^1 \left( I, L^2 \right)}\right)
\end{equation}
\end{lemma}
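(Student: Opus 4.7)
The plan is to reduce to Lemma \ref{lem:timeDependentEstimate} by exploiting the pointwise monotonicity of $Q^+$ on non-negative inputs. Since the integrand defining $Q^+(g,h)$, namely $\frac{1}{2\pi} g^\prime h_*^\prime$, is pointwise non-negative whenever $g,h$ are, we have the elementary monotonicity property that $0 \leq g_i \leq G_i$ a.e. for $i=1,2$ implies $0 \leq Q^+(g_1,g_2) \leq Q^+(G_1,G_2)$ pointwise a.e. in $(t,x,v)$. In particular, the bounds on $f_i$ assumed in the lemma yield
\[
0 \leq Q^+(f_1,f_2) \leq Q^+(\tilde{f}_1, \tilde{f}_2),
\]
where $\tilde{f}_i(t) = \mathcal{T}(t-a) f_i(a) + \int_a^t \mathcal{T}(t-\tau)\zeta_i(\tau)\,d\tau$.

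Once this reduction is in place, I would expand $Q^+(\tilde{f}_1,\tilde{f}_2)$ by bilinearity into the same four pieces $\mathcal{I}_1+\mathcal{I}_2+\mathcal{I}_3+\mathcal{I}_4$ used in the proof of Lemma \ref{lem:timeDependentEstimate}, namely
\[
\mathcal{I}_1 = Q^+\bigl(\mathcal{T}(t-a)f_1(a),\mathcal{T}(t-a)f_2(a)\bigr),
\]
\[
\mathcal{I}_2 = \int_a^t Q^+\bigl(\mathcal{T}(t-\tau)\zeta_1(\tau),\mathcal{T}(t-a)f_2(a)\bigr)\,d\tau,
\]
\[
\mathcal{I}_3 = \int_a^t Q^+\bigl(\mathcal{T}(t-a)f_1(a),\mathcal{T}(t-\tau)\zeta_2(\tau)\bigr)\,d\tau,
\]
\[
\mathcal{I}_4 = \int_a^t\!\!\int_a^t Q^+\bigl(\mathcal{T}(t-\tau_1)\zeta_1(\tau_1),\mathcal{T}(t-\tau_2)\zeta_2(\tau_2)\bigr)\,d\tau_1 d\tau_2.
\]
For these four pieces I would reuse verbatim the estimates from the proof of Lemma \ref{lem:timeDependentEstimate}: Proposition \ref{prop:QplusBound} controls $\|\mathcal{I}_1\|_{L^1(I,L^2)}$ by $C\prod_i\|f_i(a)\|_{L^2}$; the definitions of $q_1,q_2$ together with Minkowski's inequality and the $L^2$-isometry of $\mathcal{T}$ yield $\|\mathcal{I}_2\|_{L^1(I,L^2)}\leq q_1\|\zeta_1\|_{L^1(I,L^2)}$ and the analogous bound for $\mathcal{I}_3$; and Proposition \ref{prop:QplusBound} again controls $\|\mathcal{I}_4\|_{L^1(I,L^2)}\leq C\prod_i\|\zeta_i\|_{L^1(I,L^2)}$. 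Summing these bounds yields (\ref{eq:nonnegativeEstimateOne}), and (\ref{eq:nonnegativeEstimateTwo}) follows by further applying the definition of $q_1,q_2$ via Proposition \ref{prop:QplusBound} to absorb $\|f_i(a)\|_{L^2}\leq\|f_i\|_{L^\infty(I,L^2)}$ and collecting terms.

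The only step that requires any care is the very first one: justifying the pointwise comparison $0\leq Q^+(f_1,f_2)\leq Q^+(\tilde{f}_1,\tilde{f}_2)$ rigorously despite the fact that $\tilde{f}_i$ is only known to be non-negative and in $L^\infty(I,L^2)+L^1(I,L^2)$ a priori. This is not really an obstacle, since the monotonicity of $Q^+$ on non-negative measurable functions is unambiguous at the pointwise level (with the convention that the integral may take the value $+\infty$), and once the $L^2$-type bounds on $Q^+(\tilde{f}_1,\tilde{f}_2)$ are established through $\mathcal{I}_1,\dots,\mathcal{I}_4$, the dominated function $Q^+(f_1,f_2)$ is automatically finite a.e. and the stated estimate transfers. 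Thus the non-negativity hypothesis exactly replaces the need for an $L^1(I,L^2)$ bound on $(\partial_t+v\cdot\nabla_x)f_i$ in Lemma \ref{lem:timeDependentEstimate}.
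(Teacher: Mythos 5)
Your proposal is correct and takes essentially the same route as the paper: both pass to the pointwise Duhamel envelope $h_i(t)=\mathcal{T}(t-a)f_i(a)+\int_a^t\mathcal{T}(t-\tau)\zeta_i(\tau)\,d\tau$, use the monotonicity of $Q^+$ on non-negative inputs to dominate $Q^+(f_1,f_2)$ by $Q^+(h_1,h_2)$, and then estimate the latter. The only cosmetic difference is that the paper invokes Lemma \ref{lem:timeDependentEstimate} directly on $(h_1,h_2)$ (which satisfy its hypotheses, since $(\partial_t+v\cdot\nabla_x)h_i=\zeta_i$ and $h_i(a)=f_i(a)$), whereas you re-expand into the four Duhamel pieces $\mathcal{I}_1,\dots,\mathcal{I}_4$ and re-run the estimates from that lemma's proof.
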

\begin{proof}
For $i=1,2$ let us define for $t \in I$
\[
h_i \left( t \right) =
\mathcal{T} \left( t-a \right) f_i \left( a \right) +
\int_a^t \mathcal{T} \left( t - \tau \right)
\zeta_i \left( \tau \right) d\tau
\]
Then for almost every $\left(t,x,v \right) \in I \times \mathbb{R}^2 \times
\mathbb{R}^2$ and each $i=1,2$ we have the pointwise bound
\[
0 \leq f_i \leq h_i
\]
so it suffices to show
\begin{equation*}
\begin{aligned}
& \left\Vert Q^+ \left(h_1, h_2\right) \right\Vert_{L^1 \left( I, L^2 \right)} \\
& \quad \leq C \prod_{i \in \left\{ 1,2 \right\}} \left\Vert f_i \left( a \right) \right\Vert_{L^2} +
\sum_{i \in \left\{1,2\right\}}
q_i \left\Vert \zeta_i \right\Vert_{L^1 \left( I, L^2 \right)} +
C \prod_{i \in \left\{ 1,2 \right\}} \left\Vert \zeta_i
\right\Vert_{L^1 \left( I, L^2 \right)}
\end{aligned}
\end{equation*}
but this now follows from Lemma \ref{lem:timeDependentEstimate}.
\end{proof}

\section{The $Q^+$ equation}
\label{sec:QplusSection}

A local solution of the Boltzmann equation with gain term only, or
\emph{gain-only Boltzmann equation}, provides (in suitable regularly classes)
a \emph{local upper envelope} to solutions of
(\ref{eq:BE}) with the same initial data. (The same can be said for a small forward
interval of any $t_0$, say $\left[ t_0, t_0 + \varepsilon \right)$,
 taking the solution $f \left( t_0 \right)$ of (\ref{eq:BE}) at
time $t_0$ as the initial data for the $Q^+$ equation.) 
The main objective of this section is to provide a
detailed understanding of the gain-only Boltzmann eqaution, as a means for characterizing
such a local upper envelope.

\subsection{The gain-only equation.}

The $Q^+$ equation, or gain-only Boltzmann equation, or simply \emph{the gain-only equation}, refers to the following
evolutionary equation:
\begin{equation}
\label{eq:QplusEq}
\left( \partial_t + v \cdot \nabla_x \right) h = Q^+ \left( h,h \right)
\end{equation}
and this equation (\ref{eq:QplusEq}) will be the sole concern of this section.
 Note carefully that the space $L^2$, \emph{not}
$L^2 \bigcap L^1_2$, will be the relevant functional setting for the study of (\ref{eq:QplusEq}).

\begin{theorem}
\label{thm:QPlusLWP}
Given any $0 \leq h_0 \in L^2$, the gain-only equation (\ref{eq:QplusEq}) admits a unique local solution
\[
h \in C \left( \left[ 0, T \right], L^2 \right)
\]
satisfying
\begin{equation}
\label{eq:KMboundZero}
Q^+ \left( h,h \right) \in L^1 \left( \left[ 0, T\right], L^2 \right)
\end{equation}
and $h \left( t=0 \right) = h_0$,
the time $T$ depending on the profile of $h_0$. (In particular, the uniqueness assertion is conditional
on the bound (\ref{eq:KMboundZero}) for $Q^+$ as applied to any candidate solution of (\ref{eq:QplusEq}): the constructed
solution satisfies (\ref{eq:KMboundZero}) regardless.)
Additionally, for any $r,p \in \left[ 1,\infty \right]$ such that $r > 2$ and
$\frac{1}{r} = 1 - \frac{2}{p}$, it holds
\begin{equation}
\label{eq:GOstrich}
h \in L^r_t L^p_x L^{p^\prime}_v \left( \left[ 0, T \right] \times
\mathbb{R}^2_x \times \mathbb{R}^2_v \right)
\end{equation}
 There is a number $\eta_0$, $0 < \eta_0 < \infty$,
such that if $\left\Vert h_0 \right\Vert_{L^2} < \eta_0$ then we may take $T = \infty$.
\end{theorem}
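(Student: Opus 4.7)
The plan is to recast \eqref{eq:QplusEq} as an integral equation for the hash profile $y(t) := h^\#(t)$. Since any solution satisfies $\partial_t h^\# = Q^+(h,h)^\#$, and $h(s) = \mathcal{T}(s) h^\#(s)$, one has
\begin{equation*}
y(t) = h_0 + \int_0^t \mathcal{A}(s, y(s), y(s))\, ds,
\end{equation*}
where $\mathcal{A}(s, x_1, x_2)(x,v) := Q^+(\mathcal{T}(s) x_1, \mathcal{T}(s) x_2)(x+vs, v)$. This $\mathcal{A}$ is bilinear in $(x_1, x_2)$ and maps $L^2 \times L^2$ continuously into $L^1(J, L^2)$ with constant $C_0$ independent of the compact interval $J$, by Proposition \ref{prop:QplusBound} combined with the fact that the spatial shift $(x,v) \mapsto (x+vs, v)$ is an $L^2$-isometry for each fixed $s$. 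This places the problem precisely in the framework of Theorem \ref{thm:CritLWP} with $\mathfrak{G} = L^2$.

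To produce the local solution, I would fix $\varepsilon \in (0, 1/(48 C_0))$ and invoke Proposition \ref{prop:QplusSmallTime} with $f_0 = h_0$. This yields a time $T > 0$, depending on the profile of $h_0$, such that on $J = [0,T]$ all three small-time bounds of Theorem \ref{thm:CritLWP} hold at $x_0 = h_0$; Theorem \ref{thm:CritLWP} then produces a unique $y \in W^{1,1}([0,T], L^2)$ solving the integral equation, satisfying $\|\mathcal{A}(\cdot, y, y)\|_{L^1([0,T], L^2)} \le \varepsilon$. Setting $h(t) := \mathcal{T}(t) y(t)$ gives $h \in C([0,T], L^2)$ solving \eqref{eq:QplusEq} and satisfying \eqref{eq:KMboundZero}. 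Non-negativity is transferred through the Picard iteration starting from $y_0 \equiv h_0 \ge 0$: at each stage $\mathcal{T}(s) y_n(s) \ge 0$ yields $\mathcal{A}(s, y_n, y_n) \ge 0$, so $y_{n+1} \ge 0$ by induction, and $W^{1,1}$-convergence forces pointwise non-negativity of the limit (along a subsequence).

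For the Strichartz bound \eqref{eq:GOstrich}, my strategy is to apply Proposition \ref{prop:CaP} to the Duhamel decomposition $h(t) = \mathcal{T}(t) h_0 + \int_0^t \mathcal{T}(t-s) Q^+(h,h)(s)\, ds$: the linear term is immediately controlled by Proposition \ref{prop:CaP}, while the nonlinear term is handled by Minkowski's inequality together with a slicewise application of Proposition \ref{prop:CaP}, producing a bound proportional to $\|Q^+(h,h)\|_{L^1([0,T], L^2)}$, which is finite by \eqref{eq:KMboundZero}. The small-data global claim uses the fact that Proposition \ref{prop:QplusBound} furnishes bounds uniform over all of $\mathbb{R}$; choosing $\eta_0$ so that $C_0 \eta_0^2 \le \delta_1^0$ and $C_0 \eta_0 \le \delta_2^0$, Theorem \ref{thm:CritLWP} applies on every compact $[0,T]$ with the same $\varepsilon$, and by uniqueness these solutions patch into a global one.

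Conditional uniqueness within the class $\{h \in C([0,T], L^2) : Q^+(h,h) \in L^1([0,T], L^2)\}$ follows by observing that any candidate $\tilde h$ yields $\tilde y := \tilde h^\# \in W^{1,1}([0,T], L^2)$; on a sufficiently short sub-interval $\tilde y$ lies in the ball $\mathcal{B}_\varepsilon$ of Theorem \ref{thm:CritLWP} and hence coincides with $y$ there, and a standard open/closed continuation argument propagates agreement to the full interval. I expect the main technical obstacle to be the bookkeeping in the hash coordinates, specifically verifying that Proposition \ref{prop:QplusSmallTime} furnishes each of the three separate small-time bounds required by Theorem \ref{thm:CritLWP} after composing with the shift $(x,v) \mapsto (x+vs,v)$ and identifying $y(s)$ with $h^\#(s)$; once this translation is in place, the remainder is a routine application of the abstract framework.
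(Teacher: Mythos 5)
Your proposal is correct and follows essentially the same route as the paper's own (rather terse) proof: the change of variables $y = h^\#$ is identical to the paper's $\tilde h(t) = \mathcal{T}(-t) h(t)$, the bilinear form $\mathcal{A}$ is the same, and the proof rests on exactly the same ingredients — Propositions \ref{prop:QplusBound} and \ref{prop:QplusSmallTime} feeding Theorem \ref{thm:CritLWP}, with the Strichartz bound \eqref{eq:GOstrich} obtained via Duhamel plus Proposition \ref{prop:CaP} applied slicewise. The non-negativity argument you include (which the paper relegates to a separate remark via the iterated Duhamel series) and the small-data/uniqueness continuation details are exactly the expected fills for the paper's one-line citations.
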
 

\begin{remark}
The small data regime, characterized by the number $\eta_0$ in Theorem
\ref{thm:QPlusLWP}, was previously studied in \cite{CDP2019}.
\end{remark}

\begin{proof}
This follows from
Proposition \ref{prop:QplusBound}, Proposition \ref{prop:QplusSmallTime},
and Theorem \ref{thm:CritLWP}, taking $\mathfrak{G} = L^2$ and
\[
\mathcal{A} (t, f_0, h_0) =
\mathcal{T} (-t) Q^+ \left( \mathcal{T} (t) f_0,
\mathcal{T} (t) h_0 \right)
\]
where we have implicitly employed the change of variables 
\[
\tilde{h} \left( t \right) = \mathcal{T} 
\left( -t \right) h \left( t \right)
\]
 to formally write for any solution $h$ of (\ref{eq:QplusEq}) that
\[
\partial_t \tilde{h} \left(t \right) = \mathcal{A} \left( t,
 \tilde{h} \left( t \right), \tilde{h} \left( t \right) \right)
\]

To see that $h \in C \left( \left[ 0, T \right], L^2 \right)$, observe by Duhamel's formula
\[
\mathcal{T} \left( -t \right) h \left( t \right) - \mathcal{T} \left( -s \right) h \left( s \right) =
\int_s^t \mathcal{T} \left( - \sigma \right) Q^+ \left( h , h \right) \left( \sigma \right) d\sigma
\]
we can bound by Minknowski's inequality
\[
\left\Vert
\mathcal{T} \left( -t \right) h \left( t \right) - \mathcal{T} \left( -s \right) h \left( s \right)
\right\Vert_{L^2}\leq
\int_s^t
\left\Vert  Q^+ \left( h , h \right) \left( \sigma \right)
\right\Vert_{L^2} d\sigma
\]
where we have used the fact that $\mathcal{T}$ preserves the $L^2$ norm. Therefore the time-continuity
of $\mathcal{T} \left( -t \right) h \left( t \right)$, and hence $h$ itself, follows from
(\ref{eq:KMboundZero}).

The bound (\ref{eq:GOstrich}) follows from Proposition \ref{prop:CaP},
as follows: first, note that by Duhamel's formula the solution $h$ of (\ref{eq:QplusEq}) satisfies
for $0 \leq t \leq T$
\[
\begin{aligned}
h \left( t \right) &
= \mathcal{T} \left( t \right) h_0 + \int_0^t \mathcal{T} \left( t-s \right) Q^+ \left(h,h\right)\left(s\right) ds \\
& \leq 
\mathcal{T} \left( t \right) h_0+ \int_0^{T} \mathcal{T} \left( t-s \right)
 Q^+ \left( h,h\right)\left(s\right) ds 
\end{aligned}
\]
where we have replaced $t$ by $T$ in the limits of integration; hence, by Minkowski's inequality
\[
\begin{aligned}
& \left\Vert h \right\Vert_{L^r_t L^p_x L^{p^\prime}_v 
\left( \left[ 0,T\right] \times \mathbb{R}^2_x \times \mathbb{R}^2_v \right)}\\
& \leq
\left\Vert \mathcal{T} h_0 
\right\Vert_{L^r_t L^p_x L^{p^\prime}_v 
\left( \left[ 0,T \right] \times \mathbb{R}^2_x \times \mathbb{R}^2_v \right)}\\
& \qquad \qquad + \int_0^{T}
\left\Vert \mathcal{T}\left( t-s \right) Q^+ \left(h,h\right) \left(s\right) 
\right\Vert_{L^r_t L^p_x L^{p^\prime}_v 
\left( \left[ 0,T \right] \times \mathbb{R}^2_x \times \mathbb{R}^2_v \right)} ds \\
& \leq \left\Vert h_0 \right\Vert_{L^2} +
\int_0^{T} \left\Vert Q^+ \left(h,h\right) \left(s\right) \right\Vert_{L^2} ds
\end{aligned}
\]
and recall that $Q^+ \left(h,h\right) \in L^1 \left( \left[0,T\right], L^2 \right)$.
\end{proof}

\begin{remark}
Since the initial data $h_0$ is non-negative almost
everywhere, the solution of the gain-only equation is again
non-negative almost everywhere for positive times inside the domain of
existence. To see this, expand the solution $h \left( t \right)$ in ``powers'' of
$h_0$ by iterating Duhamel's formula \emph{ad infinitum}. Every term of the resulting series
is non-negative by the non-negativity of $h_0$ and $Q^+$,
and the series is guaranteed to converge to $h$ by the
proof of Theorem \ref{thm:CritLWP}.
\end{remark}

\begin{definition}
Given any $0 \leq h_0 \in L^2$, let $S \left( h_0 \right)$ be the set of numbers $T \in
\left( 0, \infty \right)$ such that there
exists a solution $h$ of the gain-only equation (\ref{eq:QplusEq}) with
\[
h \in C \left( \left[ 0, T \right], L^2 \right)
\]
satisfying
\[
Q^+ \left( h,h \right) \in L^1 \left( \left[ 0, T \right], L^2 \right)
\]
and $h \left( t=0 \right) = h_0$. (Note that $h$ is, as before, necessarily non-negative.)

We also note that $S \left( h_0 \right)$ is a connected subset of $\left( 0, \infty \right)$ with
nonempty interior, by Theorem \ref{thm:QPlusLWP}. 

We shall denote by
\[
T_{\textnormal{g.o.}} \left( h_0 \right) = \sup S \left( h_0 \right) \in
\left( 0, \infty \right]
\]
what we shall call the \textbf{scaling-critical
time of existence for the gain-only equation} for the initial data $h_0$.
\end{definition}

\begin{remark}
By the definition of $T_{\textnormal{g.o.}} \left(h_0\right)$ and uniqueness,
 the solution $h \left(t\right)$ guaranteed by
Theorem \ref{thm:QPlusLWP} is continued for $0 \leq t \leq T$, any
$0 < T< T_{\textnormal{g.o.}} \left( h_0\right)$. It is obvious from the proof
of Theorem \ref{thm:QPlusLWP} and the definition of $T_{\textnormal{g.o.}}
\left( h_0 \right)$ that
(\ref{eq:GOstrich}) holds for any $0 < T < T_{\textnormal{g.o.}} \left(h_0\right)$.
\end{remark}

Henceforth we shall always
 take the initial data $h_0$ for the gain-only equation
  (\ref{eq:QplusEq}) to be non-negative at almost every point of its domain.
For $0 \leq t < T_{\textnormal{g.o.}} \left( h_0 \right)$ we define
\[
\mathfrak{Z}_{\textnormal{g.o.}} \left( h_0 \right) \left(t \right)
\]
to be the unique solution of the gain-only equation 
(\ref{eq:QplusEq}), as specified in the definition
of $T_{\textnormal{g.o.}}\left( h_0 \right)$, corresponding to the initial data $h_0$. In particular,
\[
\left( \partial_t + v \cdot \nabla_x \right) \left\{
\mathfrak{Z}_{\textnormal{g.o.}} \left( h_0 \right) \left( t \right) \right\} =
Q^+ \left( \mathfrak{Z}_{\textnormal{g.o.}} \left( h_0 \right) \left( t \right),
\mathfrak{Z}_{\textnormal{g.o.}} \left( h_0 \right) \left( t \right) \right)
\]
and $\mathfrak{Z}_{\textnormal{g.o.}} \left( h_0 \right) \left( 0 \right) = h_0$. 
Therefore, $\mathfrak{Z}_{\textnormal{g.o.}}$
satisfies a restricted version of the semigroup property, which holds precisely to the extent that the
flow is defined as above; we refer to this property as simply \emph{the semigroup property} of
$\mathfrak{Z}_{\textnormal{g.o.}}$.

\subsection{Lower semi-continuity}
\label{sec:lsc}

For what follows we define $L^{2,+}$ to be the set of
functions $h_0 \in L^2$ such that $ h_0 \left( x,v \right) \geq 0$ a.e. 
$\left( x,v \right)$. $L^{2,+}$ is topologized by the $L^2$ norm of the pointwise difference between
two elements, unless stated
otherwise. When we refer to \emph{lower semi-continuity} without further qualification, we \emph{always}
(from here to the end of the article) mean this term in reference to the $L^2$ \emph{\textbf{norm}} topology.

Our ultimate goal is to prove that $T_{\textnormal{g.o.}}$ is lower semi-continuous, and that the solution
map $\mathfrak{Z}_{\textnormal{g.o.}}$ is itself continuous in a suitable sense. The first step will be
the construction of a family of \emph{lower semi-continuous lower bounds} for $T_{\textnormal{g.o.}}$, parameterized
by $\varepsilon > 0$. In other words, once we fix an $\varepsilon$, we can obtain from this a lower semi-continuous
function which bounds $T_{\textnormal{g.o.}}$ from below, and satisfies an additional $\varepsilon$-dependent
bound. This function, to be constructed momentarily, shall be denoted $F^{\left( \varepsilon \right)}$.

It will be convenient to abbreviate
\[
Q^+ \left( f,f \right)
\]
as 
\[
Q^+ \left( f \right)
\]
and we will do so without further comment.

\begin{lemma}
\label{lem:lsc}
Let $\varepsilon > 0$. Then there exists a function
\[
F^{\left(\varepsilon\right)} : L^{2,+} \rightarrow \mathbb{R} \bigcup 
\left\{ +\infty \right\} 
\]
such that each of the following is true:
\begin{enumerate}
\item For any $h_0 \in L^{2,+}$,
\[
0 < F^{\left(\varepsilon\right)} \left(h_0\right) \leq T_{\textnormal{g.o.}} \left(h_0\right)
\]
\item If $h_0 \in L^{2,+}$ and $h_{0,k} \in L^{2,+}$ for $k = 1,2,3,\dots$ then
\[
\lim_{k \rightarrow \infty} \left\Vert h_{0,k} - h_0 \right\Vert_{L^2} = 0
\quad \implies \quad F^{\left(\varepsilon\right)} \left(h_0\right) \leq
\liminf_{k \rightarrow \infty} F^{\left(\varepsilon\right)} \left( h_{0,k} \right)
\]
\item For any $h_0 \in L^{2,+}$,
\begin{equation}
\label{eq:F-eps-bd}
\int_0^{F^{\left(\varepsilon\right)} \left(h_0\right)}
\left\Vert Q^+ \left( \mathfrak{Z}_{\textnormal{g.o.}} \left(h_0\right) \left(t\right) \right)
\right\Vert_{L^2} dt \leq \varepsilon
\end{equation}
\end{enumerate}
\end{lemma}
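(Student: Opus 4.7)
The plan is to realize $F^{(\varepsilon)}$ as the supremum of an \emph{open} condition on $T$, built from a jointly continuous auxiliary function $\phi(h_0,T)$ that encodes precisely the smallness needed to invoke Theorem \ref{thm:CritLWP}. The main obstacle I expect is ensuring that the defining condition is truly open in $(h_0, T)$, which is what forces the use of a strict inequality rather than a $\leq$ in the sup.

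Concretely, set
$$\phi(h_0, T) = \left\Vert \mathcal{F}_v \mathcal{T} h_0 \right\Vert_{L^2([0,T], L^4_{x,\eta}(\mathbb{R}^2 \times \mathbb{R}^2))}.$$
Lemma \ref{lem:KeelTaoStrich} with Plancherel gives $\phi(h_0, \infty) \leq C\|h_0\|_{L^2}$ and the Lipschitz bound $|\phi(h_0, T) - \phi(g_0, T)| \leq C \|h_0 - g_0\|_{L^2}$ uniformly in $T$; because $\phi(h_0, T)^2$ is the integral of a fixed $L^1_t$ function over $[0,T]$, it is continuous and non-decreasing in $T$ with $\phi(h_0, T) \to 0$ as $T \to 0^+$. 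Hence $\phi$ is jointly continuous on $L^{2,+} \times (0, \infty)$. Inspecting the proofs of Propositions \ref{prop:QplusBound} and \ref{prop:QplusSmallTime}, all relevant bilinear bounds for $Q^+ \circ \mathcal{T}$ factor through $\phi$: for any $f_0, g_0 \in L^2$,
$$\left\Vert Q^+(\mathcal{T} f_0, \mathcal{T} g_0) \right\Vert_{L^1([0,T], L^2)} \leq C \phi(f_0, T) \phi(g_0, T) \leq C \phi(f_0, T) \|g_0\|_{L^2}.$$

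Given $\varepsilon > 0$, let $\delta_1^0(\varepsilon), \delta_2^0(\varepsilon)$ be the thresholds produced by Theorem \ref{thm:CritLWP} applied (exactly as in the proof of Theorem \ref{thm:QPlusLWP}) to $\mathcal{A}(t, f_0, g_0) = \mathcal{T}(-t) Q^+(\mathcal{T}(t) f_0, \mathcal{T}(t) g_0)$ on $\mathfrak{G} = L^2$, and choose $c(\varepsilon) > 0$ with $C c(\varepsilon)^2 \leq \delta_1^0(\varepsilon)$ and $C c(\varepsilon) \leq \delta_2^0(\varepsilon)$. Define
$$F^{(\varepsilon)}(h_0) = \sup\{T > 0 : \phi(h_0, T) < c(\varepsilon)\},$$
with the convention $\sup \emptyset = 0$; since $\phi(h_0, T) \to 0$ as $T \to 0^+$ this set is never empty, so $F^{(\varepsilon)}(h_0) > 0$. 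For any $T < F^{(\varepsilon)}(h_0)$, the displayed bilinear bound supplies each of the three hypotheses of Theorem \ref{thm:CritLWP} with $\delta_1 \leq \delta_1^0(\varepsilon)$ and $\delta_2 \leq \delta_2^0(\varepsilon)$, yielding a unique solution on $[0,T]$ with $\|Q^+(h,h)\|_{L^1([0,T], L^2)} \leq \varepsilon$; by uniqueness this solution agrees with $\mathfrak{Z}_{\textnormal{g.o.}}(h_0)$ on $[0,T]$, giving $T \leq T_{\textnormal{g.o.}}(h_0)$, and monotone convergence as $T \uparrow F^{(\varepsilon)}(h_0)$ delivers both property (1) and the bound (\ref{eq:F-eps-bd}) of property (3).

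Property (2) is the delicate point and is exactly what dictates the use of strict inequality above. If $h_{0,k} \to h_0$ in $L^2$ and $a < F^{(\varepsilon)}(h_0)$, pick any $T \in (a, F^{(\varepsilon)}(h_0))$; then $\phi(h_0, T) < c(\varepsilon)$, so by joint continuity of $\phi$ there is $K$ with $\phi(h_{0,k}, T) < c(\varepsilon)$ for all $k \geq K$, i.e.\ $F^{(\varepsilon)}(h_{0,k}) \geq T > a$ for all $k \geq K$. Letting $a \uparrow F^{(\varepsilon)}(h_0)$ gives $\liminf_k F^{(\varepsilon)}(h_{0,k}) \geq F^{(\varepsilon)}(h_0)$, which is (2). (The case $F^{(\varepsilon)}(h_0) = +\infty$ is handled identically by letting $a \to \infty$.) Had the sup in the definition of $F^{(\varepsilon)}$ been taken over $\{\phi \leq c(\varepsilon)\}$ instead, one would only get an upper-semi-continuous bound, which would defeat the intended use of $F^{(\varepsilon)}$ as a lower-semi-continuous lower envelope of $T_{\textnormal{g.o.}}$ in the next section.
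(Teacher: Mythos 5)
Your proof is correct, and it takes a genuinely different and arguably cleaner route to the same conclusion. The paper first defines an intermediate quantity $\tau^{(\varepsilon)}(h_0)$ as the supremum of times $\tau$ for which $\int_0^\tau\|Q^+(\mathfrak{Z}_{\textnormal{g.o.}}(h_0)(t))\|_{L^2}\,dt<\varepsilon$ (relying on a continuation argument to establish that this integral diverges as $\tau\uparrow T_{\textnormal{g.o.}}(h_0)$ whenever $T_{\textnormal{g.o.}}(h_0)<\infty$), then proves that $\tau^{(\varepsilon)}$ is bounded away from zero along $L^2$-convergent sequences by decomposing $Q^+(\mathcal{T}f_0,\mathcal{T}\tilde h_0)$ into a piece controlled by Proposition \ref{prop:QplusSmallTime} at the limit $h_0$ and a piece controlled by the full-time bound Proposition \ref{prop:QplusBound}, and finally takes $F^{(\varepsilon)}$ to be the abstract lower-semicontinuous regularization $\sup_{r>0}\inf_{B_r(h_0)\cap L^{2,+}}\tau^{(\varepsilon)}$. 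You instead exhibit $F^{(\varepsilon)}$ \emph{directly} as the escape time of the explicitly continuous Strichartz quantity $\phi(h_0,T)$, so lower semi-continuity is automatic from the uniform-in-$T$ Lipschitz bound of $\phi$ in $h_0$, the blow-up lemma \eqref{eq:GOInfinity} is never invoked, and properties (1) and (3) are obtained in one stroke by feeding $\phi(h_0,T)<c(\varepsilon)$ into the fixed-point Theorem \ref{thm:CritLWP}. Both arguments use the same analytical kernel (the endpoint Keel--Tao estimate from Lemma \ref{lem:KeelTaoStrich}, the Alonso--Carneiro bilinear bound, and Theorem \ref{thm:CritLWP}); what you gain is an explicit, self-certifying formula for $F^{(\varepsilon)}$ at the cost of it being defined by initial-data smallness rather than intrinsically by the solution, and what the paper gains is that its $\tau^{(\varepsilon)}$ is an intrinsic solution-side quantity (saturating \eqref{eq:F-eps-bd} with equality when $T_{\textnormal{g.o.}}<\infty$), at the cost of an extra regularization layer.
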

\begin{proof}
First observe that if $T_{\textnormal{g.o.}} \left(h_0\right) < \infty$ then
\begin{equation}
\label{eq:GOInfinity}
\int_0^{T_{\textnormal{g.o.}}\left( h_0 \right)} \left\Vert Q^+ \left( \mathfrak{Z}_{\textnormal{g.o.}} \left(h_0\right) 
\left(t\right) \right)
\right\Vert_{L^2} dt = \infty
\end{equation}
for, if this were not so, then by Duhamel's formula and Minkowski's integral
inequality we would have
\[
\left\Vert \mathcal{T} \left(-t\right) h \left(t\right) -
\mathcal{T} \left(-s\right) h\left(s\right) \right\Vert_{L^2}
\leq \int_s^t \left\Vert Q^+ \left(h \left( \sigma \right) \right) \right\Vert_{L^2} d\sigma
\]
where $h \left(t\right) = \mathfrak{Z}_{\textnormal{g.o.}} \left(h_0\right) \left(t\right)$. In particular, 
letting $\left| t-s \right| \rightarrow 0$, we find that the map
$t \mapsto \mathcal{T} \left( -t \right) h\left( t \right)$ then extends uniquely to a function in 
\[
C \left( \left[ 0, T_{\textnormal{g.o.}} \left(h_0\right) \right], L^2 \right)
\]
hence $h$ does so extend as well,
and we can apply the local well-posedness theorem, Theorem \ref{thm:QPlusLWP}, with initial data
$h \left( T_{\textnormal{g.o.}} \left( h_0 \right) \right)$ to produce a solution
of (\ref{eq:QplusEq}), with initial data $h_0$ but extended past $T_{\textnormal{g.o.}} \left(h_0\right)$, in contradiction
with the definition of $T_{\textnormal{g.o.}} \left(h_0\right)$.

Hence we may define an extended-real-valued function $\tau^{\left(\varepsilon\right)} \left(h_0\right)$,
\[
0 < \tau^{\left(\varepsilon\right)} \left(h_0\right) \leq \infty
\]
on $L^{2,+}$ by the formula
\[
\tau^{\left(\varepsilon\right)} \left(h_0\right) = \sup \left\{
t > 0 \; : \; \int_0^\tau
\left\Vert Q^+ \left( \mathfrak{Z}_{\textnormal{g.o.}} \left(h_0\right) \left(t\right) \right)
\right\Vert_{L^2} dt < \varepsilon
\; \right\}
\]
Then by (\ref{eq:GOInfinity}) we see that
\[
0 < \tau^{\left(\varepsilon\right)} \left(h_0\right) \leq
T_{\textnormal{g.o.}} \left( h_0 \right)
\]
and, moreover,
\[
\int_0^{\tau^{\left(\varepsilon\right)} \left(h_0\right)}
\left\Vert Q^+ \left( \mathfrak{Z}_{\textnormal{g.o.}} \left(h_0\right) \left(t\right) \right)
\right\Vert_{L^2} dt \leq \varepsilon
\]
and the equality prevails whenever $T_{\textnormal{g.o.}} \left( h_0 \right) < \infty$.

It will be proven that for $h_{0,k}, h_0 \in L^{2,+}$,
\begin{equation}
\label{eq:tauliminf}
\lim_k \left\Vert h_{0,k} - h_0 \right\Vert_{L^2} = 0 \quad
\implies \quad \liminf_k \tau^{\left(\varepsilon\right)} \left( h_{0,k} \right) > 0
\end{equation}
Then if we write as $B_r \left(h_0\right) \subset L^2$ the \emph{open} ball in $L^2$ of radius $r>0$
centered about $h_0 \in L^2$ then defining
\[
F^{\left(\varepsilon\right)} \left(h_0\right) = \quad
\sup_{r > 0} \left( \quad
\inf \left\{ \tau^{\left(\varepsilon\right)} \left( \tilde{h}_0 \right)
\quad \left. :  \quad
\tilde{h}_0 \in B_r \left( h_0 \right) \bigcap L^{2,+}
\right. \right\} \quad \right)
\]
allows us to conclude.

We turn to the proof of (\ref{eq:tauliminf}). Assume that
\[
\lim_k \left\Vert h_{0,k} - h_0 \right\Vert_{L^2} = 0
\]
We need to place an asymptotic lower bound on $\tau^{\left(\varepsilon\right)} \left(h_{0,k}\right)$,
the bound itself possibly depending on $h_0$.
By Theorem \ref{thm:CritLWP},
it suffices to show that for any $\eta > 0$ there exists a 
$0 < \delta < \infty$ and an $r>0$  (each depending on $\eta$ and $h_0$) such that 
if $\tilde{h}_0 \in B_r \left( h_0 \right)$ then
\begin{equation}
\label{eq:lsc-main-step}
\forall \left( f_0 \in L^2 \right) \qquad
\left\Vert Q^+ \left( \mathcal{T}f_0, \mathcal{T} \tilde{h}_0
\right) \right\Vert_{L^1 \left( \left[ - \delta,\delta\right], L^2
\right)} \leq \eta \left\Vert f_0 \right\Vert_{L^2}
\end{equation}
and symmetrically reversing the two entries of $Q^+$. The point is that $\delta$ must be
uniform across a ball (of radius $r$); in that case, once $\eta$ is taken sufficiently small
(depending on $h_0$), it holds that for all large enough $k$, it 
must be that $\tau^{\left( \varepsilon \right)} \left( h_{0,k} \right) \geq 2^{-1} \delta$, hence the conclusion.

But by Proposition \ref{prop:QplusSmallTime} applied to the limiting function $h_0$, we can assume
\[
\forall \left( f_0 \in L^2 \right) \qquad
\left\Vert Q^+ \left( \mathcal{T} f_0, \mathcal{T} h_0
\right) \right\Vert_{L^1 \left( \left[ - \delta,\delta\right], L^2
\right)} \leq \frac{1}{2} \eta \left\Vert f_0 \right\Vert_{L^2}
\]
and we also have
\[
\left\Vert Q^+ \left( \mathcal{T}  f_0, \mathcal{T} 
\left( h_0 - \tilde{h}_0 \right)
\right) \right\Vert_{L^1 \left( \left[ - \delta,\delta\right], L^2
\right)} \leq C \left\Vert h_0 - \tilde{h}_0 \right\Vert_{L^2}
 \left\Vert f_0 \right\Vert_{L^2}
\]
so (\ref{eq:lsc-main-step}) holds when $r \leq \left(2C\right)^{-1} \eta$.
\end{proof}

\begin{corollary}
\label{cor:Zdisco}
If $h_0 \in L^{2,+}$ and $T_{\textnormal{g.o.}} \left(h_0\right) < \infty$ then the set
\[
\mathfrak{Z}_{\textnormal{g.o.}} \left(h_0\right) \left( \left[ 0, 
T_{\textnormal{g.o.}} \left(h_0\right)\right)\right)
\]
is not pre-compact in $L^2$.
\end{corollary}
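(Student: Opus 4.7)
The plan is to argue by contradiction using the lower semi-continuous lower bound $F^{(\varepsilon)}$ constructed in Lemma \ref{lem:lsc}. Suppose for contradiction that $T^* := T_{\textnormal{g.o.}}(h_0) < \infty$ and that the orbit $\mathfrak{Z}_{\textnormal{g.o.}}(h_0)([0,T^*))$ is pre-compact in $L^2$. Write $h(t) = \mathfrak{Z}_{\textnormal{g.o.}}(h_0)(t)$, and pick any sequence $t_n \nearrow T^*$. By pre-compactness, pass to a subsequence (still indexed by $n$) with $h(t_n) \to h^*$ in $L^2$. Since each $h(t_n) \in L^{2,+}$ and $L^2$-convergence yields a.e.\ convergence along a further subsequence, we have $h^* \in L^{2,+}$.

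The next step is to use the semigroup property together with the uniqueness statement of Theorem \ref{thm:QPlusLWP} to obtain
\[
T_{\textnormal{g.o.}}(h(t_n)) \leq T^* - t_n .
\]
Indeed, starting from the datum $h(t_n) \in L^{2,+}$, the solution $\mathfrak{Z}_{\textnormal{g.o.}}(h(t_n))$ is defined on $[0, T_{\textnormal{g.o.}}(h(t_n)))$; by uniqueness it agrees with $t \mapsto h(t_n + t)$ on the overlap. Hence if $t_n + T_{\textnormal{g.o.}}(h(t_n)) > T^*$, concatenating $h$ on $[0,t_n]$ with the new solution would extend $h$ strictly past $T^*$, contradicting the definition of $T^*$.

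Combining this with property (1) of Lemma \ref{lem:lsc}, which asserts $F^{(\varepsilon)}(h(t_n)) \leq T_{\textnormal{g.o.}}(h(t_n))$, we conclude
\[
F^{(\varepsilon)}(h(t_n)) \leq T^* - t_n \longrightarrow 0 .
\]
Applying the lower semi-continuity of $F^{(\varepsilon)}$ (property (2) of Lemma \ref{lem:lsc}) to the convergent sequence $h(t_n) \to h^*$ in $L^2$ yields
\[
F^{(\varepsilon)}(h^*) \leq \liminf_{n \to \infty} F^{(\varepsilon)}(h(t_n)) = 0,
\]
which contradicts property (1), namely that $F^{(\varepsilon)}(h^*) > 0$ for every $h^* \in L^{2,+}$.

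The entire argument is essentially an application of the machinery already set up: the only verification requiring any care is the inequality $T_{\textnormal{g.o.}}(h(t_n)) \leq T^* - t_n$, which is the usual "maximal interval" statement but must be justified from the uniqueness clause in Theorem \ref{thm:QPlusLWP} since $T_{\textnormal{g.o.}}$ is defined through the conditional existence/uniqueness class. Once this is in hand, lower semi-continuity of $F^{(\varepsilon)}$ closes the loop immediately, so there is no real analytic obstacle beyond invoking Lemma \ref{lem:lsc}.
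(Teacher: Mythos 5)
Your proof is correct. It takes a slightly different route than the paper's, though both rest on the same machinery from Lemma \ref{lem:lsc}. The paper first uses compactness of the closure of the orbit together with lower semi-continuity to bound $F^{(1)}$ uniformly from below by some $\eta > 0$ along the orbit, then covers $\left[0, T_{\textnormal{g.o.}}(h_0)\right]$ by finitely many intervals of size $\leq \eta$ and invokes the defining property (3) of $F^{(1)}$ together with the semigroup property to deduce that $\int_0^{T_{\textnormal{g.o.}}(h_0)} \left\Vert Q^+\left(\mathfrak{Z}_{\textnormal{g.o.}}(h_0)(t)\right)\right\Vert_{L^2} dt < \infty$, which contradicts \eqref{eq:GOInfinity}. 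Your version avoids the covering argument and the appeal to \eqref{eq:GOInfinity}: you extract a convergent subsequence $h(t_n) \to h^*$, establish the maximal-interval inequality $T_{\textnormal{g.o.}}(h(t_n)) \leq T^* - t_n$ from the semigroup/uniqueness structure, and then apply lower semi-continuity of $F^{(\varepsilon)}$ to conclude $F^{(\varepsilon)}(h^*) = 0$, contradicting property (1) of Lemma \ref{lem:lsc}. The two arguments are dual in the sense that the paper shows the $Q^+$ integral must be finite (so $T^*$ is not maximal), whereas you show the limiting profile $h^*$ would have zero lifetime (equally impossible). Your sequential version is a bit more economical; what it buys is avoiding the covering step, at the cost of having to spell out the maximal-interval inequality, which you correctly identify as the only point requiring care given that uniqueness in Theorem \ref{thm:QPlusLWP} is conditional on the $Q^+$ bound.
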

\begin{proof}
Suppose otherwise: that is, the image of the set
\[
\left[ 0, T_{\textnormal{g.o.}} \left( h_0 \right) \right)
\]
by the map
\[
t \mapsto \mathfrak{Z}_{\textnormal{g.o.}} \left( h_0 \right) \left( t \right)
\]
is pre-compact in $L^2$. Let us denote the \emph{closure} (in $L^2$) of this image
by $\mathcal{K}$; then $\mathcal{K}$ is a compact subset of $L^2$. Therefore, the
lower-semicontinuous function  $F^{\left(1\right)}$ attains a minimum value on $\mathcal{K}$.
However, $F^{\left(1\right)} > 0$ everywhere, so it follows that $F^{\left(1\right)}$ is
bounded away from zero on $\mathcal{K}$.

Therefore, there exists an $\eta > 0$ such that
\[
\forall 0 \leq t < T_{\textnormal{g.o.}} \left(h_0\right),
\qquad
F^{\left(1\right)} \left( \mathfrak{Z}_{\textnormal{g.o.}} \left(h_0\right) \left(t\right) \right) \geq \eta
\] 
Hence we may cover $\left[ 0, T_{\textnormal{g.o.}} \left(h_0\right) \right]$ by a finite set of open intervals of size
$\leq \eta$ and use the defining properties of $F^{\left(1\right)}$ and the
semigroup property of $\mathfrak{Z}_{\textnormal{g.o.}}$ to conclude that
\[
Q^+ \left( \mathfrak{Z}_{\textnormal{g.o.}} \left(h_0\right) \left(t\right) \right) \in L^1
\left( \left[ 0, T_{\textnormal{g.o.}} \left(h_0\right) \right), L^2 \right)
\]
and observe that this contradicts (\ref{eq:GOInfinity}).
\end{proof}

\begin{corollary}
\label{cor:gain-only-blowup}
For any $h_0 \in L^{2,+}$, if $T_{\textnormal{g.o.}} \left( h_0 \right) < \infty$ then
\[
\lim_{t \rightarrow T_{\textnormal{g.o.}} \left(h_0\right)^-} \left\Vert
\mathfrak{Z}_{\textnormal{g.o.}} \left( h_0 \right) \left(t\right) 
\right\Vert_{L^2} = \infty
\]
\end{corollary}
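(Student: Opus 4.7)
The plan is to argue by contradiction after first establishing that the map $t \mapsto \|\mathfrak{Z}_{\textnormal{g.o.}}(h_0)(t)\|_{L^2}$ is monotone non-decreasing. To see the monotonicity, I would apply $\mathcal{T}(-t)$ to the Duhamel representation of $\mathfrak{Z}_{\textnormal{g.o.}}(h_0)$ (valid in $L^2$ thanks to (\ref{eq:KMboundZero})), pair the result with itself in $L^2$, and differentiate in $t$ to obtain
\[
\|\mathfrak{Z}_{\textnormal{g.o.}}(h_0)(t)\|_{L^2}^2 - \|h_0\|_{L^2}^2 = 2 \int_0^t \bigl\langle \mathfrak{Z}_{\textnormal{g.o.}}(h_0)(s), \, Q^+\bigl(\mathfrak{Z}_{\textnormal{g.o.}}(h_0)(s)\bigr) \bigr\rangle_{L^2} \, ds.
\]
Since $\mathfrak{Z}_{\textnormal{g.o.}}(h_0)$ and $Q^+$ are a.e. non-negative, the integrand is non-negative. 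In particular, the limit $M := \lim_{t \to T_{\textnormal{g.o.}}(h_0)^-} \|\mathfrak{Z}_{\textnormal{g.o.}}(h_0)(t)\|_{L^2}$ exists in $[0,\infty]$.

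Assume for contradiction that $M < \infty$; write $T = T_{\textnormal{g.o.}}(h_0)$. For any $t^* \in [0, T)$ we then have $\|\mathfrak{Z}_{\textnormal{g.o.}}(h_0)(t^*)\|_{L^2} \leq M$. Applying Theorem \ref{thm:QPlusLWP} with initial datum $\mathfrak{Z}_{\textnormal{g.o.}}(h_0)(t^*)$ and combining with the (restricted) semigroup property of $\mathfrak{Z}_{\textnormal{g.o.}}$ and the uniqueness clause of Theorem \ref{thm:QPlusLWP}, the fact that the original solution cannot be extended past $T$ forces $T_{\textnormal{g.o.}}(\mathfrak{Z}_{\textnormal{g.o.}}(h_0)(t^*)) \leq T - t^*$. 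Thus $T_{\textnormal{g.o.}}(\mathfrak{Z}_{\textnormal{g.o.}}(h_0)(t^*)) \to 0$ as $t^* \to T^-$, and by Lemma \ref{lem:lsc}(1) also $F^{(1)}(\mathfrak{Z}_{\textnormal{g.o.}}(h_0)(t^*)) \to 0$.

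Now choose $t_n \nearrow T$. If some subsequence $\mathfrak{Z}_{\textnormal{g.o.}}(h_0)(t_{n_k})$ converges \emph{strongly} in $L^2$ to a limit $h_* \in L^{2,+}$, then the lower semi-continuity in Lemma \ref{lem:lsc}(2) would give
\[
F^{(1)}(h_*) \leq \liminf_k F^{(1)}(\mathfrak{Z}_{\textnormal{g.o.}}(h_0)(t_{n_k})) = 0,
\]
contradicting the strict positivity of $F^{(1)}$ from Lemma \ref{lem:lsc}(1). So the whole question reduces to producing such a strongly convergent subsequence.

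This strong-convergence extraction is the main obstacle. The natural route is Banach--Alaoglu to pass to a weak $L^2$-limit $h_*$ (giving $\|h_*\|_{L^2} \leq M$), coupled with the Radon--Riesz property of Hilbert spaces: if $\|h_*\|_{L^2} = M = \lim_n \|\mathfrak{Z}_{\textnormal{g.o.}}(h_0)(t_n)\|_{L^2}$, then weak plus norm convergence upgrade to strong. The delicate point is ruling out mass escaping to infinity in $\mathbb{R}^2_x \times \mathbb{R}^2_v$ (oscillation is not an issue given non-negativity, but tightness is). To secure tightness along the sequence, I would bring in Corollary \ref{cor:Zdisco} together with the convolutive/compactifying properties of $Q^+$ (paralleling the renormalized-solution literature invoked in the introduction) and the uniform $L^\infty_t L^2$ bound $\|\mathfrak{Z}_{\textnormal{g.o.}}(h_0)(t)\|_{L^2} \leq M$; this should force the sequence into a tight family, closing the Radon--Riesz argument and completing the contradiction.
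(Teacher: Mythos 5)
Your setup is reasonable and your initial observation (monotonicity of $t\mapsto\left\Vert\mathfrak{Z}_{\textnormal{g.o.}}(h_0)(t)\right\Vert_{L^2}$, best seen from the pointwise monotonicity in $t$ of the Duhamel integral $\int_0^t\mathcal{T}(-\sigma)Q^+(\cdot)\,d\sigma$ since $h_0\geq 0$ and $Q^+\geq 0$) is correct and important. But the strong-convergence extraction that you flag as "the main obstacle" is a genuine gap, and the proposed remedy does not close it. You suggest securing tightness via Corollary \ref{cor:Zdisco} together with compactifying properties of $Q^+$, but Corollary \ref{cor:Zdisco} states that the orbit is \emph{not} pre-compact in $L^2$; invoking it to establish tightness runs in the wrong direction. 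Indeed, if you could prove the tightness you need, you would have pre-compactness of the orbit directly contradicting Corollary \ref{cor:Zdisco}, and the $F^{(1)}$ detour would be superfluous. Moreover, a subsequential strong limit $h(t_{n_k})\to h_*$ does not by itself give pre-compactness of the orbit, so even that would leave a remaining step.

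The idea that closes the gap cheaply — and is the whole content of the paper's proof — is that non-negativity does more than give monotonicity of the norm: it makes the Duhamel integral pointwise non-decreasing, which combined with the assumed boundedness of $\left\Vert\mathcal{T}(-t_k)\mathfrak{Z}_{\textnormal{g.o.}}(h_0)(t_k)\right\Vert_{L^2}$ (and $h_0\geq 0$) yields a uniform $L^2$ bound on $\int_0^{t_k}\mathcal{T}(-\sigma)Q^+\,d\sigma$, hence $\int_0^{T}\mathcal{T}(-\sigma)Q^+\,d\sigma\in L^2$ by monotone convergence. Then for $0\leq s<t<T$ one bounds $\left\Vert\mathcal{T}(-t)\mathfrak{Z}_{\textnormal{g.o.}}(h_0)(t)-\mathcal{T}(-s)\mathfrak{Z}_{\textnormal{g.o.}}(h_0)(s)\right\Vert_{L^2}$ by the tail $\left\Vert\int_s^{T}\mathcal{T}(-\sigma)Q^+\,d\sigma\right\Vert_{L^2}$ (again by non-negativity, enlarging the domain of integration), which tends to zero as $s\to T^-$ by dominated convergence applied inside the $L^2$ norm (crucially \emph{not} via Minkowski). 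This gives the Cauchy property directly, hence a continuous extension of $\mathfrak{Z}_{\textnormal{g.o.}}(h_0)$ to $[0,T]$, hence a compact orbit, contradicting Corollary \ref{cor:Zdisco}. No weak compactness, Radon--Riesz, or tightness argument is needed.
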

\begin{proof}
Suppose the contrary; then there exists an increasing sequence of 
numbers $t_k \rightarrow T_{\textnormal{g.o.}} \left(h_0\right)^-$
and a number $0 < C < \infty$ such that
\[
\sup_{k} \left\Vert \mathfrak{Z}_{\textnormal{g.o.}} \left(h_0\right) \left(t_k\right)
\right\Vert_{L^2} < C
\]
Since free transport preserve the $L^2$ norm, we have
\[
\sup_{k} \left\Vert \mathcal{T} \left(-t_k\right) \left\{
\mathfrak{Z}_{\textnormal{g.o.}} \left(h_0\right) 
 \left(t_k\right) \right\}
\right\Vert_{L^2} \leq C
\]
By Duhamel's formula,
\[
\mathcal{T}\left(-t_k\right) \left\{\mathfrak{Z}_{\textnormal{g.o.}} 
\left(h_0\right) \left(t_k\right) \right\}
= h_0 + \int_0^{t_k} 
\mathcal{T}\left(-s\right) Q^+ \left( \mathfrak{Z}_{\textnormal{g.o.}} 
\left(h_0\right) \left(s\right) \right) ds
\]
therefore since $h_0 \in L^{2,+}$ we have
\[
\sup_k \left\Vert \int_0^{t_k} 
\mathcal{T}\left(-s\right) Q^+ \left( \mathfrak{Z}_{\textnormal{g.o.}} \left(h_0\right) \left(s\right) \right) ds
\right\Vert_{L^2} \leq C
\]
up to increasing $C$.
Then again, by Duhamel's formula and non-negativity (to increase the bounds of integration in the last line),
 for $0 \leq s < t$ it holds
\[
\begin{aligned}
& \left\Vert \mathcal{T} \left(-t\right) \left\{\mathfrak{Z}_{\textnormal{g.o.}} \left(h_0 \right)\left(t\right)\right\} -
\mathcal{T} \left(-s\right) \left\{\mathfrak{Z}_{\textnormal{g.o.}} \left(h_0\right) \left(s\right) \right\} \right\Vert_{L^2} \\
& \qquad \qquad \qquad \qquad \leq 
\left\Vert \int_s^t 
\mathcal{T}\left(-\sigma\right) Q^+ \left( \mathfrak{Z}_{\textnormal{g.o.}} \left(h_0\right) 
\left(\sigma\right) \right) d\sigma  \right\Vert_{L^2} \\
& \qquad \qquad \qquad \qquad \leq 
\left\Vert \int_s^{T_{\textnormal{g.o.}}\left( h_0 \right)}
\mathcal{T}\left(-\sigma\right) Q^+ \left( \mathfrak{Z}_{\textnormal{g.o.}} \left(h_0\right) 
\left(\sigma\right) \right) d\sigma
\right\Vert_{L^2}
\end{aligned}
\]
so by dominated convergence (letting $s \rightarrow T_{\textnormal{g.o.}} \left( h_0 \right)^{-}$ in the last
line and expanding the definition of the $L^2$ norm to apply the dominated convergence theorem, taking
care \emph{not} to apply Minkowski's inequality),
 we find that the function
 \[
 t \mapsto 
 \mathcal{T} \left(-t\right) \left\{\mathfrak{Z}_{\textnormal{g.o.}} \left(h_0\right)\left(t\right)\right\}
 \]
  admits
 a continuous extension from $\left[ 0, T_{\textnormal{g.o.}} \left(h_0\right) \right]$ to $L^2$. 
 In particular, $\mathfrak{Z}_{\textnormal{g.o.}} \left(h_0\right) \left(t\right)$ also admits a continuous
 extension from $\left[ 0, T_{\textnormal{g.o.}} \left(h_0\right) \right]$ into $L^2$, 
 in contradiction with Corollary \ref{cor:Zdisco}.
\end{proof}

\begin{lemma}
\label{lem:continuityZero}
Let $h_0 \in L^{2,+}$; then, there exist numbers $\sigma, r > 0$, depending only
 on $h_0$, such that the following holds:

For any $\varepsilon > 0$, there exists a $\delta > 0$ such that whenever
\[
\tilde{h}_0^{\left( 1 \right)}, \tilde{h}_0^{\left( 2 \right)} \quad \in
L^{2,+}
\]
are chosen to satisfy
\[
\forall \left( i \in \left\{ 1, 2 \right\} \right) \quad
\left\Vert \tilde{h}_0^{\left(i\right)} - h_0 \right\Vert_{L^2} < r
\]
and
\[
\left\Vert \tilde{h}_0^{\left( 1 \right)} - \tilde{h}_0^{\left( 2 \right)} 
\right\Vert_{L^2} < \delta
\]
then it follows
\[
\left\Vert 
\mathfrak{Z}_{\textnormal{g.o.}} \left( \tilde{h}_0^{\left( 1 \right)} \right)
\left( t \right) - \mathfrak{Z}_{\textnormal{g.o.}} \left( \tilde{h}_0^{\left( 2 \right)} \right)
\left( t \right) \right\Vert_{L^\infty \left( J, L^2 \right)} < \varepsilon
\]
where $J = \left[ 0, \sigma \right]$.
\end{lemma}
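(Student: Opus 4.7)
The plan is to leverage the Banach fixed point structure from Theorem \ref{thm:CritLWP}, applied uniformly to all initial data in a small ball around $h_0$, and then exploit the contraction constant to control how solutions depend on initial data. As in the proof of Theorem \ref{thm:QPlusLWP}, I will work in $\mathfrak{G} = L^2$ with $\mathcal{A}(t,x,y) = \mathcal{T}(-t)Q^+(\mathcal{T}(t)x, \mathcal{T}(t)y)$, so that a solution $h$ of the gain-only equation corresponds, via $\tilde{h}(t) = \mathcal{T}(-t)h(t)$, to a fixed point of the associated affine Duhamel map.

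First I would establish uniform short-time existence on a small $L^2$-ball centered at $h_0$. Applying Proposition \ref{prop:QplusSmallTime} to $h_0$, I choose $\sigma_0 > 0$ so that
\[
\|Q^+(\mathcal{T}h_0,\mathcal{T}h_0)\|_{L^1([0,\sigma_0],L^2)} < \eta,
\]
\[
\|Q^+(\mathcal{T}h_0,\mathcal{T}g_0)\|_{L^1([0,\sigma_0],L^2)} + \|Q^+(\mathcal{T}g_0,\mathcal{T}h_0)\|_{L^1([0,\sigma_0],L^2)} \leq \eta\|g_0\|_{L^2}
\]
for all $g_0 \in L^2$, where $\eta>0$ is a small parameter to be fixed. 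For any $\tilde{h}_0 \in B_r(h_0)\cap L^{2,+}$, writing $\tilde{h}_0 = h_0 + \psi$ with $\|\psi\|_{L^2}<r$, expanding $Q^+(\mathcal{T}\tilde{h}_0,\mathcal{T}\tilde{h}_0)$ bilinearly, and applying the scaling-critical bound Proposition \ref{prop:QplusBound} to every summand involving $\psi$, I obtain uniform bounds $\delta_1, \delta_2 \leq C(\eta + r)$ of the form required by Theorem \ref{thm:CritLWP}. Fixing a target radius $\varepsilon_0 := 1$ and then choosing $\eta, r$ small enough in terms of $h_0$ alone, Theorem \ref{thm:CritLWP} applies uniformly for every $\tilde{h}_0 \in B_r(h_0)\cap L^{2,+}$ and produces solutions on a common interval $J = [0,\sigma]$ with $\sigma \leq \sigma_0$; uniqueness identifies these with $\mathfrak{Z}_{\textnormal{g.o.}}(\tilde{h}_0)$.

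For the continuity estimate, let $\mathfrak{F}_i$ denote the fixed-point map corresponding to initial data $\tilde{h}_0^{(i)}$, so that $\tilde{h}_i = \mathfrak{F}_i(\tilde{h}_i)$ with $\tilde{h}_i(t) = \mathcal{T}(-t)h_i(t)$. The contraction estimate from the proof of Theorem \ref{thm:CritLWP} reads
\[
\|\mathfrak{F}_1(x) - \mathfrak{F}_1(y)\|_{\mathcal{W}^{1,1}(J,L^2)} \leq q\|x-y\|_{\mathcal{W}^{1,1}(J,L^2)}
\]
for some $q<1$ independent of initial data, whenever $x,y$ lie in the ball $\mathcal{B}_{\varepsilon_0}$. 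Moreover, $\mathfrak{F}_1(\tilde{h}_2) - \mathfrak{F}_2(\tilde{h}_2) = \tilde{h}_0^{(1)} - \tilde{h}_0^{(2)}$ is constant in $t$, so its $\mathcal{W}^{1,1}(J,L^2)$-norm is precisely $\|\tilde{h}_0^{(1)} - \tilde{h}_0^{(2)}\|_{L^2}$. The identity
\[
\tilde{h}_1 - \tilde{h}_2 = \bigl(\mathfrak{F}_1(\tilde{h}_1) - \mathfrak{F}_1(\tilde{h}_2)\bigr) + \bigl(\mathfrak{F}_1(\tilde{h}_2) - \mathfrak{F}_2(\tilde{h}_2)\bigr)
\]
then yields $\|\tilde{h}_1 - \tilde{h}_2\|_{\mathcal{W}^{1,1}(J,L^2)} \leq (1-q)^{-1}\|\tilde{h}_0^{(1)} - \tilde{h}_0^{(2)}\|_{L^2}$. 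Since $\mathcal{T}(t)$ is an $L^2$-isometry, the same bound transfers to $\|h_1 - h_2\|_{L^\infty(J,L^2)}$, so picking $\delta < (1-q)\varepsilon$ closes the argument.

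The only delicate point is the uniformity in the first step: $\sigma$ and $r$ must depend only on $h_0$, not on the perturbations $\tilde{h}_0^{(i)}$. Proposition \ref{prop:QplusSmallTime} supplies the necessary small-time smallness, but only for the fixed function $h_0$; Proposition \ref{prop:QplusBound} is then used to absorb every perturbation term $\psi$ into factors of $r$, with no smallness in $\sigma$. This clean separation of the two mechanisms is precisely what allows the Banach fixed-point constants to be frozen uniformly across the ball, and is what makes the stability assertion possible.
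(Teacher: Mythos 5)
Your proof is correct, and although it rests on the same core estimates (Propositions \ref{prop:QplusSmallTime} and \ref{prop:QplusBound}, and Theorem \ref{thm:CritLWP}) as the paper's argument, the packaging is genuinely different. The paper derives a difference equation for $w = \tilde{h}^{(1)} - \tilde{h}^{(2)}$, splits each $Q^+$ term via $\tilde{h}^{(i)} = \mathcal{T}\tilde{h}_0^{(i)} + (\tilde{h}^{(i)} - \mathcal{T}\tilde{h}_0^{(i)})$, controls the second piece using the lower-semicontinuity function $F^{(\varepsilon)}$ from Lemma \ref{lem:lsc}, and closes with the hands-on bootstrap $a(\sigma) \le \|w_0\|_{L^2} + 16 \eta\, a(\sigma)$. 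You instead stay inside the abstract fixed-point framework: you show $\delta_1, \delta_2$ can be made uniformly small over $B_r(h_0)$ by bilinearly expanding around $h_0$, so the contraction constant $q$ for $\mathfrak{F}_1$ is uniform; then the operator-difference identity $\mathfrak{F}_1(\tilde{h}_2) - \mathfrak{F}_2(\tilde{h}_2) = \tilde{h}_0^{(1)} - \tilde{h}_0^{(2)}$ produces the $(1-q)^{-1}$ Lipschitz bound, which is the abstract form of the paper's bootstrap. The gain is conceptual economy --- no re-derivation of the difference equation and no appeal to $F^{(\varepsilon)}$. Two small corrections are needed. First, you cannot fix $\varepsilon_0 := 1$: the proof of Theorem \ref{thm:CritLWP} requires $12 C_0 \varepsilon < 1$, so the target radius must be chosen small relative to the absolute constant $C_0$ of Proposition \ref{prop:QplusBound}; this is harmless but must be stated. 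Second, the fixed point $\tilde{h}_2$ of $\mathfrak{F}_2$ lies a priori in $\mathcal{B}_{\varepsilon_0}\bigl(\tilde{h}_0^{(2)}\bigr)$, not $\mathcal{B}_{\varepsilon_0}\bigl(\tilde{h}_0^{(1)}\bigr)$, so the Lipschitz estimate from (\ref{eq:F00}) for $\mathfrak{F}_1$ must be applied on the slightly enlarged ball $\mathcal{B}_{\varepsilon_0 + \delta}\bigl(\tilde{h}_0^{(1)}\bigr)$; again this only costs a bounded enlargement of constants and does not affect the conclusion.
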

\begin{proof}
Let $\tilde{h}_0^{\left( i \right)}$, $i=1,2$, be chosen as in the statement of the Lemma,
for some $r > 0$ to be determined later. We can assume by our choice of $r,\sigma$, at the very least, that
\begin{equation}
\label{eq:continuityHelper}
\inf \left\{ T_{\textnormal{g.o.}} \left( \tilde{h}_0 \right) \quad : \quad
\left\Vert \tilde{h}_0 - h_0 \right\Vert_{L^2} < r \right\} \; > \; \sigma
\end{equation}
in view of Lemma \ref{lem:lsc}.

Letting $\tilde{h}^{\left(i\right)} \left( t \right) = \mathfrak{Z}_{\textnormal{g.o.}} \left( 
\tilde{h}_0^{\left(i\right)} \right)
\left( t \right)$, define
\[
w(t) = \tilde{h}^{\left(1\right)} \left(t\right) - \tilde{h}^{\left(2\right)} \left(t\right)
\]
then it holds
\begin{equation}
\label{eq:continuityDifferenceEquation}
\left( \partial_t + v \cdot \nabla_x \right) w =
Q^+ \left( \tilde{h}^{\left(1\right)}, w \right) + Q^+ \left( w, \tilde{h}^{\left(2\right)} \right)
\end{equation}
and we denote $w_0 = w \left( t=0 \right)$.
Consider just the first term on the right; the second is handled similarly.

We may write
\begin{equation}
\begin{aligned}
& Q^+ \left( \tilde{h}^{\left(1\right)}, w \right) \\
&  \qquad =
Q^+ \left( \mathcal{T} \tilde{h}_0^{\left(1\right)}, w \right) +
Q^+ \left( \tilde{h}^{\left(1\right)} - \mathcal{T}
 \tilde{h}_0^{\left(1\right)}, w \right) \\
 & \qquad \quad = \mathcal{I}_1 + \mathcal{I}_2
\end{aligned}
\end{equation}
Let us denote $J_\sigma = \left[ 0, \sigma \right]$.

We have previously seen (e.g. from the proof of Lemma \ref{lem:lsc}, specifically (\ref{eq:lsc-main-step}))
that, by choosing $r,\sigma$ small depending on the
small parameter $\eta$, we may have simultaneously for
all $\tilde{h}_0^{\left(1\right)}$ within $L^2$-distance $r$ of $h_0$
and all $u_0 \in L^2$ that
\[
\left\Vert Q^+ \left( \mathcal{T} \tilde{h}_0^{\left(1\right)}, \mathcal{T} u_0
\right) \right\Vert_{L^1 \left( J_\sigma, L^2 \right)} \leq
\eta \left\Vert u_0 \right\Vert_{L^2}
\]
 This estimate
suffices to handle term $\mathcal{I}_1$: indeed, it implies by Duhamel's formula applied to $w$ that
\[
\left\Vert
Q^+ \left( \mathcal{T} \tilde{h}_0^{\left(1\right)}, w \right)
\right\Vert_{L^1 \left( J_\sigma , L^2 \right)} \leq
\eta \left( \left\Vert w_0\right\Vert_{L^2} + \left\Vert 
\left( \partial_t + v \cdot \nabla_x \right) w \right\Vert_{L^1 \left( J_\sigma , L^2 \right)} \right)
\]
the right-hand side being finite by (\ref{eq:continuityHelper}), since $w$ is simply the difference between
two solutions which each have lifetimes strictly larger than $\sigma$.

Also, by Duhamel's formula
\[
\tilde{h}^{\left(1\right)} \left( t \right) - \mathcal{T} \left( t \right) \tilde{h}_0^{\left(1\right)} = \int_0^t
\mathcal{T} \left(t-s\right) Q^+ \left( \tilde{h}^{\left(1\right)} \left( s \right) \right) ds
\]
and Proposition \ref{prop:QplusBound}, it holds for any $u_0 \in L^2$
\[
\left\Vert Q^+ \left( \tilde{h}^{\left( 1 \right)} - \mathcal{T} \tilde{h}_0^{\left( 1 \right)} \; , \; \mathcal{T} u_0
\right) \right\Vert_{L^1 \left( J_\sigma , L^2 \right)} \leq C
\left\Vert Q^+ \left( \tilde{h}^{\left(1\right)} \right) \right\Vert_{L^1 \left( J_\sigma , L^2 \right)}
 \left\Vert u_0 \right\Vert_{L^2}
\]
Note carefully we have substituted Duhamel's formula into the first entry of
$Q^+$, so that $Q^+$ is acting on another $Q^+$ and a $u_0$; it is to the outer $Q^+$ that we apply
Proposition \ref{prop:QplusBound}.
 By Lemma \ref{lem:lsc} with $\varepsilon$ (the $\varepsilon$ of Lemma \ref{lem:lsc}, not related to the $\varepsilon$ appearing
 in the statement of the present lemma), for any $\eta > 0$ there exist $r,\sigma > 0$ such that again,
 simultaneously for
all $\tilde{h}_0^{\left(1\right)}$ within $L^2$-distance $r$ of $h_0$
and all $u_0 \in L^2$, it holds
 \[
 C
\left\Vert Q^+ \left( \tilde{h}^{\left(1\right)} \right) \right\Vert_{L^1 \left( J_\sigma , L^2 \right)}
\leq \eta
 \]
So for any $u_0 \in L^2$ we may now write
 \[
\left\Vert Q^+ \left( \tilde{h}^{\left( 1 \right)} - \mathcal{T} \tilde{h}_0^{\left( 1 \right)} \; , \; \mathcal{T} u_0
\right) \right\Vert_{L^1 \left( J_\sigma , L^2 \right)} \leq \eta
 \left\Vert u_0 \right\Vert_{L^2}
\]
so that, once again,
  \begin{equation*}
  \begin{aligned}
& \left\Vert Q^+ \left( \tilde{h}^{\left( 1 \right)} - \mathcal{T} \tilde{h}_0^{\left( 1 \right)} \; , \; w
\right) \right\Vert_{L^1 \left( J_\sigma , L^2 \right)} \\
& \qquad\qquad\qquad\leq \eta \left( \left\Vert w_0 \right\Vert_{L^2} +
 \left\Vert\left( \partial_t + v \cdot \nabla_x \right) w \right\Vert_{L^1 \left( J_\sigma , L^2 \right)}\right)
\end{aligned}
\end{equation*}
which suffices for $\mathcal{I}_2$.

 To conclude, let us denote
 \[
 a\left( \sigma \right) =
 \left\Vert w \right\Vert_{L^\infty \left( J_\sigma , L^2 \right)} +
 \left\Vert \left( \partial_t + v \cdot \nabla_x \right)
 w \right\Vert_{L^1 \left( J_\sigma,  L^2 \right)}
 \]
 which we recall is finite in any case, and observe that
 \[
 a\left( \sigma \right) \leq
\left\Vert w_0 \right\Vert_{L^2} +
 2 \left\Vert \left( \partial_t + v \cdot \nabla_x \right)
 w \right\Vert_{L^1 \left( J_\sigma,  L^2 \right)} \leq 2 a \left( \sigma \right)
 \]
Hence by (\ref{eq:continuityDifferenceEquation}) and the above estimates on $\mathcal{I}_1$ and $\mathcal{I}_2$
 we now have
 \[
a \left( \sigma \right) \leq
 \left\Vert \tilde{h}_0^{\left( 1 \right)} - \tilde{h}_0^{\left(2\right)} \right\Vert_{L^2} +
 16 \eta a \left( \sigma \right)
 \]
Letting $\eta = \frac{1}{32}$, with the corresponding contraints on $r,\sigma$ as specified above, 
yields by the definition of $a \left( \sigma \right)$ that
\[
\left\Vert w 
\right\Vert_{L^\infty \left( J_\sigma , L^2 \right)} \leq
2  \left\Vert \tilde{h}_0^{\left( 1 \right)} - \tilde{h}_0^{\left(2\right)} \right\Vert_{L^2} 
\]
as claimed.
\end{proof}

For the next lemma we denote by $B_r \left( h_0 \right)$ the ball of radius $r$ in $L^2$ centered
about $h_0 \in L^{2,+}$.

\begin{lemma}
\label{lem:continuityOne}
Let $K \subset L^{2,+}$ be compact. Then there exists a $\sigma > 0$, depending only on $K$, such
that the following is true:

For every $h_0 \in K$, there exists an $r > 0$
such that
\[
\forall \left(\tilde{h}_0 \in 
B_r \left( h_0 \right)\bigcap L^{2,+} \right)\quad
\sigma < T_{\textnormal{g.o.}} \left( \tilde{h}_0 \right)
\]
and such that, denoting $J_\sigma = \left[ 0, \sigma \right]$, the map
\[
B_r \left( h_0 \right) \bigcap L^{2,+} \rightarrow C \left( J_\sigma, L^2 \right),\qquad
\tilde{h}_0 \mapsto \mathfrak{Z}_{\textnormal{g.o.}} \left( \tilde{h}_0 \right) 
\left( \cdot \right)
\]
is continuous.
\end{lemma}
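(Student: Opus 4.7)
The plan is to promote the pointwise continuity data from Lemma \ref{lem:continuityZero} to a uniform time $\sigma$ via a standard finite-subcover argument on the compact set $K$. The analytic substance is already carried by Lemma \ref{lem:continuityZero}; what remains is bookkeeping to arrange the right order of quantifiers.

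First I would invoke Lemma \ref{lem:continuityZero} at each $g \in K$ to obtain numbers $\sigma(g), r(g) > 0$ such that every $\tilde h_0 \in B_{r(g)}(g) \cap L^{2,+}$ satisfies $T_{\textnormal{g.o.}}(\tilde h_0) > \sigma(g)$ (this lifetime bound is built into the proof of that lemma via its internal estimate (\ref{eq:continuityHelper}), which rests on Lemma \ref{lem:lsc}), and such that for every $\varepsilon > 0$ there is a $\delta > 0$ for which $\| \tilde h_0^{(1)} - \tilde h_0^{(2)} \|_{L^2} < \delta$ implies
\[
\bigl\| \mathfrak{Z}_{\textnormal{g.o.}}(\tilde h_0^{(1)}) - \mathfrak{Z}_{\textnormal{g.o.}}(\tilde h_0^{(2)}) \bigr\|_{L^\infty([0,\sigma(g)], L^2)} < \varepsilon
\]
whenever both $\tilde h_0^{(i)}$ lie in $B_{r(g)}(g) \cap L^{2,+}$.

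Next, the collection $\{B_{r(g)/2}(g)\}_{g \in K}$ is an open cover of the compact set $K$, so by compactness I extract a finite subcover indexed by $g_1, \ldots, g_N \in K$ and set
\[
\sigma \;=\; \min_{1 \leq i \leq N} \sigma(g_i) \;>\; 0,
\]
a number depending only on $K$. Given any $h_0 \in K$, I choose an index $i$ with $h_0 \in B_{r(g_i)/2}(g_i)$ and take $r = r(g_i)/2$; the triangle inequality then gives $B_r(h_0) \subset B_{r(g_i)}(g_i)$. Applying the two properties at the reference point $g_i$, but restricting the time interval from $[0,\sigma(g_i)]$ down to $J_\sigma = [0,\sigma]$, yields simultaneously the lifetime bound $T_{\textnormal{g.o.}}(\tilde h_0) > \sigma$ for every $\tilde h_0 \in B_r(h_0) \cap L^{2,+}$ and the continuity of the map $\tilde h_0 \mapsto \mathfrak{Z}_{\textnormal{g.o.}}(\tilde h_0)$ into $C(J_\sigma, L^2)$ on that ball, which is exactly the claim.

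The only real subtlety is the order of quantifiers: the radius $r$ is allowed to depend on the specific point $h_0 \in K$ (indeed, it must, since Lemma \ref{lem:continuityZero} supplies nothing uniform in $h_0$), while the time $\sigma$ must be uniform across $K$. The finite-subcover step above delivers this uniformity essentially for free, given that Lemma \ref{lem:continuityZero} has already done all the heavy analytic work locally.
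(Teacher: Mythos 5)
Your proof is correct and takes a genuinely different, more direct route than the paper. The paper defines an auxiliary functional $a(h_0) = \sup A(h_0)$ where $A(h_0)$ is the set of admissible times $\sigma$ for which the conclusion holds at $h_0$ (with the stricter margin $\sigma \leq 2^{-1} T_{\textnormal{g.o.}}(\tilde h_0)$), observes $a(h_0) > 0$ by Lemma \ref{lem:continuityZero}, and then argues by contradiction via sequential compactness: a sequence $h_{0,k} \in K$ with $a(h_{0,k}) \to 0$ has a convergent subsequence whose limit $h_0 \in K$, when fed into Lemma \ref{lem:continuityZero}, produces a ball on which $a(\cdot)$ is bounded away from zero, contradicting the assumption. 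Your version skips the functional $a(\cdot)$ and the contradiction entirely: you cover $K$ by the half-radius balls supplied by Lemma \ref{lem:continuityZero} at each point, extract a finite subcover indexed by $g_1,\dots,g_N$, set $\sigma = \min_i \sigma(g_i)$, and use the triangle inequality to nest $B_{r(g_i)/2}(h_0)$ inside $B_{r(g_i)}(g_i)$; restricting the time interval downward to $J_\sigma$ is harmless for both the lifetime bound and the continuity. The two arguments are logically equivalent (sequential vs.\ open-cover compactness), but yours is constructive and shorter, and it in fact implements the very idea the paper sketches in its own remark following the lemma, where the finite-subcover trick is mentioned only to make $r$ uniform; as you show, it delivers a uniform $\sigma$ at the same stroke. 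Your observation that the lifetime bound $T_{\textnormal{g.o.}}(\tilde h_0) > \sigma(g)$ is already implicit in the proof of Lemma \ref{lem:continuityZero} through (\ref{eq:continuityHelper}) (ultimately Lemma \ref{lem:lsc}) is also accurate.
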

\begin{remark}
It is convenient for the proof to let $r$ possibly depend on $h_0 \in K$, although it is possible
to show by the compactness of $K$ that $r$ need not depend on $h_0$, even if we have only proven the claim
allowing $r$ to depend on $h_0$. Indeed, choosing $r$ for each $h_0$ as in the Lemma, cover $K$ by open
balls of radius $\frac{r_i}{2}$ about $h_0^{\left(i\right)}$ as $i$ ranges over a finite set.
\end{remark}
\begin{proof}
For any $h_0 \in L^{2,+}$ we will write
\[
0 < \sigma \in A \left( h_0 \right) \subset \mathbb{R}
\]
if and only if both the following hold: first, that there exists $r > 0$, depending on $\sigma$ and $h_0$,
such that
\[
\forall \left( \tilde{h}_0 \in B_r \left(h_0\right) \bigcap L^{2,+}\right) \quad
\sigma \leq 2^{-1} T_{\textnormal{g.o.}} \left( \tilde{h}_0 \right)
\]
and second, that the map
\[
B_r \left( h_0 \right) \bigcap L^{2,+} \rightarrow C \left( \left[ 0,\sigma \right], L^2 \right),\qquad
\tilde{h}_0 \mapsto \mathfrak{Z}_{\textnormal{g.o.}} \left( \tilde{h}_0 \right) 
\left( \cdot \right)
\]
is continuous .

Also let us write
\[
a \left( h_0 \right) = \sup A \left( h_0 \right)
\]
the least upper bound of the set $A \left( h_0 \right)$.
By Lemma \ref{lem:continuityZero}, $a \left( h_0 \right) > 0$ for each
$h_0 \in L^{2,+}$.

We have to show that for any compact $K \subset L^{2,+}$,
\[
\inf \left\{ a \left( h_0 \right) \; : \; h_0 \in K 
\right\} \; > \; 0
\]
By way of contradiction, suppose that there are points $h_{0,k} \in K$,
$k = 1, 2, 3, \dots$,
such that $a \left( h_{0,k} \right)\rightarrow 0$  as $k\rightarrow \infty$.
By the compactness of $K$, we can pass to a subsequence converging in $L^2$, say
$h_{0, k^\prime} \rightarrow h_0$ for some $h_0 \in K$. Applying Lemma \ref{lem:continuityZero}
to $h_0$ we find that there must exist a number $k_0$ such that
 $a \left( h_{0, k^\prime} \right)$ is bounded from below
uniformly in $k^\prime \geq k_0$, hence the contradiction.
\end{proof}

\begin{remark}
Observe that in the proof of Lemma \ref{lem:continuityOne}, we have relied on the fact
that Lemma \ref{lem:continuityZero} provides continuity of the solution map not just
at $h_0$, but across a small ball surrounding $h_0$, for a time bounded uniformly from
below on said ball. In particular, we obtain continuity on a relatively open set
$\mathcal{O} \subset L^{2,+}$ with $K \subset \mathcal{O}$, the existence time being bounded
from below uniformly on $\mathcal{O}$.
\end{remark}

\begin{theorem}
\label{thm:lsc2}
$T_{\textnormal{g.o.}}$ is \emph{lower semi-continuous}: that is, if $h_0 \in L^{2,+}$ and $h_{0,k} \in L^{2,+}$ for
$k = 1, 2, 3, \dots$, then
\[
\lim_{k \rightarrow \infty} \left\Vert h_{0,k} - h_0 \right\Vert_{L^2} = 0 \quad \implies \quad
T_{\textnormal{g.o.}} \left( h_0 \right) \leq
\liminf_{k \rightarrow \infty} T_{\textnormal{g.o.}} \left( h_{0,k} \right)
\]
 Moreover, the solution map $\mathfrak{Z}_{\textnormal{g.o.}}$ for
 (\ref{eq:QplusEq}) is \emph{continuous}, in the following sense:
 
 Denoting for $h_0 \in L^2$ the open ball
 \[
 B_r \left( h_0 \right) = \left\{ \tilde{h}_0 \in L^2 \; : \;
 \left\Vert \tilde{h}_0 - h_0 \right\Vert_{L^2} < r \right\}
 \]
it holds that  for any $h_0 \in L^{2,+}$ and any compact interval $J = \left[ 0, T \right]$,
where $ 0 < T < T_{\textnormal{g.o.}} \left(h_0\right)$ is chosen arbitrarily,
there exists an $r>0$, depending only on $T$ and $h_0$,
such that the map
\[
B_r \left( h_0 \right) \bigcap L^{2,+} \rightarrow C \left( J, L^2 \right),\qquad
\tilde{h}_0 \mapsto \mathfrak{Z}_{\textnormal{g.o.}} \left( \tilde{h}_0 \right) 
\left( \cdot \right)
\]
is continuous.
\end{theorem}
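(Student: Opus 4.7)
The plan is to deduce both assertions from Lemma \ref{lem:continuityOne} by a finite bootstrap along the orbit of $h_0$, using the semigroup property of $\mathfrak{Z}_{\textnormal{g.o.}}$ to patch the local continuity estimates together. Fix $h_0 \in L^{2,+}$ and $0 < T < T_{\textnormal{g.o.}} \left( h_0 \right)$. Since $\mathfrak{Z}_{\textnormal{g.o.}} \left( h_0 \right) \in C \left( \left[ 0, T \right], L^2 \right)$ by Theorem \ref{thm:QPlusLWP}, the set
\[
K = \mathfrak{Z}_{\textnormal{g.o.}} \left( h_0 \right) \left( \left[ 0, T \right] \right) \subset L^{2,+}
\]
is compact in $L^2$. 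Applying Lemma \ref{lem:continuityOne} to $K$ yields a uniform $\sigma > 0$, and for each point $h_0^{(j)} \in K$ a radius $r_j > 0$ on which the solution map into $C \left( \left[ 0, \sigma \right], L^2 \right)$ is continuous and each solution lives for time exceeding $\sigma$.

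Partition $\left[ 0, T \right]$ as $0 = t_0 < t_1 < \dots < t_N = T$ with $t_{j+1} - t_j < \sigma$ for every $j$, and set $h_0^{(j)} = \mathfrak{Z}_{\textnormal{g.o.}} \left( h_0 \right) \left( t_j \right)$; pick $r_j > 0$ as above for each $j \in \left\{ 0, 1, \dots, N-1 \right\}$. I would then propagate control backward-to-forward as follows: given a target final accuracy $\varepsilon > 0$, choose inductively (working backward from $j = N-1$ down to $j = 0$) numbers $\delta_j \in (0, r_j)$ so that, by the continuity statement of Lemma \ref{lem:continuityOne} at the center $h_0^{(j)}$, whenever $w_0 \in B_{\delta_j} \left( h_0^{(j)} \right) \bigcap L^{2,+}$ one has
\[
\left\Vert \mathfrak{Z}_{\textnormal{g.o.}} \left( w_0 \right) - \mathfrak{Z}_{\textnormal{g.o.}} \left( h_0^{(j)} \right) \right\Vert_{L^\infty \left( \left[ 0, \sigma \right], L^2 \right)} < \min \left( \delta_{j+1}, \; N^{-1} \varepsilon \right)
\]
with the convention $\delta_N = \varepsilon$. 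Now suppose $\tilde{h}_0 \in B_{\delta_0} \left( h_0 \right) \bigcap L^{2,+}$. The $j=0$ estimate gives both that $T_{\textnormal{g.o.}} \left( \tilde{h}_0 \right) > \sigma > t_1$ and that $\mathfrak{Z}_{\textnormal{g.o.}} \left( \tilde{h}_0 \right) \left( t_1 \right) \in B_{\delta_1} \left( h_0^{(1)} \right)$. By the semigroup property, $\mathfrak{Z}_{\textnormal{g.o.}} \left( \tilde{h}_0 \right) \left( t_1 + \cdot \right) = \mathfrak{Z}_{\textnormal{g.o.}} \left( \mathfrak{Z}_{\textnormal{g.o.}} \left( \tilde{h}_0 \right) \left( t_1 \right) \right) \left( \cdot \right)$, so the inductive hypothesis at $j=1$ applies. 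Iterating through $j = 0, 1, \dots, N-1$ gives simultaneously that $T_{\textnormal{g.o.}} \left( \tilde{h}_0 \right) > T$ and that $\left\Vert \mathfrak{Z}_{\textnormal{g.o.}} \left( \tilde{h}_0 \right) - \mathfrak{Z}_{\textnormal{g.o.}} \left( h_0 \right) \right\Vert_{L^\infty \left( \left[ 0, T \right], L^2 \right)} < \varepsilon$.

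The first conclusion immediately yields lower semi-continuity of $T_{\textnormal{g.o.}}$: for any $T < T_{\textnormal{g.o.}} \left( h_0 \right)$ and any sequence $h_{0,k} \to h_0$ in $L^2$, eventually $h_{0,k} \in B_{\delta_0} \left( h_0 \right)$, so $T \le \liminf_k T_{\textnormal{g.o.}} \left( h_{0,k} \right)$; taking $T \nearrow T_{\textnormal{g.o.}} \left( h_0 \right)$ finishes that claim. The second conclusion is exactly the asserted continuity of the solution map on $B_{\delta_0} \left( h_0 \right) \bigcap L^{2,+}$ into $C \left( \left[ 0, T \right], L^2 \right)$, with $r = \delta_0$ depending only on $T$ and $h_0$.

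The main obstacle is the bookkeeping for the bootstrap: one must verify that the semigroup identity applies cleanly at each $t_j$ to reduce continuity on $\left[ t_j, t_{j+1} \right]$ to the local statement of Lemma \ref{lem:continuityOne} at $h_0^{(j)}$, and that the backward choice of $\delta_j$ can be made so the perturbation at $t_j$ remains inside the admissible ball $B_{r_j} \left( h_0^{(j)} \right)$. Once the partition is fixed (which uses only the uniform $\sigma$ from Lemma \ref{lem:continuityOne}), this is a finite induction of length $N$ and the constraints $\delta_j < r_j$, $\delta_j$ small enough for the Lemma \ref{lem:continuityOne} estimate at $h_0^{(j)}$ to deliver accuracy $\delta_{j+1}$ at the next step, are all simultaneously satisfiable.
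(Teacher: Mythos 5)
Your approach is essentially the same as the paper's: both use compactness of the orbit $K = \mathfrak{Z}_{\textnormal{g.o.}} \left( h_0 \right) \left( \left[ 0, T \right] \right)$, the uniform $\sigma$ from Lemma \ref{lem:continuityOne}, and a finite backward induction along the orbit glued together by the semigroup property. The one place where your writeup does not quite deliver the stated conclusion is the final packaging. You choose $\delta_0$ \emph{after} fixing a target accuracy $\varepsilon$, so $\delta_0$ depends on $\varepsilon$; consequently it cannot serve as the radius $r$ in the theorem, which is required to depend only on $T$ and $h_0$. Relatedly, your $\delta_j$-bookkeeping establishes continuity of the solution map \emph{at the single point $h_0$} and a lower bound $T_{\textnormal{g.o.}} > T$ on $B_{\delta_0} \left( h_0 \right) \cap L^{2,+}$, but the theorem asks for continuity of the map on a whole ball $B_r \left( h_0 \right) \cap L^{2,+}$, i.e.\ at every point of that ball.

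The paper sidesteps this by a slightly different bookkeeping: instead of tracking radii $\delta_j$ backward, it constructs relatively open sets $U_1 \supset U_2 \supset \dots$ by pulling back open balls through the time-$\sigma$ flow (continuous on each $B^{t_j}$ by Lemma \ref{lem:continuityOne}) and intersecting with the balls supplied by the Lemma. Each $U_k$ is open because it is the preimage of an open set under a continuous map, intersected with an open ball, and the composite flow on $U_M \ni h_0$ is a composition of continuous maps, hence continuous on all of $U_M$. Then $r$ is any radius with $B_r \left( h_0 \right) \cap L^{2,+} \subset U_M$, manifestly independent of any $\varepsilon$. Your argument can be repaired in the same spirit (or, alternatively, by observing that the bootstrap can be rerun centered at any $\hat{h}_0 \in B_{\delta_0 \left( 1 \right)} \left( h_0 \right) \cap L^{2,+}$, whose orbit stays near $K$), but as written the final sentence overclaims what has been proved.
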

\begin{proof}
First observe that for any $0 < T < T_{\textnormal{g.o.}} \left(h_0\right)$ the set
\[
K = \mathfrak{Z}_{\textnormal{g.o.}} \left(h_0\right) \left( J \right)
\]
is compact, being the image of the compact interval $J = \left[ 0, T \right]$ by the continuous  map
$\mathfrak{Z}_{\textnormal{g.o.}} \left( h_0 \right) \left( \cdot \right)$. Thus we may apply
Lemma \ref{lem:continuityOne} to the set $K$.

For each $t_0 \in J$ let $B^{t_0}$ be the $L^2$ ball centered on
$\mathfrak{Z}_{\textnormal{g.o.}} \left(h_0\right) \left(t_0\right)$ guaranteed by Lemma \ref{lem:continuityOne}:
\emph{note carefully that we are taking the solution  at time $t_0 \in J$, that is
$\mathfrak{Z}_{\textnormal{g.o.}} \left(h_0\right) \left(t_0\right)$,
 as our initial
data in the application of Lemma \ref{lem:continuityOne}}.
In particular, by the compactness of $K$, the solution map
$\mathfrak{Z}_{\textnormal{g.o.}}$
 is continuous on $B^{t_0}$ for a time
$\sigma$ that is uniform in $t_0 \in J$. Assume without loss (up to a possibly smaller choice of the constant $\sigma$)
that $T = M \sigma$, $M$ an integer.

The proof is by an induction \emph{backwards} in time, starting at $T$.
The starting point is the unit $L^2$ ball centered on 
$\mathfrak{Z}_{\textnormal{g.o.}} \left(h_0\right) \left(T\right)$. Take the preimage of this ball by  the
(partially defined) gain-only flow, at time $\sigma$, and call $U_1$ the intersection
of this preimage with $B^{T - \sigma}$. Then $U^1$ is open for the
subspace topology of $L^{2,+} \subset L^2$. Repeat the process, taking
the preimage of $U_k$ by the time $\sigma$ flow and intersecting with
$B^{T - (k+1)\sigma}$ to produce $U_{k+1}$. Eventually we will have
$U_M$, a relatively open subset of $L^{2,+}$ that contains $h_0$;
moreover, by construction, for any $\tilde{h}_0 \in U_M \subset L^{2,+}$ the flow
is defined for $0 \leq t \leq T$, and the flow is continuous on
$U_M$ for $0 \leq t \leq T$.  
\end{proof}

\section{The comparison principle}
\label{sec:comparison}

Any smooth solution $f$ of (\ref{eq:BE}) with sufficient decay for large $\left( x,v \right)$ is bounded
from above \emph{pointwise} at positive times by the solution of the $Q^+$ equation (\ref{eq:QplusEq}) with the same initial data,
for the full lifespan of the solution of (\ref{eq:QplusEq}).
Thus, under such assumptions, we 
may view the solution of (\ref{eq:QplusEq}) as an \emph{upper envelope} for the solution of
(\ref{eq:BE}), at least on a small time interval.
Setting aside ``near vacuum'' results, 
solutions of the $Q^+$ equation are not global in general even for smooth data with rapid decay
 \cite{RSH1987}; nevertheless, we can take
$f \left( t_0 \right)$ as initial data in (\ref{eq:QplusEq}) to obtain, once again, an upper envelope
valid for $t \in \left[ t_0, t_0 + \sigma\right)$ for some small $\sigma > 0$ depending  on 
 $f \left( t_0 \right)$ (\emph{note: \textbf{not} the $L^2$ norm of $f \left( t_0 \right)$, but the full profile}).
This \emph{comparison principle}, the invocation of which is \emph{defined to mean} that we may obtain
an upper envelope along sufficiently small half-open intervals starting from  \emph{any} $t_0$ in the (larger but still
half-open) domain
of interest, is a fundamental property of any
Boltzmann equation satisfying the Grad cut-off condition (the principle is obviously meaningless in
the non-cutoff case). Now it is not at all clear whether the renormalized solutions of 
DiPerna and Lions \cite{DPL1989} satisfy
a version of the comparison principle in general. However, in the
$L^2$ setting, we \emph{can} make sense of (\ref{eq:QplusEq}) by Theorem \ref{thm:QPlusLWP}. 
Since the comparison principle is the foundation of everything to follow,
we devote this section to formalizing the comparison principle to the extent that we require.

\begin{definition}
Let $f \left( t, x, v \right)$ be a non-negative measurable 
function \emph{(not necessarily solving Boltzmann's equation (\ref{eq:BE}))}, defined
in the domain
\[
I \times \mathbb{R}^2 \times \mathbb{R}^2
\]
where $I = \left[ a, b \right)$ and $-\infty < a < b \leq \infty$.
Let us assume that for any compact set $K$ of the product form 
\[
K =  A \times B \times C
 \subset I \times \mathbb{R}^2 \times \mathbb{R}^2
 \]
 (namely
 $A \subset I$, and $B, C \subset \mathbb{R}^2$), there holds
\[
 \left. f \right|_K \in C \left( A, L^1 \left( B \times C \right) \right)
\]
In particular, the pointwise evaluation in time, $f \left( t_0 \right)$, is
well-defined for each $t_0 \in I$.

For any $t_0 \in I$ such that $f \left( t_0 \right) \in L^2$, we shall write
\[
f \in \mathfrak{B}^{I}_{\left\{ t_0 \right\}}
\]
if for any $t \in \mathbb{R}$ such that
\[
t \in I \quad \textnormal{ and } \quad
t_0 \leq t < t_0 + T_{\textnormal{g.o.}} \left( f\left(t_0\right) \right)
\]
we have
\[
f \left(t\right) \leq \mathfrak{Z}_{\textnormal{g.o.}} \left( f \left( t_0\right) \right) \left( t-t_0 \right)
\]
for almost every $\left( x,v \right)$.

For any subset $F\subseteq I$ we will write
\[
f \in \mathfrak{B}_F^I
\]
if
\[
\forall\left(
t_0\in F\right) \quad
f \in \mathfrak{B}^I_{\left\{ t_0 \right\}}
\]
That is,
\[
\mathfrak{B}_F^I = \bigcap_{t_0 \in F}
\mathfrak{B}^I_{\left\{ t_0 \right\}}
\]

Similarly, if $J = \left[ a, b \right]$ is a \emph{compact} interval, then letting $I = \left[ a, b \right)$,
for any $t_0 \in J$ we write
\[
f \in \mathfrak{B}^J_{\left\{ t_0 \right\}}
\]
if either (i) $t_0 = b$ and $f \left( b \right) \in L^2$, or  (ii)
\[
f \in \mathfrak{B}^I_{\left\{ t_0 \right\}}
\]
For any subset $F \subset J$ we write
\[
f \in \mathfrak{B}^J_F
\]
if
\[
\forall \left( t_0 \in F \right) \quad f \in \mathfrak{B}^J_{\left\{ t_0 \right\}}
\]
thus
\[
\mathfrak{B}^J_F = \bigcap_{t_0 \in F} \mathfrak{B}^J_{\left\{ t_0 \right\}}
\]
\end{definition}

\begin{lemma}
\label{lem:comparisonExtension}
If $0 < T_1 < T_0$, $I_1 = \left[ 0, T_1 \right)$ and $I_2 =
\left[ T_1, T_0\right)$, and if
\begin{equation}
f \in \mathfrak{B}^{I_1}_{I_1} \quad \textnormal{ and }\quad
f\in \mathfrak{B}^{I_2}_{I_2}
\end{equation}
then
\begin{equation}
f \in \mathfrak{B}^{I_3}_{I_3}
\end{equation}
where $I_3 = \left[ 0, T_0 \right)$.
\end{lemma}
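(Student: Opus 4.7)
The plan is to fix an arbitrary $t_0 \in I_3 = \left[ 0, T_0 \right)$ and to verify $f \in \mathfrak{B}^{I_3}_{\left\{ t_0 \right\}}$ by splitting according to whether $t_0 \geq T_1$ or $t_0 < T_1$. In the first case $t_0 \in I_2$, and for any admissible $t$ we automatically have $t \geq t_0 \geq T_1$, so $t \in I_2$; the desired pointwise bound then follows verbatim from $f \in \mathfrak{B}^{I_2}_{\left\{ t_0 \right\}}$. The content of the Lemma is therefore in the second case, where the trajectory must be ``glued'' at the interface $T_1$.

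Assume then $t_0 \in I_1$, and fix $t$ with $t_0 \leq t < t_0 + T_{\textnormal{g.o.}} \left( f \left( t_0 \right) \right)$. If $t < T_1$ the hypothesis $f \in \mathfrak{B}^{I_1}_{\left\{ t_0 \right\}}$ applies directly. The real work is the range $T_1 \leq t$. I would first produce the pointwise bound at $T_1$ itself: pick a sequence $t_n \nearrow T_1$ with $t_n \in I_1$, so $f\left( t_n \right) \leq \mathfrak{Z}_{\textnormal{g.o.}} \left( f\left( t_0 \right) \right) \left( t_n - t_0 \right)$ a.e., and pass to the limit using the $L^1$-continuity of $f$ on compact product sets (built into the definition of $\mathfrak{B}$) together with the $L^2$-continuity, hence $L^1_{\textnormal{loc}}$-continuity, of $\mathfrak{Z}_{\textnormal{g.o.}} \left( f\left( t_0 \right) \right) \left( \cdot \right)$. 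After extracting an a.e. convergent subsequence on each compact $B \times C$, this yields
\[
f \left( T_1 \right) \leq \mathfrak{Z}_{\textnormal{g.o.}} \left( f\left( t_0 \right) \right) \left( T_1 - t_0 \right) \qquad \textnormal{a.e. on } \mathbb{R}^2 \times \mathbb{R}^2,
\]
which is legitimate since $T_1 - t_0 < T_{\textnormal{g.o.}} \left( f\left( t_0 \right) \right)$.

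Having this, I would apply $f \in \mathfrak{B}^{I_2}_{\left\{ T_1 \right\}}$ to obtain $f\left( t \right) \leq \mathfrak{Z}_{\textnormal{g.o.}} \left( f\left( T_1 \right) \right) \left( t - T_1 \right)$, and then close the argument by combining two properties of the gain-only flow: first, the semigroup property of $\mathfrak{Z}_{\textnormal{g.o.}}$, which tells us that with $g = \mathfrak{Z}_{\textnormal{g.o.}} \left( f\left( t_0 \right) \right) \left( T_1 - t_0 \right)$ one has $T_{\textnormal{g.o.}} \left( g \right) = T_{\textnormal{g.o.}} \left( f\left( t_0 \right) \right) - \left( T_1 - t_0 \right)$ and $\mathfrak{Z}_{\textnormal{g.o.}} \left( g \right) \left( t - T_1 \right) = \mathfrak{Z}_{\textnormal{g.o.}} \left( f\left( t_0 \right) \right) \left( t - t_0 \right)$; and second, the pointwise monotonicity of $\mathfrak{Z}_{\textnormal{g.o.}}$ in its initial data, namely that $0 \leq h_0 \leq \tilde{h}_0$ a.e. implies $T_{\textnormal{g.o.}} \left( h_0 \right) \geq T_{\textnormal{g.o.}} \left( \tilde{h}_0 \right)$ and $\mathfrak{Z}_{\textnormal{g.o.}} \left( h_0 \right) \left( \tau \right) \leq \mathfrak{Z}_{\textnormal{g.o.}} \left( \tilde{h}_0 \right) \left( \tau \right)$ on $\left[ 0, T_{\textnormal{g.o.}} \left( \tilde{h}_0 \right) \right)$. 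Applying the second with $h_0 = f\left( T_1 \right)$, $\tilde{h}_0 = g$, and then the first, gives exactly $f\left( t \right) \leq \mathfrak{Z}_{\textnormal{g.o.}} \left( f\left( t_0 \right) \right) \left( t - t_0 \right)$.

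The main obstacle is the monotonicity property, which is not formalized anywhere above but is implicit in the remark following Theorem \ref{thm:QPlusLWP}: the Picard iterates defining $\mathfrak{Z}_{\textnormal{g.o.}} \left( h_0 \right)$, starting from $\mathcal{T} \left( t \right) h_0$ and iterating the Duhamel map associated with $Q^+$, are built only from $\mathcal{T}$, $Q^+$ and addition, each of which preserves the pointwise a.e. order on non-negative functions; the iterates converge in the $\mathcal{W}^{1,1}$-norm by Theorem \ref{thm:CritLWP}, hence a.e.\ along a subsequence, so the inequality propagates to the limit. A short covering argument on $\left[ 0, T_{\textnormal{g.o.}} \left( \tilde{h}_0 \right) \right)$, using Theorem \ref{thm:lsc2} to bound time of existence from below for the larger datum $\tilde{h}_0$, extends this local comparison to the full lifespan and completes the proof.
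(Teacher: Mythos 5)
Your proof is correct and follows essentially the same route as the paper's: reduce to the case $t_0 \in I_1$, $t \in I_2$, obtain the intermediate bound $f\left( T_1 \right) \leq \mathfrak{Z}_{\textnormal{g.o.}} \left( f\left( t_0 \right) \right) \left( T_1 - t_0 \right)$ from the $I_1$-hypothesis and time continuity, and then chain through $T_1$ using the semigroup and pointwise monotonicity properties of $\mathfrak{Z}_{\textnormal{g.o.}}$. The only difference is that you spell out the limiting argument at $T_1$ and the Picard-iterate argument for monotonicity more explicitly than the paper, which simply invokes ``continuity in time'' and ``the infinite iterated Duhamel expansion.''
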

\begin{proof}
This is an immediate consequence of the semigroup property of
$\mathfrak{Z}_{\textnormal{g.o.}}$ combined with the fact that $Q^+$ is \emph{monotonic}, i.e.
\[
0 \leq f_0 \leq h_0 \quad \implies \quad 0 \leq Q^+ \left( f_0 \right) \leq Q^+ \left( h_0 \right)
\]
In fact, this monotonicity property of $Q^+$ implies that the gain-only flow $\mathfrak{Z}_{\textnormal{g.o.}}$ is
monotonic as well (for $t$ fixed):
\[
0 \leq f_0 \leq h_0 \quad \implies \quad 0 \leq \mathfrak{Z}_{\textnormal{g.o.}} \left( f_0 \right) \left( t \right)
 \leq \mathfrak{Z}_{\textnormal{g.o.}} \left( h_0 \right) \left( t \right)
\]
whenever this makes sense (this can be established by writing $\mathfrak{Z}_{\textnormal{g.o.}}$ in terms of the
initial data using an infinite iterated Duhamel expansion, which is guaranteed to converge on a small time interval
 by the Banach contraction
used in the construction of $\mathfrak{Z}_{\textnormal{g.o.}}$). Combining the monotonicity and semigroup properties
 of $\mathfrak{Z}_{\textnormal{g.o.}}$ with the definition of $\mathfrak{B}^I_F$ establishes the Lemma with a few lines
 of straightforward algebra, which we recount next:
 
 Indeed, it suffices to consider the case
 \[
 t_1 \in I_1, \; \; t_2 \in I_2
 \]
 such that
 \[
 t_2 < t_1 + T_{\textnormal{g.o.}} \left( f \left( t_1 \right) \right)
 \]
 In that case, it immediately follows each
 \[
 T_1 < t_1 + T_{\textnormal{g.o.}} \left( f \left( t_1 \right) \right)
 \]
 and
 \[
 t_2 < T_1 + T_{\textnormal{g.o.}} \left( f \left( T_1 \right) \right)
 \]
 by the semigroup property. Moreover, from the definition of $\mathfrak{B}^I_F$ we may deduce
 \[
f \left( t_2 \right) \leq \mathfrak{Z}_{\textnormal{g.o.}} \left( f \left( T_1 \right) \right) \left( t_2 - T_1 \right)
 \]
 using $f \in \mathfrak{B}^{I_2}_{I_2}$, and also
 \[
 f \left( T_1 \right) \leq \mathfrak{Z}_{\textnormal{g.o.}} \left( f \left( t_1 \right) \right) \left( T_1 - t_1 \right)
 \]
using $f \in \mathfrak{B}^{I_1}_{I_1}$ and continuity in time. Therefore, applying the monotonicity of
$\mathfrak{Z}_{\textnormal{g.o.}}$ followed by the semigroup property, we have
\[
\begin{aligned}
f \left( t_2 \right) & \leq \mathfrak{Z}_{\textnormal{g.o.}} \left( f\left(T_1 \right) \right) \left( t_2 - T_1 \right) \\
& \leq \mathfrak{Z}_{\textnormal{g.o.}} \left[ \mathfrak{Z}_{\textnormal{g.o.}} \left( f \left( t_1 \right) \right)
 \left( T_1 - t_1 \right) \right]
\left( t_2 - T_1 \right) \\
& = \mathfrak{Z}_{\textnormal{g.o.}} \left( f \left( t_1 \right) \right) \left( t_2 - t_1 \right)
\end{aligned}
\]
as required.
\end{proof}

\begin{proposition}
\label{prop:comparisonZero}
If $0 < T < \infty$ and
\[
f \in
C \left( J, L^2 \right) \bigcap
 \mathfrak{B}^{J}_{J}
\]
where $J = \left[ 0, T \right]$, then
\[
Q^+ \left(f,f\right)  \in L^1 \left( J,  L^2 \right)
\]
\end{proposition}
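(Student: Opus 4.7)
The plan is to use a compactness-plus-partition argument driven by the lower semi-continuous lower bound $F^{(\varepsilon)}$ for $T_{\textnormal{g.o.}}$ from Lemma \ref{lem:lsc}, combined with the monotonicity of $Q^+$ and the comparison hypothesis $f \in \mathfrak{B}^J_J$.

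First I would fix an arbitrary $\varepsilon > 0$ (ultimately $\varepsilon = 1$ is fine) and consider the composition $G \coloneqq F^{(\varepsilon)} \circ f : J \to (0,\infty]$. Since $f \in C(J, L^2)$ takes non-negative values (being in $\mathfrak{B}^J_J$), in fact $f : J \to L^{2,+}$ is continuous in the $L^2$ norm. Because $F^{(\varepsilon)}$ is lower semi-continuous on $L^{2,+}$ by Lemma \ref{lem:lsc}, the composition $G$ is lower semi-continuous on the compact interval $J$, and $G > 0$ pointwise. Hence $G$ attains its infimum and this infimum $\tau > 0$ is strictly positive. In particular, $\tau \leq T_{\textnormal{g.o.}}(f(t_0))$ for every $t_0 \in J$.

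Next I would choose a finite partition $0 = t_0 < t_1 < \dots < t_N = T$ with mesh $\max_j(t_{j+1}-t_j) < \tau$. On each subinterval $[t_j, t_{j+1}]$, the hypothesis $f \in \mathfrak{B}^J_{\{t_j\}}$ together with $t_{j+1} - t_j < \tau \leq T_{\textnormal{g.o.}}(f(t_j))$ gives the pointwise a.e. bound
\[
f(t) \leq \mathfrak{Z}_{\textnormal{g.o.}}(f(t_j))(t - t_j), \qquad t \in [t_j, t_{j+1}].
\]
The monotonicity of $Q^+$ in its (non-negative) arguments, which follows at once from the manifest non-negativity of the integrand in the definition of $Q^+$, then yields the pointwise a.e. bound
\[
0 \leq Q^+(f(t),f(t)) \leq Q^+\bigl(\mathfrak{Z}_{\textnormal{g.o.}}(f(t_j))(t-t_j)\bigr)
\]
on the same subinterval.

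I would then integrate this in $t$ over $[t_j, t_{j+1}]$, take the $L^2_{x,v}$ norm under the integral by Minkowski, and appeal to property (3) of Lemma \ref{lem:lsc} (bound (\ref{eq:F-eps-bd})) applied to the initial datum $f(t_j) \in L^{2,+}$, which gives
\[
\int_{t_j}^{t_{j+1}} \bigl\Vert Q^+(f(t),f(t)) \bigr\Vert_{L^2}\, dt
\leq \int_0^{F^{(\varepsilon)}(f(t_j))} \bigl\Vert Q^+\bigl(\mathfrak{Z}_{\textnormal{g.o.}}(f(t_j))(s)\bigr) \bigr\Vert_{L^2}\, ds \leq \varepsilon,
\]
since $t_{j+1} - t_j < \tau \leq F^{(\varepsilon)}(f(t_j))$. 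Summing over $j = 0, \dots, N-1$ gives $\Vert Q^+(f,f)\Vert_{L^1(J, L^2)} \leq N\varepsilon < \infty$, establishing the claim. The main obstacle is ensuring the mesh $\tau$ can be chosen uniformly positive, which is precisely what the lower semi-continuity of $F^{(\varepsilon)}$ from Section \ref{sec:lsc} buys us on the compact image $f(J) \subset L^{2,+}$; everything else is monotonicity of $Q^+$ and bookkeeping.
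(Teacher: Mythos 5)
Your proposal is correct and follows essentially the same route as the paper: use Lemma \ref{lem:lsc} to get a lower semi-continuous lower bound for $T_{\textnormal{g.o.}}$ that is bounded away from zero on the compact set $f(J)$, partition $J$ into finitely many subintervals shorter than that bound, and on each subinterval use $f \in \mathfrak{B}^J_J$ together with monotonicity of $Q^+$ and the property (\ref{eq:F-eps-bd}) to bound the local $L^1_t L^2_{x,v}$ contribution by $\varepsilon$. One stylistic slip: the step you attribute to Minkowski is really just that $0 \leq a \leq b$ pointwise implies $\Vert a \Vert_{L^2} \leq \Vert b \Vert_{L^2}$, applied at each fixed $t$ before integrating in $t$; no Minkowski-type inequality is needed.
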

\begin{proof}
By Lemma \ref{lem:lsc} and the compactness of $J$, since
$f \in C \left( J,  L^2 \right)$ we have
\[
\inf_{t \in J} T_{\textnormal{g.o.}} \left( f\left(t\right) \right) \geq
\inf_{t \in J} F^{\left(1\right)} \left( f\left(t\right) \right) = \eta > 0
\]
where $F^{\left(1\right)}$ is the function from
the statement of Lemma \ref{lem:lsc} in the case $\varepsilon = 1$.
Hence we can use $f \in \mathfrak{B}^J_J$ to estimate, by the monotonicity of $Q^+$,
\[
\begin{aligned}
\int_{J_\eta}
\left\Vert Q^+ \left( f(t) \right)
\right\Vert_{L^2} dt & \leq 
\int_0^{F^{(1)} \left( f(t_0)\right)} 
\left\Vert Q^+ \left( \mathfrak{Z}_{\textnormal{g.o.}} \left( f(t_0)\right)
\left(\tau\right) \right) \right\Vert_{L^2} d\tau \\
& \leq 1
\end{aligned}
\]
where $J_\eta = J \bigcap \left[ t_0, t_0+\eta \right]$, and we have set $\varepsilon = 1$ on each side of
(\ref{eq:F-eps-bd}) to establish the last line.
Since $\eta$ is independent of $t_0$, we can conclude by covering $J$ by a finite collection of
closed intervals, each of size at most $\eta$.
\end{proof}

Note carefully that Proposition \ref{prop:comparisonZero} relies on continuity
into $L^2$ but does \emph{not} require $f$ to solve Boltzmann's equation (\ref{eq:BE}).
For functions $f$ which actually satisfy (\ref{eq:BE}), at least to the point where
Duhamel's formula is valid, we have the following
converse to Proposition \ref{prop:comparisonZero} (which we first establish on a small time interval,
followed by longer time intervals):

\begin{lemma}
\label{lem:comparison-delta}
If $f\geq 0$ solves Boltzmann's equation (\ref{eq:BE}) on $I = \left[ 0,T \right)$ in such
a  way that Duhamel's formula holds, and in addition
\[
f \in C \left( I, L^2 \right)
\]
and
\[
Q^+ \left(f\right) \in L^1 \left( J , L^2 \right)
\]
for each compact sub-interval $J \subset I$,
then for some $\sigma > 0$ there holds
\[
f \in \mathfrak{B}^{I_\sigma}_{I_\sigma}
\]
where $I_\sigma = \left[ 0, \sigma \right)$.
\end{lemma}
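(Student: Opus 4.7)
The plan is to dominate $f$, on a small common interval forward from any $t_0 \in [0, \sigma)$, by the gain-only flow $g_{t_0}(t) := \mathfrak{Z}_{\textnormal{g.o.}}(f(t_0))(t - t_0)$. First I would choose $\sigma > 0$ small enough that $T_{\textnormal{g.o.}}(f(t_0)) > 2\sigma$ holds uniformly for $t_0 \in [0, \sigma]$: this is possible because $f \in C(I, L^2)$, so $t_0 \mapsto f(t_0)$ sends $[0, \sigma]$ into a compact subset $K \subset L^2$, and by the lower semi-continuity of $T_{\textnormal{g.o.}}$ (Theorem \ref{thm:lsc2}) the function $T_{\textnormal{g.o.}}$ is bounded strictly below by $2\sigma$ on $K$ once $\sigma$ is taken small. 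Theorem \ref{thm:QPlusLWP} then gives $g_{t_0} \in C([t_0, t_0 + 2\sigma], L^2)$ with $Q^+(g_{t_0}, g_{t_0}) \in L^1([t_0, t_0 + 2\sigma], L^2)$ for each such $t_0$.

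Fix $t_0 \in [0, \sigma)$ and set $w(t) = g_{t_0}(t) - f(t)$ on $[t_0, t_0+\sigma]$. Subtracting Duhamel's formulas (which hold for $g_{t_0}$ by Theorem \ref{thm:QPlusLWP} and for $f$ by hypothesis), both with initial data $f(t_0)$, and expanding $Q^+(g_{t_0}, g_{t_0}) - Q^+(f, f) = Q^+(w, g_{t_0}) + Q^+(f, w)$, I obtain the affine integral equation
\[
w(t) = R(t) + L(w)(t), \qquad t \in [t_0, t_0+\sigma],
\]
where $R(t) := \int_{t_0}^t \mathcal{T}(t-s) Q^-(f, f)(s) \, ds \geq 0$ and $L$ is the linear operator
\[
L(h)(t) := \int_{t_0}^t \mathcal{T}(t-s) \bigl[Q^+(h, g_{t_0}) + Q^+(f, h)\bigr](s)\, ds.
\]
The structural feature I will exploit is that $L$ is positivity-preserving, since $Q^+$, $\mathcal{T}$, and integration each preserve non-negativity.

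The hard part will be to show that, for $\sigma$ small enough and uniformly in $t_0 \in [0, \sigma]$, $L$ is a contraction on a suitable Banach space, for which the natural candidate is the subspace of $\mathcal{W}^{1,1}([t_0, t_0+\sigma], L^2)$ consisting of functions vanishing at $t_0$. The difficulty is that the arguments $f, g_{t_0}$ of $Q^+$ in $L$ are not free-transport evolutions, so Proposition \ref{prop:QplusSmallTime} does not apply to them directly. I would unwind this by inserting the Duhamel expansions of $f$ and $g_{t_0}$ and invoking Lemma \ref{lem:nonnegativeEstimate}, which is tailored precisely to the case where one only has a one-sided bound $f \leq \mathcal{T}(t - t_0) f(t_0) + \int_{t_0}^t \mathcal{T}(t-s) Q^+(f,f)(s)\, ds$; this one-sided bound is automatic from $f \geq 0$, $Q^-(f,f) \geq 0$, and the hypothesis $Q^+(f, f) \in L^1([t_0, t_0+\sigma], L^2)$. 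This reduces the estimate for $L$ to (i) a small-time factor $\delta_{f(t_0)}(\sigma)$ from Proposition \ref{prop:QplusSmallTime}, uniform over the compact set $K$, and (ii) the quantities $\|Q^+(f,f)\|_{L^1([t_0, t_0+\sigma], L^2)}$ and $\|Q^+(g_{t_0}, g_{t_0})\|_{L^1([t_0, t_0+\sigma], L^2)}$, which tend to zero as $\sigma \to 0$ by absolute continuity of the integral.

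Once the contraction is in hand, the conclusion is immediate: iterating $w^{(0)} := 0$, $w^{(n+1)} := R + L(w^{(n)})$, the positivity-preservation of $L$ and the non-negativity of $R$ yield $0 \leq w^{(n)} \leq w^{(n+1)}$ by induction, so every iterate is non-negative almost everywhere; the contraction guarantees $w^{(n)} \to w$ in the chosen norm, and passing to an a.e.\ convergent subsequence gives $w \geq 0$ a.e., i.e., $f(t) \leq g_{t_0}(t)$ for a.e.\ $(x, v)$ and every $t \in [t_0, t_0+\sigma]$. Since $t_0 \in [0, \sigma)$ was arbitrary and $[t_0, t_0+\sigma] \supseteq [t_0, \sigma)$, this is precisely the statement $f \in \mathfrak{B}^{I_\sigma}_{I_\sigma}$.
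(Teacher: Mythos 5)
Your idea of setting up an affine integral equation for the difference $w = g_{t_0} - f$ and exploiting the positivity of the iterates is appealing, but the subtraction step creates a gap that the lemma's hypotheses do not cover. When you subtract Duhamel's formulas, the source term becomes $R(t) = \int_{t_0}^t \mathcal{T}(t-s) Q^-(f,f)(s)\,ds$, whose transported time-derivative is $Q^-(f,f)$. For the affine iteration $w^{(n+1)} = R + L(w^{(n)})$ to take place in the space $\mathcal{W}^{1,1}([t_0,t_0+\sigma],L^2)$ (which is the only space where your contraction estimate, built from Lemma~\ref{lem:nonnegativeEstimate} and Proposition~\ref{prop:QplusSmallTime}, can close, since those estimates need $L^1_t L^2_{x,v}$ control of the time-derivative), you would need $Q^-(f,f) \in L^1(J, L^2)$. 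But the lemma only assumes $Q^+(f,f) \in L^1(J, L^2)$, and the loss term does not inherit this: the analysis in subsection~\ref{ssec:integrableAndContinuous} only yields $Q^-(f,f)$ in a Lebesgue space with time-exponent $\tfrac{42}{25} > 1$, not $L^1_t L^2_{x,v}$. So already $w^{(1)} = R$ may leave the space, and moreover the target $w = g_{t_0} - f$ need not lie in $\mathcal{W}^{1,1}$, so you cannot identify the fixed point of your contraction with $w$. Attempting to relocate to $C(J, L^2)$ doesn't help either, since the bilinear estimates used to make $L$ small require the free-transport (i.e.\ $\mathcal{W}^{1,1}$) structure on the argument of $Q^+$.

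The paper's proof avoids this entirely by never subtracting. From the one-sided Duhamel inequality $f(t) \leq \mathcal{T}(t-t_0) f(t_0) + \int_{t_0}^t \mathcal{T}(t-s)Q^+(f,f)(s)\,ds$ (which discards $Q^-$ using only non-negativity), one sets $h^{(1)}$ equal to the right-hand side and then iterates the gain-only Picard map $h^{(k+1)} = \mathcal{T} f(t_0) + \int \mathcal{T} Q^+(h^{(k)})$. Monotonicity of $Q^+$ gives $f \leq h^{(k)}$ for every $k$ by induction, while the smallness $\int_0^\sigma \|Q^+(f)\|_{L^2} < \varepsilon/2$ puts $h^{(1)}$ inside the small ball $\mathcal{B}_\varepsilon$ from Theorem~\ref{thm:CritLWP}, so $h^{(k)} \to \mathfrak{Z}_{\textnormal{g.o.}}(f(t_0))$ in $L^2$. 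Passing to an a.e.\ convergent subsequence gives $f \leq \mathfrak{Z}_{\textnormal{g.o.}}(f(t_0))$. Note also that the paper chooses $\sigma$ via absolute continuity of the $Q^+$ integral and does not need lower semi-continuity of $T_{\textnormal{g.o.}}$ at all, so your invocation of Theorem~\ref{thm:lsc2} is heavier machinery than necessary. If you restructure your argument to iterate the gain-only map directly, rather than the difference equation, the gap disappears and you essentially recover the paper's proof.
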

\begin{proof}
Obviously by the hypotheses for any $\varepsilon > 0$ there is a $\sigma > 0$
such that
\begin{equation}
\label{eq:comparison-delta-epsilon}
\int_0^\sigma \left\Vert Q^+ \left( f\left(s\right) \right)
\right\Vert_{L^2} ds < 2^{-1} \varepsilon
\end{equation}
but we leave the choice of a particular $\varepsilon$ for later.
We will use (\ref{eq:comparison-delta-epsilon}) in combination with the proof of Theorem \ref{thm:CritLWP}
to close a Banach fixed point iteration for the gain-only equation,
the limit of which coincides with $\mathfrak{Z}_{\textnormal{g.o.}}$ by uniqueness, and show that
$f$ lies below the function so constructed.
Hence we shall show that $f \in \mathfrak{B}^{I_\sigma}_{\left\{t_0\right\}}$,
each $t_0 \in I_\sigma$.

Fix $t_0 \in I_\sigma$.
The new iteration is defined for $t \in \left[ t_0, \sigma \right)$ by the formulas
\[
h^{\left(1\right)} \left(t\right) = \mathcal{T} \left(t-t_0\right) f \left( t_0 \right) + \int_{t_0}^t 
\mathcal{T}\left(t-s\right) Q^+ \left( f\left(s\right) \right) ds
\]
\[
h^{\left(k+1\right)} \left(t\right) = \mathcal{T} \left(t-t_0\right) f\left( t_0 \right) + \int_{t_0}^t 
\mathcal{T}\left(t-s\right) Q^+ \left( h^{\left(k\right)} \left(s\right) \right) ds
\]
In particular, it follows that
\[
\left( \partial_t + v \cdot \nabla_x \right)
\left( h^{\left(1\right)} \left(t\right) - \mathcal{T}\left(t-t_0\right) f\left(t_0\right) \right)
= Q^+ \left( f \left(t \right)\right)
\]
with $h^{\left( 1 \right)} \left( t_0 \right) = f \left( t_0 \right)$;
hence, by (\ref{eq:comparison-delta-epsilon}), there holds
\[
\begin{aligned}
&\left\Vert h^{\left(1\right)} - \mathcal{T}\left(t-t_0\right) f \left( t_0 \right) \right\Vert_{L^\infty
\left( \tilde{I}_\sigma, L^2 \right)} \\
& \qquad \qquad +
\left\Vert \left( \partial_t + v \cdot \nabla_x \right)
\left( h^{\left(1\right)} - \mathcal{T}\left(t-t_0\right) f\left(t_0\right) \right)
\right\Vert_{L^1 \left( \tilde{I}_\sigma , L^2\right)} \\
& \qquad\qquad\qquad\qquad\leq 2\cdot \left( 2^{-1} \varepsilon \right) = \varepsilon
\end{aligned}
\]
where $\tilde{I}_\sigma = \left[ t_0, \sigma \right)$; but we may now notice that the norm on the left
(the sum of \emph{both} terms) is exactly the one used to define the ball $\mathcal{B}_\varepsilon$ appearing
in the proof of Theorem \ref{thm:CritLWP}. Therefore, for small enough $\varepsilon$
we have the convergence of $h^{\left(k\right)}$ in $L^2$ as $k \rightarrow \infty$, and
by uniqueness the limit is equal
to 
\[
\mathfrak{Z}_{\textnormal{g.o.}} \left(f\left(t_0\right)\right) \left(t-t_0\right)
\]
 each $t \in \left[ t_0, \sigma \right)$.

To conclude, let us show  by induction 
that $f \left( t \right) \leq h^{(k)} \left(t\right)$ for each $t \in \left[ t_0, \sigma \right)$ and each $k$.
By Duhamel's formula and the non-negativity of $f$,
\[
f\left(t\right) \leq \mathcal{T} \left(t-t_0\right) f\left(t_0\right) + \int_{t_0}^t \mathcal{T} \left(t-s\right) 
Q^+ \left( f\left(s\right) \right) ds
\]
and the expression on the right is just $h^{\left(1 \right)}$ by definition,
so 
\[
f\left(t\right)\leq h^{\left(1\right)}\left(t\right)
 \]
for such $t$. Now suppose, for some $k$, that we have 
\[
f \left( t \right) \leq h^{\left(k\right)}\left(t\right)
\]
for each such $t$, then by Duhamel's formula and the monotonicity of $Q^+$ we also have
\[
\begin{aligned}
f\left(t\right) & \leq \mathcal{T} \left(t-t_0\right) f\left(t_0\right) + \int_{t_0}^t
 \mathcal{T} \left(t-s\right) Q^+ \left(
f\left(s\right) \right) ds \\
& \leq \mathcal{T} \left(t-t_0\right) f\left(t_0\right) + \int_{t_0}^t \mathcal{T} \left(t-s\right)
Q^+ \left( h^{\left(k\right)} \left(s\right) \right) ds \\
& = h^{\left(k+1\right)} (t)
\end{aligned}
\]
Passing to the limit in $k$ we find that for any $t_0 \leq t < \sigma$
it holds
\[
f \left(t\right) \leq \mathfrak{Z}_{\textnormal{g.o.}} \left(f\left(t_0\right)\right) \left(t-t_0\right)
\]
almost every $\left(x,v\right)$.
\end{proof}

\begin{proposition}
\label{prop:comparisonConverse}
If $f \geq 0$ solves Boltzmann's equation (\ref{eq:BE}) on $I = \left[ 0,T \right)$ in such
a  way that Duhamel's formula holds, and in addition
\[
f \in C \left( I, L^2 \right)
\]
and
\[
Q^+ (f) \in L^1 \left( J, L^2 \right)
\]
for each compact sub-interval $J \subset I$,
then
\[
f \in \mathfrak{B}^{I}_{I}
\]
\end{proposition}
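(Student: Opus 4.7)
The plan is to extend the local comparison bound of Lemma \ref{lem:comparison-delta} to the entire interval $I$ by a continuity (open/closed) argument, using the concatenation Lemma \ref{lem:comparisonExtension} together with time-translation.

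First I would observe that Lemma \ref{lem:comparison-delta} can be applied at \emph{any} base point $t_0 \in I$, not just at $t_0 = 0$: the time-translated function $\tilde{f}(s, x, v) := f(t_0 + s, x, v)$ is defined on $[0, T - t_0)$, solves (\ref{eq:BE}) in the Duhamel sense, lies in $C([0, T - t_0), L^2)$, and inherits the local $L^1_t L^2$ integrability of $Q^+$. Hence there exists $\sigma(t_0) > 0$ so that
\[
f \in \mathfrak{B}^{[t_0,\, t_0 + \sigma(t_0))}_{[t_0,\, t_0 + \sigma(t_0))}
\]

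Next, fix an arbitrary $T_0 \in (0, T)$ and set
\[
S^{*} \;=\; \sup\bigl\{\, T' \in (0, T_0] \;:\; f \in \mathfrak{B}^{[0, T')}_{[0, T')} \,\bigr\}
\]
The local statement above (applied at $t_0 = 0$) makes this set nonempty, so $S^{*} > 0$. Since the condition $f \in \mathfrak{B}^{[0,T')}_{[0,T')}$ is merely a family of pointwise inequalities indexed by pairs $(t_0, t) \in [0, T') \times [0, T')$, exhausting the open interval $[0, S^{*})$ by the increasing family $[0, T')$ with $T' \nearrow S^{*}$ immediately yields $f \in \mathfrak{B}^{[0, S^{*})}_{[0, S^{*})}$. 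Suppose for contradiction that $S^{*} < T_0$. Applying the previous paragraph at base point $S^{*}$ produces $\sigma' > 0$, which we may shrink so that $\sigma' \leq T_0 - S^{*}$, such that $f \in \mathfrak{B}^{[S^{*},\, S^{*} + \sigma')}_{[S^{*},\, S^{*} + \sigma')}$. Lemma \ref{lem:comparisonExtension} now splices these two bounds into
\[
f \in \mathfrak{B}^{[0,\, S^{*} + \sigma')}_{[0,\, S^{*} + \sigma')}
\]
contradicting the definition of $S^{*}$. Hence $S^{*} = T_0$, and the same passage to the union shows $f \in \mathfrak{B}^{[0, T_0)}_{[0, T_0)}$.

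Finally, since $T_0 < T$ was arbitrary, $f \in \mathfrak{B}^{I}_{I}$ follows: given any $t_0 \in I$ and any $t \in I$ with $t_0 \leq t < t_0 + T_{\textnormal{g.o.}}(f(t_0))$, choose $T_0 \in (t, T)$ and read off the desired pointwise bound $f(t) \leq \mathfrak{Z}_{\textnormal{g.o.}}(f(t_0))(t - t_0)$ from $f \in \mathfrak{B}^{[0, T_0)}_{[0, T_0)}$. The only subtle step is the passage to the limit at $S^{*}$, but this is a purely formal consistency check on the pointwise definition of $\mathfrak{B}^{I}_{F}$; no new compactness or regularity beyond the hypotheses is needed. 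Thus, once Lemmas \ref{lem:comparison-delta} and \ref{lem:comparisonExtension} and time-translation invariance are in hand, no serious obstacle remains.
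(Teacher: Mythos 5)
Your argument is correct and follows essentially the same route as the paper's proof: a continuity (supremum) argument over the interval, bootstrapping the local comparison of Lemma \ref{lem:comparison-delta} at translated base points and splicing the pieces together with Lemma \ref{lem:comparisonExtension}. The intermediate parameter $T_0$ and the explicit time-translation remark are minor bookkeeping variants of the paper's definition of $\zeta = \sup\{\sigma : f \in \mathfrak{B}^{I_\sigma}_{I_\sigma}\}$, and the ``exhaustion'' step matches the paper's observation that $f \in \mathfrak{B}^{I_\zeta}_{I_\zeta}$ follows from the definitions.
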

\begin{proof}
Define
\[
\zeta = \sup \left\{ \;\sigma \in \left( 0, T \right) \;
: \; f \in \mathfrak{B}^{I_\sigma}_{I_\sigma} \; \right\}
\]
where $I_\sigma = \left[ 0, \sigma \right)$.
By Lemma \ref{lem:comparison-delta} we have $\zeta > 0$. Suppose
\[
\zeta < T
\]
by way of contradiction.
We can show from definitions that 
\[
f \in 
\mathfrak{B}^{I_\zeta}_{I_\zeta}
\]
 Then again, by
Lemma \ref{lem:comparison-delta}, we also have for some
$\delta > 0$ that 
\[
f \in \mathfrak{B}^{I_{\zeta+\delta}\setminus I_\zeta}_{I_{\zeta+\delta}\setminus I_\zeta}
\]
Hence Lemma \ref{lem:comparisonExtension} implies that
\[
f \in \mathfrak{B}^{I_{\zeta+\delta}}_{I_{\zeta+\delta}}
\]
contradicting
the definition of $\zeta$.
\end{proof}

\section{Pointwise convergence and the fundamental lemma}
\label{sec:ptwiseConvFund}

The following Lemma utilizes the uniform square integrability results from Section \ref{sec:squareIntegrability}
to pass to pointwise limits in the comparison principle, under suitable conditions. The Lemma also allows us
to propagate $L^2$ convergence from one point in time to a later point in time, under the same conditions. 
We will use this Lemma both in the construction (by compactness) of ($*$)-solutions, and similarly, the passage
to limits of ($*$)-solutions, in Sections \ref{sec:ExistenceProof} and \ref{sec:starLimitsProof}, respectively.

\begin{lemma}
\label{lem:fundamental}
\emph{(the Fundamental Lemma)}
Consider the interval $I = \left[ a, b \right)$ where $-\infty < a < b \leq \infty$, and
let $f_n, f$ be measurable, non-negative functions \emph{(not necessarily solving Boltzmann's equation)} with common domain
\[
I \times \mathbb{R}^2 \times \mathbb{R}^2
\]
such that, for any compact set $K$ of the product form
\[
K = A \times B \times C
 \subset I \times \mathbb{R}^2 \times \mathbb{R}^2
 \]
 (namely
 $A \subset I$, and $B, C \subset \mathbb{R}^2$), it holds
\[
\left. f_n\right|_K , \left. f \right|_K \in C \left( A, L^1 \left( B \times C \right) \right)
\]
In particular, pointwise evaluation in time is well-defined.
We also require
\[
f_n \left( a \right), f \left( a \right) \in L^2_{x,v} \left( \mathbb{R}^2 \times \mathbb{R}^2 \right)
\]

Furthermore, let us assume $f_n$ satisfy
\begin{equation}
\label{eq:fundAssumptionOne}
\forall\left( n \in \mathbb{N} \right) \quad 
f_n \in \mathfrak{B}^{I}_{\left\{a\right\}}
\end{equation}
making no such assumption for $f$. 

Finally, assume that there holds
\begin{equation}
\label{eq:fundAssumptionThree}
\lim_{n \rightarrow \infty} \left\Vert f_n \left( a \right)
- f \left( a \right) \right\Vert_{L^2} = 0
\end{equation}
as well as the pointwise convergence
\begin{equation}
\label{eq:fundAssumptionTwo}
f_n \rightarrow f \quad \textnormal{a.e.} \quad \left( t, x, v \right) \in  
I \times \mathbb{R}^2 \times \mathbb{R}^2 
\end{equation}

Then, given all the above, we may conclude that
\begin{equation}
\label{eq:fundConclusionOne}
f \in \mathfrak{B}^{I_0 }_{\left\{a\right\}}
\end{equation}
where 
\[
I_0 = I \bigcap \left[ a, a + T_{\textnormal{g.o.}} \left( f \left( a \right) \right) \right)
\]
and we have
\begin{equation}
\label{eq:fundConclusionTwo}
\lim_{n \rightarrow \infty} \left\Vert f_n
- f  \right\Vert_{L^2 \left( J , L^2 \right) } = 0
\end{equation}
for any compact sub-interval $J \subset I_0$.
\end{lemma}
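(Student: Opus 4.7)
The plan is to exploit the lower semi-continuity and continuity of the gain-only solution map provided by Theorem \ref{thm:lsc2}, in combination with the uniform square integrability machinery of Section \ref{sec:squareIntegrability}. First I would fix a compact subinterval $J = [a, a+T] \subset I_0$, so $T < T_{\textnormal{g.o.}}(f(a))$. Applying Theorem \ref{thm:lsc2} to the $L^2$ convergence (\ref{eq:fundAssumptionThree}), there is an $N_0 \in \mathbb{N}$ so that $T < T_{\textnormal{g.o.}}(f_n(a))$ for all $n \geq N_0$, and
\[
\mathfrak{Z}_{\textnormal{g.o.}}(f_n(a)) \to \mathfrak{Z}_{\textnormal{g.o.}}(f(a)) \quad \textnormal{in} \; C([0,T], L^2).
\]
Since $[0,T]$ is compact, this convergence also takes place in $L^2([0,T] \times \mathbb{R}^2 \times \mathbb{R}^2)$.

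Next I would establish the envelope inequality (\ref{eq:fundConclusionOne}). By Fubini applied to (\ref{eq:fundAssumptionTwo}), there is a set $G \subset I_0$ of full measure such that for each $t \in G$, $f_n(t,\cdot,\cdot) \to f(t,\cdot,\cdot)$ pointwise a.e.\ in $(x,v)$. For $t \in G$ and $n \geq N_0$, hypothesis (\ref{eq:fundAssumptionOne}) provides $f_n(t) \leq \mathfrak{Z}_{\textnormal{g.o.}}(f_n(a))(t-a)$ a.e.; extracting (for this fixed $t$) a subsequence along which $\mathfrak{Z}_{\textnormal{g.o.}}(f_n(a))(t-a) \to \mathfrak{Z}_{\textnormal{g.o.}}(f(a))(t-a)$ pointwise a.e., the pointwise inequality then passes to the limit to yield $f(t) \leq \mathfrak{Z}_{\textnormal{g.o.}}(f(a))(t-a)$ a.e.\ for every $t \in G$. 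To obtain the inequality at every $t \in I_0$, I would approximate $t$ by $t_k \in G$ with $t_k \to t$ and use the continuity of $f$ in time into $L^1$ on compacts in $(x,v)$ (from the hypotheses of the lemma) together with the continuity of $\mathfrak{Z}_{\textnormal{g.o.}}(f(a))$ into $L^2$ (Theorem \ref{thm:QPlusLWP}) to pass to the limit along suitable further subsequences giving pointwise a.e.\ control on an exhaustion of $\mathbb{R}^2 \times \mathbb{R}^2$ by compacts.

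Finally, for the convergence (\ref{eq:fundConclusionTwo}): the $L^2$ convergence of the upper bounds established in Step 1, combined with Lemma \ref{lem:ui}, renders $\{\mathfrak{Z}_{\textnormal{g.o.}}(f_n(a))\}_{n \geq N_0}$ uniformly square integrable on $[0,T] \times \mathbb{R}^2 \times \mathbb{R}^2$. The pointwise domination $f_n \leq \mathfrak{Z}_{\textnormal{g.o.}}(f_n(a))$ (after the harmless time shift $t \mapsto t-a$) then transfers this uniform square integrability to $\{f_n\}_{n \geq N_0}$ via Lemma \ref{lem:uiUpper}; together with the pointwise convergence (\ref{eq:fundAssumptionTwo}), Lemma \ref{lem:uiDCT} delivers $f_n \to f$ in $L^2(J \times \mathbb{R}^2 \times \mathbb{R}^2) = L^2(J, L^2)$. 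The main technical obstacle is the subsequence bookkeeping in Step 2: the dominating sequence converges only in $L^2$, so to compare with the pointwise limit of $f_n$ one must extract a.e.-convergent subsequences both at fixed $t \in G$ and in the subsequent extension from $G$ to all of $I_0$. This is harmless precisely because the full-sequence pointwise limit $f$ is already identified by (\ref{eq:fundAssumptionTwo}), so every subsequential cluster point must agree with $f$ a.e.
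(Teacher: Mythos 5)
Your proposal is correct and takes essentially the same approach as the paper: both invoke Theorem~\ref{thm:lsc2} for $L^2$ convergence of the gain-only flow, then use the uniform square integrability machinery (Lemmas~\ref{lem:ui}, \ref{lem:uiUpper}, \ref{lem:uiDCT}) to deduce (\ref{eq:fundConclusionTwo}), and finally pass to a.e.-convergent subsequences of $\{\mathfrak{Z}_{\textnormal{g.o.}}(f_n(a))\}$ to transfer the comparison inequality from the approximating sequence to the limit. The only divergence is bookkeeping: the paper extracts a single subsequence along which the gain-only flow converges pointwise a.e.\ jointly in $(t,x,v)$ on a compact $J$ and then appeals to the arbitrariness of $J$, whereas you extract $t$-by-$t$ on a full-measure set $G$ and then close the gap from a.e.\ $t$ to every $t$ via time-continuity, a step the paper leaves implicit when it passes directly from the a.e.\ inequality to the membership $f \in \mathfrak{B}^{I_0}_{\{a\}}$.
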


\begin{proof}
Let $I_0$ be as in the statement of the lemma, and let $J \subset I_0$ be a compact sub-interval.
By Theorem \ref{thm:lsc2}, (\ref{eq:fundAssumptionThree}) implies that
\[
\lim_{n \rightarrow \infty} \left\Vert \mathfrak{Z}_{\textnormal{g.o.}} \left( f_n \left( a \right) \right) \left( \cdot - a \right)
- \mathfrak{Z}_{\textnormal{g.o.}} \left( f \left( a \right) \right)
 \left( \cdot - a\right) \right\Vert_{L^2 \left( J , L^2 \right)} = 0
\]
where we have used the compactness of $J$ to drop from $L^\infty$ to $L^2$ in the time variable.
Therefore, by Lemma \ref{lem:ui} with 
\[
E = J \times \mathbb{R}^2 \times \mathbb{R}^2
\]
we find that the sequence
\[
\left\{ \mathfrak{Z}_{\textnormal{g.o.}} \left( f_n \left( a \right) \right) \left( \cdot - a\right) \right\}_{n}
\]
is uniformly square integrable in $J \times\mathbb{R}^2 \times \mathbb{R}^2$. In particular, by Lemma \ref{lem:uiUpper}
and (\ref{eq:fundAssumptionOne}), the sequence
\[
\left\{ f_n \left( \cdot \right) \right\}_n
\]
is uniformly square integrable in $J \times \mathbb{R}^2 \times \mathbb{R}^2$.
Therefore, by Lemma \ref{lem:uiDCT} and (\ref{eq:fundAssumptionTwo}), we immediately deduce
\begin{equation}
\lim_{n \rightarrow \infty} \left\Vert f_n 
- f \right\Vert_{L^2 \left( J , L^2  \right)} = 0
\end{equation}
which is (\ref{eq:fundConclusionTwo}). In particular, we have
\[
f \in L^2 \left( J, L^2 \right)
\]
So there only remains to prove (\ref{eq:fundConclusionOne}).

Recall again that
\[
\lim_{n \rightarrow \infty} \left\Vert \mathfrak{Z}_{\textnormal{g.o.}} \left( f_n \left( a \right) \right) \left( \cdot - a\right)
- \mathfrak{Z}_{\textnormal{g.o.}} \left( f \left( a \right) \right)
 \left( \cdot - a\right) \right\Vert_{L^2 \left( J, L^2 \right)} = 0
\]
Therefore, passing to a subsequence in $n$, say $n_m$, $m = 1,2,3,\dots$, we find that in the limit
$m \rightarrow \infty$ we have the pointwise convergence
\[
\mathfrak{Z}_{\textnormal{g.o.}} \left( f_{n_m} \left( a \right) \right) \left( \cdot - a\right)
\rightarrow \mathfrak{Z}_{\textnormal{g.o.}} \left( f \left( a \right) \right) \left( \cdot - a \right)
\; \textnormal{a.e.} \;
\left( t, x, v \right) \in J \times \mathbb{R}^2 \times \mathbb{R}^2
\]
Combining this pointwise convergence of $\mathfrak{Z}_{\textnormal{g.o.}}$ with
the pointwise convergence from (\ref{eq:fundAssumptionTwo}), and the fact that
$J$ is an arbitrary compact subinterval of
$I_0$, we find that
\[
f \in \mathfrak{B}^{I_0}_{\left\{a\right\}}
\]
which is (\ref{eq:fundConclusionOne}). Indeed, since $f_{n_m} \in \mathfrak{B}^I_{\left\{a\right\}}$,
\[
\begin{aligned}
& f \left( t \right) - \mathfrak{Z}_{\textnormal{g.o.}} \left( f \left( a \right) \right) \left( t-a \right) \\
& \leq \left[ f\left(t\right) - \mathfrak{Z}_{\textnormal{g.o.}} \left( f \left( a \right) \right) \left( t-a \right) \right]
- \left[ f_{n_m} \left( t \right) - \mathfrak{Z}_{\textnormal{g.o.}} \left( f_{n_m} \left( a \right) \right)
\left( t-a \right) \right] \\
& = \left[ f \left( t \right) - f_{n_m} \left( t \right) \right] -
\left[ \mathfrak{Z}_{\textnormal{g.o.}} \left( f \left( a \right) \right) \left( t-a \right)
- \mathfrak{Z}_{\textnormal{g.o.}} \left( f_{n_m} \left( a \right) \right)
\left( t-a \right) \right]
\end{aligned}
\]
and both terms on the last line tend to zero pointwise almost every $\left( t,x,v \right)$ as $m \rightarrow \infty$.
\end{proof}

\section{Entropy and entropy dissipation}
\label{sec:entropyDissipation}

For any non-negative measurable function $h_0 \left( x,v \right)$ such that 
\[
\mathbf{1}_{0 < h_0 < 1} h_0 \log h_0 \in L^1
\]
 the \emph{entropy}
$H\left( h_0 \right)\in \left( -\infty, +\infty \right]$ is defined by 
\[
H \left( h_0 \right) = \int_{\mathbb{R}^2 \times \mathbb{R}^2} h_0 \left(x,v\right) \log
h_0 \left( x,v \right) dx dv
\]
where the real-valued function $s \mapsto s \log s \; \left( s \geq 0 \right)$
 is understood, by continuity, to take the value $0$ at $s=0$.
 
More generally, we will decompose
\[
H \left( h_0 \right) = H^+ \left( h_0 \right) - H^- \left(h_0 \right)
\]
where
\[
H^+ \left( h_0 \right) = \int_{h_0 > 1}
h_0 \left( x,v \right) \log h_0 \left( x,v \right) dx dv
\]
and
\[
H^- \left( h_0 \right) = \int_{0 < h_0 \leq 1}
h_0 \left( x,v \right) \log \frac{1}{h_0 \left( x,v \right)} dx dv
\]
Recall from (\ref{eq:defSubscriptedNorm}) the norm
\[
\left\Vert h_0 \right\Vert_{L^1_{2,t}} = \int_{\mathbb{R}^2 \times \mathbb{R}^2}
\left( 1 + \left| x - v t \right|^2 + \left| v \right|^2 \right) \left| h_0 \left( x,v \right) \right| dx dv
\]
where $t \in \mathbb{R}$, and $L^1_{2}$ is a shorthand for $L^1_{2,0}$. 
The next lemma shows that the entropy is well-defined in $L^1_2$, 
although possibly taking the value $+\infty$:
to this end, it suffices to prove that the negative part $H^- \left( h_0 \right)$ is finite.

We shall require the \emph{(unsigned) entropy densities}
defined via the functions $\alpha^{\pm} : \mathbb{R} \rightarrow \mathbb{R}$,
\[
\alpha^{-} \left( s \right) = \mathbf{1}_{0 < s < 1} \cdot s \log \frac{1}{s}
\]
\[
\alpha^{+} \left( s \right) = \mathbf{1}_{s > 1} \cdot s \log s
\]
so
\[
H^{\pm} \left( h_0 \right) = \int \alpha^{\pm} \left( h_0 \right) dx dv
\]

\begin{lemma}
\label{lem:alphaPlus}
For any $0 \leq a < b$,
\[
0 \leq
\alpha^{+} \left( b \right) - \alpha^+ \left( a \right) \leq
 \frac{1}{2} \left( b+a \right) \left( b-a \right)
\]
In particular, letting $a = 0$ and $b = s > 0$, we have
\[
\alpha^{+} \left( s \right) \leq \frac{s^2}{2}
\]
\end{lemma}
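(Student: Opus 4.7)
The plan is to reduce both bounds to a one-line calculus fact, namely that the (almost-everywhere) derivative of $\alpha^+$ is pointwise controlled by the identity map on $[0,\infty)$.

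First I would establish the lower bound by observing that $\alpha^+$ is non-decreasing on $[0,\infty)$: it vanishes on $[0,1]$ and on $(1,\infty)$ one computes $(\alpha^+)'(s) = \log s + 1 > 0$. Hence $\alpha^+(b) \geq \alpha^+(a)$ whenever $b \geq a \geq 0$, which yields the left inequality.

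For the upper bound, I would note that $\alpha^+$ is absolutely continuous on any compact subinterval of $[0,\infty)$ (it is continuous and piecewise $C^1$, with derivative $0$ on $[0,1)$ and $\log s + 1$ on $(1,\infty)$), so the fundamental theorem of calculus gives
\[
\alpha^+(b) - \alpha^+(a) = \int_a^b (\alpha^+)'(s)\, ds.
\]
Thus it suffices to prove the pointwise bound $(\alpha^+)'(s) \leq s$ for almost every $s \in [0,\infty)$. For $s < 1$ this is trivial since the left side vanishes. For $s > 1$ the inequality reads $\log s + 1 \leq s$, which follows from considering $h(s) = s - \log s - 1$: one has $h(1) = 0$ and $h'(s) = 1 - 1/s \geq 0$ for $s \geq 1$, so $h(s) \geq 0$ on $[1,\infty)$. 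Integrating the bound $(\alpha^+)'(s) \leq s$ from $a$ to $b$ yields
\[
\alpha^+(b) - \alpha^+(a) \leq \int_a^b s \, ds = \frac{b^2 - a^2}{2} = \frac{1}{2}(b+a)(b-a),
\]
which is the desired upper bound. The specialization to $a = 0, b = s$ recovers $\alpha^+(s) \leq s^2/2$ immediately.

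There is no real obstacle here; the only point requiring any care is the treatment of the corner at $s = 1$, which is handled cleanly by using absolute continuity rather than trying to split cases based on where $a$ and $b$ sit relative to $1$.
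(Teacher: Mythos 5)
Your proof is correct and takes essentially the same approach as the paper: bound $(\alpha^+)'(s)$ pointwise by $s$ via $\log s \le s-1$ and integrate. The paper's version is slightly terser (it applies the fundamental theorem of calculus without commenting on the corner at $s=1$), but the argument is the same.
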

\begin{proof}
For any $s > 1$ we have
\[
\frac{d}{ds} \alpha^+ \left( s \right) =
1 + \log s \leq 1 + \left( s-1 \right) = s
\]
Hence we may compute by the fundamental theorem of calculus: for any $0 < a < b$,
\[
0 \leq
\alpha^+ \left( b \right) - \alpha^+ \left( a \right) \leq
\int_a^b s ds = \frac{1}{2} \left( b+a \right) \left( b-a \right)
\]
\end{proof}

\begin{figure}[ht]
\centering
\includegraphics[width=10cm]{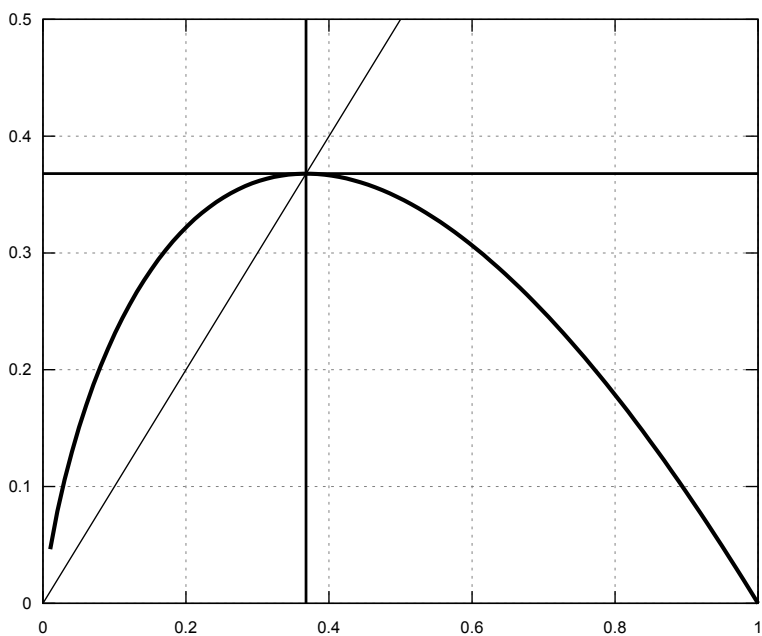}
\captionof{figure}{Graph of $\alpha^{-}$}
\label{fig:alphaminus}
\end{figure}

\begin{lemma}
\label{lem:alphaMinus}
The function $\alpha^{-}$ is continuous on the whole real line; moreover:
\begin{enumerate}
\item The restriction of $\alpha^{-}$ to $\left( 0, 1 \right)$ is
smooth and concave.

\item $\alpha^{-}$ attains a unique maximum value $\alpha^{-} \left( e^{-1} \right)
= e^{-1}$.

\item $\alpha^{-}$ is increasing on $\left(0, e^{-1}\right)$.

\item $\alpha^{-}$ is decreasing on $\left( e^{-1}, 1\right)$.

\item On compact subintervals of $\left( 0, 1 \right]$, $\alpha^{-}$ is Lipschitz.

\item Whenever $s \in \left[ e^{-1}, 1 \right]$, it holds $\alpha^{-} \left( s \right) \leq s$.
\end{enumerate}
\end{lemma}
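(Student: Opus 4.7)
The plan is purely calculus-based: write out $\alpha^{-}$ explicitly and verify each of the six properties by inspection of the formula $\alpha^{-}(s) = -s \log s$ on the interval $(0,1)$, using that $\alpha^{-}(s) = 0$ otherwise. First I would dispense with continuity: $\alpha^{-}$ is obviously smooth on each of $(-\infty,0)$, $(0,1)$, and $(1,\infty)$ where it agrees with $0$, $-s\log s$, and $0$ respectively; continuity reduces to checking the two one-sided limits $\lim_{s \to 0^{+}} (-s\log s) = 0$ and $\lim_{s \to 1^{-}} (-s \log s) = 0$, both standard.

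Next, on $(0,1)$ I would compute
\[
\frac{d}{ds}\alpha^{-}(s) = -\log s - 1, \qquad \frac{d^2}{ds^2}\alpha^{-}(s) = -\frac{1}{s},
\]
which immediately gives smoothness and concavity (item (1)). The critical point equation $-\log s - 1 = 0$ yields the unique stationary point $s = e^{-1}$ in $(0,1)$, at which $\alpha^{-}(e^{-1}) = e^{-1} \cdot 1 = e^{-1}$; combined with the boundary values $\alpha^{-}(0) = \alpha^{-}(1) = 0$ and concavity, this establishes item (2) along with the monotonicity claims (3) and (4), since $\frac{d}{ds}\alpha^{-} > 0$ precisely when $s < e^{-1}$ and $< 0$ precisely when $s > e^{-1}$ (within $(0,1)$).

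For item (5), on a compact subinterval $[a,b] \subset (0,1]$ with $0 < a \leq b \leq 1$, the derivative $-\log s - 1$ is bounded in absolute value by $|\log a| + 1$ on $[a, \min(b,1))$; since $\alpha^{-}$ is continuous at $s=1$ and vanishes for $s \geq 1$, the only potential concern is the corner at $s=1$, where the left derivative is $-1$ and the right derivative is $0$, so $\alpha^{-}$ remains Lipschitz across $s=1$ (with constant at most $\max(|\log a| + 1, 1)$). Finally for item (6), when $s \in [e^{-1}, 1]$ the inequality $\alpha^{-}(s) \leq s$ is equivalent (dividing by $s > 0$) to $\log(1/s) \leq 1$, i.e.\ $s \geq e^{-1}$, which holds by hypothesis.

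There is no substantive obstacle; the only mild care needed is the continuity at the endpoints and the corner at $s=1$ when asserting the Lipschitz bound.
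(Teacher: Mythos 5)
Your proof is correct and takes essentially the same calculus-based approach as the paper (explicit derivatives, critical point at $e^{-1}$, sign of the derivative for monotonicity). Two minor variations worth noting but not substantive: you handle the corner at $s=1$ in item (5) more explicitly than the paper (which just cites boundedness of $\log s$), and for item (6) you divide by $s$ directly rather than chaining $\alpha^{-}(s)\le\alpha^{-}(e^{-1})=e^{-1}\le s$ via the decreasing property as the paper does.
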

\begin{proof}
The continuity is trivial, as is the smoothness on $\left( 0, 1 \right)$. The concavity on
$\left( 0,1 \right)$ follows from the formula
\[
\frac{d^2}{ds^2} \alpha^{-} \left( s \right) = - \frac{1}{s} < 0
\]
which in turn implies that $\alpha^{-}$ takes a unique maximum (which must lie in the
interval $\left( 0, 1 \right)$).

Since
\begin{equation}
\label{eq:alphaMinusDeriv}
\frac{d}{ds} \alpha^{-} \left(s\right) = - 1 - \log s
\end{equation}
we easily observe that $\alpha^{-}$ is (strictly) increasing on $\left( 0, e^{-1} \right)$
and (strictly) decreasing on $\left( e^{-1}, 1 \right)$. In particular, the
unique maximum is attained at $s = e^{-1}$, and we compute
\[
\alpha^{-} \left( e^{-1} \right) = e^{-1}
\]

The Lipschitz continuity on $\left( 0, 1 \right]$ follows again from (\ref{eq:alphaMinusDeriv}) and
the fact that $\log s$ is bounded on compact subsets of $\left( 0, 1 \right]$.

Since $\alpha^{-}$ is decreasing on $\left( e^{-1}, 1 \right)$ we can compute, for $s \in
\left( e^{-1}, 1 \right)$,
\[
\begin{aligned}
\alpha^{-} \left(s\right) & \leq
\alpha^{-} \left( e^{-1} \right)
= e^{-1} \leq s
\end{aligned}
\]
Hence $\alpha^{-} \left( s \right) \leq s$ for $s \in \left[ e^{-1}, 1 \right]$.
\end{proof}

\begin{lemma}
\label{lem:HminusFinite}
For any non-negative measurable function $h_0 \in L^1_2$, we have
\[
H^- \left( h_0 \right) < \infty
\]
In fact, for any $T \in \left[ 0, \infty \right)$,
\begin{equation}
\label{eq:HminusBound}
H^- \left( h_0 \right) \leq C_0 + \left\Vert h_0 \right\Vert_{L^1_{2,T}} 
\end{equation}
where the additive constant $C_0$ is given by
\begin{equation}
\label{eq:CzeroDef}
C_0 = \int_{\mathbb{R}^2 \times \mathbb{R}^2} \left( 1 + \left| x \right|^2 +
\left| v \right|^2 \right) \exp \left( - 1 - \left| x \right|^2 - \left| v \right|^2 \right) dx dv
\end{equation}
which is simply $\left| H \left( m_0 \right) \right|$
where $m_0 = \exp \left( - 1 - \left| x \right|^2 - \left| v \right|^2 \right)$.
\end{lemma}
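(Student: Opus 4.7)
The plan is to compare $h_0$ pointwise with the shifted Gaussian
\[
m_T(x,v) = \exp\bigl( -1 - |x - vT|^2 - |v|^2 \bigr)
\]
which is designed so that $\log\bigl(1/m_T\bigr) = 1 + |x-vT|^2 + |v|^2$, matching exactly the weight appearing in $\|\cdot\|_{L^1_{2,T}}$; note also that $m_T \leq e^{-1}$ everywhere, since the exponent never exceeds $-1$.

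I would split the phase space $\mathbb{R}^2_x \times \mathbb{R}^2_v$ into three disjoint regions and estimate $\int \alpha^-(h_0)\,dx\,dv$ on each. First, on $\{h_0 \geq 1\}$ the integrand $\alpha^-(h_0)$ vanishes by the definition of $\alpha^-$, so this region contributes zero. Second, on $\{m_T \leq h_0 < 1\}$, monotonicity of $s \mapsto \log(1/s)$ gives
\[
\alpha^-(h_0) = h_0 \log \tfrac{1}{h_0} \leq h_0 \log \tfrac{1}{m_T} = h_0 \bigl(1 + |x-vT|^2 + |v|^2\bigr),
\]
so integration over this region contributes at most $\|h_0\|_{L^1_{2,T}}$. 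Third, on $\{0 < h_0 < m_T\}$, both $h_0$ and $m_T$ lie in $(0, e^{-1}]$, where Lemma \ref{lem:alphaMinus} states that $\alpha^-$ is increasing; therefore
\[
\alpha^-(h_0) \leq \alpha^-(m_T) = m_T\bigl(1 + |x-vT|^2 + |v|^2 \bigr).
\]

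To handle the resulting integral $\int m_T (1 + |x-vT|^2 + |v|^2)\,dx\,dv$, I would perform the change of variables $y = x - vT$ at each fixed $v$ (a translation of unit Jacobian), reducing it to
\[
\int_{\mathbb{R}^2 \times \mathbb{R}^2} (1 + |y|^2 + |v|^2) \exp\bigl( -1 - |y|^2 - |v|^2 \bigr)\,dy\,dv,
\]
which is precisely the constant $C_0$ defined in (\ref{eq:CzeroDef}). Summing the three contributions yields $H^-(h_0) \leq \|h_0\|_{L^1_{2,T}} + C_0$.

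There is no significant obstacle here: the only subtlety is ensuring the comparison function is uniformly bounded above by $e^{-1}$ so that the monotonicity clause of Lemma \ref{lem:alphaMinus} applies on the ``small'' region, and choosing the weight in the exponent so that (a) $\log(1/m_T)$ produces exactly the $L^1_{2,T}$ weight on the ``large'' region and (b) the free-transport-type change of variables in the ``small'' region neatly collapses to a $v$-independent Gaussian moment integral.
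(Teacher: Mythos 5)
Your proposal is correct and follows essentially the same strategy as the paper's proof: compare $h_0$ against the transported Gaussian $m_T = \mathcal{T}(T)m_0$ (bounded above by $e^{-1}$), split the set $\{0 < h_0 \le 1\}$ at the level $m_T$, use monotonicity of $\alpha^-$ below $m_T$ and monotonicity of $\log(1/s)$ above $m_T$. Your explicit change of variables $y = x - vT$ on the small region is the same step the paper carries out by invoking that free transport preserves Lebesgue measure, so $H^-(m_T) = H^-(m_0) = C_0$.
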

\begin{proof}
We will require the (non-normalized) Gaussian function $m_0$ on $\mathbb{R}^2 \times \mathbb{R}^2$
defined by
\[
m_0 \left( x, v \right) = \exp \left(- 1 - \left| x \right|^2 - \left| v \right|^2 \right)
\]
and we also define via free transport (\ref{eq:freeTransport}), denoted $\mathcal{T}$, 
\[
m_t = \mathcal{T} \left( t \right) m_0
\]
which is what will allow us (by the choice $t=T$)
 to introduce the parameter $T$ in (\ref{eq:HminusBound}) \emph{without}
accepting a $T$-varying loss in constants. Note that $m_0$ (hence $m_T$) is everywhere bounded
above by $e^{-1}$.

Choose an arbitrary time $T$ with $0 \leq T < \infty$, which will be considered fixed for the
rest of the proof of this lemma.

Let us decompose the set $\left\{ 0 < h_0 \leq 1 \right\}$ into two parts, which
we will denote $A, B$, via the formulas
\[
A = \left\{ \; \left( x,v \right) \; : \; 0 < h_0 \left( x,v \right) \leq m_T \left( x,v \right)
 \; \right\}
\]
\[
B = \left\{ \; \left( x,v \right) \; : \; m_T \left( x,v \right) < h_0 \left( x,v \right) \leq 1 \; \right\}
\]
Denote the respective integrals $H^{-}_{A} \left( h_0 \right)$ and $H^{-}_{B} \left( h_0 \right)$,
 providing a decomposition of $H^{-} \left( h_0 \right)$ as their sum.

Let us first handle $H^{-}_{A} \left( h_0 \right)$. Recall that
$\left\Vert m_T \right\Vert_{L^\infty} \leq e^{-1}$. Additionally, by Lemma \ref{lem:alphaMinus}(3),
for $0 < s \leq e^{-1}$ we have that $\alpha^{-}$ is
\emph{increasing} so $H^{-}_{A} \left( h_0 \right)$ has the bound
\[
H^{-}_{A} \left( h_0 \right) \leq H^{-} \left( m_T \right)
\]
But the transport semigroup $\mathcal{T} \left( \cdot \right)$ preserves the Lebesgue measure
on $\mathbb{R}^2 \times \mathbb{R}^2$ so we have
\[
H^{-} \left( m_T \right) = H^{-} \left( m_0 \right)
\]
and, since $0 < m_0 \left( x,v \right) \leq e^{-1} < 1$,
$H^{-} \left( m_0 \right) = - H \left( m_0 \right)$ is the
constant $C_0$ appearing in (\ref{eq:HminusBound}) and (\ref{eq:CzeroDef}).

For $H^{-}_{B} \left( h \right)$, simply observe that
for all $s \in \left( 0, 1 \right]$ the function $s \mapsto \log \frac{1}{s}$ is a non-negative
\emph{decreasing} function, so we can simply bound $\log \frac{1}{h_0}$ by
$\log \frac{1}{m_T}$, that is,
\[
H^{-}_{B} \left( h_0 \right) \leq
\int_{\mathbb{R}^2 \times \mathbb{R}^2} 
\left( 1 + \left| x - v T \right|^2 + \left| v \right|^2 \right) h_0 \left( x,v \right) dx dv
\]
and the right-hand side is just $\left\Vert h_0 \right\Vert_{L^1_{2,T}}$.
\end{proof}

The (local) \emph{instantaneous entropy dissipation} $\mathcal{D}$,
 corresponding to (\ref{eq:BE}), is defined for any
non-negative measurable function $h \left( t,x,v \right)$ by
the formula
\begin{equation}
\label{eq:defEntrDis}
\left[ \mathcal{D} \left( h \right) \right] \left( t,x \right) =
\frac{1}{4} \int_{\mathbb{S}^1 \times \mathbb{R}^2 \times \mathbb{R}^2}
\left( h^\prime h_*^\prime - h h_* \right) \log
\frac{h^\prime h_*^\prime}{h h_*} d\sigma dv dv_*
\end{equation}
where $h = h \left( t,x,v \right)$, $h_* = h \left( t,x, v_* \right)$,
$h^\prime = h \left( t,x, v^\prime\right)$, and $h_*^\prime = h \left( t,x, v_*^\prime \right)$.
Now since
\[
\left( a-b \right) \log \frac{a}{b}
\]
is non-negative for each pair of positive numbers $a,b$ (since the sign of
$a-b$ is always equal to the sign of $\left(\log a - \log b\right)$), it follows that
$\mathcal{D}\left(h\right)$ is always  non-negative (although it may be infinite).

Any $0 \leq f \in C^1 \left( \left[ 0, T \right], \mathcal{S} \right)$
solving (\ref{eq:BE}) on $\left[ 0, T \right]$ with initial data
$f_0 = f \left( t=0 \right)$ is known to satisfy the \emph{entropy identity}
\begin{equation}
\label{eq:entropyIdentity}
H \left( f \left( t \right) \right) + 
\int_0^t \int_{\mathbb{R}^2} \mathcal{D} \left( f \left( \tau \right) \right) dx d\tau =
H \left( f_0 \right)
\end{equation}
each $0 \leq t \leq T$.
The (space-)time integral of the (local) instantaneous entropy dissipation is simply known as
the \emph{entropy dissipation} (at time $t$, although the $t$ dependence may be
suppressed).

\begin{remark}
The integrand in the dissipation functional is possibly ambiguous if the quantities
$h, h_*, h^\prime, h_*^{\prime}$ vanish at some point of the integration domain.
Such a situation cannot happen at $t > 0$ for classical
solutions of (\ref{eq:BE}) as long as the initial data is not \emph{identically} zero
(see \cite{Vi2002}, Chapter 2, Section 6, titled ``Lower bounds,''
and references therein). Unfortunately, it is sometimes hard to prove that $f \left(t,x,v\right) > 0$ a.e. 
$\left( x,v \right)$ for $t > 0$  at low regularity. The convention used by
DiPerna and Lions in \cite{DPL1991} is to set the integrand to infinity at any point
$\left( t, x, v, v_*, \sigma \right)$ where any
of $h, h_*, h^\prime, h_*^\prime$ vanishes, and we follow the same convention so as to make use of their results.
\end{remark}

In regimes of lesser regularity, the equality
(\ref{eq:entropyIdentity}) may be downgraded to an \emph{entropy inequality},
or fail altogether. In the $L^2$ regime, the version of the entropy inequality we shall ultimately
require is
\[
H \left( f \left( t \right) \right) + 
\int_0^t \int_{\mathbb{R}^2} \mathcal{D} \left( f \left( \tau \right) \right) dx d\tau \leq
H \left( f_0 \right)
\]
almost every $t$. To this end, we consider the
terms $H^+$ and $H^-$ separately:

\begin{lemma}
\label{lem:HplusBd}
For any non-negative measurable
 function $h_0 \in L^2$, the positive entropy integral $H^+ \left( h_0 \right)$ is finite, being bounded by
 the \emph{square} of the $L^2$ norm:
 \begin{equation}
 \label{eq:HplusBdOne}
 H^+ \left( h_0 \right) \leq \left\Vert h_0 \right\Vert_{L^2}^2
 \end{equation}
 Moreover, for any $h_{0,1}, h_{0,2} \in L^2$ we have
 \begin{equation}
 \label{eq:HplusBdTwo}
 \left| H^+ \left( h_{0,1} \right) - H^+ \left( h_{0,2} \right) \right| \leq
\left( \max_{i \in \left\{ 1, 2 \right\}}
\left\Vert h_{0,i} \right\Vert_{L^2} \right) \cdot
\left\Vert h_{0,1} - h_{0,2} \right\Vert_{L^2}
 \end{equation}
\end{lemma}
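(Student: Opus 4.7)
The plan is to deduce both inequalities directly from Lemma \ref{lem:alphaPlus}, which already encodes the pointwise behavior of $\alpha^+$ that we need. The proof reduces to a pointwise estimate followed by a Cauchy--Schwarz step, with essentially no further work required.

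For the first estimate (\ref{eq:HplusBdOne}), I would invoke the bound $\alpha^+(s) \leq s^2/2$ from the final assertion of Lemma \ref{lem:alphaPlus}, applied pointwise with $s = h_0(x,v)$. Since $H^+(h_0) = \int \alpha^+(h_0) \, dx\,dv$ by definition, integrating this pointwise bound over $\mathbb{R}^2 \times \mathbb{R}^2$ yields $H^+(h_0) \leq \tfrac{1}{2}\|h_0\|_{L^2}^2$, which is in fact slightly stronger than (\ref{eq:HplusBdOne}).

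For the second estimate (\ref{eq:HplusBdTwo}), I would use the first assertion of Lemma \ref{lem:alphaPlus} in its symmetric form: for any $a, b \geq 0$, writing without loss of generality $a \leq b$,
\[
|\alpha^+(b) - \alpha^+(a)| \leq \tfrac{1}{2}(a+b)(b-a) = \tfrac{1}{2}(a+b)|b-a|.
\]
Applied pointwise with $a = h_{0,1}(x,v)$ and $b = h_{0,2}(x,v)$ (or vice versa), this gives
\[
|\alpha^+(h_{0,1}) - \alpha^+(h_{0,2})| \leq \tfrac{1}{2}\bigl(h_{0,1} + h_{0,2}\bigr) |h_{0,1} - h_{0,2}|
\]
almost everywhere. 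Integrating, applying the triangle inequality inside the integral to move the absolute value past the difference of integrals, and then Cauchy--Schwarz, we obtain
\[
|H^+(h_{0,1}) - H^+(h_{0,2})| \leq \tfrac{1}{2}\bigl(\|h_{0,1}\|_{L^2} + \|h_{0,2}\|_{L^2}\bigr)\|h_{0,1} - h_{0,2}\|_{L^2},
\]
and since $\tfrac{1}{2}(\|h_{0,1}\|_{L^2} + \|h_{0,2}\|_{L^2}) \leq \max_i \|h_{0,i}\|_{L^2}$, the claim follows.

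There is no genuine obstacle here; the only point requiring mild care is the symmetrization of the first bound of Lemma \ref{lem:alphaPlus} when $a$ and $b$ are unordered, which is trivial since the inequality is symmetric in $a,b$ under taking absolute values. The two parts of the lemma are, in essence, the integrated forms of the quadratic upper bound and the mean-value-type bound for $\alpha^+$, respectively.
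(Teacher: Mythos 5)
Your proof is correct and follows essentially the same route as the paper: both estimates are reduced pointwise to Lemma~\ref{lem:alphaPlus}, with (\ref{eq:HplusBdTwo}) obtained by pushing the absolute value inside the integral, applying Cauchy--Schwarz to the resulting product, and finishing with the triangle inequality in $L^2$. The only difference is that you spell out the elementary steps (symmetrization of $a,b$, the factor $\tfrac12$, the passage from $\|h_{0,1}+h_{0,2}\|_{L^2}$ to $2\max_i\|h_{0,i}\|_{L^2}$) that the paper leaves implicit.
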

\begin{proof}
Both bounds follow immediately from Lemma \ref{lem:alphaPlus}. In particular, for
(\ref{eq:HplusBdTwo}) we use Lemma \ref{lem:alphaPlus} with the
 Cauchy-Schwarz inequality,
\[
 \left| H^+ \left( h_{0,1} \right) - H^+ \left( h_{0,2} \right) \right| \leq
 \frac{1}{2}
\left\Vert h_{0,1} + h_{0,2} \right\Vert_{L^2}
\left\Vert h_{0,1} - h_{0,2} \right\Vert_{L^2}
\]
and conclude by the triangle inequality.
\end{proof}

\begin{lemma}
\label{lem:HminusConv}
Let us be given non-negative measurable functions
\[
h_0, h_{0,n} \in  L^1_2
\]
for $n=1,2,3,\dots$, such that
\[
\sup_{n \in \mathbb{N}} \left\Vert h_{0,n} \right\Vert_{L^1_2} < \infty
\]
and
\[
h_{0,n} \left( x,v \right) \rightarrow h_0 \left( x,v \right) \quad
\textnormal{a.e.} \quad \left( x,v \right) \in \mathbb{R}^2 \times \mathbb{R}^2
\]
as $n \rightarrow \infty$. 
Then
\[
\lim_{n \rightarrow \infty} H^- \left( h_{0,n} \right) = H^- \left( h_0 \right)
\]
\end{lemma}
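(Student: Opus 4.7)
The plan is to write $H^-(h_{0,n}) = \int \alpha^-(h_{0,n})\,dx\,dv$ and pass to the limit by a compact/tail decomposition. By Lemma \ref{lem:alphaMinus} the function $\alpha^-$ is continuous on $[0,\infty)$ and satisfies $0 \le \alpha^- \le e^{-1}$ (with $\alpha^-(s)=0$ for $s\ge 1$), so the pointwise hypothesis $h_{0,n}\to h_0$ a.e.\ yields $\alpha^-(h_{0,n})\to\alpha^-(h_0)$ a.e. Over any ball $B_R\subset\mathbb{R}^2\times\mathbb{R}^2$ of finite Lebesgue measure, bounded convergence immediately gives $\int_{B_R}\alpha^-(h_{0,n})\to\int_{B_R}\alpha^-(h_0)$. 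Fatou's lemma applied to $(1+|x|^2+|v|^2)h_{0,n}$ yields $\|h_0\|_{L^1_2}\le M:=\sup_n\|h_{0,n}\|_{L^1_2}$, so $h_0$ obeys the same tail estimates as the $h_{0,n}$. The argument will therefore reduce to a uniform-in-$n$ tail bound plus a standard $\varepsilon/3$ argument.

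The core task is to show
\[
\int_{B_R^c}\alpha^-(h_{0,n})\,dx\,dv \;\longrightarrow\; 0 \quad \text{as } R\to\infty, \ \text{uniformly in } n.
\]
Note that the Maxwellian-comparison bound $\alpha^-(h_{0,n})\le \alpha^-(m_0)+(1+|x|^2+|v|^2)h_{0,n}$ implicit in the proof of Lemma \ref{lem:HminusFinite} is inadequate: it gives uniform boundedness of $H^-(h_{0,n})$ but \emph{not} uniform tightness, since nothing in the hypothesis prevents the $L^1_2$-mass of $h_{0,n}$ from drifting to infinity as $n\to\infty$.

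The remedy I propose is an interpolation. Maximizing $s^{1/4}\log(1/s)$ on $(0,1]$ (maximum attained at $s=e^{-4}$ with value $4/e$) gives the pointwise bound $\alpha^-(s)\le (4/e)\,s^{3/4}$ for every $s\ge 0$. Writing $w=1+|x|^2+|v|^2$ and applying H\"older's inequality with exponents $(4/3,4)$ to the factorization $h_{0,n}^{3/4}=(h_{0,n}w)^{3/4}w^{-3/4}$,
\[
\int_{B_R^c}\alpha^-(h_{0,n})\,dx\,dv \;\le\; \tfrac{4}{e}\,\|h_{0,n}\|_{L^1_2}^{3/4}\left(\int_{B_R^c}w^{-3}\,dx\,dv\right)^{1/4}.
\]
The dual weight $w^{-3}$ is integrable on $\mathbb{R}^2\times\mathbb{R}^2\cong\mathbb{R}^4$ (integrand $\sim r^{-6}$ against volume element $\sim r^3$), with $\int_{B_R^c}w^{-3}\,dx\,dv=O(R^{-2})$. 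The tail is therefore at most $CM^{3/4}R^{-1/2}$, uniformly in $n$, and the same bound applies to $h_0$. Combining with the bounded convergence on $B_R$ gives the conclusion.

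The main obstacle is exactly this uniform tail estimate: uniform $L^1_2$-control plus a.e.\ convergence does not imply uniform tightness of $\{(1+|x|^2+|v|^2)h_{0,n}\}$, which is why the Maxwellian-comparison argument used to prove finiteness in Lemma \ref{lem:HminusFinite} does not upgrade to convergence. The interpolation $\alpha^-(s)\lesssim s^{3/4}$ (any exponent $s^{1-\beta}$ with $\beta\in(0,1/3)$ would serve equally) is the device that converts the $L^1_2$-bound on $h_{0,n}$ into genuine uniform tail control on $\alpha^-(h_{0,n})$, by pairing with the Euclidean weight $w^{-p}$, which is integrable on $\mathbb{R}^4$ precisely for $p>2$.
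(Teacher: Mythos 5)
Your proof is correct, and it takes a genuinely different route from the paper. The paper establishes the uniform tail bound by comparing $h_{0,n}$ against the \emph{sub}-Gaussian profile $\gamma(x,v) = \exp[-(1+|x|^2+|v|^2)^{1/2}]$ rather than the Gaussian $m_0$ used in Lemma \ref{lem:HminusFinite}. Where $h_{0,n} \le \gamma$, monotonicity of $\alpha^-$ gives $\alpha^-(h_{0,n}) \le \alpha^-(\gamma)$, whose integral is finite so its tail vanishes. Where $\gamma < h_{0,n} \le 1$, the key is that $\log(1/h_{0,n}) \le \log(1/\gamma) = w^{1/2}$ (writing $w = 1+|x|^2+|v|^2$), and on $E_R = \{w \ge R^2\}$ one has $w^{1/2} \le R^{-1} w$, so $\int_{E_R} w^{1/2} h_{0,n} \le R^{-1}\|h_{0,n}\|_{L^1_2} \le MR^{-1}$. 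Your observation that the Gaussian comparison of Lemma \ref{lem:HminusFinite} (with $\log(1/m_0) \sim w$) does not upgrade to uniform tightness is exactly why the paper switches from $m_0$ to $\gamma$: the square root gains the critical factor $R^{-1}$. Your interpolation $\alpha^-(s) \le (4/e)s^{3/4}$ plus H\"older against $w^{-3}$ accomplishes the same thing in one step without case-splitting, and it produces a slightly sharper quantitative rate ($O(R^{-1/2})$ versus the paper's $O(R^{-1})$ on the large-$h$ part, combined with a non-quantitative dominated-convergence rate on the small-$h$ part — so neither dominates overall, but yours is cleaner to state). Both arguments exploit the same structural fact: $\alpha^-$ grows \emph{strictly} sublinearly as $s\to 0^+$, which leaves room to trade against the $L^1_2$ weight. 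Your proof is a valid and arguably tidier alternative.
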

\begin{proof}
Let us denote
\[
E_R = \left\{ \; \left( x,v \right) \in \mathbb{R}^2 \times \mathbb{R}^2 \;\; : \;\;
1 + \left|x\right|^2 + \left|v\right|^2 \geq R^2 \; \right\}
\]
and observe that for each $R > 1$, by the continuity of $\alpha^{-}$,
 we may apply the dominated convergence theorem  on the
\emph{complement} of $E_R$ due to the
fact that
\[
\forall \left( s \in \mathbb{R} \right)
 \qquad 0 \; \leq \; \alpha^{-} \left( s \right) \; \leq \;  e^{-1}
\]
and the complement $E_R^C$ of $E_R$ is a bounded set: that is,
\[
\forall \left( R > 0 \right) \qquad
\lim_n \int_{E_R^C} \alpha^{-} \left( h_{0,n} \right) dx dv =
\int_{E_R^C} \alpha^{-} \left( h_0 \right) dx dv
\]

Hence if only we can show
\[
\lim_{R \rightarrow \infty} \sup_{n \in \mathbb{N}}
\int_{E_R}
\alpha^{-} \left( h_{0, n} \right) dx dv = 0
\]
then we will be done. To this end, we will decompose $H^-$ in a manner similar to the proof
of Lemma \ref{lem:HminusFinite}.

Let us define the \emph{non-Gaussian} function
\[
\gamma \left( x,v \right) = \exp \left[
- \left( 1 + \left|x \right|^2 + \left| v \right|^2 \right)^{\frac{1}{2}} \right]
\]
and note that $\left\Vert \gamma \right\Vert_{L^\infty} = e^{-1}$.
Let us consider separately the sets (depending on $n$) where
\[
0 \leq h_{0,n} \left( x,v \right) \leq \gamma \left( x,v \right)
\]
and
\[
\gamma \left( x,v \right) < h_{0,n} \left( x,v \right) \leq 1
\]
In the first case we have, by Lemma \ref{lem:alphaMinus}(3) and the fact
that $\left\Vert \gamma \right\Vert_{L^\infty} = e^{-1}$,
\begin{equation*}
\begin{aligned}
&  \lim_{R \rightarrow \infty} \sup_{n \in \mathbb{N}}
\int_{E_R}
\mathbf{1}_{h_{0,n} \leq \gamma} \cdot
\alpha^{-} \left( h_{0,n} \right) dx dv \\
& \qquad \qquad \qquad \qquad \qquad \qquad\qquad
\leq \lim_{R \rightarrow \infty} \int_{E_R}
\alpha^{-} \left( \gamma \right) dx dv = 0
\end{aligned}
\end{equation*}
so it only remains to
 consider the second case. Then again, since $\log \frac{1}{s}$ is a decreasing non-negative
function for $0 < s \leq 1$, for the second case we only need to show
\begin{equation}
\label{eq:HminusLastStep}
\lim_{R \rightarrow \infty} \sup_{n \in \mathbb{N}}
\int_{E_R}
\left( 1 + \left| x \right|^2 + \left| v \right|^2 \right)^{\frac{1}{2}}
h_{0,n} dx dv = 0
\end{equation}
but this follows immediately from the uniform boundedness of the sequence
$\left\{ h_{0, n} \right\}_n$ in $L^1_2$, since
\[
\left( 1 + \left| x \right|^2 + \left| v \right|^2 \right)^{\frac{1}{2}} \leq
\frac{1}{R} \left(1 + \left| x\right|^2 + \left| v \right|^2 \right)
\]
for each $\left( x,v \right) \in E_R$.
\end{proof}

\section{($*$)-solutions}
\label{sec:starSolutions}

\subsection{Definitions.}

We recall the notion of \emph{renormalized solution} as introduced by DiPerna and Lions.

\begin{definition}
(\cite{DPL1989})
Let $I = \left[ a, b \right)$, where $-\infty < a < b \leq \infty$, and suppose
\[
0 \leq f \in L^1_{\textnormal{loc}} \left( I \times \mathbb{R}^2 \times \mathbb{R}^2 \right)
\]
Then we say $f$ is a \emph{renormalized solution} of (\ref{eq:BE}) provided that
\[
\frac{1}{1+f} Q^{\pm} \left( f, f \right) \in L^1_{\textnormal{loc}} \left( I \times \mathbb{R}^2 \times \mathbb{R}^2
\right)
\]
and it holds
\[
\left( \frac{\partial}{\partial t} + v \cdot \nabla_x \right) \log \left( 1 + f \right)
= \frac{1}{1+f} \left( Q^+ \left( f, f \right) - Q^- \left( f,f \right) \right)
\]
in the sense of distributions on $I \times \mathbb{R}^2 \times \mathbb{R}^2$.
\end{definition}

Throughout this article, although not formally required in the definition of renormalized
solutions, we impose the requirement that
 solutions of Boltzmann's equation will at least be in $L^1$ uniformly in $t$, which in particular
implies that $\rho_f$ is in $L^1_x$ uniformly in $t$, i.e.
\[
f \in L^\infty \left( I, L^1 \right) \quad
\textnormal{ and } \quad
\rho_f \in L^\infty \left( I, L^1_x \left( \mathbb{R}^2 \right) \right)
\]
 In particular, given a solution $f$ defined for
$a \leq t < b$, the function $F \left( t,x, v \right)$ satisfying
\[
F^\# \left( t,x,v \right) = \int_a^t \left( \rho_f \right)^\# \left( \sigma, x, v \right) d\sigma
\]
is well-defined almost everywhere (recall that the notation $F^\#$ is defined by (\ref{eq:defHashNotation})).
 \cite{DPL1989} Thus
we can view $\rho_f$ as an \emph{integrating factor} in Boltzmann's equation to write a solution $f$ in the
form
\begin{equation}
\label{eq:BEIF}
\begin{aligned}
& f^\# \left( t,x,v \right) - f^\# \left( s,x,v \right) \exp \left( - \left( F^\# \left( t,x,v \right) -
F^\# \left( s,x,v \right) \right) \right) \\
& = \int_s^t Q^+ \left( f,f \right)^\# \left( \tau , x, v \right)
\cdot \exp \left( - \left( F^\# \left( t, x, v \right) - F^\# \left( \tau, x, v \right) \right) \right) d\tau
\end{aligned}
\end{equation}
for almost all $x,v \in \mathbb{R}^2$ and
 $a \leq s < t < b$. This form of Boltzmann's equation is particularly convenient 
because it can be stated under minimal integrability assumptions (for example, neither $Q^+$ nor
$Q^-$ need be locally integrable, as long as they can be integrated \emph{along almost every characteristic}).
 It is possible to show \cite{DPL1989} that renormalized
solutions of Boltzmann's equation (\ref{eq:BE}) (having constant collision kernel, so that the loss term is proportional to
$\rho_f$) verify (\ref{eq:BEIF}) whenever $\rho_f \in L^\infty \left( I, L^1_x \left( \mathbb{R}^2 \right) \right)$.

We are now ready to define ($*$)-solutions of (\ref{eq:BE}), although we defer til Section
\ref{sec:ExistenceProof} the proof of their existence.
Recall again, from (\ref{eq:defSubscriptedNorm}),
\[
\left\Vert h_0 \right\Vert_{L^1_2} = \int_{\mathbb{R}^2 \times \mathbb{R}^2}
\left( 1 + \left| x \right|^2 + \left| v \right|^2 \right) \left| h_0 \left( x,v \right) \right| dx dv
\]
and that the entropy is well-defined and finite on $L^2 \bigcap L^1_2$, by Lemma \ref{lem:HminusFinite}
and Lemma \ref{lem:HplusBd}.

\begin{definition}
\label{def:starSoln}
Let us be given a non-negative
measurable function 
\[
f \in L^1_{\textnormal{loc}} \left( \left[ 0, \infty \right) \times \mathbb{R}^2 \times \mathbb{R}^2 \right)
\] 
Then we will say that $f$ is a
\emph{($*$)-solution} of (\ref{eq:BE}) provided that $f$ is a renormalized solution of (\ref{eq:BE}) on 
$\left[ 0, \infty \right)$, with
\begin{equation}
\label{eq:dataCondition}
f \left( t = 0 \right) = f_0 \in L^2 \bigcap L^1_2
\end{equation}
for which
\begin{equation}
\label{eq:continuityL1}
f \in C \left( \left[ 0, \infty \right), L^1 \right)
\end{equation}
 and that
there exists a number $T^* \left( f \right)$,
\[
0 < T^* \left( f \right) \leq \infty
\]
with corresponding interval
\[
I^* \left( f \right) = \left[ 0, T^* \left( f \right) \right)
\]
such that the following holds:

For each compact sub-interval $J \subset I^* \left( f \right)$ it holds
\begin{equation}
\label{eq:continuityCondition}
f \in C \left( J, L^2 \right)
\end{equation}
and
\begin{equation}
\label{eq:kmCondition}
Q^+ \left( f,f \right) \in L^1 \left( J, L^2 \right)
\end{equation}
and, in the event $T^* \left( f \right) < \infty$, we also require
\begin{equation}
\label{eq:notinCondition}
f \notin C \left( \left[ 0, T^* \left( f \right) \right] , L^2 \right)
\end{equation}

Additionally, we require that for almost every
$t$ with
\[
0 \leq t < \infty
\]
we have each of the following estimates:
\begin{equation}
\label{eq:massCondition}
\int_{\mathbb{R}^2 \times \mathbb{R}^2} f \left( t \right) dx dv = \int_{\mathbb{R}^2 \times \mathbb{R}^2}
 f_0 dx dv
\end{equation}
\begin{equation}
\label{eq:momentumCondition}
\int_{\mathbb{R}^2 \times \mathbb{R}^2} v f \left( t \right) dx dv = \int_{\mathbb{R}^2 \times \mathbb{R}^2}
v f_0 dx dv
\end{equation}
\begin{equation}
\label{eq:energyCondition}
\int_{\mathbb{R}^2 \times \mathbb{R}^2}
 \left| v \right|^2 f \left( t \right) dx dv \leq \int_{\mathbb{R}^2 \times \mathbb{R}^2}
  \left| v \right|^2 f_0 dx dv
\end{equation}
\begin{equation}
\label{eq:dispCondition}
\int_{\mathbb{R}^2 \times \mathbb{R}^2}
 \left| x-vt \right|^2 f \left( t \right) dx dv \leq \int_{\mathbb{R}^2 \times \mathbb{R}^2}
   \left| x\right|^2 f_0 dx dv
\end{equation}
and
\begin{equation}
\label{eq:entropyCondition}
H \left( f \left( t \right) \right) +
\int_0^t \int_{\mathbb{R}^2} \mathcal{D} \left( f \left( s \right) \right) dx ds \leq
H \left( f_0 \right)
\end{equation}
\end{definition}

\begin{remark}
Note carefully that uniqueness is unknown, at present, in the class of
($*$)-solutions for a given initial data $f_0$, even on an arbitrarily
 small time interval $\left[ 0, \delta \right] \subset
I^* \left( f \right)$.
This is why the notation
$T^* \left( f \right), I^* \left( f \right)$ makes explicit reference to the solution
$f$, not only the initial data $f_0$: for a given $f_0$ there may well be multiple ($*$)-solutions
$f$ with initial data $f_0$ but different values of $T^* \left( f \right)$.
\end{remark}

\subsection{Discussion.}

A ($*$)-solution, as provided by Definition \ref{def:starSoln}, is intuitively understood as a \emph{global}
renormalized solution which happens to be (simultaneously) a \emph{distributional} solution on some
(possibly finite) interval $I^* \left(f\right)$. The solution can be viewed as an $L^2$ solution
on $I^* \left( f \right)$, but the solution is not continuous into $L^2$ on any interval $J$ containing
$I^* \left( f \right)$ as a proper subset; therefore, the solution is in this sense \emph{maximal}.
 Note carefully that maximality is for the \emph{solution},
not the \emph{data}, in view of possible non-uniqueness: two maximal solutions need not coincide for any $t > 0$, nor do
their intervals $I^*$ need to coincide. 

The idea of constructing a renormalized solution of (\ref{eq:BE}), which is also a solution in some stronger
sense on some initial interval, has been studied previously by Lions: see \cite{L1994II},
Theorem V.1. In that reference, Lions establishes a class of global renormalized solutions which
satisfy \emph{in addition} certain differential inequalities, which Lions refers to as
\emph{dissipation inequalities}; the solutions so obtained are called
\emph{dissipative solutions}. He proves the existence of such
solutions (Theorems IV.1 and IV.2 of the same reference); however,
 general renormalized solutions are \emph{not} guaranteed to 
satisfy such dissipation inequalities.
 The differential inequalities are defined via
\emph{testing} a dissipative solution 
$f$ against functions drawn from a class of higher integrability and decay. (Here \emph{testing} is
not meant in a distributional sense, but a different sense reminiscent of viscosity solutions.)
Taking a \emph{classical} (or sufficiently strong) solution $\tilde{f}$ \emph{as the test function}
in the differential inequalities leads immediately to his Theorem V.1 on weak-strong uniqueness, namely
$f = \tilde{f}$ insofar as $\tilde{f}$ is defined and so controlled (i.e. on the initial
interval).

\begin{remark}
At no point in this paper do we employ dissipative
solutions, weak solutions in the sense of \cite{L1994II}, or differential inequalities so obtained,
 although we mention them in passing; note carefully that the strong compactness result of
 \cite{L1994II} does \emph{not} require dissipation inequalities in its general formulation, namely
 Theorem II.1 of that reference.
\end{remark}

\subsection{Integrability and time continuity.}
\label{ssec:integrableAndContinuous}

The objective of this sub-section is to show that the $Q^+$ bound (\ref{eq:kmCondition}) combined with the initial
data condition
(\ref{eq:dataCondition}) automatically implies
the $L^2$ continuity (\ref{eq:continuityCondition}), and that ($*$)-solutions are \emph{distributional}
solutions of (\ref{eq:BE}) on $I^* \left( f \right)$: in particular, each of $Q^+$ and $Q^-$ is in
\[
L^1_{\textnormal{loc}} \left( I^* \left( f \right) \times \mathbb{R}^2 \times \mathbb{R}^2 \right)
\]
Of course this is immediate for $Q^+$ from our assumption (\ref{eq:kmCondition}); hence, we only have
to prove local integrability for $Q^-$, and the $L^2$ time continuity of $f$.

\begin{lemma}
\label{lem:rhoL6}
For any compact interval $J \subset \mathbb{R}$,
and any $f\left(t,x,v\right)$ such that the right-hand side is finite, it holds
\[
\left\Vert \rho_f \right\Vert_{L^6 \left( J, L^{3/2}_x
\left( \mathbb{R}^2 \right)
\right)}
 \leq C
\left\Vert \left< v \right>^2 f \right\Vert_{L^\infty \left( J,
 L^1\right)}^{\frac{1}{2}}
 \left\Vert f \right\Vert_{L^3 \left( J, L^3_x L^{3/2}_v
 \left( \mathbb{R}^2 \times \mathbb{R}^2 \right)\right)}^{\frac{1}{2}}
\]
the constant depending on neither $J$ nor $f$.
\end{lemma}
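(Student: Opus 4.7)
The proof is a straightforward chain of H\"older/Cauchy-Schwarz inequalities, carefully chosen so that the powers of weights and the function space exponents line up. I will establish it in three steps: a pointwise-in-$(t,x)$ interpolation, a H\"older inequality in $x$, and finally separation of the $t$-integration.

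\textbf{Step 1 (pointwise bound in $v$).} At each fixed $(t,x)$, factor $f = f^{1/2}\cdot f^{1/2}$ and distribute the weight $\left<v\right>^{\pm 1}$ to obtain, by Cauchy--Schwarz in $v$,
\[
\rho_f(t,x) \;=\; \int \left<v\right> f^{1/2}\cdot \left<v\right>^{-1} f^{1/2}\,dv
\;\leq\; \left(\int \left<v\right>^2 f\,dv\right)^{1/2}\!\!\left(\int \left<v\right>^{-2} f\,dv\right)^{1/2}.
\]
For the second factor, H\"older in $v$ with exponents $(3/2,3)$ and the fact that $\left<v\right>^{-2}\in L^3_v(\mathbb{R}^2)$ (which crucially uses $d_v=2$, since $\int \left<v\right>^{-6}\,dv<\infty$) yields
$\int \left<v\right>^{-2} f\,dv \leq C \left\Vert f(t,x,\cdot)\right\Vert_{L^{3/2}_v}$.
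Writing $M(t,x)=\int \left<v\right>^2 f\,dv$ and $N(t,x)=\|f(t,x,\cdot)\|_{L^{3/2}_v}$, we conclude
\[
\rho_f(t,x)\;\leq\; C\, M(t,x)^{1/2}\, N(t,x)^{1/2}.
\]

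\textbf{Step 2 (H\"older in $x$).} Raising to the $3/2$ power and integrating in $x$, we want to split $M^{3/4}N^{3/4}$ so that $M$ goes into $L^1_x$ and $N$ into $L^3_x$. This dictates the H\"older pair $(4/3,4)$:
\[
\left\Vert \rho_f(t,\cdot)\right\Vert_{L^{3/2}_x}^{3/2} \leq C\!\int M^{3/4} N^{3/4}\,dx
\leq C\Bigl(\!\int M\,dx\Bigr)^{3/4}\Bigl(\!\int N^3\,dx\Bigr)^{1/4}.
\]
Raising to the $4$th power,
\[
\left\Vert \rho_f(t,\cdot)\right\Vert_{L^{3/2}_x}^{6} \leq C\Bigl(\!\int M(t,x)\,dx\Bigr)^{3}\Bigl(\!\int N(t,x)^3\,dx\Bigr).
\]

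\textbf{Step 3 (separation of $t$).} Integrate over $J$ and pull the $M$-factor out in $L^\infty_t$ (the $N$-factor stays in $L^1_t$):
\[
\left\Vert \rho_f\right\Vert_{L^6(J,L^{3/2}_x)}^{6} \leq C \Bigl(\sup_{t\in J}\int M(t,x)\,dx\Bigr)^{3}\int_J\!\int N(t,x)^3\,dx\,dt.
\]
Recognizing $\sup_t\int M\,dx = \left\Vert \left<v\right>^2 f\right\Vert_{L^\infty(J,L^1)}$ and $\int_J\!\int N^3\,dx\,dt = \left\Vert f\right\Vert_{L^3(J,L^3_xL^{3/2}_v)}^{3}$, and taking sixth roots, gives the claimed inequality with a constant depending on neither $J$ nor $f$.

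There is no real obstacle here: the content of the lemma is simply choosing the \emph{inhomogeneous} weight $\left<v\right>^2=1+|v|^2$ so that the Cauchy--Schwarz factorization in $v$ balances the $L^1$ moment against the $L^{3/2}_v$ norm with exponent exactly $1/2$. The arithmetic is forced: the outer H\"older exponents $(4/3,4)$ in $x$ and the $L^\infty_t/L^3_t$ split are the unique choices that reproduce the mixed norms on the right-hand side, and the dimension $d_v=2$ enters exactly once, via the integrability of $\left<v\right>^{-6}$.
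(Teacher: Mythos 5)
Your proof is correct and follows essentially the same path as the paper's: a pointwise bound on $\rho_f$ at each $(t,x)$ by $\left(\int \langle v\rangle^2 f\,dv\right)^{1/2}\|f\|_{L^{3/2}_v}^{1/2}$ (using $\langle v\rangle^{-6}\in L^1_v(\mathbb{R}^2)$), then H\"older in $x$, then H\"older in $t$. The only difference is cosmetic, in Step~1: you split the weight as $\langle v\rangle^{\pm 1}$ and apply Cauchy--Schwarz in $v$ followed by one H\"older, whereas the paper passes through the intermediate norm $\|\langle v\rangle f\|_{L^{6/5}_v}$ (H\"older with $\langle v\rangle^{-1}\in L^6_v$) and then interpolates; both reduce to the same dimensional fact and yield the identical pointwise estimate.
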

\begin{proof}
By H{\" o}lder's inequality,
\[
\rho_f \left( t,x \right) = \int_{\mathbb{R}^2} f\left(t,x,v\right) dv \leq C
\left\Vert \left< v \right> f\left(t,x,v\right) \right\Vert_{L^{6/5}_v \left( \mathbb{R}^2 \right)}
\]
Also, by interpolation
\[
\left\Vert \left<  v \right> f \left( t \right) \right\Vert_{L^{6/5}_v
\left( \mathbb{R}^2 \right)} \leq
C \left\Vert \left<v \right>^2 f\left(t\right) \right\Vert_{L^1_v
\left(\mathbb{R}^2\right)}^{\frac{1}{2}}
\left\Vert f \left(t\right) \right\Vert_{L^{3/2}_v
\left(\mathbb{R}^2\right)}^{\frac{1}{2}} 
\]
hence
\[
\rho_f \left( t,x \right) \leq
C \left\Vert \left<v \right>^2 f \left( t,x,v \right)\right\Vert_{L^1_v
\left( \mathbb{R}^2 \right)}^{\frac{1}{2}}
\left\Vert f \left( t,x,v \right) \right\Vert_{L^{3/2}_v
\left( \mathbb{R}^2 \right)}^{\frac{1}{2}} 
\]
Apply the norm $L^{\frac{3}{2}}_x \left(\mathbb{R}^2\right)$ to both sides and use H{\" o}lder.
\[
\left\Vert \rho_f \left( t \right) \right\Vert_{L^{3/2}_x \left( \mathbb{R}^2 \right)} \leq
C \left\Vert \left< v \right>^2 f \left( t \right) \right\Vert_{L^1}^{\frac{1}{2}}
\left\Vert f \left(t\right) \right\Vert_{L^3_x L^{3/2}_v \left( \mathbb{R}^2 \times \mathbb{R}^2 \right)}^{\frac{1}{2}}
\]
Take the $L^6 \left( J\right)$ norm for the $t$ variable on both sides and apply
H{\" o}lder's inequality once more to conclude.
\end{proof}

Let us show that a renormalized solution on $\left[ 0, \infty \right)$ satisfying (\ref{eq:dataCondition}),
 (\ref{eq:massCondition}),
 (\ref{eq:energyCondition}), and (\ref{eq:dispCondition}),
 as well as 
(\ref{eq:kmCondition}) with $J = \left[ 0, T \right]$, automatically satisfies the local integrability
\[
Q^- \left( f,f \right) =
\rho_f f \in L^1_{\textnormal{loc}} \left(
J \times \mathbb{R}^2 \times \mathbb{R}^2 \right)
\]
We will need the Strichartz
estimates
\begin{equation}
\label{eq:strich000}
\left\Vert \mathcal{T}  h_0
\right\Vert_{L^3_t L^3_x L^{\frac{3}{2}}_v
\left( \mathbb{R} \times \mathbb{R}^2 \times \mathbb{R}^2\right)}
 \leq C \left\Vert h_0 \right\Vert_{L^2}
\end{equation}
and
\begin{equation}
\label{eq:strich001}
\left\Vert \mathcal{T} h_0
\right\Vert_{L^{\frac{7}{3}}_t L^{\frac{7}{2}}_x L^{\frac{7}{5}}_v
\left( \mathbb{R} \times \mathbb{R}^2 \times \mathbb{R}^2\right)} \leq C \left\Vert h_0 \right\Vert_{L^2}
\end{equation}
which hold for any $h_0 \in L^2$
by Proposition \ref{prop:CaP}.

First, for $t \in J$ by (\ref{eq:BEIF}) we have
the \emph{pointwise} upper bound
\begin{equation}
\label{eq:PtwiseDuhamelUpper}
f\left( t,x,v \right) \leq
\left[ \mathcal{T}\left( t \right)f_0 \right] \left( x,v \right) +
\int_0^t \left[ \mathcal{T} \left( t-s \right) Q^+ \left( f,f \right) \left( s \right)
\right] \left( x,v \right) ds
\end{equation}
Here we have used the non-negativity of $\rho_f$ to bound the exponential factors involving $F^\#$
(i.e. integrating factors) uniformly by  $1$.

In any case, substituting $T$ for $t$ in the upper limit of the Duhamel integral on the right
side of (\ref{eq:PtwiseDuhamelUpper}), and using the non-negativity of $Q^+ \left( f,f \right)$, we have
\[
f\left( t,x,v \right) \leq
\left[ \mathcal{T}\left( t \right)f_0 \right] \left( x,v \right) +
\int_0^{T} \left[ \mathcal{T} \left( t-s \right) Q^+ \left( f,f \right) \left( s \right)
\right] \left( x,v \right) ds
\]
Applying Minkowski's inequality and  (\ref{eq:strich000}), we obtain
\begin{equation}
\label{eq:strichABCD}
\begin{aligned}
& \left\Vert f \right\Vert_{L^3 \left( J, L^3_x L^{\frac{3}{2}}_v
\left( \mathbb{R}^2 \times \mathbb{R}^2 \right)\right)} \\
& \leq
C \left( \left\Vert f_0 \right\Vert_{L^2} + 
\int_0^{T} \left\Vert \mathcal{T} \left( t-s \right) 
Q^+ \left( f,f \right) \left( s \right) \right\Vert_{L^3 \left( J, L^3_x L^{\frac{3}{2}}_v
\left( \mathbb{R}^2 \times \mathbb{R}^2 \right)\right)} ds \right) \\
& \leq
C \left( \left\Vert f_0 \right\Vert_{L^2} + 
\int_0^{T} \left\Vert Q^+ \left( f,f \right) \left( s \right) \right\Vert_{L^2} ds \right)
\end{aligned}
\end{equation}
and the right-hand side is finite by hypothesis.
Combining this with Lemma \ref{lem:rhoL6} and the fact that $f \left( t \right) \in L^1_{2,t}$ each
 $t$ allows us to conclude that
\[
\rho_f \in L^6 \left( J, L^{\frac{3}{2}}_x \left( \mathbb{R}^2 \right)\right)
\]
On the other hand, by H{\" o}lder's inequality,
\[
\left\Vert \rho_f f \right\Vert_{L^{\frac{42}{25}}_t L^{\frac{21}{20}}_x L^{\frac{7}{5}}_v
\left( J \times \mathbb{R}^2 \times \mathbb{R}^2 \right)} \leq
\left\Vert \rho_f \right\Vert_{L^6_t L^{\frac{3}{2}}_x
\left( J \times \mathbb{R}^2 \right)}
\left\Vert f \right\Vert_{L^{\frac{7}{3}}_t L^{\frac{7}{2}}_x L^{\frac{7}{5}}_v
\left( J \times \mathbb{R}^2 \times \mathbb{R}^2 \right)}
\]
so arguing again by the Duhamel inequality, as in the proof of (\ref{eq:strichABCD}) above,  using
now (\ref{eq:strich001}) to place 
\[
f \in L^{\frac{7}{3}}_t L^{\frac{7}{2}}_x L^{\frac{7}{5}}_v
\left( J \times \mathbb{R}^2 \times \mathbb{R}^2 \right)
\]
we may conclude that
\[
Q^- \left( f,f \right) = \rho_f f \in L^{\frac{42}{25}}_t L^{\frac{21}{20}}_x L^{\frac{7}{5}}_v
\left( J \times \mathbb{R}^2 \times \mathbb{R}^2 \right)
\]
so, in particular, 
\[
Q^- \left( f,f \right) =
\rho_f f \in L^1_{\textnormal{loc}}
\left( J \times \mathbb{R}^2 \times \mathbb{R}^2 \right)
\]
Thus (\ref{eq:BE}) holds in the sense
of distributions on $J = \left[ 0, T \right]$.

\begin{remark}
We have actually shown more, namely that for a ($*$)-solution $f$,
\[
Q^- \left( f,f \right) \in L^p_{t,x,v,\textnormal{loc}} \left( I^* \left( f \right)\times
\mathbb{R}^2_x \times \mathbb{R}^2_v \right) 
\]
for some $p > 1$. It is \emph{also} true that
\begin{equation}
\label{eq:QplusP}
Q^+ \left( f,f \right) \in L^p_{t,x,v,\textnormal{loc}} \left( I^* \left( f \right)\times
\mathbb{R}^2_x \times \mathbb{R}^2_v \right)
\end{equation}
for some $p > 1$, 
although it does not follow immediately from (\ref{eq:kmCondition}) alone, due to the
$L^1$ integrability in time. There are many ways
to see this (e.g. using Strichartz and convolution inequalities),
but perhaps the simplest is to use conservation of mass to interpolate against
(\ref{eq:kmCondition}). Indeed,
\begin{equation}
\label{eq:Qhalf}
\left\Vert Q^{\pm} \left( f,f \right) \right\Vert_{L^\infty_t L^{\frac{1}{2}}_x
L^1_v \left( \left[ 0,\infty\right) \times \mathbb{R}^2_x \times \mathbb{R}^2_v \right)}
\leq C \left\Vert f \right\Vert_{L^\infty_t L^1_{x,v} \left(
\left[ 0,\infty \right) \times \mathbb{R}^2_x \times \mathbb{R}^2_v\right)}^2
\end{equation}
follows (by H{\" o}lder's inequality)
 immediately from the fact that, considered in the velocity variable only, due to the
constant collision kernel, $Q^{\pm}$ is continuous
as a map $L^1_v \left( \mathbb{R}^2 \right) \times L^1_v \left( \mathbb{R}^2 \right)
\rightarrow L^1_v \left( \mathbb{R}^2 \right)$. Interpolating (\ref{eq:Qhalf}) against
(\ref{eq:kmCondition}) (which remains a valid operation in fractional integrability in this case),
an epsilon away from the (\ref{eq:kmCondition}) endpoint,
provides a quantitative $p > 1$ for which (\ref{eq:QplusP}) holds. 
\end{remark}

It remains to show, again with $J = \left[ 0,T \right]$ and under the same assumptions, that
\[
f \in C \left( J, L^2 \right)
\] 
Indeed, 
we have by Duhamel's formula, for $t \in J$,
\[
\mathcal{T} \left( -t \right) f \left( t \right) +
\int_0^t \mathcal{T} \left( -s \right) Q^- \left( f,f \right) \left(s\right)ds =
f_0 + \int_0^t \mathcal{T} \left( -s \right) Q^+ \left( f,f \right)\left( s \right) ds
\]
and the terms are all non-negative (on both sides).
Since $f_0 \in L^2$ and $Q^+ \left( f,f \right) \in L^1 \left( J, L^2\right)$,
 we therefore have
\[
\mathcal{T} \left( -t \right) Q^- \left( f,f \right) \left( t \right) \in
L^2 \left( \mathbb{R}^2_x \times \mathbb{R}^2_v, L^1_t \left( J, \mathbb{R} \right) \right)
\]
Of course we also have
\[
\mathcal{T} \left( -t \right) Q^+ \left( f,f \right)  \left( t \right) \in
L^2 \left( \mathbb{R}^2_x \times \mathbb{R}^2_v, L^1_t \left( J, \mathbb{R} \right) \right)
\]
which follows directly from our hypothesis $Q^+ \left( f,f \right) \in
L^1 \left( J, L^2 \right)$ and Minkowski's inequality.

Now by Duhamel again, for $0 \leq s \leq t \leq T$ we have
\[
\mathcal{T} \left( -t \right) f \left( t \right) -
\mathcal{T} \left( -s \right) f \left( s \right) =
\int_s^t \mathcal{T} \left( - \tau\right) \left\{ Q^+ \left( f,f \right) -
Q^- \left( f, f \right) \right\} \left( \tau \right) d\tau
\]
so taking the $L^2$ norm of both sides (\emph{without} applying Minkowski) it follows from dominated
convergence in time\footnote{expand the
$L^2$ norm of the Duhamel integral to obtain a double integral involving \emph{two} time
variables, say $\tau$ and $\tau^\prime$, and let $t,s$ each be drawn from a shrinking
family of open neighborhoods of some fixed $t_0 \in J$} that the map
\[
t \mapsto \mathcal{T} \left( -t \right) f \left( t \right)
\]
is in the class
\[
 C \left( J, L^2 \right)
\]
But the continuity of $\mathcal{T} \left( -t \right) f \left( t \right)$ is equivalent to the
continuity of $f \left( t \right)$, so we find
that $f \in C \left( J, L^2 \right)$.

We also have:

\begin{proposition}
\label{prop:comparisonStar}
If $f$ is a ($*$)-solution of (\ref{eq:BE}) then
\[
f \in \mathfrak{B}^{I^* \left( f \right)}_{I^* \left( f \right)}
\]
\end{proposition}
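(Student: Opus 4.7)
The plan is to reduce Proposition \ref{prop:comparisonStar} to a direct invocation of the converse comparison result, namely Proposition \ref{prop:comparisonConverse}, applied on the interval $I = I^*(f)$. Since the hypotheses there are continuity into $L^2$ on compact sub-intervals, the $L^1 L^2$ bound on $Q^+$, and validity of Duhamel's formula, essentially every ingredient is built into Definition \ref{def:starSoln} or has already been extracted from it in Subsection \ref{ssec:integrableAndContinuous}.

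First I would collect the standing facts for any ($*$)-solution $f$ restricted to $I^*(f)$: non-negativity (built in), continuity $f \in C(J, L^2)$ on every compact sub-interval $J \subset I^*(f)$ from (\ref{eq:continuityCondition}), and $Q^+(f,f) \in L^1(J, L^2)$ from (\ref{eq:kmCondition}). These cover the ``good'' hypotheses of Proposition \ref{prop:comparisonConverse} verbatim.

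The real content is checking that Duhamel's formula holds on $I^*(f)$. Here I would invoke the discussion of Subsection \ref{ssec:integrableAndContinuous}, which used conservation of mass, the energy/dispersion inequalities (\ref{eq:energyCondition}-\ref{eq:dispCondition}), the pointwise upper bound (\ref{eq:PtwiseDuhamelUpper}) from (\ref{eq:BEIF}), the Castella-Perthame Strichartz estimates (\ref{eq:strich000})-(\ref{eq:strich001}), and Lemma \ref{lem:rhoL6} to conclude that $Q^-(f,f)$ is locally integrable on $I^*(f) \times \mathbb{R}^2 \times \mathbb{R}^2$ (with $L^p$ integrability for some $p > 1$). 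Together with the local integrability of $Q^+$, this promotes the renormalized identity to a genuine distributional solution of (\ref{eq:BE}) on $I^*(f)$. Then standard manipulations with the integrating factor form (\ref{eq:BEIF}): multiplying out the exponentials that are uniformly bounded because $\rho_f \in L^\infty_t L^1_x$ (consequence of (\ref{eq:massCondition})), and using the $L^1$ integrability in $(t,x,v)$ of $Q^{\pm}$ on compact sub-intervals, produce the usual mild form
\[
f(t) = \mathcal{T}(t-s) f(s) + \int_s^t \mathcal{T}(t-\tau)\bigl(Q^+(f,f) - Q^-(f,f)\bigr)(\tau) d\tau
\]
valid for a.e.\ pair $0 \leq s \leq t$ in $I^*(f)$, which is the sense in which Duhamel's formula is required by Proposition \ref{prop:comparisonConverse} (specifically as used in Lemma \ref{lem:comparison-delta}).

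Given these three inputs (non-negativity, $C(J, L^2)$ continuity, $Q^+ \in L^1(J, L^2)$ on compacts) plus Duhamel's formula on $I^*(f)$, Proposition \ref{prop:comparisonConverse} applies with $I = I^*(f)$ and yields $f \in \mathfrak{B}^{I^*(f)}_{I^*(f)}$, which is exactly the conclusion. I do not expect any genuine obstacle here: the only point that deserves care is confirming that the renormalized form plus the integrability of $Q^\pm$ established in Subsection \ref{ssec:integrableAndContinuous} is strong enough to run the mild formulation that Lemma \ref{lem:comparison-delta} uses. This is a routine verification from DiPerna-Lions-type arguments for the integrating factor, justified by $\rho_f \in L^\infty_t L^1_x$ and the $L^p$ local integrability of $Q^\pm(f,f)$ for some $p > 1$.
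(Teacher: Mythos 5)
Your proposal is correct and follows exactly the paper's route: the paper's own proof is a one-line citation of Proposition \ref{prop:comparisonConverse} together with the definition of ($*$)-solution, implicitly relying on the verification in Subsection \ref{ssec:integrableAndContinuous} that $Q^-(f,f)$ is locally integrable and Duhamel's formula holds. You have simply made explicit the hypothesis-checking that the paper leaves tacit.
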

\begin{proof}
Follows immediately from Proposition \ref{prop:comparisonConverse} and the definition of ($*$)-solution.
\end{proof}

\section{Criterion on finite-time breakdown of continuity}
\label{sec:breakdownCriterion}

The criterion (\ref{eq:notinCondition}) in the definition of ($*$)-solutions implies that
($*$)-solutions are in some sense \emph{maximal} (indeed, verifying this maximality plays a central role
in the proof of \emph{existence} of ($*$)-solutions, as we shall see in Section \ref{sec:ExistenceProof}).
One might conjecture, based on Corollary \ref{cor:gain-only-blowup}, that
\begin{equation}
\label{eq:unknownCriterion}
\lim_{t \rightarrow T^* \left( f \right)^-} \left\Vert f \left( t \right) \right\Vert_{L^2} = \infty
\end{equation}
whenever $T^* \left( f \right) < \infty$; however, it is not at all clear whether (\ref{eq:unknownCriterion}) holds
for every ($*$)-solution $f$ of (\ref{eq:BE}) with $T^* \left( f \right) < \infty$.
Indeed (\ref{eq:unknownCriterion}) cannot follow simply from the local existence theory\footnote{or uniqueness, for that
matter, should it hold}, due to the
scaling-criticality of $L^2$ for (\ref{eq:BE}).

Nevertheless, there \emph{are} several scaling-critical criteria which one can prove for finite-time breakdown
of continuity of (\ref{eq:BE}): the next Theorem establishes two such criteria, one stated in terms of the gain-only
flow (namely $T_{\textnormal{g.o.}}$), the other in terms of a time integral for the gain term $Q^+$,
 reminiscent of functional settings studied by Klainerman and Machedon. \cite{KM1993,KM2008}

\begin{theorem}
\label{thm:breakdownRegularity}
Let $f$ be a ($*$)-solution of (\ref{eq:BE}) corresponding to some initial data 
\[
0 \leq f_0 \in L^2 \bigcap L^1_2
\]
Then each of the following is true:
\begin{enumerate}
\item
For any compact sub-interval $J \subset I^* \left( f \right)$,
\begin{equation}
\label{eq:TgoBddBelowCpt}
\inf_{t \in J} T_{\textnormal{g.o.}} \left( f \left( t \right) \right) > 0
\end{equation}

\item
Either $T^* \left( f \right) = \infty$ or each of the following holds:
\begin{enumerate}
\item
For any $t \in I^* \left( f \right)$ there holds
\begin{equation}
\label{eq:TgoUpperBd}
T_{\textnormal{g.o.}} \left( f \left( t \right) \right) \leq T^* \left( f \right) - t
\end{equation}
hence
\[
\inf_{t \in I^* \left( f \right)} T_{\textnormal{g.o.}} \left( f \left( t \right) \right) = 0
\]

\item
There holds
\begin{equation}
\label{eq:QplusBlowUp}
\int_{I^* \left( f \right)} 
\left\Vert Q^+ \left( f,f \right) \left( t \right)
\right\Vert_{L^2} dt = \infty
\end{equation}
\end{enumerate}
\end{enumerate}
\end{theorem}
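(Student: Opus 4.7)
\emph{Plan of proof.}

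\emph{Part (1).} Since $J \subset I^*(f)$ is compact and $f \in C(J, L^2)$ by Definition \ref{def:starSoln}, the image $f(J)$ is a compact subset of $L^{2,+}$. Lemma \ref{lem:lsc} supplies a strictly positive, lower semi-continuous function $F^{(1)}$ that bounds $T_{\textnormal{g.o.}}$ from below on $L^{2,+}$; any such function attains a positive minimum on a compact set, and this immediately yields (\ref{eq:TgoBddBelowCpt}).

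\emph{Part (2), claim (b).} Assume $T^*(f) < \infty$ and suppose, for contradiction, that $Q^+(f,f) \in L^1(I^*(f), L^2)$. I would argue exactly as in Section \ref{ssec:integrableAndContinuous}, applied now to the compact interval $[0, T^*(f)]$: the renormalized identity
\[
\mathcal{T}(-t) f(t) + \int_0^t \mathcal{T}(-s) Q^-(f,f)(s)\, ds = f_0 + \int_0^t \mathcal{T}(-s) Q^+(f,f)(s)\, ds
\]
has non-negative summands and is uniformly bounded in $L^2$ by the right-hand side, which forces $\mathcal{T}(-\cdot) Q^{\pm}(f,f) \in L^2_{x,v}(L^1_t([0, T^*(f)]))$ despite there being no direct $L^1(L^2)$ bound on $Q^-$ alone. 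A dominated convergence argument applied to
\[
\mathcal{T}(-t) f(t) - \mathcal{T}(-s) f(s) = \int_s^t \mathcal{T}(-\tau)(Q^+ - Q^-)(f,f)(\tau)\, d\tau
\]
then extends $f$ continuously in $L^2$ up to the closed endpoint, so $f \in C([0, T^*(f)], L^2)$, directly contradicting (\ref{eq:notinCondition}).

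\emph{Part (2), claim (a).} Suppose $T^*(f) < \infty$ and (\ref{eq:TgoUpperBd}) fails at some $t_0 \in I^*(f)$, so that $T^*(f) - t_0 < T_{\textnormal{g.o.}}(f(t_0))$. Proposition \ref{prop:comparisonStar} gives $f \in \mathfrak{B}^{I^*(f)}_{\{t_0\}}$, yielding the pointwise bound $f(t) \leq \mathfrak{Z}_{\textnormal{g.o.}}(f(t_0))(t - t_0)$ for $t \in [t_0, T^*(f))$. Since $\mathfrak{Z}_{\textnormal{g.o.}}(f(t_0))$ lives continuously in $L^2$ on the compact interval $[0, T^*(f) - t_0]$, strictly inside its gain-only lifespan, and is its own gain-only upper envelope, Proposition \ref{prop:comparisonZero} places the corresponding $Q^+$ in $L^1([0, T^*(f) - t_0], L^2)$. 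Monotonicity of $Q^+$ then yields $Q^+(f,f) \in L^1([t_0, T^*(f)], L^2)$, which combined with the ($*$)-solution bound $Q^+(f,f) \in L^1([0, t_0], L^2)$ on the compact sub-interval $[0,t_0] \subset I^*(f)$ contradicts claim (b). The infimum assertion follows by letting $t \to T^*(f)^-$.

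\emph{Main obstacle.} The delicate point is the continuity extension in (b): one is given only an $L^1(L^2)$ bound on $Q^+$, with no analogous individual control on $Q^-$. The trick is that the integrating-factor Duhamel identity enforces a cancellation in the $x,v$ integration —- non-negativity makes the combination $Q^+ - Q^-$ more tractable than either term separately —- and this is precisely what allows the $L^2$ continuity to extend to the closed endpoint. This cancellation mechanism is the heart of the analysis in Section \ref{ssec:integrableAndContinuous}, so the proof ultimately reduces to an application of that earlier argument on the compact interval $[0, T^*(f)]$.
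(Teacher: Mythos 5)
Your proof is correct and takes essentially the same route as the paper: Part (1) from lower semi-continuity and compactness, Part (2)(b) from the Section \ref{ssec:integrableAndContinuous} continuity-extension argument contradicting (\ref{eq:notinCondition}), and Part (2)(a) by showing the comparison principle transfers the finite $Q^+$ bound for the gain-only envelope to $f$ on $[t_0,T^*(f))$, contradicting (b). The only cosmetic differences are that the paper invokes lower semi-continuity of $T_{\textnormal{g.o.}}$ directly (Theorem \ref{thm:lsc2}) rather than via $F^{(1)}$, and reads the $\mathcal{J}<\infty$ bound off the definition of $T_{\textnormal{g.o.}}$ rather than routing through Proposition \ref{prop:comparisonZero}; both choices are valid.
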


\begin{proof}
The (unconditional)
 first claim (\ref{eq:TgoBddBelowCpt}) follows immediately from the lower semi-continuity of $T_{\textnormal{g.o.}}$, since
$f$ is continuous into $L^2$ on compact subintervals of $I^* \left( f \right)$. Moreover, the time-continuity
argument from subsection \ref{ssec:integrableAndContinuous} shows that, subject to the
condition $T^* \left( f \right) < \infty$, the
the $Q^+$ blow-up (\ref{eq:QplusBlowUp}) must hold, since otherwise we would have
continuity on the \emph{compact} interval
\[
f \in C \left( \left[ 0, T^* \left( f \right) \right], L^2 \right)
\]
in contradiction with the definition of ($*$)-solution. 
 Thus we only need to show that if $T^* \left(f \right) < \infty$
then (\ref{eq:QplusBlowUp}) implies (\ref{eq:TgoUpperBd}).

 Suppose (\ref{eq:TgoUpperBd}) fails to hold; then
there exists a time $t_0 \in I^* \left( f \right)$ such that
\[
T_{\textnormal{g.o.}} \left( f \left( t_0 \right) \right) > T^* \left( f \right) - t_0
\]
This implies by the definition of $T_{\textnormal{g.o.}}$ that
\[
\mathcal{J} = \int_0^{T^* \left( f \right) - t_0} 
\left\Vert Q^+ \left( \mathfrak{Z}_{\textnormal{g.o.}} \left(
f \left( t_0 \right) \right) \left( s \right) \right)
\right\Vert_{L^2} ds < \infty
\]
Hence, by the comparison principle Proposition \ref{prop:comparisonStar},
for 
\[
 t_0 < t < T^* \left( f \right)
\]
it holds
\[
\int_{t_0}^{t}
\left\Vert Q^+ \left( 
f , f \right) \left(s\right) \right\Vert_{L^2} dt \leq \mathcal{J}
\]
Therefore, by monotone convergence,
\[
\int_{t_0}^{T^* \left( f \right)}
\left\Vert Q^+ \left( 
f , f \right) \left(s\right) \right\Vert_{L^2} dt \leq \mathcal{J} < \infty
\]
in contradiction with (\ref{eq:QplusBlowUp}).
\end{proof}

\section{Existence of ($*$)-solutions}
\label{sec:ExistenceProof}

\subsection{The truncation scheme.}
\label{ssec:truncationScheme}

In order to construct local solutions of Boltzmann's equation at low regularity,
we will be relying on a compactness argument based on a modified equation
which is known to be globally well-posed. This will be essentially the same scheme as appears
in the original work of DiPerna and Lions (\cite{DPL1989} Section VIII and references therein),
where both the evolution and
the initial data are modified. Crucially, for the purposes of this paper,
the modified collision kernel must be bounded
from above \emph{pointwise} by the uniform constant determined by the normalization
of (\ref{eq:BE}): this is required because later we will need to prove the comparison principle
for the \emph{modified} equation whereas our definition of $\mathfrak{B}_I^I$ is in
reference to the \emph{standard} version of the gain-only flow. Recall again that the
definition of $\mathfrak{B}_I^I$ does not require $f_n$ to solve Boltzmann's equation.

Let us recall the spatial density
\[
\rho_f \left( t,x \right) = \int_{\mathbb{R}^2} f \left( t,x,v \right) dv
\]
and formally set
\begin{equation}
\label{eq:truncatedEquation}
\left( \partial_t + v \cdot \nabla_x \right) f_n = \frac{1}{1 + n^{-1} \rho_{f_n}}
\left\{
Q_{b_n}^+ \left( f_n, f_n \right) - Q_{b_n}^- \left( f_n, f_n \right) \right\}
\end{equation}
where $0 \leq f_n \left( t=0 \right) = f_{n,0} \in \mathcal{S}$ approaches $f_0$ in a sense to be specified later,
and
\begin{equation}
\label{eq:truncatedCollKernel}
b_n \in C_0^{\infty} \left( \mathbb{R}^2 \times \mathbb{S}^1 \right)
\subset L^1 \left( \mathbb{R}^2 \times \mathbb{S}^1 \right)
\bigcap L^\infty \left( \mathbb{R}^2 \times \mathbb{S}^1 \right)
\end{equation}
refers to a smooth compactly supported
 collision kernel (depending only radially on the relative velocity for each $n$)
satisfying the \emph{pointwise} constraints
\begin{equation}
\label{eq:truncatedCollTwoPi}
\forall \left( n \in \mathbb{N} \right) \quad 0 \leq b_n \leq \frac{1}{2\pi}
\end{equation}
and
\begin{equation}
\label{eq:truncatedCollConv}
b_n \rightarrow \frac{1}{2\pi} \quad \textnormal{ almost everywhere }
\end{equation}
as $n \rightarrow \infty$,
having defined $Q^{\pm}_{b_n}$ by substituting $b_n$ for $b$ in (\ref{eq:defQbPlus})
and (\ref{eq:defQbMinus}).

For technical reasons, we shall also assume that, for each $n \in \mathbb{N}$, there exists
a number $\delta_n > 0$ (tending to zero as $n \rightarrow \infty$) such that
\begin{equation}
\label{eq:truncSptVel}
\min \left( \left| z \right|, \left| z \right|^{-1} \right) < \delta_n \qquad
\implies \qquad b_n \left( z, \sigma \right) = 0
\end{equation}
and, for $z \neq 0$,
\begin{equation}
\label{eq:truncSptDefl}
\min \left( \frac{\left| z \cdot \sigma\right|}{\left| z \right|},
1 - \frac{\left|z \cdot \sigma\right|}{\left| z \right|} \right) < \delta_n
\qquad \implies \qquad b_n \left( z, \sigma \right) = 0
\end{equation}
These conditions intuitively forbid scattering events with small or large relative speed,
or those residing inside a set of deflection angles, that set being defined explicitly and
 having small measure.

We recall below (cf. \cite{DPL1989, Ar2011})
 a simple global well-posedness result for the truncated equation
(\ref{eq:truncatedEquation}):
in fact, for the proof,
 it will be slightly modified further still by initially
  substituting $\rho_{\left| f_n \right|}$ for
$\rho_{f_n}$, since we do not know \emph{a priori} that $f_n$ is non-negative for positive values of $t$.

We turn to the basic global well-posedness result for (\ref{eq:truncatedEquation}).

\begin{theorem}
\label{thm:truncated}
For $n \in \mathbb{N}$, let $b_n$ be given as above and
let $f_{n,0} \in \mathcal{S}$ be a non-negative function; furthermore, assume that for
each $n$ there exists $c_n > 0$ such that, for all $\left( x,v \right) \in \mathbb{R}^2 \times
\mathbb{R}^2$,
\begin{equation}
\label{eq:dataLowerBd}
f_{n,0} \left( x,v \right) \geq c_n 
\exp \left( - \frac{1}{2} \left| x\right|^2 - 
\frac{1}{2} \left|v\right|^2 \right)
\end{equation}
 Then there exists a unique non-negative mild solution
\[
f_n \in C^1 \left( \left[0,\infty\right), \mathcal{S} \right)
\]
of the truncated Boltzmann equation (\ref{eq:truncatedEquation})
such that $f_n \left( t=0 \right) = f_{n,0}$; moreover, for all
$\left( t,x,v \right) \in \left[ 0,\infty\right) \times \mathbb{R}^2 \times \mathbb{R}^2$,
\[
f_n \left( t,x,v \right) > 0
\]
Additionally, for each $t \geq 0$, we have the global conservation of mass,
\begin{equation}
\label{eq:truncatedMass}
\int_{\mathbb{R}^2 \times \mathbb{R}^2} f_n \left(t,x,v\right) dx dv = 
\int_{\mathbb{R}^2 \times \mathbb{R}^2} f_{n,0} \left(x,v\right) dx dv
\end{equation}
the global conservation of momentum,
\begin{equation}
\label{eq:truncatedMomentum}
\int_{\mathbb{R}^2 \times \mathbb{R}^2} v f_n \left(t,x,v\right) dx dv = 
\int_{\mathbb{R}^2 \times \mathbb{R}^2} v f_{n,0} \left(x,v\right) dx dv
\end{equation}
the global conservation of kinetic energy,
\begin{equation}
\label{eq:truncatedVelMoment}
\int_{\mathbb{R}^2 \times \mathbb{R}^2} \left|v\right|^2 f_n \left(t,x,v\right) dx dv =
\int_{\mathbb{R}^2 \times \mathbb{R}^2} \left|v\right|^2 f_{n,0} \left(x,v\right) dx dv
\end{equation}
a similar conservation law for spatial moments,
\begin{equation}
\label{eq:truncatedPosMoment}
\int_{\mathbb{R}^2 \times \mathbb{R}^2} 
 \left|x-vt\right|^2 f_n \left(t,x,v\right) dx dv = \int_{\mathbb{R}^2 \times \mathbb{R}^2} 
  \left|x\right|^2 f_{n,0} \left(x,v\right) dx dv
\end{equation}
and
\begin{equation}
\label{eq:truncatedPosCross}
\int_{\mathbb{R}^2 \times \mathbb{R}^2}
 \left( x - v t \right) \cdot v\;
  f_n \left(t,x,v\right) dx dv = \int_{\mathbb{R}^2 \times \mathbb{R}^2} 
  x\cdot v\; f_{n,0} \left(x,v\right) dx dv
\end{equation}
and the entropy identity
\begin{equation}
\label{eq:truncatedEntropyIdentity}
H \left( f_n \left( t \right) \right) +
\int_0^t \int_{\mathbb{R}^2}
\frac{1}{1 + n^{-1} \rho_{f_n}}
 \mathcal{D}_{b_n} \left( f_n \left( \tau \right) \right) dx d\tau =
H \left( f_{n,0} \right)
\end{equation}
where $\mathcal{D}_{b_n}$ refers to the entropy dissipation defined in reference to the
collision kernel $b_n$, namely
\begin{equation}
\label{eq:defEntrDisGeneral}
\mathcal{D}_{b_n} \left( h \right) =
\frac{1}{4} \int_{\mathbb{S}^1 \times \mathbb{R}^2 \times \mathbb{R}^2}
b_n 
\left( h^\prime h_*^\prime - h h_* \right) \log
\frac{h^\prime h_*^\prime}{h h_*} d\sigma dv dv_*
\end{equation}
where $b_n$ denotes $b_n \left( v - v_* \right)$ (the integrand is everywhere finite since
$f_n$ is nowhere vanishing).
\end{theorem}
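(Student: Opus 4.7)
My plan is to construct a local-in-time strictly positive Schwartz solution of (\ref{eq:truncatedEquation}) by Banach fixed-point iteration on its mild form written along characteristics, then to globalize using the a priori conservation estimates. Since positivity is not known a priori during the contraction, one initially replaces $\rho_{f_n}$ in the denominator by $\rho_{|f_n|}$; once positivity is established the two coincide. Setting
\[
\Lambda_n(f)(t,x,v) = \frac{\int_{\mathbb{R}^2\times\mathbb{S}^1} b_n\, f(t,x,v_*)\, dv_*\, d\sigma}{1+n^{-1}\rho_{|f|}(t,x)}, \qquad \Gamma_n(f) = \frac{Q^+_{b_n}(f,f)}{1+n^{-1}\rho_{|f|}},
\]
the equation becomes $(\partial_t + v\cdot\nabla_x) f + f\,\Lambda_n(f) = \Gamma_n(f)$, which in the $\#$-notation (\ref{eq:defHashNotation}) integrates as
\[
f^\#(t) = f_{n,0}\, \exp\!\Big(-\!\int_0^t \Lambda_n^\#\, d\tau\Big) + \int_0^t \exp\!\Big(-\!\int_s^t \Lambda_n^\#\, d\tau\Big)\, \Gamma_n^\#(s)\, ds.
\]
Because $b_n$ is smooth, compactly supported, and pointwise bounded by $(2\pi)^{-1}$, and the denominator is bounded below by $1$, both $\Lambda_n$ and $\Gamma_n$ are locally Lipschitz on weighted Schwartz seminorms (with constants depending on $n$); a standard contraction produces a local solution in $C^1(J,\mathcal{S})$, and an inductive bootstrap on weighted derivatives $\partial^\alpha_x \partial^\beta_v$ propagates the full Schwartz class. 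Strict positivity follows at once, since $f_{n,0}\geq c_n e^{-|x|^2/2-|v|^2/2}>0$ and the exponential factor in the Duhamel representation is itself strictly positive. The conservation laws derived below give uniform $L^1\cap L^1_2$ bounds that, combined with the bounded structure of the truncated nonlinearity, prevent any Schwartz seminorm from blowing up in finite time, so the solution extends to all $t\geq 0$.

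\textbf{Conservation laws.} For any smooth polynomial $\varphi(v)$, multiplying (\ref{eq:truncatedEquation}) by $\varphi$ and integrating in $(x,v)$ gives
\[
\frac{d}{dt}\int \varphi\, f_n\, dx\, dv = \int \frac{1}{1+n^{-1}\rho_{f_n}(t,x)} \Big[\int \varphi(v)\big(Q^+_{b_n}(f_n,f_n) - Q^-_{b_n}(f_n,f_n)\big)\, dv\Big] dx,
\]
where the crucial observation is that the damping factor depends only on $(t,x)$ and can be pulled outside the velocity integration. The standard collisional symmetrization $(v,v_*)\leftrightarrow(v',v_*')$ and $(v,v_*)\leftrightarrow(v_*,v)$ rewrites the inner integral as
\[
\tfrac{1}{4}\int b_n\big(\varphi'+\varphi_*'-\varphi-\varphi_*\big)\, f_n\, (f_n)_*\, dv\, dv_*\, d\sigma,
\]
which vanishes for $\varphi \in \{1, v_i, |v|^2\}$, yielding (\ref{eq:truncatedMass}), (\ref{eq:truncatedMomentum}), (\ref{eq:truncatedVelMoment}). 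For the mixed second moments (\ref{eq:truncatedPosMoment}) and (\ref{eq:truncatedPosCross}), I would differentiate the corresponding $\#$-quantities in time so that the transport term is identically absorbed, undo the change of variable, factor out the $v$-independent damping as above, and expand $|x-vt|^2$ and $(x-vt)\cdot v$ as polynomials in $v$ whose $(t,x)$-dependent coefficients multiply the monomials $1, v_i, v_i v_j$. Each such monomial reduces to a linear combination of the three collision invariants, so the right-hand side vanishes term by term.

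\textbf{Entropy identity and main obstacle.} For (\ref{eq:truncatedEntropyIdentity}), I multiply (\ref{eq:truncatedEquation}) by $\log f_n$, which is well-defined by strict positivity of $f_n$ and integrable against $f_n$ by Schwartz decay together with Lemma \ref{lem:HminusFinite} and Lemma \ref{lem:HplusBd}. Integrating in $(x,v)$ and factoring the damping out of the velocity integration as before, the classical $H$-theorem symmetrization identity
\[
\int \log f_n\, \big(Q^+_{b_n}-Q^-_{b_n}\big)\, dv = -\mathcal{D}_{b_n}(f_n)
\]
yields $\frac{d}{dt}H(f_n(t)) = -\int \mathcal{D}_{b_n}(f_n)/(1+n^{-1}\rho_{f_n})\, dx$, and integration in $t$ produces the stated entropy identity. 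The main technical obstacle I expect is the inductive propagation of weighted Schwartz seminorms: derivatives of the denominator $1+n^{-1}\rho_{f_n}$ generate a chain-rule cascade whose control requires all lower-order seminorms already to be bounded, and the moment estimates needed for globalization must be propagated simultaneously with the conservation laws. None of these steps is conceptually difficult given the $L^\infty\cap L^1$ truncation of $b_n$ and its compact support in the relative velocity, but the bookkeeping is the bulk of the work.
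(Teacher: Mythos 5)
Your approach is essentially correct and takes a somewhat different route from the paper's. The paper (Appendix~\ref{app:truncated}) proceeds in layers: first a Banach fixed point in $C\left(\left[0,T\right],L^1\right)$ with $T = \mathcal{O}\left(n^{-1}\right)$ \emph{independent of the data} (giving global existence and the Lipschitz estimate (\ref{eq:truncatedLipMap})), then a Gronwall bound in $L^\infty_{x,v}$ using the support conditions on $b_n$, then an explicit Gaussian subsolution $g_n\left(t,x,v\right) = c_n e^{-K_n t} e^{-\left|x-vt\right|^2/2 - \left|v\right|^2/2}$, and only then moments and derivatives. You instead iterate directly in weighted Schwartz-type seminorms via the mild/integrating-factor form. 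This works in principle, but note that since $\mathcal{S}$ is Fr\'echet rather than Banach one still must run the actual contraction in a single Banach space (e.g.\ $L^1$ or some $H^{k,k}$) and bootstrap the remaining seminorms by Gronwall, which is precisely what the paper's ordering accomplishes; the paper's layering also makes the origin of the $n$-dependent constants transparent.

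The one point you should make explicit is the entropy identity. Strict positivity of a Schwartz solution by itself does \emph{not} justify multiplying the equation by $\log f_n$ and integrating: for a generic strictly positive Schwartz function the negative part of $\log f_n$ may grow superpolynomially, and then the pairing of $\log f_n$ against the transport term, or against $Q^+_{b_n}\left(f_n,f_n\right)$ (which, unlike the loss term, does not carry a factor $f_n\left(v\right)$), is not manifestly integrable nor are the boundary terms in the integration by parts manifestly zero. The paper handles this via the \emph{quantitative} Gaussian lower bound $f_n \geq g_n$, which forces $\left|\log f_n\right| \lesssim_n 1 + \left|x-vt\right|^2 + \left|v\right|^2$ and so makes $\log f_n$ of at most quadratic growth against Schwartz decay. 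Your setup actually delivers the same bound for free: since $b_n \leq \left(2\pi\right)^{-1}$ forces $\Lambda_n \leq \rho_{\left|f\right|}/\left(1 + n^{-1}\rho_{\left|f\right|}\right) \leq n$ pointwise, the integrating factor obeys $\exp\left(-\int_0^t \Lambda_n^{\#}\,d\tau\right) \geq e^{-nt}$, so every Picard iterate and hence the limit satisfies $f_n^{\#}\left(t\right) \geq f_{n,0} e^{-nt} \geq c_n e^{-nt} e^{-\left|x\right|^2/2 - \left|v\right|^2/2}$. You should state this quantitative lower bound rather than appeal only to strict positivity together with Lemmas \ref{lem:HminusFinite} and \ref{lem:HplusBd}, which establish finiteness of $H\left(f_n\right)$ but not the validity of the differential identity itself.
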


\begin{proof}
See Appendix \ref{app:truncated}.
\end{proof}

\subsection{The comparison principle.}
\label{ssec:truncatedComparison}

We aim to show that the Schwartz solutions $f_n$ from subsection \ref{ssec:truncationScheme} satisfy the
comparison principle:
\begin{equation}
\label{eq:comp-fn}
f_n \in \mathfrak{B}^I_I
\end{equation}
where $I = \left[ 0, \infty\right)$. Now due to the fact that $f_n$ is Schwartz we clearly have
\[
Q^+ \left( f_n ,f_n \right) \in L^1 \left( J, L^2 \right)
\]
for any compact $J \subset I$, and that the proof of Proposition \ref{prop:comparisonConverse} only depends
on Lemma \ref{lem:comparison-delta}.
The only problem is that $f_n$ does not satisfy (\ref{eq:BE}), but rather (\ref{eq:truncatedEquation}).
But the proof of Lemma \ref{lem:comparison-delta} does not actually
require $f_n$ to satisfy (\ref{eq:BE}): the proof carries through (simply replacing $f$ by $f_n$ everywhere)
if only it holds the \emph{pointwise} upper bound for $0 \leq t_0 < t < \infty$
\[
f_n \left( t \right) \leq \mathcal{T} \left( t-t_0 \right) f_n \left( t_0 \right) +
\int_{t_0}^t \mathcal{T} \left( t-s \right) Q^+ \left( f_n \left( s \right) \right) ds
\]
noting carefully $Q^+$ is that of (\ref{eq:BE}), \emph{not} (\ref{eq:truncatedEquation}). But we can verify
this inequality directly from Duhamel's formula:
\begin{equation*}
\begin{aligned}
f_n \left( t \right) & = \mathcal{T} \left( t-s \right) f_n \left( s \right) +
\int_s^t \mathcal{T} \left( t-\tau\right) 
\frac{
Q_{b_n}^+ \left( f_n, f_n \right) - Q_{b_n}^- \left( f_n, f_n \right)}{1+n^{-1}\rho_{f_n}}
\left( \tau \right) d\tau \\
& \leq \mathcal{T} \left( t-s \right) f_n \left( s \right) +
\int_s^t \mathcal{T} \left( t-\tau\right) 
\frac{
Q_{b_n}^+ \left( f_n, f_n \right)}{1+n^{-1} \rho_{f_n}}
\left( \tau \right) d\tau \\
\\
& \leq \mathcal{T} \left( t-s \right) f_n \left( s \right) +
\int_s^t \mathcal{T} \left( t-\tau\right) 
Q_{b_n}^+ \left( f_n, f_n \right)
\left( \tau \right) d\tau \\
\\
& \leq \mathcal{T} \left( t-s \right) f_n \left( s \right) +
\int_s^t \mathcal{T} \left( t-\tau\right) 
Q^+ \left( f_n, f_n \right)
\left( \tau \right) d\tau 
\end{aligned}
\end{equation*}
where we have used the uniform bound $b_n \leq \left( 2\pi \right)^{-1}$ in the last
step. Hence we may conclude (\ref{eq:comp-fn}).

\subsection{The convergence argument.} We are ready to prove:
\begin{theorem}
\label{thm:starSolnExists}
For any
\[
0 \leq f_0 \in L^2 \bigcap L^1_2
\]
there exists a ($*$)-solution of (\ref{eq:BE}) corresponding to the initial data $f_0$.
\end{theorem}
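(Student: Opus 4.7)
The plan is to regularize $f_0$ by a sequence $\{f_{n,0}\} \subset \mathcal{S}$ of non-negative Schwartz functions obeying the Gaussian lower bound (\ref{eq:dataLowerBd}) and converging to $f_0$ in $L^2 \cap L^1_2$, with $H(f_{n,0}) \to H(f_0)$ (the latter guaranteed by Lemmas \ref{lem:HplusBd} and \ref{lem:HminusConv} together with a standard truncation and mollification). Choosing the approximate kernels $b_n$ as in (\ref{eq:truncatedCollKernel})--(\ref{eq:truncSptDefl}) and applying Theorem \ref{thm:truncated}, I obtain global smooth solutions $f_n \in C^1([0,\infty), \mathcal{S})$ of the truncated equation satisfying the exact conservation laws (\ref{eq:truncatedMass})--(\ref{eq:truncatedPosCross}) and the entropy identity (\ref{eq:truncatedEntropyIdentity}). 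Subsection \ref{ssec:truncatedComparison} shows these also obey the comparison principle $f_n \in \mathfrak{B}^I_I$ with $I = [0,\infty)$, where the comparison is \emph{against the untruncated constant kernel} thanks to (\ref{eq:truncatedCollTwoPi}).

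Next I would invoke DiPerna--Lions compactness: the uniform-in-$n$ bounds on mass, energy, second moments in space, and entropy dissipation furnish the weak compactness plus velocity averaging needed to extract a subsequence with $f_n \to f$ a.e.\ on $[0,\infty) \times \mathbb{R}^2 \times \mathbb{R}^2$ such that $f \in C([0,\infty), L^1)$ is a global renormalized solution of (\ref{eq:BE}) with initial data $f_0$. The constraints (\ref{eq:massCondition})--(\ref{eq:dispCondition}) transfer to the limit by the uniform $L^1_2$ control and Fatou's lemma, while the entropy inequality (\ref{eq:entropyCondition}) follows by combining Lemmas \ref{lem:HplusBd} and \ref{lem:HminusConv} to pass $H(f_{n,0}) \to H(f_0)$ and $H(f_n(t)) \to H(f(t))$ a.e., with weak lower semicontinuity of the entropy dissipation.

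The core new content is the construction of $T^*(f)$ and the verification of the defining properties on $I^*(f) = [0, T^*(f))$. I would iterate the Fundamental Lemma \ref{lem:fundamental}. At $a=0$, hypotheses (\ref{eq:fundAssumptionOne})--(\ref{eq:fundAssumptionTwo}) are satisfied (comparison from the truncated scheme, $L^2$-convergence of initial data, and a.e.\ convergence), yielding $f \in \mathfrak{B}^{[0, T_{\textnormal{g.o.}}(f_0))}_{\{0\}}$ together with $f_n \to f$ in $L^2_{\mathrm{loc}}([0, T_{\textnormal{g.o.}}(f_0)), L^2)$. Comparison with $\mathfrak{Z}_{\textnormal{g.o.}}(f_0) \in C([0,T], L^2)$, combined with the $L^1$ continuity of $f$ and the uniform square integrability Lemmas \ref{lem:uiCst}--\ref{lem:uiDCT}, upgrades this to $f \in C([0,T], L^2)$ for every $T < T_{\textnormal{g.o.}}(f_0)$; Proposition \ref{prop:comparisonZero} then delivers $Q^+(f,f) \in L^1([0,T], L^2)$. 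Selecting $t_k$ within the current interval where $f_n(t_k) \to f(t_k)$ in $L^2$ (available along a.e.\ $t$ via $L^2_tL^2_{x,v}$ convergence) and reapplying the Fundamental Lemma at $a=t_k$ (with diagonal extraction in $n$) extends the interval of $L^2$-control by up to $T_{\textnormal{g.o.}}(f(t_k))$; define $T^*(f)$ as the supremum over all such extensions.

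The hardest part will be verifying the maximality condition (\ref{eq:notinCondition}). Suppose toward contradiction that $T^*(f) < \infty$ and $f \in C([0, T^*(f)], L^2)$. Choose $t_k \nearrow T^*(f)$ along which $f_n(t_k) \to f(t_k)$ in $L^2$ (available on a set of full measure by the iterated $L^2_tL^2_{x,v}$ convergence). By continuity, $f(t_k) \to f(T^*(f))$ in $L^2$, so the lower semi-continuity of $T_{\textnormal{g.o.}}$ from Theorem \ref{thm:lsc2} gives $\liminf_k T_{\textnormal{g.o.}}(f(t_k)) \geq T_{\textnormal{g.o.}}(f(T^*(f))) > 0$. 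Hence $t_k + T_{\textnormal{g.o.}}(f(t_k)) > T^*(f)$ for all sufficiently large $k$, and one further application of the Fundamental Lemma at $a=t_k$ pushes the interval of $L^2$-control strictly past $T^*(f)$, contradicting its definition. This forces (\ref{eq:notinCondition}) and completes the construction.
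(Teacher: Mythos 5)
Your plan follows the paper's strategy closely: construct global Schwartz solutions of the truncated equation with the comparison principle against the untruncated $Q^+$, pass to a DiPerna--Lions/Lions limit, establish the moment and entropy constraints, iterate the Fundamental Lemma to build $I^*(f)$, and prove maximality by a lower-semicontinuity contradiction. Two of your steps, however, are not quite justified as written.

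First, you invoke Proposition \ref{prop:comparisonZero} to conclude $Q^+(f,f) \in L^1([0,T],L^2)$. That proposition requires $f \in \mathfrak{B}^J_J$, i.e.\ comparison against the gain-only flow started from $f(t_0)$ for \emph{every} $t_0 \in J$, whereas the Fundamental Lemma at $a=0$ only gives $f \in \mathfrak{B}^{[0,T_{\textnormal{g.o.}}(f_0))}_{\{0\}}$ (comparison from $f_0$ alone), so its hypothesis is not met at this stage. The conclusion you want does hold, but by a more direct route: from $0 \leq f(t) \leq \mathfrak{Z}_{\textnormal{g.o.}}(f_0)(t)$ a.e.\ and monotonicity of $Q^+$ one has $0 \leq Q^+(f,f)(t) \leq Q^+\left(\mathfrak{Z}_{\textnormal{g.o.}}(f_0)\right)(t)$ pointwise, and the majorant is in $L^1([0,T],L^2)$ for every $T < T_{\textnormal{g.o.}}(f_0)$ by definition of $\mathfrak{Z}_{\textnormal{g.o.}}$. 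This is how the paper argues; it also makes it natural to derive the $Q^+$ bound first and then obtain $L^2$ time continuity from subsection \ref{ssec:integrableAndContinuous}, rather than the reverse order you use (your uniform-square-integrability argument for continuity is valid, but unnecessary once the $Q^+$ bound is in hand).

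Second, you assert $H(f_n(t)) \to H(f(t))$ for a.e.\ $t$. This is not available for $t$ beyond $T^*(f)$ when $T^*(f) < \infty$: the $H^+$ part would need strong $L^2$ convergence of $f_n(t)$ via Lemma \ref{lem:HplusBd}, which your construction supplies only for a.e.\ $t \in I^*(f)$, while (\ref{eq:entropyCondition}) must hold on all of $[0,\infty)$. What is needed, and what does hold for all $t$, is only the one-sided bound $H(f(t)) \leq \liminf_n H(f_n(t))$: combine Fatou's lemma for $H^+$ (using $\alpha^+ \geq 0$ and a.e.\ convergence) with the two-sided convergence of $H^-$ from Lemma \ref{lem:HminusConv} (which needs only the uniform $L^1_2$ bound from the conservation laws and a.e.\ convergence). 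The two-sided limits $H^\pm(f_{n,0}) \to H^\pm(f_0)$ are required only at $t=0$, where your use of Lemmas \ref{lem:HplusBd} and \ref{lem:HminusConv} is fully justified by the strong convergence of the initial data.
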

\begin{proof}
To begin, consider the unique solutions $f_n$ from Theorem \ref{thm:truncated}, corresponding to 
non-negative Schwartz initial
data $f_{n,0}$ which we assume to satisfy each of the following:
\begin{equation}
\label{eq:starAssumptionOne}
\lim_{n \rightarrow \infty} \left\Vert
f_{n,0} - f_0 \right\Vert_{L^2 \bigcap L^1_2} = 0
\end{equation}
\begin{equation}
\label{eq:starAssumptionThree}
\lim_{n \rightarrow \infty} f_{n,0} \left( x,v \right) = f_0 \left( x,v \right)
\quad \textnormal{ a.e }
\left( x,v \right) \in \mathbb{R}^2 \times \mathbb{R}^2
\end{equation}
\begin{equation}
\label{eq:starAssumptionFour}
f_{n,0} \geq c_n \exp \left( - \frac{1}{2} \left| x \right|^2 -
\frac{1}{2} \left| v\right|^2 \right)
\end{equation}
with $c_n \rightarrow 0$ and $n \rightarrow \infty$,
no other conditions being imposed on the sequence $f_{n,0}$. Note that (\ref{eq:starAssumptionOne})
implies 
\begin{equation}
\label{eq:starAssumptionTwo}
\lim_{n \rightarrow \infty} 
\int_{\mathbb{R}^2 \times \mathbb{R}^2 }
\left( \left| x \right|^2 + \left| v \right|^2 \right)
\left| f_{n,0} - f_0 \right| dx dv  = 0
\end{equation}
 Such a sequence can be constructed
by first producing a sequence of smooth compactly supported approximants $\tilde{f}_{n,0}$ 
via convolution and
truncation, and then writing $f_{0,n}$ as the sum of $\tilde{f}_{n,0}$ and the function on the
right-hand side of (\ref{eq:starAssumptionThree}) with, say, $c_n = \frac{1}{n}$.
Passing to a subsequence, also denoted
$f_{n,0}$, provides (\ref{eq:starAssumptionThree}).

It follows immediately that
\[
\sup_{n \in \mathbb{N}} \left\Vert f_{n,0} \right\Vert_{L^2 \bigcap L^1_2} < \infty
\]
and hence by Lemmas \ref{lem:HminusFinite} and \ref{lem:HplusBd} we also have
\[
\sup_{n \in \mathbb{N}} H^{\pm} \left( f_{n,0} \right) < \infty
\]
Then following the DiPerna-Lions argument \cite{DPL1989,DPL1991} one shows the weak compactness for the solution
sequence $f_n$, and that any limit point is a renormalized solution of (\ref{eq:BE}). Moreover,
passing to a subsequence $n_m$ ($m \in \mathbb{N}$), and using the $L^1$ (norm topology) compactness result of Lions \cite{L1994II}, 
we may assume (aside from the usual weak convergence) the \emph{pointwise} convergence
\[
f_{n_m}
 \rightarrow f \quad \textnormal{a.e.} \quad \left(t,x,v\right) \in \left[ 0,\infty\right) \times \mathbb{R}^2 \times \mathbb{R}^2
\]
as $m \rightarrow \infty$,
where $f$ is a renormalized solution of (\ref{eq:BE}). 
Our claim is that the limiting function $f$ is, in fact, a ($*$)-solution of (\ref{eq:BE}).

We note that the $L^1$ time continuity (\ref{eq:continuityL1}) follows from the DiPerna-Lions argument.
Let us turn to the bounds on moments and entropy. 

Let us begin with the kinetic energy bound. Since $f$ is the weak limit of the sequence $f_n$,
it follows for any non-negative function $\varphi \left( t,x,v \right)$, smooth and compactly supported
in all variables, with $\left\Vert \varphi \right\Vert_{L^\infty} \leq 1$, and assuming $\varphi$ is
supported in a time interval $\left( a,b \right)$ of size $\tau$,
\[
\begin{aligned}
& \int_{\left( 0,\infty \right) \times \mathbb{R}^2 \times \mathbb{R}^2}
\left|v \right|^2 \varphi \left( t,x,v \right) f \left( t,x,v \right) dt dx dv \\ 
& \qquad \qquad \qquad = \lim_{n \rightarrow \infty} 
\int_{\left( 0,\infty \right) \times \mathbb{R}^2 \times \mathbb{R}^2}
\left|v \right|^2 \varphi \left( t,x,v \right) f_n \left( t,x,v \right) dt dx dv \\
& \qquad \qquad \qquad \leq
\tau \liminf_{n \rightarrow \infty} \sup_{t \in \left( a,b \right)}
\int_{\mathbb{R}^2 \times \mathbb{R}^2}
\left|v \right|^2  f_n \left( t,x,v \right)  dx dv \\
& \qquad \qquad \qquad = \tau \liminf_{n \rightarrow \infty} \int_{\mathbb{R}^2 \times \mathbb{R}^2}
\left|v \right|^2  f_{n,0} \left( x,v \right) dx dv
\end{aligned}
\]
where we have used (\ref{eq:truncatedVelMoment}) in the last step. By
(\ref{eq:starAssumptionTwo}) and
 the arbitrariness of $\varphi$
we deduce (\ref{eq:energyCondition}). Similarly we deduce (\ref{eq:dispCondition}) from
(\ref{eq:truncatedPosMoment}).

Turn now to the mass bound; we only sketch the proof. For any compact set $K \subset
\mathbb{R}^2_x \times \mathbb{R}^2_v$, we can decompose
\[
\begin{aligned}
& \int_{\mathbb{R}^2 \times \mathbb{R}^2} f_n \left( t,x,v \right) dx dv \\
& \qquad \qquad \qquad =
\int_{K} f_n \left( t,x,v \right) dx dv +
\int_{\left(\mathbb{R}^2 \times \mathbb{R}^2\right)\setminus K} f_n \left( t,x,v \right) dx dv
\end{aligned}
\]
The first term on the right converges (in a suitable sense) to
$\int_K f \left(t\right) dx dv$, and the second can be made small uniformly in $n$ by suitable
choice of $K$, due to (\ref{eq:truncatedVelMoment}) and (\ref{eq:truncatedPosMoment}). Hence
we deduce (\ref{eq:massCondition}) as a consequence of (\ref{eq:truncatedMass}) and the bounds
on second moments in $x$ and $v$. Similarly, we can use (\ref{eq:truncatedMomentum}) to
deduce (\ref{eq:momentumCondition}).

The entropy inequality is far more subtle and has been studied by DiPerna and Lions
in \cite{DPL1991}. In that reference it was proven that, for a sequence of renormalized solutions
(or solutions of the truncated model, etc.), and ignoring notational details for simplicity,
\[
\int_0^t \int_{\mathbb{R}^2} \mathcal{D} \left( f \left( t \right) \right) dx d\tau \leq
\liminf_{n \rightarrow \infty} 
\int_0^t \int_{\mathbb{R}^2} \mathcal{D} \left( f_n \left( t \right) \right) dx d\tau
\]
The proof is non-trivial but it is based on convexity arguments combined with a careful definition
for dissipation functional. We also have, by convexity,
\[
H \left( f \left( t \right) \right) \leq \liminf_{n \rightarrow \infty}
H \left( f_n \left( t \right) \right)
\]
Therefore, to deduce (\ref{eq:entropyCondition}) from (\ref{eq:truncatedEntropyIdentity}), 
the key is to prove the \emph{limits} at the initial time,
\[
\lim_{n \rightarrow \infty} H^{\pm} \left( f_{n,0} \right) = H^{\pm} \left( f_0 \right) 
\]
This follows from (\ref{eq:starAssumptionOne}) and (\ref{eq:starAssumptionThree}), using
Lemma \ref{lem:HplusBd} for $H^+$ and Lemma \ref{lem:HminusConv} for $H^-$.

It remains to identify a $T^* \left( f \right) \in \left( 0, \infty \right]$ which
verifies
(\ref{eq:continuityCondition}), (\ref{eq:kmCondition}) and (\ref{eq:notinCondition}).

The fundamental lemma, Lemma \ref{lem:fundamental}, implies that for some $\delta > 0$,
\[
Q^+ \left( f,f \right) \in L^1 \left( \left[ 0, \delta \right), L^2  \right)
\]
since (by the lemma) $f$ is controlled \emph{pointwise} by the gain-only flow based at $f_0$
for $ t \in \left[ 0,\delta \right)$ (some
$\delta$ depending only on $f_0$),
whereas the gain-only flow has the requisite $Q^+$ bound in $L^1_t L^2_{x,v}$ for small enough time intervals.

So let us define
\begin{equation}
\label{eq:TstarDef}
T^* \left( f \right) = \sup \left\{ \; T \in \left( 0, \infty \right) \; : \; Q^+ \left( f, f \right) \in
L^1 \left( \left[ 0, T \right], L^2 \right) \; \right\}
\end{equation}
and
\[
I^* \left( f \right) = \left[ 0, T^* \left( f \right) \right)
\]
Clearly (\ref{eq:kmCondition}) follows trivially from the
definition of $T^* \left( f \right)$.
Now even though we have not yet proven that $f$ is a ($*$)-solution, 
 we can still apply
the arguments from subsection \ref{ssec:integrableAndContinuous} to conclude
from (\ref{eq:kmCondition}) that, on any compact sub-interval
$J \subset I^* \left( f \right)$, it holds
\[
f \in C \left( J, L^2 \right)
\]
hence we have (\ref{eq:continuityCondition}). So it only remains to prove (\ref{eq:notinCondition}).

Before we proceed to prove (\ref{eq:notinCondition}), let us prove a preliminary result.
Let $T$ be a real number with $0 < T < T^* \left( f \right)$; then $f \in C \left( \left[ 0, T \right], L^2\right)$,
so by the lower semi-continuity of $T_{\textnormal{g.o.}}$, we know that there exists $\eta_T$ with
\[
0 < \eta_T < \inf_{t \in \left[ 0, T \right]} T_{\textnormal{g.o.}} \left( f \left( t \right) \right)
\]
Therefore, by partitioning $\left[ 0, T \right]$ into suitable consecutive sub-intervals of size
\[
\textnormal{ between } \frac{\eta_T}{4} \textnormal{ and } \frac{\eta_T}{2}
\]
 and inductively applying Lemma \ref{lem:fundamental} finitely many times (using our freedom to wait to choose the next
 interval of the partition until \emph{after} the previous invocation of the lemma),
 we can deduce that up to extraction of a further
subsequence still denoted $f_{n_{m}}$, there holds
\begin{equation}
\label{eq:step3141592}
\lim_{m \rightarrow \infty} \left\Vert f_{n_m} \left( t \right) - f \left( t \right) \right\Vert_{L^2} = 0
\end{equation}
for almost every $t \in \left[ 0, T \right]$. In particular, by the arbitrariness of $T$ and diagonalization,
 the same can be said for almost
every $t \in I^* \left( f \right)$.

To complete the proof, let us suppose that (\ref{eq:notinCondition}) fails; that is,
\[
f \in C \left( \left[ 0, T^* \left( f \right) \right], L^2 \right)
\]
Then by the lower-semicontinuity of $T_{\textnormal{g.o.}} \left( \cdot \right)$ we may choose $r$ such that
\[
0 < r < \inf_{t \in I^* \left( f \right)} T_{\textnormal{g.o.}} \left( f \left( t \right) \right)
\]

Let us pick an intermediate time $t_0$ with
\[
T^* \left( f \right) - \frac{r}{2} < t_0 < T^* \left( f \right)
\]
for which (\ref{eq:step3141592}) holds.
Then applying Lemma \ref{lem:fundamental} one last time, we can conclude 
from (\ref{eq:fundConclusionOne})
that
$f$ is bounded \emph{pointwise} by the gain-only flow based at $f \left( t_0 \right)$, up to a slightly larger time than $T^* \left( f \right)$,
say $\tilde{t}$ where
\[
T^* \left( f \right) < \tilde{t} < T^* \left( f \right) + \frac{r}{4}
\]
hence for some $T^\prime > T^* \left( f \right)$ we have
\[
Q^+ \left( f,f \right) \in L^1
\left( \left[ 0, T^\prime \right], L^2 \right)
\]
which contradicts (\ref{eq:TstarDef}).
\end{proof}

\section{Limits of ($*$)-solutions}
\label{sec:starLimitsProof}

For the next theorem, we consider a sequence $f_n$ of ($*$)-solutions to (\ref{eq:BE}), corresponding simply
to initial data $f_{n,0} \in L^2 \bigcap L^1_2$, without assuming any higher regularity or decay for
$f_n$ or $f_{n,0}$.
 We shall assume that we have prepared the sequence $f_n$ by passing to subsequences,
prior to the application of the theorem, so as to simplify the statement of the theorem itself.

\begin{theorem}
\label{thm:limits}
For each $n \in \mathbb{N}$ let
 $f_n$ be a ($*$)-solution of (\ref{eq:BE}) with initial data 
 \[
 f_{n,0} = f_n \left( t=0 \right) \in L^2 \bigcap L^1_2
 \]
Furthermore, let us assume that, for some renormalized solution $f$ of (\ref{eq:BE}),
\[
f_n \rightharpoonup f
\]
where the convergence is (at least) in the weak topology of $L^1 \left( K \right)$
for each compact $K \subset \left[ 0, \infty \right) \times \mathbb{R}^2 \times \mathbb{R}^2$,
 (cf. \cite{DPL1989}), and that there holds the convergence of the initial data
\[
\lim_{n \rightarrow \infty} \left\Vert f_{n,0} - f_0 \right\Vert_{L^2} = 0
\]
and
\begin{equation}
\begin{aligned}
& \lim_{n \rightarrow \infty} \sum_{\varphi \in
\left\{ 1, v_1, v_2, \left| v \right|^2, \left| x \right|^2, x \cdot v \right\}} \left|
\int_{\mathbb{R}^2 \times \mathbb{R}^2 } \varphi \cdot
\left( f_{n,0} - f_0 \right) dx dv \right|  = 0
\end{aligned}
\end{equation}
where $f_0 = f \left( t=0 \right)$, and additionally that (cf. \cite{L1994II})
\[
f_n \rightarrow f \quad \textnormal{a.e.} \quad \left( t,x,v \right) \in \left[ 0, \infty \right) \times \mathbb{R}^2
\times \mathbb{R}^2
\]

Then it follows that $f$ is a ($*$)-solution of (\ref{eq:BE}) with
\[
0 < T^* \left( f \right) \leq \liminf_{n \rightarrow \infty} T^* \left( f_n \right)
\]
the $\liminf$ being necessarily non-zero (but possibly infinite). (But even if each
$T^* \left( f_n \right)$ is finite we do not exclude the possibility $T^* \left( f \right) = \infty$, provided
the $\liminf$ is infinite, as indicated.)

Moreover, there exists a subsequence $n_m$ such that both the following hold: first,
for each compact sub-interval $J \subset I^* \left( f \right)$,
\[
\lim_{m \rightarrow \infty} \left\Vert f_{n_m} - f \right\Vert_{L^2 \left( J \times
\mathbb{R}^2 \times \mathbb{R}^2 \right)} = 0
\]
and, second, for almost every $t \in I^* \left( f \right)$, it holds
\[
\lim_{m \rightarrow \infty} \left\Vert f_{n_m} \left( t \right) - f \left( t \right) \right\Vert_{L^2} = 0
\]
\end{theorem}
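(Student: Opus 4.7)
The plan is to closely mirror the existence argument of Theorem \ref{thm:starSolnExists}, with the approximants now being the given $(*)$-solutions $f_n$ rather than smooth solutions of a truncated model. The moment conditions (\ref{eq:massCondition})--(\ref{eq:dispCondition}) are inherited from $f_n$ by lower semi-continuity of weak $L^1$ convergence combined with the hypothesized convergence of moments of $f_{n,0}$ to those of $f_0$, using a cutoff in phase space to control tails uniformly in $n$. The entropy inequality (\ref{eq:entropyCondition}) passes to the limit via the DiPerna-Lions convexity/lower-semi-continuity machinery \cite{DPL1991}, combined with Lemma \ref{lem:HplusBd} (which yields $H^+(f_{n,0}) \to H^+(f_0)$ from $L^2$ convergence) and Lemma \ref{lem:HminusConv} (which yields $H^-(f_{n,0}) \to H^-(f_0)$ from pointwise convergence plus uniform boundedness in $L^1_2$). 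The $L^1$ time continuity (\ref{eq:continuityL1}) is inherited from the renormalized structure.

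The core of the proof is identifying the correct $T^*(f)$ and verifying (\ref{eq:continuityCondition})--(\ref{eq:notinCondition}) along with the two convergence claims. I would set
\[
T^*(f) \coloneqq \sup\{T > 0 \; : \; Q^+(f,f) \in L^1([0,T], L^2)\}
\]
and drive everything by the Fundamental Lemma (Lemma \ref{lem:fundamental}) paired with Proposition \ref{prop:comparisonStar}, which guarantees $f_n \in \mathfrak{B}^{I^*(f_n)}_{I^*(f_n)}$. At the base step $a=0$, for any $T^\flat < \liminf_n T^*(f_n)$ one has $f_n \in \mathfrak{B}^{[0,T^\flat]}_{\{0\}}$ for all $n$ large; applying Lemma \ref{lem:fundamental} yields $f \in \mathfrak{B}^{I_0}_{\{0\}}$ for $I_0 = [0,T^\flat]\cap[0,T_\textnormal{g.o.}(f_0))$ together with $L^2_{t,x,v}$-convergence of $f_n$ to $f$ on compact subsets of $I_0$. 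Monotonicity of $Q^+$ and Theorem \ref{thm:QPlusLWP} then give $Q^+(f,f)\in L^1_\textnormal{loc}(I_0,L^2)$, and the argument of subsection \ref{ssec:integrableAndContinuous} delivers $f \in C_\textnormal{loc}(I_0,L^2)$, establishing $T^*(f) > 0$.

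To cover all of $I^*(f)$ and obtain the inequality $T^*(f) \leq \liminf T^*(f_n)$, I would iterate Lemma \ref{lem:fundamental}. After the current step has established $L^2$-in-time convergence on some interval $[0,t_k]$, pass to a further subsequence to secure pointwise-in-time $L^2$-convergence $f_{n_m}(t)\to f(t)$ at almost every $t\in[0,t_k]$; choose $t_{k+1}$ in this full-measure set close to the right endpoint, and apply Lemma \ref{lem:fundamental} with $a=t_{k+1}$ to extend $\mathfrak{B}$-control to $[t_{k+1},t_{k+1}+T_\textnormal{g.o.}(f(t_{k+1})))$. By the lower semi-continuity of $T_\textnormal{g.o.}$ (Theorem \ref{thm:lsc2}) over compact intervals of $L^2$-continuity, this iteration exhausts $I^*(f)$. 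The inequality $T^*(f)\leq\liminf T^*(f_n)$ is built into the hypothesis of Lemma \ref{lem:fundamental}: at $a$ the lemma requires $f_n\in\mathfrak{B}^I_{\{a\}}$, which in turn demands $I\subset I^*(f_n)$ for large $n$; hence the extension cannot cross the threshold $\liminf T^*(f_n)$. The maximality (\ref{eq:notinCondition}) in the finite case is handled exactly as in the existence proof: continuity of $f$ up to $T^*(f)$ would, via local theory applied at that time, extend $Q^+(f,f)\in L^1 L^2$ strictly beyond $T^*(f)$, contradicting its supremum definition. The $L^2$ convergence statements drop out of the iteration, with a final diagonal extraction producing a single subsequence valid across $I^*(f)$.

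The principal obstacle will be the careful bookkeeping of nested subsequence extractions so that a single diagonal subsequence witnesses both the pointwise-a.e.-in-time $L^2$ convergence and the $L^2_{t,x,v}$ convergence on compact subintervals simultaneously, while maintaining the ability at each iteration step to select the new base time $t_{k+1}$ from the full-measure set on which pointwise $L^2$ convergence has been achieved. A secondary subtlety is ensuring that the iteration terminates precisely at $T^*(f)$ in the intrinsic sense when $T^*(f) < \liminf T^*(f_n)$, so that the maximality condition (\ref{eq:notinCondition}) is not merely an artifact of having run out of approximants.
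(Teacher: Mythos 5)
Your overall architecture (passing to a $\liminf$-saturating subsequence, iterating Lemma \ref{lem:fundamental} with Proposition \ref{prop:comparisonStar}, inheriting moments and entropy from the existence proof) is faithful to the paper, but the crucial inequality $T^*(f)\leq\liminf_n T^*(f_n)$ is not established, and the reasoning you give for it is incorrect. You define $T^*(f)$ intrinsically as the supremum of $T$ for which $Q^+(f,f)\in L^1([0,T],L^2)$, and then assert the bound is ``built into'' Lemma \ref{lem:fundamental} because the iteration cannot push the $\mathfrak{B}$-control past $\tilde{T}=\liminf_n T^*(f_n)$. That only shows the \emph{construction} is confined to $[0,\tilde T)$; it does not show the \emph{intrinsic} quantity $T^*(f)$ is confined there. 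Nothing in the Fundamental Lemma forbids $Q^+(f,f)$ from being integrable on a longer interval for reasons invisible to the iteration, so a priori one could have $T^*(f)>\tilde T$, which would also break your maximality step (since you cannot base Lemma \ref{lem:fundamental} at $t_0>\tilde T$, where the $f_n$ no longer satisfy the comparison principle).

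The missing ingredient is a contradiction argument that \emph{uses the hypothesis that the $f_n$ are $(*)$-solutions}, via the breakdown criterion (\ref{eq:TgoUpperBd}) of Theorem \ref{thm:breakdownRegularity}: for $T^*(f_n)<\infty$ one has $T_{\textnormal{g.o.}}(f_n(t))\leq T^*(f_n)-t$. Concretely, one assumes $Q^+(f,f)\in L^1(\tilde I,L^2)$ and deduces $f\in C(\tilde J,L^2)$, picks $r<\inf_{t\in\tilde J}T_{\textnormal{g.o.}}(f(t))$ by lower semi-continuity, selects $\tilde t\in(\tilde T-r/2,\tilde T)$ at which pointwise-in-time $L^2$ convergence holds, and passes to the limit in $T_{\textnormal{g.o.}}(f_n(\tilde t))\leq T^*(f_n)-\tilde t$ to obtain $T_{\textnormal{g.o.}}(f(\tilde t))\leq \tilde T-\tilde t<r$, contradicting the choice of $r$. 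This shows $Q^+(f,f)\notin L^1(\tilde I,L^2)$, hence $T^*(f)\leq\tilde T$, and it also dispatches the maximality condition (\ref{eq:notinCondition}) in the boundary case $T^*(f)=\tilde T$ (which your extension-based maximality argument does not reach). Your proposed maximality argument is then correct only in the remaining case $T^*(f)<\tilde T$, where one has enough room to apply Lemma \ref{lem:fundamental} based near $T^*(f)$.
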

\begin{proof}
Clearly we may assume without loss of generality, by passing to a further sequence (still denoted $f_n$) which 
saturates the $\liminf$ in the theorem statement, that
for some $\tilde{T}$ with $0 < \tilde{T} \leq \infty$, the \emph{limit}
\[
\tilde{T} = \lim_{n \rightarrow \infty} T^* \left( f_n \right)
\]
exists in the extended real line. By lower-semicontinuity of
$T_{\textnormal{g.o.}} \left( \cdot \right)$ and the strong $L^2$ convergence at $t=0$, along with
(\ref{eq:TgoUpperBd}), we  have
\[
\tilde{T} \geq T_{\textnormal{g.o.}} \left( f_0 \right) > 0
\]
which follows from the chain of (in)equalities
\[
\tilde{T} = \lim_{n \rightarrow \infty} T^* \left( f_n \right) \geq
\liminf_{n \rightarrow \infty} T_{\textnormal{g.o.}} \left( f_{n,0} \right)
\geq T_{\textnormal{g.o.}} \left( f_0 \right) > 0
\]
 Let us furthermore define
\[
T_0 = \sup \left\{ \; T \in \left( 0, \infty \right) \; : \; Q^+ \left( f, f \right) \in
L^1 \left( \left[ 0, T \right], L^2 \right) \; \right\}
\]
where the set is non-empty by passage to the limit in
 the comparison principle following Lemma \ref{lem:fundamental}: indeed,
 $T_0 \geq T_{\textnormal{g.o.}} \left( f_0 \right) > 0$.
In what follows we will assume that each $T_0, \tilde{T}$ is finite: the proof is simpler in the
case that $\tilde{T} = \infty$ and $T_0 < \infty$. (There are two cases remaining: that each
$T_0, \tilde{T}$ is infinite, and that $T_0 = \infty$ and $\tilde{T}$ is finite; but, there is nothing to
show in the first case, and the proof below shows that the second case is impossible.)

Let us denote the shorthand
\[
I_0 = \left[ 0, T_0 \right) \qquad J_0 = \left[ 0, T_0 \right]
\]
\[
\tilde{I} = \left[ 0, \tilde{T} \right)\qquad
\tilde{J} = \left[ 0, \tilde{T} \right]
\]
By Proposition \ref{prop:comparisonStar},
\[
\forall \left( n \in \mathbb{N} \right) \quad
f_n \in \mathfrak{B}^{I^* \left( f_n \right)}_{I^* \left( f_n \right)}
\]
hence by the definition of $\tilde{T}$, we find that for any compact
subinterval $J \subset \tilde{I}$ there exists an integer
$N \in \mathbb{N}$, depending on $J$, such that
\[
\forall \left( n \in \mathbb{N} \; : \; n \geq N \right) \quad
f_n \in \mathfrak{B}^J_J
\]

Before we turn to the core of the proof,
let us pass to an even further subsequence (still denoted $f_n$) such that both the following hold:
first, for any compact sub-interval $J \subset I_0 \bigcap \tilde{I}$,
\begin{equation}
\label{eq:convOneLimits}
\lim_{n \rightarrow \infty} \left\Vert f_n - f \right\Vert_{L^2 \left( J \times
\mathbb{R}^2 \times \mathbb{R}^2 \right)} = 0
\end{equation}
and, second, for almost every $t \in I_0 \bigcap \tilde{I}$,
\begin{equation}
\label{eq:convTwoLimits}
\lim_{n \rightarrow \infty} \left\Vert f_n \left( t \right) - f \left( t \right) \right\Vert_{L^2} = 0
\end{equation}
This is possible by inductively applying Lemma \ref{lem:fundamental} as in the proof
of Theorem \ref{thm:starSolnExists}. Note that we need $I_0$ to guarantee the square-integrability (with time continuity)
of $f$ along $J$, whereas we need $\tilde{I}$ to guarantee the comparison principle on $J$ for
$f_n$ for all large enough $n$ depending on $J$: these two, with the necessary convergence
 at $t=0$, are the keys to inductively applying Lemma \ref{lem:fundamental}. We can moreover conclude that
 \[
 f \in \mathfrak{B}^{I_0 \bigcap \tilde{I}}_{\left\{ t_0 \right\}}
 \]
 for almost every $t_0 \in I_0 \bigcap \tilde{I}$.
 
We must also prove the moment bounds and entropy inequality. The key to proving
(\ref{eq:massCondition}-\ref{eq:dispCondition}) is that we have assumed,
for $\varphi \in \left\{ 1, v_1, v_2, \left|v \right|^2, \left| x \right|^2, x \cdot v \right\}$,
\begin{equation}
\label{eq:dataConvPf}
\lim_{n \rightarrow \infty} \int_{\mathbb{R}^2 \times \mathbb{R}^2} \varphi f_{n,0} \; dx dv =
\int_{\mathbb{R}^2 \times \mathbb{R}^2} \varphi f_0 dx dv
\end{equation}
which, by non-negativity of $f_{n,0} , f_0$
 and combined with the assumption that $f_{n,0} \rightarrow f_0$ strongly
in $L^2$, provides us
\begin{equation}
\label{eq:dataBdPf}
\sup_{n \in \mathbb{N}} \left\Vert f_{n,0} \right\Vert_{L^2 \bigcap L^1_2} < \infty
\end{equation}
but note carefully that we are \emph{neither} assuming \emph{nor} asserting that $f_{n,0}$ converges
to $f_0$ strongly in $L^2 \bigcap L^1_2$, contrary to the proof of Theorem 
\ref{thm:starSolnExists}. In any case, using (\ref{eq:dataConvPf}) and the known estimates for
the ($*$)-solutions $f_n$, we can deduce (\ref{eq:massCondition}-\ref{eq:dispCondition}) similarly
to the proof of Theorem \ref{thm:starSolnExists}. Similarly, using again the results of 
DiPerna and Lions from \cite{DPL1991} as in the proof of Theorem \ref{thm:starSolnExists},
we obtain (\ref{eq:entropyCondition}) by noting that
\[
\lim_{n \rightarrow \infty} H^{\pm} \left( f_{n,0} \right) = H^{\pm} \left( f_0 \right)
\]
using, as before, Lemma \ref{lem:HplusBd} and Lemma \ref{lem:HminusConv}, and our assumptions
on $f_{n,0}$ (namely strong $L^2$ convergence, the boundedness in $L^1_2$, and pointwise convergence,
all at $t=0$).

We have only to show that $T_0 \leq \tilde{T}$ (where $T_0$ comes from the $Q^+$ integral for
$f$ whereas $\tilde{T}$ comes from the sequence $f_n$), and that
\begin{equation}
\label{eq:toShowNotIn}
Q^+ \left( f, f \right) \notin L^1 \left( I_0, L^2 \right)
\end{equation}
which encodes the maximality property of ($*$)-solutions.
Indeed, given (\ref{eq:toShowNotIn}), assume that
$f$ is continuous from $\left[ 0, T^* \left( f \right) \right]$ into $L^2$ and deduce a contradiction with the
comparison principle cf.  
the proof of Proposition \ref{prop:comparisonZero}.

Let us begin by proving instead the  statement
\begin{equation}
\label{eq:toShowNotInWeaker}
Q^+ \left( f, f \right) \notin L^1 \left( \tilde{I}, L^2 \right)
\end{equation}
Indeed, if this were \emph{not} the case, then arguing as in subsection \ref{ssec:integrableAndContinuous},
 the reader can verify
 that we would have continuity on
the \emph{closed} interval
\[
f \in C \left( \tilde{J}, L^2 \right)
\]
and moreover that $T_0 \geq \tilde{T}$.
From this we obtain that, by lower-semicontinuity of $T_{\textnormal{g.o.}} \left( \cdot \right)$, we may
choose $r$ such that
\[
0 < r < \inf_{t \in \tilde{J}}
T_{\textnormal{g.o.}} \left( f \left( t \right) \right)
\]
So pick a time $\tilde{t}$ with
\begin{equation}
\label{eq:tildeStar}
\tilde{T} - \frac{r}{2} < \tilde{t} < \tilde{T}
\end{equation}
such that (\ref{eq:convTwoLimits}) holds. Now by (\ref{eq:TgoUpperBd}), for each large enough $n$ we have
\[
T_{\textnormal{g.o.}} \left( f_n \left( \tilde{t} \right) \right) \leq
T^* \left( f_n \right) - \tilde{t}
\]
We wish to let $n \rightarrow \infty$ in this inequality; indeed, on the right we simply obtain
\[
 \tilde{T} - \tilde{t} 
 \]
 whereas on the left, by lower semi-continuity of $T_{\textnormal{g.o.}} \left( \cdot \right)$
and the fact that $\tilde{t}$ verifies (\ref{eq:convTwoLimits}), we find that
\[
T_{\textnormal{g.o.}} \left( f \left( \tilde{t} \right) \right) \leq
\liminf_{n \rightarrow \infty} T_{\textnormal{g.o.}} \left( f_n \left( \tilde{t} \right) \right) 
\]
hence
\[
T_{\textnormal{g.o.}} \left( f \left( \tilde{t} \right) \right) \leq \tilde{T} - \tilde{t}
\]
The quantity on the left is no less than $r$, hence
\[
r \leq \tilde{T} - \tilde{t}
\]
which contradicts (\ref{eq:tildeStar}).

We conclude that (\ref{eq:toShowNotInWeaker}) holds; this immediately implies that
$T_0 \leq \tilde{T}$. But (\ref{eq:toShowNotInWeaker}) \emph{also} implies the following:
\emph{in the case} that $T_0 = \tilde{T}$, we immediately have (\ref{eq:toShowNotIn}),
 so there is nothing more to
show in that case. Therefore, to conclude the proof, we are free to prove
(\ref{eq:toShowNotIn}) under the simplifying assumption that $T_0 < \tilde{T}$.

Suppose the desired conclusion fails. Then we have
\[
Q^+ \left( f,f \right) \in L^1 \left( I_0 , L^2 \right)
\]
and in particular, continuity on the \emph{closed} interval $J_0 = I_0 \bigcup \left\{
T_0 \right\}$, i.e.
\[
f \in C \left( J_0 , L^2 \right)
\]
so choose, as before, an $r_0$ satisfying
\[
0 < r_0 < \inf_{t \in J_0}
T_{\textnormal{g.o.}} \left( f \left( t \right) \right)
\]
As before, pick a time $t_0$ with
\[
T_0 - \frac{r_0}{2} < t_0 < T_0
\]
such that (\ref{eq:convTwoLimits}) holds. Then by Lemma \ref{lem:fundamental} and using
that $T_0 < \tilde{T}$, we can conclude
that
\[
f \in \mathfrak{B}^I_{\left\{ t_0 \right\}}
\]
where $I = \left[ t_0 , b \right)$ with $b = \min \left( \tilde{T}, t_0 + r_0 \right) > T_0$.
In particular, by our choice of $r_0$ as (less than) an inf over $J_0$ and that $t_0 \in J_0$,
\[
t_0 + T_{\textnormal{g.o.}} \left( f \left( t_0 \right) \right) \geq
t_0 + r_0 \geq b > T_0
\]
Hence for any compact subinterval $J$ of $\left[ 0, b \right)$,
\[
Q^+ \left( f,f \right) \in L^1 \left( J, L^2 \right)
\]
which contradicts the definition of $T_0$.
\end{proof}

\section{Scattering}
\label{sec:scatteringBasic}

\subsection{The scattering lemma.}

The Lemma to follow expresses a type of stability against perturbations of scattering states.

\begin{lemma}
\label{lem:scattering-lemma}
Suppose
\[
 f_{+\infty} \in L^2
\]
Then there exist numbers $\varepsilon, T > 0$, each depending only on $f_{+\infty}$, such that the following
holds:

For any $t_0 \geq T$,
\[
\left\Vert h_0 - \mathcal{T} \left( t_0 \right) f_{+\infty} \right\Vert_{L^2} 
< \varepsilon \;\; \implies \;\;
\begin{cases}
T_{\textnormal{g.o.}} \left(h_0\right) = \infty \\
\textnormal{and}\\
\int_0^\infty \left\Vert Q^+ \left( \mathfrak{Z}_{\textnormal{g.o.}} \left(h_0\right) \left(t\right)
\right) \right\Vert_{L^2} dt < \infty
\end{cases}
\]
\end{lemma}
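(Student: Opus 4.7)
The plan is to produce the global gain-only solution directly by a Banach contraction argument on $I=[0,\infty)$, using Proposition \ref{prop:QplusLargeTime} to supply precisely the three small bilinear bounds demanded by Theorem \ref{thm:CritLWP}, and then invoke uniqueness for the gain-only equation to identify the output with $\mathfrak{Z}_{\textnormal{g.o.}}(h_0)$.

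First, I would fix the universal constant $C_0$ from Proposition \ref{prop:QplusBound} controlling the bilinear operator $\mathcal{A}(t,f_0,g_0)=\mathcal{T}(-t)Q^+(\mathcal{T}(t)f_0,\mathcal{T}(t)g_0)$ as a map $L^2\times L^2\to L^1(I,L^2)$. Choose an auxiliary smallness parameter $\varepsilon_0>0$, depending only on $C_0$, small enough to meet the algebraic smallness constraints at the end of the proof of Theorem \ref{thm:CritLWP} (with the solution's $\mathcal{W}^{1,1}$-size $\varepsilon$ of, say, $\|f_{+\infty}\|_{L^2}$). Then apply Proposition \ref{prop:QplusLargeTime} with this $\varepsilon_0$, extracting numbers $\delta>0$ and $T>0$, each depending only on $f_{+\infty}$, such that whenever $\|h_0-\mathcal{T}(t_0)f_{+\infty}\|_{L^2}<\delta$ for some $t_0\geq T$, all three bilinear estimates (\ref{eq:QplusLargeTimeFirst})--(\ref{eq:QplusLargeTimeThird}) hold on $I$ with right-hand side involving $\varepsilon_0$. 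Set $\varepsilon=\delta$.

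Second, I would carry through verbatim the contraction mapping argument of Theorem \ref{thm:CritLWP} with $x_0=h_0$ and the compact interval $J$ replaced by the unbounded interval $I=[0,\infty)$. The proof of Theorem \ref{thm:CritLWP} is algebraic: none of its constants depend on $|J|$, and Lemma \ref{lem:Abd} uses compactness of $J$ only to justify calling $\|x\|_{L^\infty(J,L^2)}+\|dx/dt\|_{L^1(J,L^2)}$ a Banach space norm, which is equally valid on $I$. Equivalently, one can run the contraction on a sequence of compact intervals $[0,T_n]$ with $T_n\uparrow\infty$, the smallness hypotheses being provided uniformly in $T_n$ by Proposition \ref{prop:QplusLargeTime} (which is stated on $[0,\infty)$ and hence \emph{a fortiori} gives the same bounds on every $[0,T_n]$), and then patch the solutions by uniqueness. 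The output is a function
\[
h\in C(I,L^2),\qquad (\partial_t+v\cdot\nabla_x)h\in L^1(I,L^2),
\]
satisfying Duhamel's formula for (\ref{eq:QplusEq}) on all of $I$ with $h(0)=h_0$, and in particular $Q^+(h,h)\in L^1(I,L^2)$.

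Third, uniqueness from Theorem \ref{thm:QPlusLWP} identifies the restriction of $h$ to each $[0,T_n]$ with $\mathfrak{Z}_{\textnormal{g.o.}}(h_0)(\cdot)$, so $T_{\textnormal{g.o.}}(h_0)\geq T_n$ for every $n$; letting $n\to\infty$ yields $T_{\textnormal{g.o.}}(h_0)=\infty$, and the global $Q^+$ integral bound is exactly the $L^1(I,L^2)$ control on $Q^+(h,h)$ produced by the contraction. The only real technical point is verifying that the fixed point argument of Theorem \ref{thm:CritLWP} transplants to the unbounded interval; this is entirely a matter of observing that all constants in Lemma \ref{lem:Abd} and Theorem \ref{thm:CritLWP} are independent of the right endpoint of $J$, so no new analytic input beyond Proposition \ref{prop:QplusLargeTime} is needed.
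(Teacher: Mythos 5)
Your proposal is correct and matches the paper's argument, which is precisely the one-line combination of Theorem \ref{thm:CritLWP} with Proposition \ref{prop:QplusLargeTime}, identifying the resulting fixed point with $\mathfrak{Z}_{\textnormal{g.o.}}(h_0)$ by uniqueness (cf.\ Theorem \ref{thm:QPlusLWP}). Your care about the compact-vs.-unbounded interval issue is well placed, and the patching argument over $[0,T_n]$ with $T_n\uparrow\infty$ is exactly the clean way to resolve it given how Theorem \ref{thm:CritLWP} is stated, so no change of substance from the paper's intended proof.
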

\begin{proof}
An immediate consequence of Theorem \ref{thm:CritLWP} with Proposition \ref{prop:QplusLargeTime}, cf. the proof
of Theorem \ref{thm:QPlusLWP}.
\end{proof}

\subsection{The scattering criterion.} We are ready to characterize scattering solutions of (\ref{eq:BE}).

\begin{theorem}
\label{thm:ScatteringCriterion}
Let $f$ be a ($*$)-solution of (\ref{eq:BE}); then the following are equivalent:
\begin{enumerate}
\item
$T^* \left(f\right) = \infty$ and $f$ scatters

\item
\[
\int_{I^* \left( f  \right)}
\left\Vert Q^+ \left( f,f \right) \left( t \right) \right\Vert_{L^2}
dt < \infty
\]
\end{enumerate}
\end{theorem}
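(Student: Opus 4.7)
The plan is to prove the two implications separately; the hard direction is $(2) \Rightarrow (1)$, since there one must construct the asymptotic state $f_{+\infty}$ from only a $Q^+$-integrability bound. Both directions rest crucially on the comparison principle of Proposition \ref{prop:comparisonStar}, together with the pointwise non-negativity of $f$ and $Q^\pm$.

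For $(1) \Rightarrow (2)$, I would fix the scattering state $f_{+\infty}$ and invoke Lemma \ref{lem:scattering-lemma} to extract $\varepsilon, T > 0$ such that any datum within $L^2$-distance $\varepsilon$ of $\mathcal{T}(t_0) f_{+\infty}$, for some $t_0 \geq T$, launches a global solution of the gain-only equation with summable $Q^+$. Scattering of $f$ lets me pick such a $t_0$ with $h_0 := f(t_0)$ admissible; then by Proposition \ref{prop:comparisonStar}, $f(t_0 + \tau) \leq \mathfrak{Z}_{\textnormal{g.o.}}(f(t_0))(\tau)$ pointwise, and monotonicity of $Q^+$ transfers the integrability to $Q^+(f,f)$ on $[t_0,\infty)$. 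Finiteness on $[0,t_0]$ is immediate from (\ref{eq:kmCondition}) in the definition of a ($*$)-solution.

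For $(2) \Rightarrow (1)$, Theorem \ref{thm:breakdownRegularity}(2b), contrapositively, immediately yields $T^*(f) = \infty$, so Duhamel's formula for (\ref{eq:BE}) holds on all of $[0,\infty)$. Set $g(t) = \mathcal{T}(-t) f(t)$ and split $g = P - M$ with
\[
P(t) = f_0 + \int_0^t \mathcal{T}(-\tau) Q^+(f,f)(\tau) \, d\tau,
\qquad
M(t) = \int_0^t \mathcal{T}(-\tau) Q^-(f,f)(\tau) \, d\tau.
\]
Minkowski's inequality combined with hypothesis (2) gives $\|P(t) - P(s)\|_{L^2} \leq \int_s^t \|Q^+(f,f)\|_{L^2} \, d\tau$, so $P(t)$ is Cauchy in $L^2$; together with pointwise monotonicity (coming from $Q^+ \geq 0$ and $\mathcal{T}$ preserving non-negativity), this yields $P(t) \uparrow P_\infty \in L^2$ almost everywhere. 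For $M$: pointwise monotonicity is again automatic, and the bound $0 \leq M(t) = P(t) - g(t) \leq P(t) \leq P_\infty$ furnishes a time-independent $L^2$ majorant; the dominated convergence theorem then upgrades the resulting pointwise convergence $M(t) \uparrow M_\infty$ to $L^2$ convergence. Therefore $g(t) \to f_{+\infty} := P_\infty - M_\infty$ in $L^2$, which is precisely scattering.

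The main obstacle is the loss term in the $(2) \Rightarrow (1)$ direction: no direct dispersive bound on $\|Q^-(f,f)\|_{L^2}$ is available from the theory developed for $Q^+$ alone, and trying to estimate $M(t)$ in isolation is hopeless. The resolution (and the crux of the argument) is to avoid estimating $M$ directly and instead read its $L^2$ control out of the identity $M = P - g$ combined with the non-negativity $g \geq 0$, so that $P_\infty \in L^2$ automatically serves as a pointwise dominant for the entire family $\{M(t)\}_{t \geq 0}$. This is a prototypical instance of the interplay---flagged throughout the introduction---between dispersive bounds for $Q^+$ and the structural non-negativity of the kinetic equation, which together tame $Q^-$.
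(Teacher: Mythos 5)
Your proof is correct, and for the direction $(1)\Rightarrow(2)$ it is essentially the paper's own argument (Lemma \ref{lem:scattering-lemma} to find a suitable $t_0\geq T$, the comparison principle of Proposition \ref{prop:comparisonStar} to transfer summability to $Q^+(f,f)$ on $[t_0,\infty)$, and (\ref{eq:kmCondition}) for the compact initial piece). The use of $T^*(f)=\infty$ to invoke the comparison principle on all of $[0,\infty)$ and the hypothesis that $f$ scatters to find an admissible $t_0$ are both in place, so no gap there.

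For $(2)\Rightarrow(1)$, your argument is actually \emph{more careful} than the one printed in the paper, and in fact it repairs a genuine defect. The paper's proof asserts the Duhamel formula
\[
\mathcal{T}(-t)f(t) - \mathcal{T}(-s)f(s) = \int_s^t \mathcal{T}(-\tau)\, Q^+(f(\tau))\,d\tau
\]
for a ($*$)-solution of the \emph{full} equation (\ref{eq:BE}), and then bounds the $L^2$ norm of the left side by $\int_s^\infty\|Q^+\|_{L^2}\,dt$. As written, this formula omits the loss term $-Q^-(f,f)$, which cannot simply be dropped from an equality; dropping it only gives a one-sided pointwise inequality, and the resulting Cauchy estimate does not follow without a separate argument controlling the accumulated loss. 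Your decomposition $g = P - M$ with $P(t)=f_0+\int_0^t\mathcal{T}(-\tau)Q^+\,d\tau$ and $M(t)=\int_0^t\mathcal{T}(-\tau)Q^-\,d\tau$, together with the observations that $P$ is $L^2$-Cauchy and pointwise monotone (hence $P(t)\uparrow P_\infty\in L^2$ a.e.), that $M$ is pointwise monotone with the uniform $L^2$ majorant $0\leq M(t)=P(t)-g(t)\leq P_\infty$, and that dominated convergence then yields $M(t)\to M_\infty$ in $L^2$, is exactly the missing step. This monotone/dominated-convergence argument is the right way to close the loop: it uses only the non-negativity of $f$, $Q^\pm$, and $\mathcal{T}$, in the same spirit as the argument already carried out in subsection \ref{ssec:integrableAndContinuous} for compact intervals, but here extended uniformly in time. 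So you have not merely reproduced the paper's proof; you have supplied the control of the loss term that the printed version glosses over.
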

\begin{proof}
\underline{\emph{(1)$\implies$(2)}.} Since $f$ scatters by hypothesis, there exists an 
\[
f_{+\infty} \in L^2
\]
 such that
\[
\lim_{t \rightarrow + \infty} \left\Vert f\left( t \right) - \mathcal{T} \left( t \right) f_{+\infty} \right\Vert_{L^2}
= 0
\]
Let $\varepsilon, T$ be as in the statement of Lemma \ref{lem:scattering-lemma}. 
Pick a number $\tilde{T}$ such that
\[
\forall \left( t \geq \tilde{T} \right) \quad
\left\Vert f\left( t \right) - \mathcal{T} \left( t \right) f_{+\infty} \right\Vert_{L^2}
< \varepsilon
\]
and let
\[
t_0 = \min \left( T, \tilde{T} \right)
\]
Then $t_0 \geq T$ and
\[
\left\Vert f\left( t_0 \right) - \mathcal{T} \left( t_0 \right) f_{+\infty} \right\Vert_{L^2}
< \varepsilon
\]
hence by the Lemma we have 
\[
T_{\textnormal{g.o.}} \left( f \left( t_0 \right) \right) = \infty
\]
and
\[
\int_0^\infty \left\Vert Q^+ \left( \mathfrak{Z}_{\textnormal{g.o.}} \left(f \left( t_0 \right) \right) \left(t\right)
\right) \right\Vert_{L^2} dt < \infty
\]
Thus by Proposition \ref{prop:comparisonStar} we have
\[
\int_{t_0}^\infty
\left\Vert Q^+ \left( f \left( t \right) \right)\right\Vert_{L^2} dt \leq
\int_0^\infty \left\Vert Q^+ \left( \mathfrak{Z}_{\textnormal{g.o.}} \left(f \left( t_0 \right) \right) \left(t\right)
\right) \right\Vert_{L^2} dt < \infty
\]
and $Q^+ \left( f \right) \in L^1 \left( \left[ 0, t_0 \right] , L^2 \right)$ since
$T^* \left( f \right) = \infty$,
so by adding the two time integrals, we may conclude.

\underline{\emph{(2)$\implies$(1)}.} Since we have assumed
\[
\int_0^{T^* \left( f \right)}
\left\Vert Q^+ \left( f \left( t \right) \right)\right\Vert_{L^2} dt < \infty
\]
it follows from Theorem \ref{thm:breakdownRegularity} that 
\[
T^* \left( f \right) = \infty
\]
that is
\begin{equation}
\label{eq:scatteringCriterionConverse}
\int_0^\infty
\left\Vert Q^+ \left( f \left( t \right) \right)\right\Vert_{L^2} dt < \infty
\end{equation}
Also, we have Duhamel's formula, for $0 < s < t$,
\[
\mathcal{T} \left( -t \right) f \left( t \right) - \mathcal{T} \left( -s \right) f \left( s \right) =
\int_s^t \mathcal{T} \left( - \tau \right) Q^+ \left( f \left( \tau \right) \right) d\tau
\]
hence
\[
\left\Vert
\mathcal{T} \left( -t \right) f \left( t \right) - \mathcal{T} \left( -s \right) f \left( s \right)\right\Vert_{L^2}
\leq \int_s^\infty
\left\Vert Q^+ \left( f \left( t \right) \right)\right\Vert_{L^2} dt 
\]
the right-hand side tending to zero as $s\rightarrow \infty$ by monotone convergence and (\ref{eq:scatteringCriterionConverse}).
Thus there exists $f_{+\infty} \in L^2$ such that
\[
\lim_{t \rightarrow +\infty} \left\Vert \mathcal{T} \left( -t \right) f \left( t \right) - f_{+\infty} 
\right\Vert_{L^2} = 0
\]
which is equivalent to
\[
\lim_{t \rightarrow +\infty} \left\Vert f \left( t \right) - \mathcal{T} \left( t \right) f_{+\infty} 
\right\Vert_{L^2} = 0
\]
so we may conclude.
\end{proof}

\section{Exclusive scattering}
\label{sec:scatteringAdvanced}

\subsection{Definition.}

\begin{definition}
\label{def:ExSc}
A non-negative measurable function $f_0 \in L^2 \bigcap L^1_2$ will be said to be
\emph{exclusively scattering} if, for every ($*$)-solution $f$ with initial data
$f \left( t=0 \right) = f_0$, it holds that
\[
T^* \left( f \right) = \infty \quad \textnormal{ and } \quad
f \textnormal{ scatters}
\]
and in such case we write $f_0 \in \mathcal{E}$.
\end{definition}

\begin{remark}
Observe that the definition of $\mathcal{E}$ makes no mention of uniqueness; in particular, it is a
property of the \emph{initial data} $f_0$, not of a ($*$)-solution (since there might be many
($*$)-solutions corresponding to any given $f_0 \in \mathcal{E}$). When we say that $f_0$ is
exclusively scattering, or equivalently $f_0 \in \mathcal{E}$, we are simply saying that it is not possible
to identify a ($*$)-solution of (\ref{eq:BE}) with initial data $f_0$ that does not scatter.
\end{remark}

\subsection{Perturbations.}

We begin with a simple lemma.

\begin{lemma}
\label{lem:standard}
Let $\left( Z, d_Z \right)$ be a metric space (not necessarily complete). Suppose that for a subset $U \subset Z$, 
it holds that for every $u \in U$ and for every sequence $\left\{ z_n \right\}_n \subset Z$ with
\[
\lim_{n \rightarrow \infty} d_Z \left( z_n, u \right) = 0
\]
there exists a subsequence $\left\{ z_{n_m}\right\}_m$ such that
\[
\left( \forall m \right) \; z_{n_m} \in U
\]
Then $U$ is open in $Z$.
\end{lemma}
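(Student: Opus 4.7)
The plan is to argue by contraposition: assume that $U$ is not open in $Z$ and construct a sequence that violates the hypothesis of the lemma. This is a standard topological argument and should be quite short.

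Concretely, if $U$ fails to be open, then there exists some $u \in U$ such that no open ball centered at $u$ is contained in $U$. In particular, for each positive integer $n$, the ball
\[
B_{1/n}(u) = \left\{ z \in Z \; : \; d_Z(z, u) < \tfrac{1}{n} \right\}
\]
is not a subset of $U$, so we may select a point $z_n \in B_{1/n}(u)$ with $z_n \notin U$. By construction, $d_Z(z_n, u) < 1/n$, and hence
\[
\lim_{n \to \infty} d_Z(z_n, u) = 0.
\]

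Now we invoke the hypothesis of the lemma at the point $u \in U$ with this sequence $\{z_n\}_n$: there must exist a subsequence $\{z_{n_m}\}_m$ with $z_{n_m} \in U$ for every $m$. But each term of the original sequence was chosen so that $z_n \notin U$, so every subsequence likewise lies entirely in $Z \setminus U$. This is a direct contradiction, and therefore $U$ must have been open to begin with.

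There is no real obstacle here: the only subtlety is being careful that ``not open'' produces the right sort of witness $u$ (namely, one in $U$ itself rather than merely in the closure), which is immediate from the definition of an open set in a metric space, and that the constructed sequence $\{z_n\}_n$ is a sequence in $Z$ (not required to lie in $U$ or its complement globally, only that each $z_n \notin U$ suffices to contradict the subsequence statement). No completeness of $Z$ is used, consistent with the remark that $Z$ need not be complete.
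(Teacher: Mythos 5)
Your proof is correct and is essentially the same contrapositive argument as the paper's, which in one line observes that if $U$ is not open there is $u \in U$ and a sequence in $Z \setminus U$ converging to $u$; you simply spell out the construction of that sequence via balls $B_{1/n}(u)$ and the resulting contradiction with the subsequence hypothesis.
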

\begin{proof}
If $U$ is not open then there must be a point $u \in U$ and a
 sequence $\left\{ z_n \right\}_n \subset Z \backslash U$ such that
$z_n \rightarrow u$ in $Z$.
\end{proof}

Recall from (\ref{eq:defXnorm}) the $X$ norm
\[
\left\Vert h \right\Vert_X \coloneqq
\left\Vert h \right\Vert_{L^2} + \sum_{\varphi \in \left\{
1, \; v_1, \; v_2, \; \left| v \right|^2, \; \left| x \right|^2 , \; x \cdot v \right\}}
\left| \int_{\mathbb{R}^2 \times \mathbb{R}^2} 
\varphi \left( x,v \right) h \left( x,v \right) dx dv \right|
\]
which leads to define (\ref{eq:defXspace}) the \emph{incomplete} metric space
\[
X = \left( L^{2,+} \bigcap L^1_2 \; , \; d_X \right)
\]
where $L^{2,+}$ is the set of non-negative functions in $L^2$
and
\[
d_X \left( h, \tilde{h} \right) = \left\Vert h - \tilde{h} \right\Vert_X
\]
and we are ready to show:

\begin{theorem}
\label{thm:exScOpen}
$\mathcal{E}$ is open in $X$.
\end{theorem}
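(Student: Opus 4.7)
The plan rests on Lemma \ref{lem:standard}: given $f_0 \in \mathcal{E}$ and a sequence $\{f_{n,0}\} \subset X$ with $d_X(f_{n,0}, f_0) \to 0$, I aim to show that some subsequence lies in $\mathcal{E}$. Suppose for contradiction that no subsequence does; passing to a subsequence I may assume every $f_{n,0} \notin \mathcal{E}$, so by Definition \ref{def:ExSc} there exist ($*$)-solutions $f_n$ with initial data $f_{n,0}$ for which $\int_0^{T^*(f_n)} \|Q^+(f_n, f_n)(t)\|_{L^2}\, dt = +\infty$ (Theorem \ref{thm:ScatteringCriterion}). The goal is to derive a uniform upper bound on this integral, which will be the contradiction.

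The $X$-convergence yields strong $L^2$ convergence of the initial data together with uniform boundedness in $L^1_2$, hence uniform bounds on $H^{\pm}(f_{n,0})$ via Lemmas \ref{lem:HplusBd} and \ref{lem:HminusFinite}. Standard DiPerna--Lions weak compactness together with Lions' strong $L^1$ compactness supply a subsequence satisfying the hypotheses of Theorem \ref{thm:limits}. That theorem produces a ($*$)-solution $f$ of (\ref{eq:BE}) with $f(0) = f_0$, $T^*(f) \leq \liminf_n T^*(f_n)$, strong $L^2(J \times \mathbb{R}^2 \times \mathbb{R}^2)$ convergence on every compact $J \subset I^*(f)$, and $\|f_n(t) - f(t)\|_{L^2} \to 0$ for almost every $t \in I^*(f)$. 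Since $f_0 \in \mathcal{E}$, the ($*$)-solution $f$ is global and scatters, so $T^*(f_n) \to \infty$ and there exists $f_{+\infty} \in L^2$ realising the scattering.

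To control the tail, invoke Lemma \ref{lem:scattering-lemma} with state $f_{+\infty}$ to obtain $\varepsilon, T > 0$. Fix $t_0 \geq T$ belonging to the a.e.-convergence set with $\|f(t_0) - \mathcal{T}(t_0) f_{+\infty}\|_{L^2} < \varepsilon/2$. Then for $n$ sufficiently large both $\|f_n(t_0) - \mathcal{T}(t_0) f_{+\infty}\|_{L^2} < \varepsilon$ and $t_0 < T^*(f_n)$, and Lemma \ref{lem:scattering-lemma} provides $T_{\textnormal{g.o.}}(f_n(t_0)) = \infty$ together with a uniform bound $\int_0^\infty \|Q^+(\mathfrak{Z}_{\textnormal{g.o.}}(f_n(t_0))(s))\|_{L^2}\, ds \leq K_1$. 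The comparison principle (Proposition \ref{prop:comparisonStar}) applied at time $t_0$, combined with monotonicity of $Q^+$, then yields $\int_{t_0}^{T^*(f_n)} \|Q^+(f_n, f_n)(t)\|_{L^2}\, dt \leq K_1$ uniformly in $n$.

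The main obstacle, and the heart of the argument, is to obtain a corresponding uniform bound on the initial segment $\int_0^{t_0} \|Q^+(f_n, f_n)(t)\|_{L^2}\, dt$. For this I exploit the lower semi-continuous gauge $F^{(1)}$ of Lemma \ref{lem:lsc}: the compact trajectory $\mathcal{K} := f([0, t_0]) \subset L^{2,+}$ forces $\eta := \min_{\mathcal{K}} F^{(1)} > 0$. Partition $[0, t_0]$ into finitely many sub-intervals $[a_k, a_{k+1}]$ of length less than $\eta/2$, with each $a_k$ (except $a_0 = 0$) chosen from the a.e.-convergence set. Lower semi-continuity of $F^{(1)}$ together with $f_n(a_k) \to f(a_k)$ in $L^2$ forces $F^{(1)}(f_n(a_k)) > \eta/2$ for all $n$ large, whence property (3) of Lemma \ref{lem:lsc} gives $\int_0^{\eta/2} \|Q^+(\mathfrak{Z}_{\textnormal{g.o.}}(f_n(a_k))(s))\|_{L^2}\, ds \leq 1$. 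Since $t_0 < T^*(f_n)$, a second application of Proposition \ref{prop:comparisonStar} combined with monotonicity of $Q^+$ transfers this to $\int_{a_k}^{a_{k+1}} \|Q^+(f_n, f_n)(t)\|_{L^2}\, dt \leq 1$; summing over the $K$ sub-intervals and adding the tail bound $K_1$ gives $\int_0^{T^*(f_n)} \|Q^+(f_n, f_n)(t)\|_{L^2}\, dt \leq K + K_1 < \infty$, the desired contradiction.
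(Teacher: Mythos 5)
Your proof is correct and follows the same overall strategy as the paper: Lemma \ref{lem:standard} to reduce to a subsequence claim, compactness and Theorem \ref{thm:limits} to extract a limiting ($*$)-solution $f$ which must be global and scattering (since $f_0 \in \mathcal{E}$), then the scattering lemma combined with a.e.\ $L^2$ convergence at some time $t_0 \geq T$ to force eventual scattering of the $f_n$, a contradiction. However, the step you single out as ``the main obstacle, and the heart of the argument'' --- the partition of $\left[ 0, t_0 \right]$ by the lower-semicontinuous gauge $F^{(1)}$ to bound $\int_0^{t_0} \left\Vert Q^+ \left( f_n, f_n \right) \right\Vert_{L^2} dt$ uniformly in $n$ --- is superfluous. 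Once $T^* \left( f_n \right) \to \infty$ is known (which you establish from Theorem \ref{thm:limits} and $T^*(f) = \infty$), for every $n$ large enough one has $t_0 < T^* \left( f_n \right)$, and then condition (\ref{eq:kmCondition}) in Definition \ref{def:starSoln} already gives $Q^+ \left( f_n, f_n \right) \in L^1 \left( \left[ 0, t_0 \right], L^2 \right)$ for free; no uniformity in $n$ is needed, because Theorem \ref{thm:ScatteringCriterion} only requires finiteness of the integral for each individual $f_n$ in order to conclude that $f_n$ is global and scatters. Your partition argument is itself valid (it is essentially the mechanism underlying Proposition \ref{prop:comparisonZero}) and would even yield a uniform bound, but the contradiction does not require one. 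One further small point: you describe Lemma \ref{lem:scattering-lemma} as providing a \emph{uniform} constant $K_1$ for the tail integral; the lemma as stated only asserts finiteness, though the bound can indeed be made uniform by inspecting Theorem \ref{thm:CritLWP} and Proposition \ref{prop:QplusLargeTime}. Again, this uniformity is not needed for the argument.
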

\begin{proof}
Let $\left\{ f_{0,n} \right\}_n \subset X$ be a sequence such that
\[
\lim_{n \rightarrow \infty} \left\Vert f_{0,n} - f_0 \right\Vert_{X} = 0
\]
for some $f_0 \in \mathcal{E}$. By Lemma \ref{lem:standard}, it suffices to show that there exist infinitely
many $n$ for which $f_{0,n} \in \mathcal{E}$. 

So suppose the opposite: then, there exists $N$ such that $f_{0,n} \notin \mathcal{E}$ for each $n \geq N$.
Now the sequence $f_{0,n}$ is clearly uniformly bounded in $L^2 \bigcap L^1_2$; in particular, we also have
uniform bounds on entropy and entropy dissipation for any ($*$)-solutions associated with the $f_{0,n}$.
For each $n \geq N$ let us pick a ($*$)-solution $f_n$ such that $f_n \left( t=0 \right) = f_{0,n}$ and
$f_n$ is not a global scattering solution (that is, either $T^* \left( f_n \right) < \infty$, or
$T^* \left( f_n \right) = \infty$ but $f_n$ does not scatter). This is possible because, for
$n \geq N$, we have $f_{0,n} \notin \mathcal{E}$. Passing to a subsequence, applying Theorem \ref{thm:limits},
and passing to a further subsequence, we can eventually find a subsequence $n_m$ such that all the following hold:

\begin{enumerate}
\item
The sequence $\left\{f_{n_m}\right\}_m$ converges, weakly and for a.e. $(t,x,v)$, and for a.e. $\left( x,v \right)$ at $t=0$,
 to a ($*$)-solution $f$ with
$f \left( t=0 \right) = f_0$.

\item
\begin{equation}
\label{item:second}
T^* \left( f \right) \leq \liminf_{m \rightarrow \infty} T^* \left( f_{n_m} \right)
\end{equation}

\item
For a.e. $t$ with $0 < t < T^* \left( f \right)$,
\begin{equation}
\label{item:third}
\lim_{m \rightarrow \infty}
\left\Vert f_{n_m} \left( t \right) - f \left( t \right) \right\Vert_{L^2} = 0
\end{equation}

\item
For each $m$: 
\begin{equation}
\label{item:fourth}
 \textnormal{either } T^* \left( f_{n_m} \right) < \infty\textnormal{, or } T^* \left( f_{n_m} \right) = \infty
 \textnormal{ but }
f_{n_m} \textnormal{ does not scatter.}
\end{equation}
\end{enumerate}

But now we see that, since $f$ is a ($*$)-solution with initial data $f_0$, and by hypothesis we
have $f_0 \in \mathcal{E}$, it follows from the definition of $\mathcal{E}$ that
\[
T^* \left( f \right) = \infty
\]
and $f$ scatters. In particular, by (\ref{item:second}),
\[
\liminf_{m \rightarrow \infty} T^* \left( f_{n_m} \right) = \infty
\]
Moreover, by the scattering lemma, Lemma \ref{lem:scattering-lemma}, there exist numbers $T,\varepsilon$, depending only
on the solution $f$ just identified\footnote{which need not be unique!},
such that any ($*$)-solution $\tilde{f}$ which comes within an $\varepsilon$-ball of $f$ in
$L^2$ at any one
time at least $T$ necessarily satisfies $T^* \left( \tilde{f} \right) = \infty$ and $\tilde{f}$ scatters.
But now we see that (\ref{item:third}) implies
 that
 \[
 \exists\left( \tilde{t} \in \left[ T, T+ 1\right]\right) \;\;
  \exists \left( M \in \mathbb{N} \right) \;\;
 \forall\left( m>M\right) \;\;
 \left\Vert f_{n_m} \left( \tilde{t} \right) - 
 f \left( \tilde{t} \right) \right\Vert_{L^2} < \varepsilon
 \]
so for all $m > M$ we have that
$T^* \left( f_{n_m} \right) = \infty$ and $f_{n_m}$ scatters, which contradicts (\ref{item:fourth}).
\end{proof}

\section{Weak-strong uniqueness}
\label{sec:WkStrong}

\subsection{Propagation of weighted estimates.}

We know by now that ($*$)-solutions exist for any non-negative $f_0 \in L^2 \bigcap L^1_2$. However,
if $f_0$ is chosen from a more restrictive functional space, then we can say more. We begin with the
gain-only equation, then we upgrade the result to the full Boltzmann equation.

\begin{lemma}
\label{lem:MomentPropGO}
Let $0 < \alpha < \infty$.
Assume $f_0$ is such that 
\[
\left< v \right>^\alpha f_0 \in L^2
\]
and let
\[
0 < T < T_{\textnormal{g.o.}} \left(f_0\right)
\]
Then the solution $h\left(t\right)$ 
of the gain-only
Boltzmann equation with initial data $f_0$ i.e.
\[
h \left( t \right) = \mathfrak{Z}_{\textnormal{g.o.}} \left( f_0 \right) \left( t \right)
\]
 satisfies
\[
\left< v \right>^\alpha h \in L^\infty \left( [0,T], L^2 \right)
\quad \textnormal{and} \quad
\left<v \right>^\alpha Q^+ \left(h,h\right) \in L^1 \left( [0,T], L^2 \right) 
\]
\end{lemma}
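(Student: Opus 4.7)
The plan is to introduce the truncated weight $\nu_R^\alpha(v) = \min(\langle v\rangle^\alpha, R^\alpha)$ from subsection \ref{ssec:truncatedWeights}, establish the desired bounds with $\nu_R^\alpha$ in place of $\langle v\rangle^\alpha$ \emph{uniformly in $R$}, and then send $R \to \infty$ by monotone convergence. The advantage of the truncation is that $\nu_R^\alpha$ is bounded, commutes with free transport $\mathcal{T}$ (depending only on $v$), and satisfies the pointwise inequality
\[
\nu_R^\alpha(v) \leq C_\alpha \bigl(\nu_R^\alpha(v') + \nu_R^\alpha(v_*')\bigr)
\]
established in subsection \ref{ssec:truncatedWeights}, which translates directly into
\[
\nu_R^\alpha Q^+(h,h) \leq C_\alpha \bigl[Q^+(\nu_R^\alpha h, h) + Q^+(h, \nu_R^\alpha h)\bigr]
\]
with $C_\alpha$ independent of $R$. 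Since $h \in C([0,T], L^2)$ and $Q^+(h,h) \in L^1([0,T], L^2)$ by the choice $T < T_{\textnormal{g.o.}}(f_0)$, both $h$ and (because $\nu_R^\alpha$ is bounded) $\nu_R^\alpha h$ meet the hypotheses of Proposition \ref{prop:QplusTimeDecomposition}, with $(\partial_t + v\cdot\nabla_x)(\nu_R^\alpha h) = \nu_R^\alpha Q^+(h,h)$.

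The central estimate will come from applying Proposition \ref{prop:QplusTimeDecomposition} with $g = h$ (the gain-only solution, determining the partition) and $\tilde h = \nu_R^\alpha h$. This yields, on each sub-interval $I_j = [t_j, t_{j+1}]$ of the partition,
\[
\|Q^+(\nu_R^\alpha h, h)\|_{L^1(I_j, L^2)} + \|Q^+(h, \nu_R^\alpha h)\|_{L^1(I_j, L^2)} \leq C C_0(h)\Bigl(\|\nu_R^\alpha h(t_j)\|_{L^2} + \varepsilon \|\nu_R^\alpha Q^+(h,h)\|_{L^1(I_j, L^2)}\Bigr)
\]
where $C_0(h) = \|h\|_{L^\infty([0,T],L^2)} + \|Q^+(h,h)\|_{L^1([0,T],L^2)}$ is finite and, crucially, the partition (and thus its cardinality $N$) depends only on $h$ and $\varepsilon$, \emph{not} on $R$. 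Combining with the pointwise inequality above,
\[
\|\nu_R^\alpha Q^+(h,h)\|_{L^1(I_j, L^2)} \leq C_\alpha C C_0(h)\Bigl(\|\nu_R^\alpha h(t_j)\|_{L^2} + \varepsilon \|\nu_R^\alpha Q^+(h,h)\|_{L^1(I_j, L^2)}\Bigr)
\]
The plan is to fix $\varepsilon$ small (depending only on $\alpha$ and $h$) so that $C_\alpha C C_0(h)\varepsilon \leq \tfrac{1}{2}$, whereupon the second term is absorbed into the left, producing
\[
\|\nu_R^\alpha Q^+(h,h)\|_{L^1(I_j, L^2)} \leq 2 C_\alpha C C_0(h) \|\nu_R^\alpha h(t_j)\|_{L^2}
\]
and, via Duhamel's formula applied to $\nu_R^\alpha h$ (using that $\nu_R^\alpha$ commutes with $\mathcal{T}$),
\[
\|\nu_R^\alpha h(t_{j+1})\|_{L^2} \leq C_1 \|\nu_R^\alpha h(t_j)\|_{L^2}, \qquad C_1 = 1 + 2 C_\alpha C C_0(h)
\]
with $C_1$ independent of $R$ and $j$.

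Iterating this geometric recursion across the $N$ intervals yields the $R$-uniform bound
\[
\|\nu_R^\alpha h\|_{L^\infty([0,T], L^2)} + \|\nu_R^\alpha Q^+(h,h)\|_{L^1([0,T], L^2)} \leq C_2 \|\nu_R^\alpha f_0\|_{L^2} \leq C_2 \|\langle v\rangle^\alpha f_0\|_{L^2}
\]
where $C_2 = (1 + 2 C_\alpha C C_0(h) N) C_1^N$ depends only on $h$ and $\alpha$. Sending $R \to \infty$ and using that $\nu_R^\alpha \nearrow \langle v\rangle^\alpha$ pointwise (with $h, Q^+(h,h) \geq 0$), monotone convergence applied first in $(x,v)$ to the $L^2$ norm, then in $t$ to the $L^1$ norm, upgrades the estimate to $\langle v\rangle^\alpha h \in L^\infty([0,T], L^2)$ and $\langle v\rangle^\alpha Q^+(h,h) \in L^1([0,T], L^2)$, as required. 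The main obstacle to the argument is precisely the one that the truncated-weight formalism of subsection \ref{ssec:truncatedWeights} was designed to address: the entries of $Q^+$ in the pointwise bound must involve a bounded weight for the preceding uniform estimates to be even meaningful before the limit in $R$ is taken. Once the partition given by Proposition \ref{prop:QplusTimeDecomposition} is seen to depend only on $h$ and not on the function being weighted, the remaining steps are routine.
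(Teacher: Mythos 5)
Your proposal is correct and follows essentially the same route as the paper's proof: truncate the weight to $\nu_R^\alpha$, apply Proposition \ref{prop:QplusTimeDecomposition} with $g = h$ so the partition is $R$-independent, use the pointwise inequality $\nu_R^\alpha Q^+(h,h) \le C_\alpha(Q^+(\nu_R^\alpha h, h) + Q^+(h, \nu_R^\alpha h))$ to absorb the $\varepsilon$-term, iterate over the finite partition, and remove the truncation by monotone convergence. The only cosmetic difference is that you run the full $j$-iteration and pass $R\to\infty$ once at the end, whereas the paper interleaves the monotone-convergence step into each stage of the induction on $j$; the two are equivalent.
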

\begin{proof}
Fixing $0 < T < T_{\textnormal{g.o.}} \left( f_0 \right)$ with $I = \left[ 0, T \right]$ we may define
\[
C_0 \left( T \right) = \left\Vert h \right\Vert_{L^\infty \left( I, L^2 \right)} + 
\left\Vert Q^+ \left( h,h \right) \right\Vert_{L^1 \left( I, L^2 \right)} < \infty
\]
and observe that $Q^+ \left( h,h \right)$ is exactly $\left( \partial_t + v \cdot \nabla_x \right) h$.
If, as in subsection \ref{ssec:truncatedWeights}, we write
\[
\nu_R^\alpha = \min \left( \left< v \right>^\alpha, R^\alpha \right)
\]
then we have each
\[
\left( \partial_t + v \cdot \nabla_x \right) h = Q^+ \left( h,h \right)
\]
and
\[
\left( \partial_t + v \cdot \nabla_x \right) \left\{
\nu_R^\alpha h \right\} = \nu_R^\alpha  Q^+ \left( h,h \right)
\]

Let us apply Proposition \ref{prop:QplusTimeDecomposition}, viewing $g$ as $h$ and $h$ as
$\nu_R^\alpha h$, to deduce the existence of a finite partition $I = \bigcup_j I_j$, 
$I_j = \left[ t_j, t_{j+1} \right]$, such that
\begin{equation*}
\begin{aligned}
& \left\Vert Q^+ \left( \nu_R^\alpha h, h \right) \right\Vert_{L^1 \left( I_j, L^2 \right)} +
\left\Vert Q^+ \left( h, \nu_R^\alpha h \right) \right\Vert_{L^1 \left( I_j, L^2 \right)} \\
& \qquad \qquad \leq C_1 C_0 \left( T \right) \times \left(
\left\Vert \nu_R^\alpha h \left( t_j \right) \right\Vert_{L^2} + \varepsilon
\left\Vert \nu_R^\alpha Q^+ \left( h,h \right) \right\Vert_{L^1 \left( I_j, L^2 \right)} \right)
\end{aligned}
\end{equation*}
where we label $C_1$ to fix the constant once and for all. Now according to Proposition
\ref{prop:QplusTimeDecomposition}, the partition depends on $h$ but not on
$\nu_R^\alpha h$; this may seem paradoxical since the pointwise quotient of these two is the known
function $\nu_R^\alpha$, but what it \emph{really} means in this context is that the partition does
not depend on the parameters $\alpha,R$. Crucially, $\nu^\alpha_R$ is bounded above by $R^\alpha$ so we
know that
\[
\nu_R^\alpha h \in L^\infty \left( I, L^2 \right) \;\; \textnormal{ and } \;\;
\nu_R^\alpha Q^+ \left( h,h \right) \in L^1 \left( I, L^2 \right)
\]
Also, as in the discussion of subsection \ref{ssec:truncatedWeights}, we may write 
\begin{equation*}
\begin{aligned}
& \left\Vert \nu_R^\alpha Q^+ \left( h,h \right) \right\Vert_{L^1 \left( I_j, L^2 \right)} \\
& \qquad\quad \leq 2^{2 + \frac{\alpha}{2}} \left(
\left\Vert Q^+ \left( \nu_R^\alpha h, h \right) \right\Vert_{L^1 \left( I_j, L^2 \right)} +
\left\Vert Q^+ \left( h, \nu_R^\alpha h \right) \right\Vert_{L^1 \left( I_j, L^2 \right)}\right)
\end{aligned}
\end{equation*}
therefore
\begin{equation*}
\begin{aligned}
& \left\Vert \nu_R^\alpha Q^+ \left( h,h \right) \right\Vert_{L^1 \left( I_j, L^2 \right)} \\
& \qquad \quad \leq 2^{2 + \frac{\alpha}{2}} C_1 C_0 \left( T \right) \times \left(
\left\Vert \nu_R^\alpha h \left( t_j \right) \right\Vert_{L^2} + \varepsilon
\left\Vert \nu_R^\alpha Q^+ \left( h,h \right) \right\Vert_{L^1 \left( I_j, L^2 \right)} \right)
\end{aligned}
\end{equation*}

Let us assume that
\[
2^{2 + \frac{\alpha}{2}} C_1 C_0 \left( T \right)  \varepsilon = \frac{1}{2}
\]
so that
\begin{equation*}
\begin{aligned}
 \left\Vert \nu_R^\alpha Q^+ \left( h,h \right) \right\Vert_{L^1 \left( I_j, L^2 \right)} 
\leq 2^{3 + \frac{\alpha}{2}} C_1 C_0 \left( T \right)
\left\Vert \nu_R^\alpha h \left( t_j \right) \right\Vert_{L^2} 
\end{aligned}
\end{equation*}
On the other hand,
\[
\left\Vert \nu_R^\alpha h \left( t_j \right) \right\Vert_{L^2}  \leq 
\left\Vert \nu_R^\alpha f_0 \right\Vert_{L^2} + \sum_{i=0}^{j-1}
\left\Vert \nu_R^\alpha Q^+ \left( h,h \right) \right\Vert_{L^1 \left( I_i, L^2 \right)}
\]
Therefore
\begin{equation*}
\begin{aligned}
 & \left\Vert \nu_R^\alpha Q^+ \left( h,h \right) \right\Vert_{L^1 \left( I_j, L^2 \right)}  \\
 &   \qquad \leq  2^{3 + \frac{\alpha}{2}} C_1 C_0 \left( T \right) \left(
\left\Vert \nu_R^\alpha f_0 \right\Vert_{L^2} + \sum_{i=0}^{j-1}
\left\Vert \nu_R^\alpha Q^+ \left( h,h \right) \right\Vert_{L^1 \left( I_i, L^2 \right)}
\right)
\end{aligned}
\end{equation*}

We conclude by a finite induction in $j$. Indeed, suppose that
\[
\left< v \right>^\alpha Q^+ \left( h,h \right) \in
\bigcap_{i=0}^{j-1} L^1 \left( I_i, L^2 \right)
\]
then we have
\begin{equation*}
\begin{aligned}
 & \left\Vert \nu_R^\alpha Q^+ \left( h,h \right) \right\Vert_{L^1 \left( I_j, L^2 \right)}  \\
 &   \qquad \leq  2^{3 + \frac{\alpha}{2}} C_1 C_0 \left( T \right) \left(
\left\Vert \left< v \right>^\alpha f_0 \right\Vert_{L^2} + \sum_{i=0}^{j-1}
\left\Vert \left< v \right>^\alpha Q^+ \left( h,h \right) \right\Vert_{L^1 \left( I_i, L^2 \right)}
\right)
\end{aligned}
\end{equation*}
therefore by monotone convergence in $R$ as $R \rightarrow \infty$ it follows
\[
\left< v \right>^\alpha Q^+ \left(h,h\right) \in
\bigcap_{i=0}^{j} L^1 \left( I_i, L^2 \right)
\]
so we finally obtain
\[
\left< v \right>^\alpha Q^+ \left( h,h \right) \in
L^1 \left( I, L^2 \right)
\]
which in turn implies
\[
\left< v \right>^\alpha h \in L^\infty \left( I, L^2 \right)
\]
since $\left< v \right>^\alpha f_0 \in L^2$.
\end{proof}

\begin{proposition}
\label{prop:propagationL2Velocity}
Let $\alpha > 0$. Assume $f$ is a ($*$)-solution of (\ref{eq:BE}) with initial
data $0 \leq f_0 \in L^2 \bigcap L^1_2$ such that
\[
\left< v \right>^\alpha f_0 \in L^2
\]
Then for any compact sub-interval $J \subset I^* \left( f \right)$, 
\[
\left< v \right>^\alpha f \in L^\infty \left( J, L^2 \right) \quad \textnormal{ and }
\quad \left< v \right>^\alpha Q^+ \left( f,f \right) \in L^1 \left( J, L^2 \right)
\]
\end{proposition}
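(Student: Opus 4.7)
The plan is to transfer the weighted bounds from the gain-only flow (Lemma \ref{lem:MomentPropGO}) to the full ($*$)-solution $f$ by exploiting the pointwise comparison principle (Proposition \ref{prop:comparisonStar}) combined with the monotonicity of $Q^+$. The basic observation is that if $0 \leq f(t,x,v) \leq \mathfrak{Z}_{\textnormal{g.o.}}(f(t_0))(t-t_0,x,v)$ pointwise a.e., then also
\[
0 \leq Q^+(f,f)(t,x,v) \leq Q^+\bigl(\mathfrak{Z}_{\textnormal{g.o.}}(f(t_0)),\mathfrak{Z}_{\textnormal{g.o.}}(f(t_0))\bigr)(t-t_0,x,v)
\]
pointwise a.e., by bilinearity and non-negativity of the integrand defining $Q^+$. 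Multiplying by the non-negative weight $\langle v\rangle^\alpha$ preserves both inequalities.

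First I would fix a compact sub-interval $J \subset I^*(f)$. By Theorem \ref{thm:breakdownRegularity}(1), together with the lower semi-continuity of $T_{\textnormal{g.o.}}$ (Theorem \ref{thm:lsc2}) applied to the continuous curve $t\mapsto f(t)$ into $L^2$, there exists $\eta>0$ with $\inf_{t\in J}T_{\textnormal{g.o.}}(f(t)) > \eta$. Partition $J$ as $0 = t_0 < t_1 < \dots < t_N = \sup J$ with $t_{j+1}-t_j < \eta$ (possibly after shifting so that $J$ begins at $0$; alternatively, treat the left endpoint as an extra base point by the semigroup property).

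Next I would run a finite induction on $j$: at each step, I would assume $\langle v\rangle^\alpha f(t_j) \in L^2$. Because $t_{j+1}-t_j < \eta \leq T_{\textnormal{g.o.}}(f(t_j))$, Lemma \ref{lem:MomentPropGO} (with initial datum $f(t_j)$ in place of $f_0$) yields
\[
\langle v\rangle^\alpha\,\mathfrak{Z}_{\textnormal{g.o.}}(f(t_j)) \in L^\infty([0,t_{j+1}-t_j],L^2),
\]
\[
\langle v\rangle^\alpha\, Q^+\!\bigl(\mathfrak{Z}_{\textnormal{g.o.}}(f(t_j)),\mathfrak{Z}_{\textnormal{g.o.}}(f(t_j))\bigr) \in L^1([0,t_{j+1}-t_j],L^2).
\]
Invoking the comparison principle $f \in \mathfrak{B}^{I^*(f)}_{I^*(f)}$ and the pointwise-monotonicity observation above, the same bounds follow for $\langle v\rangle^\alpha f$ and $\langle v\rangle^\alpha Q^+(f,f)$ on $[t_j,t_{j+1}]$. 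In particular, evaluating the $L^\infty_t L^2_{x,v}$ bound at $t_{j+1}$ shows $\langle v\rangle^\alpha f(t_{j+1}) \in L^2$, which closes the induction and allows the next step.

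After finitely many steps we cover $J$, and patching the estimates over the partition yields $\langle v\rangle^\alpha f \in L^\infty(J,L^2)$ and $\langle v\rangle^\alpha Q^+(f,f) \in L^1(J,L^2)$, as desired. The only genuine subtlety is justifying $Q^+(f,f) \leq Q^+(g,g)$ from the pointwise inequality $f\leq g$ a.e.; this just reduces to interchanging integration in $(v_*,\sigma)$ with the inequality, which is legitimate since everything is non-negative. The local-well-posedness constants in Lemma \ref{lem:MomentPropGO} depend on $C_0(t_{j+1}-t_j)$, the ambient $L^\infty_t L^2$ and $L^1_t L^2$ bounds for the unweighted $f$, which are finite on $J$ by the definition of ($*$)-solution, so no uniformity issue arises across the finitely many steps.
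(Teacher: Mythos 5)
Your proposal is correct and follows essentially the same route as the paper's proof: choose $r$ (your $\eta$) smaller than $\inf_{t \in J} T_{\textnormal{g.o.}}(f(t))$ using the lower semi-continuity of $T_{\textnormal{g.o.}}$ and the $L^2$-continuity of $f$ on compact subintervals, partition the interval into subintervals of length less than $r$, and induct across the partition using Lemma \ref{lem:MomentPropGO} at the base point $f(t_j)$ combined with Proposition \ref{prop:comparisonStar} and the pointwise monotonicity of $Q^+$. The only slight difference is that you spell out the monotonicity-of-$Q^+$ step explicitly (which the paper leaves implicit) and you make a small misstatement about the constants in Lemma \ref{lem:MomentPropGO} depending on bounds for the unweighted $f$ — they actually depend on the unweighted gain-only flow $h$ — but this has no bearing on the argument since uniformity across steps is not needed for a finite induction.
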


\begin{remark}
Note carefully that Proposition \ref{prop:propagationL2Velocity} neither requires uniqueness, nor
does the proof imply uniqueness. All it says is that if the initial data satisfies a certain $L^2$-based weighted estimate, then
any ($*$)-solution $f$ corresponding to $f_0$ enjoys the same estimate on compact subintervals
of $I^* \left( f \right)$.
\end{remark}

\begin{proof}
Let $T$ be any real number such that
\[
0 < T < T^* \left( f \right)
\]
Since $f \in C \left( \left[ 0, T \right], L^2 \right)$, by lower semi-continuity of
$T_{\textnormal{g.o.}}$ we may pick $r$ with
\[
0 < r < \inf_{t \in \left[ 0, T \right]} T_{\textnormal{g.o.}} \left( f \left( t \right) \right)
\]
We may assume without loss of generality that
\[
T = k r
\]
for some $k \in \mathbb{N}$. Let us define, for $j = 0, 1, 2, \dots, k-1$,
\[
I_j = \left[ j r, \left( j+1 \right) r \right]
\]
Denote by $P_j$ the statement
\[
\left< v \right>^\alpha f \in L^\infty \left( I_j, L^2 \right) \quad \textnormal{ and }
\quad \left< v \right>^\alpha Q^+ \left( f,f \right) \in L^1 \left( I_j, L^2 \right)
\]

Combining Lemma \ref{lem:MomentPropGO} with Proposition \ref{prop:comparisonStar} and the assumption
\[
\left< v \right>^\alpha f_0 \in L^2
\]
immediately
lets us conclude $P_0$.  Similarly, if 
\[
P_0, P_1, P_2, \dots, P_{\ell - 1}
\]
 all hold, then Lemma \ref{lem:MomentPropGO} combined with Proposition \ref{prop:comparisonStar} 
 imply $P_\ell$.
\end{proof}

\subsection{Weak-strong uniqueness.} Uniqueness holds in the ($*$)-solution class assuming the existence
of a classical solution, up to the time $T^* \left( f \right)$ where continuity breaks down.
 More precisely, we have the following:

\begin{theorem}
\label{thm:unique}
Let $f$ be a ($*$)-solution of (\ref{eq:BE}), corresponding to some initial data
$0 \leq f_0 \in L^2 \bigcap L^1_2$. Furthermore, assume that
\[
\left< v \right>^2 f_0 \in L^2
\]
and also assume that
\[
\forall \left( 0 < T < T^* \left( f \right) \right) \quad
\left< v \right>^2 f \in
L^2 \left( \left[ 0, T \right], L^\infty_x L^2_v \left(
\mathbb{R}^2 \times \mathbb{R}^2 \right)\right)
\]
Then the following uniqueness holds in the class of ($*$)-solutions:

For any ($*$)-solution $h$ of (\ref{eq:BE}), corresponding to the same $f_0$, it holds
\[
T^* \left( h \right) = T^* \left( f \right)
\]
and for almost every $\left( t,x,v \right) \in I^* \left( f \right) \times \mathbb{R}^2 \times
\mathbb{R}^2$, 
\[
h \left( t,x,v \right) = f \left( t,x,v \right)
\]
There is no claim of uniqueness for $t > T^* \left( f \right)$.
\end{theorem}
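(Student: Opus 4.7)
The plan is to form the difference $w = f - h$ and derive a Grönwall-type inequality for $\|\langle v\rangle^2 w(t)\|_{L^2}$ on short sub-intervals, iterated to cover any compact $J \subset I^*(f) \cap I^*(h)$, together with the maximality property (\ref{eq:notinCondition}) to extract the equality $T^*(f) = T^*(h)$. I first fix $T < T^*(f)$ and, under the provisional assumption $T < T^*(h)$, aim to show $w \equiv 0$ on $[0, T]$.

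Since $h$ is a ($*$)-solution with the same initial data, Proposition \ref{prop:propagationL2Velocity} applied to $h$ (using $\langle v\rangle^2 h_0 = \langle v\rangle^2 f_0 \in L^2$) yields $\langle v\rangle^2 h \in L^\infty(J, L^2)$ and $\langle v\rangle^2 Q^+(h,h) \in L^1(J, L^2)$ on any compact $J \subset I^*(h)$; the same holds for $f$. This justifies the subtraction of the Duhamel formulas in the weighted $L^2$ sense. Decomposing
\[
Q^+(f,f) - Q^+(h,h) = Q^+(f,w) + Q^+(w,h), \qquad f\rho_f - h\rho_h = f\rho_w + w\rho_h,
\]
passing to $w^\# = f^\# - h^\#$ to remove the free transport, and differentiating $\|\langle v\rangle^2 w(t)\|_{L^2}^2$ in time yields
\[
\tfrac{1}{2}\tfrac{d}{dt}\|\langle v\rangle^2 w(t)\|_{L^2}^2
= \int \langle v\rangle^4 w\bigl[Q^+(f,w) + Q^+(w,h)\bigr] dx\,dv - \int \langle v\rangle^4 w f \rho_w\, dx\,dv - \int \langle v\rangle^4 w^2 \rho_h\, dx\,dv.
\]

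The decisive observation is the favorable sign $-\int \langle v\rangle^4 w^2\rho_h\, dx\,dv \le 0$ (since $h \ge 0$, hence $\rho_h \ge 0$), which allows the potentially unbounded $w\rho_h$ contribution to be discarded. The middle term is treated by Cauchy--Schwarz in $v$:
\[
\Bigl|\int \langle v\rangle^4 w f \rho_w\, dx\,dv\Bigr| \le \|\rho_w\|_{L^2_x} \|\langle v\rangle^2 w\|_{L^2_{x,v}} \|\langle v\rangle^2 f\|_{L^\infty_x L^2_v},
\]
and since $\|\rho_w(\tau)\|_{L^2_x} \le C\|\langle v\rangle^2 w(\tau)\|_{L^2_{x,v}}$ by Cauchy--Schwarz using $\langle v\rangle^{-2} \in L^2_v(\mathbb{R}^2)$, this contributes $C\|\langle v\rangle^2 w(\tau)\|_{L^2}^2 \|\langle v\rangle^2 f(\tau)\|_{L^\infty_x L^2_v}$; the last factor is the $L^2_\tau$ weight from the hypothesis. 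The $Q^+$ contributions are handled by the weight distribution $\langle v\rangle^2 \le C(\langle v'\rangle^2 + \langle v_*'\rangle^2)$ from energy conservation, followed by the Alonso--Carneiro $L^{4/3}_v \times L^{4/3}_v \to L^2_v$ bound (with the $L^{4/3}_v$ norms interpolated between $L^1_v$ and $L^2_v$ using the $L^1_2$-moments of $f$ and $h$ from Definition \ref{def:starSoln}), then Hölder in $x$; all resulting contributions are bounded by $\Phi(\tau)\|\langle v\rangle^2 w(\tau)\|_{L^2}^2$ for some $\Phi \in L^1_\tau(J)$. Grönwall on $[0,\delta]$ for sufficiently small $\delta$ forces $w \equiv 0$ there, and iteration across finitely many sub-intervals (of length bounded below by a quantity depending only on the norms of $f$ on $[0, T]$) propagates the equality to $[0, T]$. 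Arbitrariness of $T$ gives $f = h$ on $[0, T^*(f) \wedge T^*(h))$. Finally, the equality $T^*(f) = T^*(h)$ follows from (\ref{eq:notinCondition}): were $T^*(h) < T^*(f)$, then $h = f$ on $[0, T^*(h))$ together with $f \in C([0, T^*(h)], L^2)$ would yield $h \in C([0, T^*(h)], L^2)$, contradicting (\ref{eq:notinCondition}) applied to $h$; the reverse inequality is analogous (noting that $f$ also inherits (\ref{eq:notinCondition})).

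The main obstacle is handling the $Q^+$ contributions in a pointwise-in-$t$ form: the convolutive estimates of Section \ref{sec:dispersiveEstimates} are packaged as time-integrated bounds (Propositions \ref{prop:QplusBound}, \ref{prop:QplusWeighted}, \ref{prop:QplusWeightedSmallTime}), so to use them directly one may prefer to replace the differential argument above by an integral Grönwall, estimating $\|\langle v\rangle^2 w\|_{L^\infty([0,\delta], L^2)}$ via Lemma \ref{lem:timeDependentEstimate} (which gives small-constant $Q^+$ bounds on short intervals) for the $Q^+$ terms while keeping the sign trick $\rho_h \ge 0$ for the loss terms by means of the pointwise Duhamel bound $h \le \mathcal{T}f_0 + \int_0^t \mathcal{T}Q^+(h,h)$ in conjunction with Lemma \ref{lem:nonnegativeEstimate}. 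Either route converges to the same Grönwall closure; the indispensable ingredient in both is the non-negativity cancellation that removes the $w\rho_h$ obstruction, which otherwise would require an $L^\infty_x L^2_v$ bound on $h$ that the hypotheses do not provide.
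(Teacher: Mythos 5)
Your proposal, as refined in its final paragraph, is essentially the paper's own argument: define $w = h - f$, exploit the sign of $\rho_h \geq 0$ to absorb the $w\rho_h$ loss term (the paper does this by treating $\rho_h$ as an integrating factor in Duhamel's formula, so the exponential factors are $\leq 1$), control the remaining gain-type terms by Lemma~\ref{lem:nonnegativeEstimate} together with the small-time $Q^+$ bound of Proposition~\ref{prop:QplusSmallTime}, handle the $f\rho_w$ term using the extra $L^2_t L^\infty_x L^2_v$ hypothesis on $\langle v\rangle^2 f$ combined with $\|\rho_w\|_{L^2_x} \lesssim \|\langle v\rangle^2 w\|_{L^2_{x,v}}$, and close an integral Grönwall/bootstrap on short intervals; the $T^*(f) = T^*(h)$ claim follows from~(\ref{eq:notinCondition}) exactly as you describe.

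Your first-pass differential energy computation has a genuine gap that you yourself correctly flag, so it is worth making the diagnosis precise. In the pointwise-in-$t$ form, the Alonso--Carneiro bound $L^{4/3}_v \times L^{4/3}_v \to L^2_v$ needs (after H\"older in $x$) quantities like $\|w(t)\|_{L^4_x L^{4/3}_v}$ at fixed $t$, and the proposed interpolation between $L^1_v$ and $L^2_v$ using the $L^1_2$-moment cannot deliver this: the $L^1_2$ norm is an integral over $(x,v)$ jointly, so it gives no pointwise-in-$x$ control of $\|w(t,x,\cdot)\|_{L^1_v}$. Such $L^4_x$-type control is only available after integration in time (via Strichartz), which is precisely why the paper's estimates are packaged as $L^1_t L^2_{x,v}$ bounds. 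This is also why, in your final route, one should rely on Lemma~\ref{lem:nonnegativeEstimate} rather than Lemma~\ref{lem:timeDependentEstimate}: the latter demands $(\partial_t + v\cdot\nabla_x)f, (\partial_t + v\cdot\nabla_x)h \in L^1(J, L^2)$, which would require $Q^-(f,f), Q^-(h,h) \in L^1(J, L^2)$ --- not something the hypotheses give --- whereas Lemma~\ref{lem:nonnegativeEstimate} only requires the one-sided pointwise Duhamel bound involving $Q^+$, which is exactly what ($*$)-solutions provide.
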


\begin{remark}
Note carefully that Theorem \ref{thm:unique} does \emph{not} address uniqueness in the class of
renormalized solutions. That is, even on $I^* \left( f \right)$, we \emph{do not} exclude (by this
argument) the possibility
that there exist renormalized solutions for the initial data $f_0$ that do not coincide with $f$, regardless of
the particular bounds we have assumed for $f$ alone. From the proof below,
we can \emph{only} say that any such renormalized solution
does not possess an $L^1 \left( J, L^2 \right)$ bound for $Q^+ \left( h \right)$ on compact
subintervals $J \subset I^* \left( f \right)$. That is, precisely as written, uniqueness is only shown to hold
in the class of ($*$)-solutions, and only on $I^* \left( f \right)$.
\end{remark}

\begin{proof}
The proof is a standard Gronwall-type argument on the difference equation (and relying, in particular,
on the non-negativity of $f,h$). Let us define
\[
w = h - f
\]
and let $T$ be such that
\[
0 < T < \min \left( T^* \left( f \right), T^* \left( h \right) \right)
\]
and denote $I = \left[ 0, T \right]$. Due to the characterization of breakdown of continuity, namely
Theorem \ref{thm:breakdownRegularity}, it suffices to show that $w \left( t,x,v \right) = 0$ for almost
every $\left( t,x,v \right) \in I \times \mathbb{R}^2 \times \mathbb{R}^2$, whenever $T$ is so chosen.

Clearly $w \in C \left( I, L^2 \right)$ and $w \left( t=0, x, v \right) = 0$ a.e. $\left( x,v \right)$.
Also, by Proposition \ref{prop:propagationL2Velocity} we have
\begin{equation}
\label{eq:fWeightedInteg}
\left< v \right>^2 f \in L^\infty \left( I, L^2 \right) \quad \textnormal{ and }
\quad \left< v \right>^2 Q^+ \left( f,f \right) \in L^1 \left( I, L^2 \right)
\end{equation}
\begin{equation}
\label{eq:hWeightedInteg}
\left< v \right>^2 h \in L^\infty \left( I, L^2 \right) \quad \textnormal{ and }
\quad \left< v \right>^2 Q^+ \left( h,h \right) \in L^1 \left( I, L^2 \right)
\end{equation}
so $w=h-f$ immediately provides
\begin{equation}
\label{eq:differenceL2Bd}
\left< v \right>^2 w \in L^\infty \left( I, L^2 \right)
\end{equation}
We have by Duhamel's formula
\[
\left< v \right>^2 f \left( t \right) \leq \mathcal{T} \left( t \right)
\left(\left<v\right>^2 f_0\right) + \int_0^t
\mathcal{T} \left( t-\tau \right) \left\{
\left< v \right>^2 
Q^+ \left( f , f  \right) \left(\tau\right)
\right\} d\tau
\]
\[
\left< v \right>^2 h \left( t \right) \leq \mathcal{T} \left( t \right)\left(
\left<v\right>^2 f_0\right) + \int_0^t
\mathcal{T} \left( t-\tau \right) \left\{
\left< v \right>^2 
Q^+ \left( h, h \right)\left(\tau\right)
\right\} d\tau
\]
therefore by Lemma \ref{lem:nonnegativeEstimate} we may deduce
\[
Q^+ \left( \left< v \right>^2 f, \left< v \right>^2 h \right), \;\;
Q^+ \left( \left< v \right>^2  h,\left< v \right>^2  f \right) \;\; \in
L^1 \left( I, L^2 \right)
\]
therefore
\[
\left< v \right>^2 Q^+ \left( f, h \right), \;\;
\left< v \right>^2 Q^+ \left( h, f \right) \;\; \in
L^1 \left( I, L^2 \right)
\]
which in turn implies (by expanding $w = h - f$)
\begin{equation}
\label{eq:differenceTimeBd}
\left< v \right>^2 Q^+ \left( w, h \right), \;\;
\left< v \right>^2 Q^+ \left( f, w \right) \;\; \in
L^1 \left( I, L^2 \right)
\end{equation}

Moreover, $w$ satisfies the following \emph{difference equation} in the sense of distributions:
\[
\left( \partial_t + v \cdot \nabla_x \right) w = 
Q^+ \left( w, h \right) + Q^+ \left(f, w \right) - w \rho_h - f \rho_w
\]
We can equivalently write
\begin{equation}
\label{eq:differenceEq}
\left( \partial_t + v \cdot \nabla_x + \rho_h \right) w =
Q^+ \left( w, h \right) + Q^+ \left( f, w \right) - f \rho_w
\end{equation}
and view $\rho_h$ as an integrating factor in Duhamel's formula, precisely as is done in (\ref{eq:BEIF}).
In particular, since
$h \geq 0$ a.e. $\left( t,x,v \right)$, we find that $\rho_h \geq 0$ a.e. $\left( t,x \right)$ so
that, as long as we work purely in mixed Lebesgue spaces (which we will), the term $\rho_h$ is completely harmless
(the fact that the terms on the right of
(\ref{eq:differenceEq}) need not be non-negative is irrelevant: we will be estimating each
in absolute value).

\begin{remark}
Technically we have not shown that $w \rho_h$ is locally integrable. However, it turns out $w \rho_h$ is, indeed,
locally integrable: this is because the estimates to follow indirectly imply that
$f \rho_w$ is locally integrable, and we may write
\[
w \rho_h  = h \rho_h - f \rho_f - f \rho_w
\]
and the first two terms on the right are just the losses $Q^- \left( h,h \right)$ resp.
$Q^- \left( f,f \right)$, which we have already shown to be locally integrable on compact sub-intervals of
$I^* \left( h \right)$ resp. $I^* \left( f \right)$.
\end{remark}

Let us multiply the right-hand side of (\ref{eq:differenceEq}) by $\textnormal{sgn}\left(w \right)$ (as if
to write an energy estimate for $\left| w \right|$) and
decompose into its three terms: namely,
\[
\mathcal{M} = \mathcal{M}_1 + \mathcal{M}_2 - \mathcal{M}_3
\]
where
\[
\mathcal{M}_1 = \textnormal{sgn} \left( w \right) Q^+ \left( w,h \right)
\]
\[
\mathcal{M}_2 = \textnormal{sgn} \left( w \right) Q^+ \left( f,w \right)
\]
and
\[
\mathcal{M}_3 = \textnormal{sgn} \left(w \right) f \rho_w
\]
so that
\[
\left( \partial_t + v \cdot \nabla_x + \rho_h \right) \left| w \right| = \mathcal{M}
\]

Since $w = h - f \in C \left( I, L^2 \right)$, we see that $\left\Vert w \left(t\right)\right\Vert_{L^2}$ is
a continuous function of $t \in I$. Moreover, since $f$ and $h$ coincide when $t=0$, we see that
$w\left( t=0 \right)$ is zero almost everywhere. Let us assume that 
$\left\Vert w \left( t \right) \right\Vert_{L^2}$ is not identically zero for all $t \in I$ and
derive a contradiction. In that case, we can define
\[
t_0 = \inf \left\{ t \in \left[ 0, T \right] \; : \;
\left\Vert w\left( t \right) \right\Vert_{L^2} > 0 \right\}
\]
and observe that $0 \leq t_0 < T$ (the case $t_0 = 0$ being permitted at this stage),
 and $w  = 0$ for $0 \leq t \leq t_0$ due to the time continuity
of $w$ into $L^2$. In particular, $w \left( t=t_0,x,v\right)=0$ a.e. $\left( x,v \right)$.
 To obtain the contradiction, we shall show that $w = 0$ for
$0 \leq t < t_1$ for some $t_1$ strictly larger than $t_0$.

The style of argument is to estimate an integral in terms of itself, the constant being less than one over
any small enough time
 interval: in particular, this type of argument relies on the \emph{finiteness} of the integral, and such
estimates generally imply ``if it is finite, then it is zero.'' 
Therefore, before we begin, it will be useful to establish that 
\begin{equation}
\label{eq:UniquenessEnergyIsFinite}
\left< v \right>^2 \mathcal{M} \in L^1 \left( I, L^2 \right)
\end{equation}
To this end, let us show that
\[
\left< v \right>^2 \mathcal{M}_i \in L^1 \left( I, L^2 \right)
\]
for $i \in \left\{ 1,2,3 \right\}$. For $\mathcal{M}_1$ and $\mathcal{M}_2$, this follows immediately
from (\ref{eq:differenceTimeBd}).
For $\mathcal{M}_3$, we have by H{\" o}lder's inequality
\[
\begin{aligned}
\left\Vert \left< v \right>^2 \mathcal{M}_3 \right\Vert_{L^1 \left( I, L^2 \right)} & \leq
\left\Vert \left< v \right>^2 f \right\Vert_{L^2 \left( I, L^\infty_x L^2_v
\left( \mathbb{R}^2 \times \mathbb{R}^2 \right)
\right)}
\left\Vert \rho_{\left| w \right|} \right\Vert_{L^2 \left( I, L^2_x \left( \mathbb{R}^2 \right)\right)} \\
& \leq C
\left\Vert \left< v \right>^2  f \right\Vert_{L^2 \left( I, L^\infty_x L^2_v
\left( \mathbb{R}^2 \times \mathbb{R}^2 \right)\right)}
\left\Vert \left< v \right>^2 w \right\Vert_{L^2 \left( I, L^2 \right)}
\end{aligned}
\]
where we have used that
\begin{equation}
\label{eq:rhoHolder}
 \rho_{\left| w \right|}
 = \left\Vert w \right\Vert_{L^1_v \left( \mathbb{R}^2 \right)}
  \leq C \left\Vert \left< v \right>^2 w \right\Vert_{L^2_v
 \left( \mathbb{R}^2 \right)}
\end{equation}
We know that $\left< v \right>^2 w \in L^2 \left( I, L^2 \right)$ by
(\ref{eq:differenceL2Bd}) and the compactness of $I$, and it is a hypothesis of the Theorem that
\begin{equation}
\label{eq:fHyp}
\left< v \right>^2 f \in
L^2 \left( I, L^\infty_x L^2_v \left(
\mathbb{R}^2 \times \mathbb{R}^2 \right)\right)
\end{equation}
so we may conclude (\ref{eq:UniquenessEnergyIsFinite}).

By Duhamel's formula with $w \left( t=t_0 \right) = 0$, for $t \in \left[ t_0, T\right]$ we may write
\begin{equation}
\label{eq:DifferenceDuhamel}
\left| w \right| \left( t \right) \leq
\int_{t_0}^t \mathcal{T} \left( t-\tau\right) \left| \mathcal{M} \right| \left( \tau \right) d\tau
\end{equation}
hence, multiplying through by $\left< v \right>^2$ and commuting with the free transport, we have
\begin{equation}
\label{eq:WeightedDifferenceDuhamel}
\left< v \right>^2 \left| w \right| \left( t \right) \leq
\int_{t_0}^t \mathcal{T} \left( t-\tau\right) 
\left\{ \left< v \right>^2 \left| \mathcal{M} \right|  \left( \tau \right) \right\} d\tau
\end{equation}
Therefore, letting $J_\kappa = \left[ t_0, \kappa \right]$ for $\kappa \in \left[ t_0, T \right]$,
\begin{equation}
\label{eq:DifferenceL2}
\left\Vert \left< v \right>^2 w \right\Vert_{L^\infty \left( J_\kappa, L^2 \right)}
\leq \left\Vert \left< v \right>^2 \mathcal{M}  \right\Vert_{L^1 \left( J_\kappa , L^2 \right)} 
\end{equation}
Let us define for $\kappa \in \left[ t_0, T \right]$
\[
e \left(\kappa \right) =
\left\Vert 
\left< v \right>^2
\mathcal{M} \right\Vert_{L^1 \left( J_\kappa ,  L^2 \right)}
\]
so that
\[
e\left( \kappa \right) \leq \sum_{i=1}^3 
\left\Vert 
\left< v \right>^2
\mathcal{M}_i \right\Vert_{L^1 \left( J_\kappa ,  L^2 \right)}
\]
We will show that $e \left( t_1 \right) = 0$ for some $t_1 > t_0$ to conclude the Theorem.

Let us first estimate $\mathcal{M}_3$ since it is the easiest term. Indeed
\[
\left| \mathcal{M}_3 \right| \leq f \rho_{\left| w \right|}
\]
so recalling (\ref{eq:rhoHolder}) and (\ref{eq:DifferenceL2}) we have
\[
\begin{aligned}
& \left\Vert \left< v \right>^2 \mathcal{M}_3 \right\Vert_{L^1 \left( J_\kappa , L^2 \right)} \leq
\left\Vert \left< v \right>^2 f \right\Vert_{L^2 \left( J_\kappa, L^\infty_x L^2_v
\left( \mathbb{R}^2 \times \mathbb{R}^2 \right)\right)}
\left\Vert \rho_{\left| w \right|} \right\Vert_{L^2 \left( J_\kappa, L^2_x \left( \mathbb{R}^2 \right)\right)} \\
& \qquad \qquad \leq C \left\Vert \left< v \right>^2 f \right\Vert_{L^2 \left( I, L^\infty_x L^2_v
\left( \mathbb{R}^2 \times \mathbb{R}^2 \right)\right)}
\left\Vert \left< v \right>^2 w \right\Vert_{L^2 \left( J_\kappa, 
L^2 \right)} \\
& \qquad \qquad \leq C \left( \kappa - t_0 \right)^{\frac{1}{2}}
\left\Vert \left< v \right>^2 f \right\Vert_{L^2 \left( I, L^\infty_x L^2_v
\left( \mathbb{R}^2 \times \mathbb{R}^2 \right)\right)}
\left\Vert \left< v \right>^2 w \right\Vert_{L^\infty \left( J_\kappa, 
L^2 \right)} \\
& \qquad \qquad \leq C
\left( \kappa - t_0 \right)^{\frac{1}{2}}
\left\Vert \left< v \right>^\alpha f \right\Vert_{L^2 \left( I, L^\infty_x L^2_v
\left( \mathbb{R}^2 \times \mathbb{R}^2 \right)\right)}
e \left( \kappa \right) 
\end{aligned}
\]
so if $\left( \kappa - t_0 \right)$ is sufficiently small then by (\ref{eq:fHyp}) we have
\[
\left\Vert \left< v \right>^2 \mathcal{M}_3 \right\Vert_{L^1 \left( J_\kappa , L^2 \right)} \leq
\frac{1}{4} e \left( \kappa \right)
\]

We now turn to $\mathcal{M}_1$ (the estimate for $\mathcal{M}_2$ is similar, by substituting $f$ for $h$).
Let us denote
\[
B = \left\{ \; \zeta_0 \in L^1_{\textnormal{loc}} \left( \mathbb{R}^2 \times \mathbb{R}^2 \right) \; : \;
\left\Vert \zeta_0 \right\Vert_{L^2} \leq 1 \; \right\}
\]
and then let us additionally define for $\kappa \in \left[ t_0, T \right]$
with $J_\kappa = \left[ t_0, \kappa \right]$
\[
q \left( \kappa \right) = 
\sup_{\zeta_0 \in B} 
\left\Vert Q^+ \left( \mathcal{T} \left( t - t_0 \right) \zeta_0 ,
\mathcal{T} \left( t - t_0 \right)\left\{ \left< v \right>^2 h \left( t_0 \right) \right\}
\right) \right\Vert_{L^1 \left( J_\kappa , L^2 \right)}
\]
Then since $\left< v \right>^2 h \left( t_0 \right) 
\in L^2$, by 
Proposition \ref{prop:QplusSmallTime} we have
\begin{equation}
\label{eq:qlim}
\lim_{\kappa \rightarrow t_0^+} q \left( \kappa \right) = 0
\end{equation}
We will apply Lemma \ref{lem:nonnegativeEstimate} to estimate
\[
Q^+ \left( \left< v \right>^2 \left| w \right|, \left< v \right>^2 h \right)
\]
which can only be larger than (a constant times) $\left< v \right>^2 \left| \mathcal{M}_1 \right|$.
Indeed, we know that
\[
\left< v \right>^2 \left| w \right| \left( t \right) \leq
\int_{t_0}^t \mathcal{T} \left( t-\tau\right) 
\left\{ \left< v \right>^2 \left| \mathcal{M} \right|  \left( \tau \right) \right\} d\tau
\]
and also
\begin{equation*}
\begin{aligned}
 \left< v \right>^2 h \left( t \right) & \leq \mathcal{T} \left( t -t_0 \right)\left\{
\left<v\right>^2 h \left( t_0 \right) \right\} \\
& \qquad\qquad + \int_{t_0}^t
\mathcal{T} \left( t-\tau \right) \left\{
\left< v \right>^2 
Q^+ \left( h, h \right)\left(\tau\right)
\right\} d\tau
\end{aligned}
\end{equation*}
in particular $w \left( t=t_0 \right) = 0$. Hence by Lemma \ref{lem:nonnegativeEstimate} we may write,
again with $J_\kappa = \left[ t_0, \kappa \right]$, 
\begin{equation*}
\begin{aligned}
&\left\Vert Q^+ \left( \left< v \right>^2 \left| w \right|, \left< v \right>^2 h
\right) \right\Vert_{L^1 \left( J_\kappa, L^2 \right)} \\
& \qquad \qquad\leq
q \left(\kappa\right) 
\left\Vert \left< v \right>^2 \mathcal{M} \right\Vert_{L^1 \left( J_\kappa, L^2 \right)} \\
& \qquad \qquad \qquad \qquad 
+ C \left\Vert \left< v \right>^2 Q^+ \left( h,h \right) \right\Vert_{L^1 \left( J_\kappa, L^2 \right)}
\left\Vert \left< v \right>^2 \mathcal{M} \right\Vert_{L^1 \left( J_\kappa, L^2 \right)} \\
& \qquad\qquad \leq \left( q \left( \kappa \right) +
C \left\Vert \left< v \right>^2 Q^+ \left( h,h \right) \right\Vert_{L^1 \left( J_\kappa, L^2 \right)}
\right) e \left( \kappa \right)
\end{aligned}
\end{equation*}
Then by (\ref{eq:qlim}) and (\ref{eq:hWeightedInteg}) we have
\[
\lim_{\kappa \rightarrow t_0^+}
\left( q \left( \kappa \right) +
C \left\Vert \left< v \right>^2 Q^+ \left( h,h \right) \right\Vert_{L^1 \left( J_\kappa, L^2 \right)}
\right)
= 0
\]
therefore for $\left( \kappa - t_0 \right)$ sufficiently small it holds
\[
\left\Vert \left< v \right>^2 \mathcal{M}_1
\right\Vert_{L^1 \left( J_\kappa, L^2 \right)}  \leq
\frac{1}{4} e \left( \kappa \right)
\]

Altogether we find that for all $\left( \kappa - t_0 \right)$ sufficiently small it holds
\[
e \left( \kappa \right) \leq \frac{3}{4} e \left( \kappa \right)
\]
and since we know $e \left( \kappa \right) < \infty$ this implies $e \left( \kappa_0 \right) = 0$
for some $\kappa_0 > t_0$, reaching the desired contradiction.
\end{proof}

\subsection{Exclusive scattering.}

Weak-strong uniqueness allows us to establish exclusive scattering simply by proving the existence
of a single scattering ($*$)-solution with sufficient integrability and decay:

\begin{corollary}
\label{cor:endCorollary}
Suppose $0 \leq f_0 \in L^2 \bigcap L^1_2$ is such that 
\[
\left< v \right>^2 f_0 \in L^2
\]
and that there exists a ($*$)-solution $f$ of (\ref{eq:BE}), with initial data $f_0$, such that
\[
T^* \left( f \right) = \infty \textnormal{ and  }f \textnormal{ scatters}
\]
and
\[
\forall \left( T < \infty \right) \quad
\left< v \right>^2 f \in L^2 \left( \left[ 0, T \right], L^\infty_x L^2_v 
\left( \mathbb{R}^2 \times \mathbb{R}^2 \right) \right)
\]
Then $f_0 \in \mathcal{E}$. 
\end{corollary}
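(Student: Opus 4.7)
The plan is to observe that the hypotheses of Corollary \ref{cor:endCorollary} are precisely those needed to invoke the weak-strong uniqueness theorem, Theorem \ref{thm:unique}, applied to the given ($*$)-solution $f$. Indeed, $f$ is a ($*$)-solution with $T^*(f) = \infty$, the initial data satisfies $\langle v \rangle^2 f_0 \in L^2$, and the weighted integrability
\[
\langle v \rangle^2 f \in L^2\left( \left[ 0,T \right], L^\infty_x L^2_v \left( \mathbb{R}^2 \times \mathbb{R}^2 \right)\right)
\]
holds for every $0 < T < \infty = T^*(f)$, so all the hypotheses of Theorem \ref{thm:unique} are verified.

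Given this, I would proceed as follows. Let $\tilde{f}$ be any ($*$)-solution of (\ref{eq:BE}) corresponding to the same initial data $f_0$; the goal is to show $T^*(\tilde{f}) = \infty$ and that $\tilde{f}$ scatters. Theorem \ref{thm:unique} applied to the pair $(f, \tilde{f})$ yields directly
\[
T^*(\tilde{f}) = T^*(f) = \infty
\]
and that $\tilde{f}(t,x,v) = f(t,x,v)$ for almost every $(t,x,v) \in [0,\infty) \times \mathbb{R}^2 \times \mathbb{R}^2$. In particular, for almost every $t \geq 0$ the equality $\tilde{f}(t) = f(t)$ holds in $L^2$; since both $\tilde{f}$ and $f$ lie in $C(J, L^2)$ for every compact sub-interval $J \subset [0,\infty)$, this equality in $L^2$ propagates to every $t \geq 0$ by time-continuity.

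Finally, since $f$ scatters, there exists a non-negative $f_{+\infty} \in L^2$ with
\[
\lim_{t \rightarrow +\infty} \left\Vert f(t) - \mathcal{T}(t) f_{+\infty} \right\Vert_{L^2} = 0
\]
and the identification $\tilde{f}(t) = f(t)$ in $L^2$ for every $t \geq 0$ immediately transfers this scattering statement to $\tilde{f}$, with the same scattering profile $f_{+\infty}$. Hence every ($*$)-solution with initial data $f_0$ is global and scatters, which is exactly $f_0 \in \mathcal{E}$. There is no serious obstacle here: the entire content of the corollary is that the existence of one sufficiently integrable scattering representative, together with weak-strong uniqueness, suffices to rule out the existence of any non-scattering ($*$)-solution associated with the same datum.
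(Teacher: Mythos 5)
Your proof is correct and follows the paper's own argument essentially verbatim: both invoke Theorem \ref{thm:unique} to identify any ($*$)-solution with initial data $f_0$ with the given global scattering solution $f$, and then conclude $f_0 \in \mathcal{E}$ from the definition of $\mathcal{E}$. The extra detail you provide — propagating the a.e.\ coincidence to all $t$ via $C(J,L^2)$ continuity and transferring the scattering profile — is a harmless elaboration of what the paper states more tersely.
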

\begin{proof}
Since $f$ satisfies the conditions of the weak-strong uniqueness theorem, Theorem \ref{thm:unique},
globally in time, it follows that any ($*$)-solution with initial data $f_0$ coincides with $f$
for all $t \geq 0$. On the other hand, by hypotheses, $f$ is a global scattering ($*$)-solution. Therefore,
every ($*$)-solution with initial data $f_0$ is a global scattering ($*$)-solution (being simply $f$),
so we conclude that $f_0 \in \mathcal{E}$, by the definition of the class $\mathcal{E}$.
\end{proof}

\section{Proof of the main theorem: Part I}
\label{sec:mainTheoremOne}

Let $a,b,c > 0$ and consider the moving Maxwellian distribution
\[
m^{a,b,c} \left( t,x,v \right) = a
\exp \left( - b \left| v \right|^2 - c \left| x - v t \right|^2 \right)
\]
with initial data
\[
m_0^{a,b,c} \left( x,v \right) = a \exp \left(- b \left| v \right|^2
- c \left| x \right|^2 \right)
\]
Clearly, $m^{a,b,c}$ scatters (since it is an exact solution of the free transport equation);
moreover, since $m^{a,b,c} \in C^1 \left( \left[ 0,\infty\right), \mathcal{S}\right)$,
Theorem \ref{thm:unique} implies that any ($*$)-solution corresponding to the
initial data $m_0^{a,b,c}$ is global and coincides with $m^{a,b,c}$. Therefore,
$m_0^{a,b,c}$ is exclusively scattering, i.e.  $m_0^{a,b,c} \in \mathcal{E}$.
Hence, by Theorem \ref{thm:exScOpen}, there exists an $\varepsilon =
\varepsilon \left( a,b,c \right) > 0$ such
that if $f_0 \in X$ and
\begin{equation}
\label{eq:mainThmEqn}
\left\Vert f_0 - m_0^{a,b,c} \right\Vert_{X} < 2 \cdot \varepsilon
\end{equation}
then $f_0 \in \mathcal{E}$; by the definition of the $X$-norm
\[
\left\Vert h_0 \right\Vert_X = \left\Vert h_0 \right\Vert_{L^2} +
\sum_{\varphi \in \left\{ 1, \; v_1, \; v_2, \; \left|v\right|^2,\;
\left|x\right|^2, \; x \cdot v \right\}}
\left| \int_{\mathbb{R}^2 \times \mathbb{R}^2}
\varphi \left(x,v\right) h \left( x,v\right) dx dv \right|
\]
we see that (\ref{eq:mainThmEqn}) follows from our hypotheses
(\ref{eq:momentEpsilon}-\ref{eq:sqaureEpsilon}). On the other hand, given
$f_0 \in \mathcal{E}$, it follows from the definition of $\mathcal{E}$ that any
($*$)-solution of (\ref{eq:BE}) corresponding to initial data $f_0$ is global and
scatters; but by Theorem \ref{thm:starSolnExists}, there does indeed exist such a ($*$)-solution.

\section{Higher regularity}
\label{sec:higherRegularity}

\subsection{Preliminaries.}
\label{ssec:regularityPreliminaries}

We will be using difference quotients in order to establish propagation of regularity on the full (recall, half-open)
interval $I^* \left( f \right)$. This is slightly subtle because we are using $L^1$ in the time variable: this
turns out not to be an issue, as we shall show momentarily. Let us define the translation by $a \in \mathbb{R}$
along the unit vector $\mathbf{e} \in \mathbb{R}^2$ for $h_0 \in L^2$:
\[
\left( \tau_{\mathbf{e}}^a h_0 \right) \left( x,v \right) =
h_0 \left( x + a \mathbf{e}, v \right)
\]
Then we define the finite difference operator for $a \neq 0$
\[
D_{\mathbf{e}}^a = a^{-1} \left( \tau_{\mathbf{e}}^a - I \right)
\]
where $I$ is the identity. Fixing once and for all
an orthonormal basis $\left\{\mathbf{e}_i \right\}_{i=1,2}$ of $\mathbb{R}^2$ we denote
\[
\left| D^a h_0 \right| = \left( \sum_{i} 
\left| D_{\mathbf{e}_i}^a h_0 \right|^2 \right)^{\frac{1}{2}}
\]
and $\left\Vert D^a h_0 \right\Vert_{L^2}$ is then the $L^2$ norm of
$\left| D^a h_0 \right|$. The symbol $\nabla_x$ denotes differentiation in the sense of distributions with
respect to the variable $x \in \mathbb{R}^2$.

For this subsection (specifically the following two lemmas)
we follow the presentation of the book by Evans (\cite{LCE2010} subsection 5.8.2).

\begin{lemma}
\label{lem:diffQuotOne}
For any $h_0 \in L^2$ such that $\nabla_x h_0 \in L^2$, and for any $a \in \mathbb{R} \setminus
\left\{ 0 \right\}$,
\[
\left\Vert D^a h_0 \right\Vert_{L^2} \leq 2^{\frac{1}{2}} \left\Vert \nabla_x h_0 \right\Vert_{L^2}
\]
\end{lemma}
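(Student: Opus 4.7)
The plan is to reduce by density to the case where $h_0$ is Schwartz (or at least $C^1$ with $\nabla_x h_0 \in L^2$) and then invoke the fundamental theorem of calculus in the $x$-direction $\mathbf{e}_i$. Since both sides of the claimed inequality are continuous with respect to the seminorm $\|h_0\|_{L^2} + \|\nabla_x h_0\|_{L^2}$, and Schwartz functions are dense in this space, it will suffice to establish the bound under the temporary smoothness hypothesis and then pass to the limit.

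Assuming the smoothness, for each $i \in \{1,2\}$ and $a \neq 0$ I would write
\[
(\tau_{\mathbf{e}_i}^a h_0)(x,v) - h_0(x,v) = \int_0^a \partial_{x_i} h_0(x + s\mathbf{e}_i, v)\, ds,
\]
so that
\[
(D_{\mathbf{e}_i}^a h_0)(x,v) = \frac{1}{a}\int_0^a \partial_{x_i} h_0(x + s\mathbf{e}_i, v)\, ds,
\]
with the analogous expression (integrating from $a$ to $0$) in the case $a<0$. In either case, $D_{\mathbf{e}_i}^a h_0$ is exhibited pointwise as an average of translates of $\partial_{x_i} h_0$ over an interval of length $|a|$.

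Next I would apply Jensen's inequality (equivalently, Cauchy--Schwarz) in the $s$ variable to bound the square of this average by the average of squares, then integrate in $(x,v)$ and interchange the order of integration via Fubini. Translation invariance of Lebesgue measure then collapses the $s$-integral, yielding $\|D_{\mathbf{e}_i}^a h_0\|_{L^2}^2 \leq \|\partial_{x_i} h_0\|_{L^2}^2$ uniformly in $a \neq 0$. Summing over $i = 1,2$ gives
\[
\|D^a h_0\|_{L^2}^2 = \sum_{i=1}^{2} \|D_{\mathbf{e}_i}^a h_0\|_{L^2}^2 \leq \sum_{i=1}^{2} \|\partial_{x_i} h_0\|_{L^2}^2 = \|\nabla_x h_0\|_{L^2}^2,
\]
which is in fact stronger than the claimed bound (the factor $2^{1/2}$ leaves room to spare and is presumably included only for later convenience).

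There is no real obstacle here: the argument is classical, and since the numerical constant the author requires is suboptimal, no care needs to be taken with sharpness. The only mild point of attention is handling the sign of $a$ in the integral representation, which is purely cosmetic.
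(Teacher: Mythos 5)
Your proof is correct and follows essentially the same route as the paper's: represent the difference quotient via the fundamental theorem of calculus as an average of translates of the derivative, then control the $L^2$ norm of that average by an integral inequality (you use Jensen/Cauchy--Schwarz and Fubini where the paper uses Minkowski). Your bookkeeping is slightly tighter — you keep $\|\partial_{x_i} h_0\|_{L^2}$ per component before summing and thereby obtain the constant $1$ rather than $2^{1/2}$ — but this sharpening is immaterial to the paper's later use of the lemma.
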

\begin{proof}
We have by the fundamental theorem of calculus
\[
\left( D_{\mathbf{e}_i}^a h_0 \right) \left( x,v \right) =
 \int_0^1 \left( \mathbf{e}_i \cdot \nabla_x h_0 \right) \left( x + a b \mathbf{e}_i, v \right) db
\]
therefore
\[
\left\Vert D_{\mathbf{e}_i}^a h_0 \right\Vert_{L^2} \leq \int_0^1
\left\Vert \tau_{\mathbf{e}_i}^{ab}\left( \mathbf{e}_i \cdot \nabla_x h_0\right) \right\Vert_{L^2} db \leq
\left\Vert \nabla_x h_0 \right\Vert_{L^2}
\]
\end{proof}

\begin{lemma}
\label{lem:diffQuotTwo}
Let $h_{0} \in L^2$ be such that
\[
\liminf_{0 < \left| a \right| \rightarrow 0}
\left\Vert D^a h_0 \right\Vert_{L^2} < \infty
\]
Then $\nabla_x h_0 \in L^2$ and it holds
\[
\left\Vert \nabla_x h_0 \right\Vert_{L^2} \leq 2^{\frac{1}{2}}
\liminf_{0 < \left| a \right| \rightarrow 0}
\left\Vert D^a h_0 \right\Vert_{L^2}
\]
\end{lemma}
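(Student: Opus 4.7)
The plan is to emulate the standard reflexivity argument for difference quotients (as in Evans, Section 5.8.2), adapted so that it gives the precise constant $2^{1/2}$ appearing in the statement. First, I would choose a sequence $a_k \in \mathbb{R}\setminus\{0\}$ with $a_k \to 0$ saturating the liminf, so that $L := \lim_{k\to\infty} \|D^{a_k} h_0\|_{L^2}$ exists and equals the given liminf, which is finite by hypothesis. In particular, each sequence $\{D^{a_k}_{\mathbf{e}_i} h_0\}_k$ is bounded in $L^2$ by definition of the vector-valued norm $|D^a h_0|$.

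Next I would exploit the reflexivity (indeed Hilbertness) of $L^2$: by a diagonal extraction, pass to a subsequence (still indexed by $k$) so that for each $i \in \{1,2\}$,
\[
D^{a_k}_{\mathbf{e}_i} h_0 \;\rightharpoonup\; g_i \qquad \text{weakly in } L^2
\]
for some $g_i \in L^2$. The key step is to identify $g_i$ as $\mathbf{e}_i \cdot \nabla_x h_0$. For this, fix an arbitrary test function $\varphi \in C_c^\infty(\mathbb{R}^2_x \times \mathbb{R}^2_v)$ and use the elementary identity (a discrete integration by parts)
\[
\int (D^{a_k}_{\mathbf{e}_i} h_0)\, \varphi \, dx\, dv \;=\; -\int h_0 \,(D^{-a_k}_{\mathbf{e}_i} \varphi)\, dx\, dv.
\]
On the left, weak convergence gives the limit $\int g_i \varphi\, dx\, dv$. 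On the right, since $\varphi$ is smooth and compactly supported, $D^{-a_k}_{\mathbf{e}_i}\varphi \to \mathbf{e}_i \cdot \nabla_x \varphi$ uniformly with uniformly compactly supported $a_k$-translates, so dominated convergence yields the limit $-\int h_0\,(\mathbf{e}_i \cdot \nabla_x \varphi)\, dx\, dv$. This is precisely the distributional pairing $\langle \mathbf{e}_i \cdot \nabla_x h_0, \varphi\rangle$, and by the arbitrariness of $\varphi$ we conclude $g_i = \mathbf{e}_i \cdot \nabla_x h_0$ in the sense of distributions. In particular $\mathbf{e}_i \cdot \nabla_x h_0 \in L^2$ for each $i$, whence $\nabla_x h_0 \in L^2$.

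Finally, for the quantitative estimate, I would apply the weak lower semi-continuity of the $L^2$ norm componentwise:
\[
\left\Vert \mathbf{e}_i \cdot \nabla_x h_0 \right\Vert_{L^2} \;\leq\; \liminf_{k\to\infty} \left\Vert D^{a_k}_{\mathbf{e}_i} h_0 \right\Vert_{L^2} \;\leq\; \liminf_{k\to\infty} \left\Vert D^{a_k} h_0 \right\Vert_{L^2} \;=\; L,
\]
since each coordinate contribution is dominated by the full vector norm. Summing the squares over $i=1,2$ gives
\[
\left\Vert \nabla_x h_0 \right\Vert_{L^2}^2 \;=\; \sum_{i=1}^2 \left\Vert \mathbf{e}_i \cdot \nabla_x h_0 \right\Vert_{L^2}^2 \;\leq\; 2 L^2,
\]
and taking the square root produces the claimed $2^{1/2}$ factor. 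There is no real obstacle here; the only point requiring mild care is the distributional identification of $g_i$, which must be done via the discrete integration-by-parts identity rather than attempting any direct pointwise limit argument (since we have only weak, not a.e., convergence of the difference quotients in general).
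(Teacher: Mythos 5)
Your proposal is correct and follows essentially the same route as the paper: choose a sequence $a_k$ saturating the $\liminf$, extract weak $L^2$ limits of $D^{a_k}_{\mathbf{e}_i} h_0$ by reflexivity, identify the weak limits with $\mathbf{e}_i \cdot \nabla_x h_0$ via the discrete integration-by-parts identity and dominated convergence against test functions, then conclude by weak lower semi-continuity of the norm together with the pointwise bound $\left\Vert D^{a_k}_{\mathbf{e}_i} h_0 \right\Vert_{L^2} \leq \left\Vert D^{a_k} h_0 \right\Vert_{L^2}$ and summing squares over $i=1,2$ to produce the $2^{1/2}$ factor. The only cosmetic difference is that your final chain writes $\liminf$ over the full index $k$ rather than maintaining the subsequence index explicitly, but since $\left\Vert D^{a_k} h_0 \right\Vert_{L^2}$ converges to $L$ this causes no issue.
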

\begin{proof}
Let us define
\[
M = \liminf_{0 < \left| a \right| \rightarrow 0}
\left\Vert D^a h_0 \right\Vert_{L^2}
\]
and pick a sequence $a_k \in \mathbb{R} \setminus \left\{ 0 \right\}$ with 
$a_k \rightarrow 0$ such that
\[
\lim_{k \rightarrow \infty} \left\Vert D^{a_k} h_0 \right\Vert_{L^2} = M
\]
Then for $i = 1,2$ it holds
\[
\limsup_{k \rightarrow \infty} \left\Vert D^{a_k}_{\mathbf{e}_i} h_0 \right\Vert_{L^2} \leq M
\]

Hence we can pass to a weak limit along a subsequence $\left\{ a_{k_n} \right\}_n$
\[
D_{\mathbf{e}_i}^{a_{k_n}} h_0 \rightharpoonup u_i \in L^2
\]
and moreover
\[
\left\Vert u_i \right\Vert_{L^2} \leq M
\]
On the other hand, by duality and the dominated convergence theorem, for any smooth and compactly
supported function $\varphi_0$ on $\mathbb{R}^2 \times \mathbb{R}^2$,
\begin{equation*}
\begin{aligned}
\int_{\mathbb{R}^2 \times \mathbb{R}^2} \varphi_0 u_i  dx dv & =
\lim_{n \rightarrow \infty}
\int_{\mathbb{R}^2 \times \mathbb{R}^2} \varphi_0 D_{\mathbf{e}_i}^{a_{k_n}} h_0 dx dv \\ 
& = - \lim_{n \rightarrow \infty} \int_{\mathbb{R}^2 \times \mathbb{R}^2}
h_0 D_{\mathbf{e}_i}^{-a_{k_n}} \varphi_0 dx dv \\
& = - \int_{\mathbb{R}^2 \times \mathbb{R}^2}
h_0 \mathbf{e}_i \cdot \nabla_x \varphi_0 dx dv
\end{aligned}
\end{equation*}
which implies
\[
u_i = \mathbf{e}_i \cdot \nabla_x h_0
\]
\end{proof}

The key is to realize that $L^1$ only occurs in the \emph{time} variable, whereas the difference
quotient only occurs in the \emph{space} variable, and apply Fatou's lemma.

\begin{lemma}
\label{lem:diffQuotThree}
Let $\zeta \in L^1 \left( I, L^2 \right)$ for some interval $I \subset \mathbb{R}$, and further suppose
that
\[
\liminf_{0 < \left| a \right| \rightarrow 0} 
\left\Vert D^a \zeta \right\Vert_{L^1 \left( I, L^2 \right)} < \infty
\]
Then $\nabla_x \zeta \in L^1 \left( I, L^2 \right)$ and it holds
\[
\left\Vert \nabla_x \zeta \right\Vert_{L^1 \left( I, L^2 \right)} \leq
2^{\frac{1}{2}} \liminf_{0 < \left| a \right| \rightarrow 0}
\left\Vert D^a \zeta \right\Vert_{L^1 \left( I, L^2 \right)}
\]
\end{lemma}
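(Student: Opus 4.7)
The plan is to reduce this to Lemma \ref{lem:diffQuotTwo} applied at almost every fixed time slice, using Fatou's lemma to integrate the resulting pointwise bound in $t$. The key observation is that $D^a$ acts only in the spatial variable, while the $L^1$ norm is only in $t$, so the operator commutes cleanly with time slicing, and lower semi-continuity in the spatial variables can be promoted to lower semi-continuity of the time integral via Fatou.

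First I would fix a sequence $\{a_k\}_k \subset \mathbb{R} \setminus \{0\}$ with $a_k \to 0$ such that
\[
\lim_{k \to \infty} \left\Vert D^{a_k} \zeta \right\Vert_{L^1(I, L^2)} = M \coloneqq \liminf_{0 < |a| \to 0} \left\Vert D^a \zeta \right\Vert_{L^1(I, L^2)}.
\]
For each $t \in I$ set $g(t) = \liminf_{k \to \infty} \left\Vert D^{a_k} \zeta(t) \right\Vert_{L^2}$. Applying Fatou's lemma to the non-negative measurable functions $t \mapsto \left\Vert D^{a_k} \zeta(t) \right\Vert_{L^2}$ gives
\[
\int_I g(t) \, dt \leq \liminf_{k \to \infty} \int_I \left\Vert D^{a_k} \zeta(t) \right\Vert_{L^2} \, dt = M < \infty,
\]
so $g(t) < \infty$ for almost every $t \in I$.

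Next, at each such $t$, Lemma \ref{lem:diffQuotTwo} applied to $h_0 = \zeta(t)$ yields $\nabla_x \zeta(t) \in L^2$ together with the pointwise-in-$t$ estimate $\left\Vert \nabla_x \zeta(t) \right\Vert_{L^2} \leq 2^{\frac{1}{2}} g(t)$. A short duality check (testing $D^{a_k} \zeta$ against a smooth compactly supported $\varphi(t,x,v)$ and using Fubini together with the adjoint identity $\langle D^{a_k} \zeta, \varphi \rangle = -\langle \zeta, D^{-a_k} \varphi \rangle$) verifies that this pointwise-in-time $\nabla_x \zeta(t)$ coincides with the distributional gradient of $\zeta$ in the $x$-variable on the full product domain, so the resulting map $t \mapsto \left\Vert \nabla_x \zeta(t) \right\Vert_{L^2}$ is measurable. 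Integrating the pointwise estimate in $t$ and using the Fatou bound above delivers
\[
\left\Vert \nabla_x \zeta \right\Vert_{L^1(I, L^2)} \leq 2^{\frac{1}{2}} \int_I g(t) \, dt \leq 2^{\frac{1}{2}} M,
\]
which is the desired conclusion.

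The only non-routine point, and the one most likely to need care, is the measurability and distributional identification in the second step: one must verify that the a.e.-defined $L^2$-valued function $t \mapsto \nabla_x \zeta(t)$ truly represents the distributional $x$-gradient of the $(t,x,v)$-function $\zeta$, rather than some artifact of passing to a $t$-dependent subsequence in $\{a_k\}$. This is handled by noting that the full sequence $a_k$ (not a $t$-dependent one) is used to define $g(t)$, and that Lemma \ref{lem:diffQuotTwo} applied at fixed $t$ extracts a further subsequence whose weak $L^2$ limit is uniquely characterized by duality against test functions, independently of the subsequence chosen, so Fubini closes the argument.
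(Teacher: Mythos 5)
Your proof is correct and takes essentially the same route as the paper's: pick a subsequence $a_k$ realizing the $\liminf$, apply Fatou to control $\liminf_k \|D^{a_k}\zeta(t)\|_{L^2}$ pointwise in $t$, invoke Lemma \ref{lem:diffQuotTwo} slice-by-slice, then reintegrate with Fatou. The one thing you do beyond the paper is spell out the measurability of $t \mapsto \|\nabla_x \zeta(t)\|_{L^2}$ and the identification of the slice-wise gradient with the full distributional $x$-gradient of $\zeta$ on $I \times \mathbb{R}^2 \times \mathbb{R}^2$ via duality and Fubini; the paper leaves this implicit, so your extra care is a welcome addition rather than a deviation.
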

\begin{proof}
Since $\zeta \in L^1 \left( I, L^2 \right)$, we have
$\zeta \left( t \right) \in L^2$ for a.e. $t \in I$; we want to apply Lemma
\ref{lem:diffQuotTwo} for almost every such $t$. Let us define
\[
M = \liminf_{0 < \left| a \right| \rightarrow 0} 
\left\Vert D^a \zeta \right\Vert_{L^1 \left( I, L^2 \right)}
\]
and take a sequence $a_k \in
\mathbb{R} \setminus \left\{ 0 \right\}$ such that
\[
\lim_{k \rightarrow \infty}
\left\Vert D^{a_k} \zeta \right\Vert_{L^1 \left( I, L^2 \right)} = M
\]
Then by Fatou's lemma, the quantity
\[
\liminf_{k \rightarrow \infty}
\left\Vert D^{a_k} \zeta \left( t \right) \right\Vert_{L^2}
\]
is finite for a.e. $t \in I$, and we note that
\begin{equation}
\label{eq:liminfUpper}
\liminf_{0 < \left| a \right| \rightarrow 0}
\left\Vert D^a \zeta \left(t \right) \right\Vert_{L^2} \leq
\liminf_{k \rightarrow \infty}
\left\Vert D^{a_k} \zeta \left( t \right) \right\Vert_{L^2}
\end{equation}
 Therefore, since we also have
$\zeta \left( t \right) \in L^2$ for a.e. $t \in I$, by Lemma \ref{lem:diffQuotTwo}, we have
that $\nabla_x \zeta \left( t \right) \in L^2$ for a.e.e $t \in I$

Now we estimate, using Lemma \ref{lem:diffQuotTwo}, followed by (\ref{eq:liminfUpper}) and 
finally Fatou's lemma:
\begin{equation*}
\begin{aligned}
\left\Vert \nabla_x \zeta \right\Vert_{L^1 \left( I, L^2 \right)} & =
\int_I \left\Vert \nabla_x \zeta \left( t \right) \right\Vert_{L^2} dt \\
& \leq 2^{\frac{1}{2}} \int_I \liminf_{0 < \left| a \right| \rightarrow 0}
\left\Vert D^{a} \zeta \left( t \right) \right\Vert_{L^2} dt \\
& \leq 2^{\frac{1}{2}} \int_I
\liminf_{k \rightarrow \infty} \left\Vert D^{a_k} \zeta \left( t \right) \right\Vert_{L^2} dt \\
& \leq 2^{\frac{1}{2}} \liminf_{k \rightarrow \infty} \int_I
\left\Vert D^{a_k} \zeta \left( t \right) \right\Vert_{L^2} dt \\
& = 2^{\frac{1}{2}} \lim_{k \rightarrow \infty}
\left\Vert D^{a_k} \zeta \right\Vert_{L^1 \left( I, L^2 \right)} \\
& = 2^{\frac{1}{2}} M
\end{aligned}
\end{equation*}
\end{proof}

\subsection{The gain-only equation.}

Let us recall the Sobolev norms (\ref{eq:SobolevNorms}) for non-negative real numbers $\alpha,\beta$,
\[
\left\Vert f_0 \right\Vert_{H^{\alpha,\beta}} =
\left\Vert \left< v \right>^\beta \left< \nabla_x \right>^\alpha f_0 \right\Vert_{L^2}
\]
We have already propagated $H^{0,\beta}$ for (\ref{eq:BE}) for any $\beta \geq 0$
 by Proposition \ref{prop:propagationL2Velocity}. The objective of
this sub-section is to propagate $H^{2,2}$ for the \emph{gain-only} equation. Then we
will close out our treatment of regularity by propagating $H^{2,2}$ for the full equation (\ref{eq:BE}) in the subsequent
sub-section, which will turn out to be sufficient to propagate Schwartz regularity and conclude Part II of the main theorem.

Before we begin, let us observe that for some constant $C > 0$ we have the equivalence of norms
\[
C^{-1} \left\Vert f_0 \right\Vert_{H^{1,2}} \leq
\left\Vert f_0 \right\Vert_{H^{0,2}} + \left\Vert \nabla_x f_0
\right\Vert_{H^{0,2}} \leq C \left\Vert f_0 \right\Vert_{H^{1,2}}
\]
Moreover, denoting by the symbol
\[
\mathcal{D}^2_x f_0
\]
the matrix of second-order distributional derivatives of $f_0 \in L^2$ in the $x$ variable only, we have
for some other constant $C > 0$
\[
C^{-1} \left\Vert f_0 \right\Vert_{H^{2,2}} \leq
\left\Vert f_0 \right\Vert_{H^{0,2}} + \left\Vert \mathcal{D}^2_x f_0
\right\Vert_{H^{0,2}} \leq C \left\Vert f_0 \right\Vert_{H^{2,2}}
\]
The propagation proofs for the gain-ony equation
 will be similar to the proof of Lemma \ref{lem:MomentPropGO} and will also
rely on the conclusion of that Proposition.

\begin{lemma}
\label{lem:DerivGO}
Assume $f_0$ is such that 
\[
\left< v \right>^2 \left< \nabla_x \right> f_0 \in L^2
\]
and let
\[
0 < T < T_{\textnormal{g.o.}} \left(f_0\right)
\]
Then the solution $h\left(t\right)$ 
of the gain-only
Boltzmann equation with initial data $f_0$ i.e.
\[
h \left( t \right) = \mathfrak{Z}_{\textnormal{g.o.}} \left( f_0 \right) \left( t \right)
\]
 satisfies
\[
\left< v \right>^2 \left< \nabla_x \right> h \in L^\infty \left( [0,T], L^2 \right)
\]
and
\[
\left<v \right>^2 \left< \nabla_x \right> Q^+ \left(h,h\right) \in L^1 \left( [0,T], L^2 \right) 
\]
\end{lemma}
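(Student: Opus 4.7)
The plan is to mirror the proof of Lemma~\ref{lem:MomentPropGO}, only replacing the velocity weight by a difference-quotient-plus-velocity-weight, and then passing to the derivative via Section~\ref{ssec:regularityPreliminaries}. Fix $\mathbf{e}\in\{\mathbf{e}_1,\mathbf{e}_2\}$ and $a\neq 0$ and write $w^a = D^a_{\mathbf{e}} h$. Since $D^a_{\mathbf{e}}$ commutes with $v\cdot\nabla_x$ and $Q^+$ is local in $x$, bilinearity gives
\[
(\partial_t+v\cdot\nabla_x)w^a = Q^+(w^a,\tau^a_{\mathbf{e}}h) + Q^+(h,w^a),
\]
so $w^a$ solves a linear inhomogeneous transport equation driven by two bilinear couplings involving $h$ and its translate $\tau^a_{\mathbf{e}}h$. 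I would carry along the truncated weight $\nu_R^2 = \min(\langle v\rangle^2,R^2)$ as in Section~\ref{ssec:truncatedWeights} and use the pointwise inequality (\ref{eq:truncatedWeight}) with $\alpha=2$ to bound $\nu_R^2 Q^+(\cdot,\cdot)$ by sums of $Q^+$ with the weight placed on each factor in turn.

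First I would invoke Proposition~\ref{prop:QplusTimeDecomposition} with $g$ playing the role of $h$, $\tau^a_{\mathbf{e}}h$, $\langle v\rangle^2 h$, and $\nu_R^2\tau^a_{\mathbf{e}}h$ in succession. By Lemma~\ref{lem:MomentPropGO} and Theorem~\ref{thm:QPlusLWP} each of these has a $W^{1,1}_tL^2$-type norm controlled solely in terms of $\|h\|_{L^\infty(I,L^2)}$ and $\|\langle v\rangle^2 h\|_{L^\infty(I,L^2)}$ on $I=[0,T]$, together with the corresponding $L^1(I,L^2)$ bounds on $Q^+$; crucially, since the partition in Proposition~\ref{prop:QplusTimeDecomposition} is constructed from the temporal profile of $\|\zeta(s)\|_{L^2}$ and from the $L^2_tL^4_{x,\eta}$ norm of $\mathcal{F}_v\mathcal{T}(t-\tau_k)g(\tau_k)$, and both of these quantities are invariant under the $x$-translation $\tau^a_{\mathbf{e}}$ and also independent of the truncation parameter $R$, the resulting partition $\{I_j\}$ can be chosen once and for all in terms of $h$ alone, uniformly in $a$ and $R$. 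Absorbing the small parameter $\varepsilon$ into the left-hand side on each $I_j$ in the style of the proof of Lemma~\ref{lem:MomentPropGO}, I obtain
\[
\|\nu_R^2 Q^+(w^a,\tau^a_{\mathbf{e}}h)\|_{L^1(I_j,L^2)} + \|\nu_R^2 Q^+(h,w^a)\|_{L^1(I_j,L^2)} \le C\bigl(\|\nu_R^2 w^a(t_j)\|_{L^2} + \varepsilon\|\nu_R^2 (\partial_t+v\cdot\nabla_x)w^a\|_{L^1(I_j,L^2)}\bigr).
\]
A finite induction over $j$, combined with Duhamel's formula for $w^a$ and Lemma~\ref{lem:diffQuotOne} to bound the initial datum $\|\nu_R^2 D^a_{\mathbf{e}} f_0\|_{L^2}\le C\|\langle v\rangle^2\nabla_x f_0\|_{L^2}$, produces the $a,R$-uniform estimate
\[
\|\nu_R^2 w^a\|_{L^\infty(I,L^2)} + \|\nu_R^2 Q^+(w^a,\tau^a_{\mathbf{e}}h)\|_{L^1(I,L^2)} + \|\nu_R^2 Q^+(h,w^a)\|_{L^1(I,L^2)} \le C\|\langle v\rangle^2\langle\nabla_x\rangle f_0\|_{L^2}.
\]
Monotone convergence as $R\to\infty$ then removes the truncation.

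To conclude, Lemma~\ref{lem:diffQuotTwo} applied pointwise in $t$ (with the help of Fatou) upgrades the $a$-uniform bound for $\nu^2 w^a$ to $\langle v\rangle^2\nabla_x h\in L^\infty(I,L^2)$, which combined with Lemma~\ref{lem:MomentPropGO} yields $\langle v\rangle^2\langle\nabla_x\rangle h\in L^\infty(I,L^2)$. For the $Q^+$ estimate, Lemma~\ref{lem:diffQuotThree} turns the uniform-in-$a$ bound into an $L^1(I,L^2)$ bound on $\langle v\rangle^2 Q^+(\nabla_x h,h)$ and $\langle v\rangle^2 Q^+(h,\nabla_x h)$ separately; by the locality of $Q^+$ in $x$ these sum to $\langle v\rangle^2\nabla_x Q^+(h,h)$, and adding the $\langle v\rangle^2 Q^+(h,h)$ bound from Lemma~\ref{lem:MomentPropGO} gives $\langle v\rangle^2\langle\nabla_x\rangle Q^+(h,h)\in L^1(I,L^2)$. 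The main obstacle throughout is verifying that the partition of Proposition~\ref{prop:QplusTimeDecomposition} can be chosen independently of both $a$ and $R$; as indicated, this follows from translation invariance of the Fourier-side norm driving the partition and from the fact that the truncated weight only appears as a pointwise multiplier that does not enter the construction of the partition.
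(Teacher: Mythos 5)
Your proposal is essentially correct and follows the same architecture as the paper's proof: a finite-difference quotient in $x$, Proposition~\ref{prop:QplusTimeDecomposition} to produce an $a$-uniform self-estimate on short intervals, and Lemmas~\ref{lem:diffQuotOne}--\ref{lem:diffQuotThree} to pass from difference quotients to derivatives. The one genuinely different technical choice is the decomposition of $D^a_{\mathbf{e}}Q^+(h,h)$: you write it as $Q^+(D^a_{\mathbf{e}}h,\tau^a_{\mathbf{e}}h)+Q^+(h,D^a_{\mathbf{e}}h)$, whereas the paper writes it as $Q^+(D^a_{\mathbf{e}}h,h)+\tau^a_{\mathbf{e}}\{Q^+(h,\tau^{-a}_{\mathbf{e}}D^a_{\mathbf{e}}h)\}$. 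Both are algebraically valid, but the paper's form has been engineered so that the ``$g$''-argument feeding Proposition~\ref{prop:QplusTimeDecomposition} is always the \emph{un}translated $\langle v\rangle^2 h$, which makes the $a$-independence of the partition automatic; with your decomposition you must, as you correctly observe, argue that the partition is invariant under the $x$-translation $\tau^a_{\mathbf{e}}$ (true, since the $L^2$-profile of $\zeta$ and the $L^2_tL^4_{x,\eta}$ norm driving the partition are both invariant under $\tau^a_{\mathbf{e}}$). This is a small extra step the paper avoids.

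Two remarks. First, carrying the truncated weight $\nu_R^2$ is superfluous here: by Lemma~\ref{lem:MomentPropGO} we already have $\langle v\rangle^2 h\in L^\infty(I,L^2)$ and $\langle v\rangle^2 Q^+(h,h)\in L^1(I,L^2)$, and finite-differencing in $x$ preserves these memberships for each fixed $a\neq 0$; so $\zeta^a_{\mathbf{e}}=\langle v\rangle^2 D^a_{\mathbf{e}}Q^+(h,h)$ is \emph{a priori} in $L^1(I_j,L^2)$, which is exactly what licenses the self-estimate ``$\|\zeta^a\|\le C(\ldots)+\varepsilon\|\zeta^a\|$'' without any truncation. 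This is the device the paper uses, and it is cleaner than truncating and then passing $R\to\infty$ by monotone convergence. Second, in your list of candidate $g$'s you include $\nu_R^2\tau^a_{\mathbf{e}}h$; if the truncated weight were ever placed on the $g$-side the resulting partition would depend on $R$, undermining the claimed $R$-uniformity. The fix (which your estimates can accommodate) is to always keep the full weight $\langle v\rangle^2$ on $g$ and place any truncation only on the difference-quotient argument. Finally, your remark about bounding $\langle v\rangle^2 Q^+(\nabla_x h,h)$ and $\langle v\rangle^2 Q^+(h,\nabla_x h)$ ``separately'' is imprecise---Lemma~\ref{lem:diffQuotThree} is applied to $\zeta=\langle v\rangle^2 Q^+(h,h)$ and yields $\langle v\rangle^2\nabla_x Q^+(h,h)\in L^1(I,L^2)$ directly, which is all that is needed.
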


\begin{proof}
Fixing any $0 < T < T_{\textnormal{g.o.}} \left( f_0 \right)$ with $I = \left[ 0,T \right]$ we may define
\begin{equation}
\label{eq:firstDerivC000}
C_0 \left( T \right) = \left\Vert h \right\Vert_{L^\infty \left( I, H^{0,2} \right)} +
\left\Vert Q^+ \left( h,h \right) \right\Vert_{L^1 \left( I, H^{0,2} \right)}
\end{equation}
which is finite by Lemma \ref{lem:MomentPropGO}. 

Let $\mathbf{e} \in \mathbb{R}^2$ be a unit vector. We have each
\begin{equation}
\label{eq:derivative000}
\left( \partial_t + v \cdot \nabla_x \right) \left\{ \left< v \right>^2 h \right\} =
\left< v \right>^2 Q^+ \left( h,h \right)
\end{equation}
and
\begin{equation}
\label{eq:derivative001}
\begin{aligned}
& \left( \partial_t + v \cdot \nabla_x \right) \left\{ \left< v \right>^2 D_{\mathbf{e}}^a h \right\} \\
& \qquad\qquad =\left< v \right>^2 Q^+ \left( D_{\mathbf{e}}^a h,h \right) +
\tau_{\mathbf{e}}^a \left\{ \left< v \right>^2 Q^+ \left(  h, 
\tau_{\mathbf{e}}^{-a} D_{\mathbf{e}}^a h \right)\right\}
\end{aligned}
\end{equation}
where $D_{\mathbf{e}}^a$ is the finite difference operator which has been previously defined, and we have applied the
product rule to commute $D_{\mathbf{e}}^a$ with $Q^+$. Let us in particular denote
\[
\zeta^a_{\mathbf{e}} = \left( \partial_t + v \cdot \nabla_x \right) \left\{ \left< v \right>^2 D_{\mathbf{e}}^a h \right\}
\]
The key is to apply Proposition \ref{prop:QplusTimeDecomposition}, recalling that the conclusion of the Proposition is
independent of \emph{one} of the two arguments of $Q^+$: this is why it does not bother us that $a$ is a variable, nor
that the right-hand side of (\ref{eq:derivative001}) contains 
$D_{\mathbf{e}}^a h$ \emph{and} $\tau_{\mathbf{e}}^{-a} D_{\mathbf{e}}^a h$. The symbol $g $ in the Proposition will
stand for the present $\left< v \right>^2 h$ (this is why we use $H^{0,2}$ in the definition
(\ref{eq:firstDerivC000}) of $C_0 \left( T \right)$ above),
and we decompose $I = \bigcup_j I_j$, $I_j = \left[ t_j, t_{j+1} \right]$, as in the Proposition, depending on
some $\varepsilon > 0$ to be chosen later.
The claim is that if
\begin{equation}
\label{eq:derivClaimHyp}
Q^+ \left( h,h \right) \in \bigcap_{i=0}^{j-1} L^1 \left( I_i, H^{1,2} \right)
\end{equation}
then
\begin{equation}
\label{eq:derivClaimConclusion}
Q^+ \left( h,h \right) \in \bigcap_{i=0}^{j} L^1 \left( I_i, H^{1,2} \right)
\end{equation}
which allows us to conclude after finitely many inductive iterations. We remark that
\[
\left\Vert h \left( t \right) \right\Vert_{H^{1,2}} \leq
\left\Vert h_0 \right\Vert_{H^{1,2}} + \int_0^t 
\left\Vert Q^+ \left( h,h \right) \left( s \right) \right\Vert_{H^{1,2}} ds
\]
so there is nothing more to show, once the claim is established.

Let us assume (\ref{eq:derivClaimHyp}); we know, in particular, that
\[
h \left( t_j \right) \in H^{1,2}
\]
and we need to show that
\[
Q^+ \left( h,h \right) \in L^1 \left( I_j, H^{1,2} \right)
\]
In fact, since $\zeta_\mathbf{e}^a =
\left< v \right>^2 D_{\mathbf{e}}^a Q^+ \left( h,h \right)$,
by Lemma \ref{lem:diffQuotThree} we only need to show that
\[
\zeta_{\mathbf{e}}^a \in L^1 \left( I_j , L^2 \right)
\]
uniformly in $a \in \mathbb{R} \setminus \left\{ 0 \right\}$ for any unit vector
$\mathbf{e} \in \mathbb{R}^2$. Note carefully that we \emph{already know} this membership \emph{for each $a$}
because $\zeta_{\mathbf{e}}^a$ is just defined by a finite difference; therefore, it is permissible to
estimate $\zeta_{\mathbf{e}}^a$ \emph{in terms of itself}, with a small enough constant, uniformly in $a$.

We proceed by (\ref{eq:derivative001}), noting that the left hand side is just $\zeta_{\mathbf{e}}^a$:
\begin{equation*}
\begin{aligned}
& \left\Vert \zeta_{\mathbf{e}}^a \right\Vert_{L^1 \left( I_j, L^2 \right)} \\
&\qquad  = \left\Vert \left< v \right>^2 Q^+ \left( D_{\mathbf{e}}^a h,h \right)  +
\tau_{\mathbf{e}}^a \left\{ \left< v \right>^2 Q^+ \left(  h, 
\tau_{\mathbf{e}}^{-a} D_{\mathbf{e}}^a h \right)\right\} \right\Vert_{L^1 \left( I_j, L^2 \right)} \\
&\qquad  \leq \left\Vert  Q^+ \left( \left< v \right>^2 
\left| D_{\mathbf{e}}^a h \right|, \; \left< v \right>^2 h \right) 
\right\Vert_{L^1 \left( I_j, L^2 \right)} \\
& \qquad \qquad \qquad +
\left\Vert Q^+ \left( \left< v \right>^2 h, \;
\left< v \right>^2 \tau_{\mathbf{e}}^{-a} 
\left| D_{\mathbf{e}}^a h \right| \right) \right\Vert_{L^1 \left( I_j, L^2 \right)} \\
& \qquad \leq C C_0 \left( T \right) \times \left(
\left\Vert h \left( t_j \right) \right\Vert_{H^{1,2}} + \varepsilon
\left\Vert \zeta_{\mathbf{e}}^a \right\Vert_{L^1 \left( I_j, L^2 \right)} \right)
\end{aligned}
\end{equation*}
We conclude by choosing $\varepsilon$ no larger than $2^{-1} C^{-1} C_0 \left( T \right)^{-1}$.
\end{proof}

The following lemma is similar to Lemma \ref{lem:DerivGO}, both in statement and in proof, and we
only sketch the details.

\begin{lemma}
\label{lem:secondDerivGO}
Assume $f_0$ is such that 
\[
\left< v \right>^2 \left< \nabla_x \right>^2 f_0 \in L^2
\]
and let
\[
0 < T < T_{\textnormal{g.o.}} \left(f_0\right)
\]
Then the solution $h\left(t\right)$ 
of the gain-only
Boltzmann equation with initial data $f_0$ i.e.
\[
h \left( t \right) = \mathfrak{Z}_{\textnormal{g.o.}} \left( f_0 \right) \left( t \right)
\]
 satisfies
\[
\left< v \right>^2 \left< \nabla_x \right>^2 h \in L^\infty \left( [0,T], L^2 \right)
\]
and
\[
\left<v \right>^2 \left< \nabla_x \right>^2 Q^+ \left(h,h\right) \in L^1 \left( [0,T], L^2 \right) 
\]
\end{lemma}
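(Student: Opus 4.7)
The plan is to mirror the proof of Lemma \ref{lem:DerivGO}, but apply two finite--difference quotients in $x$ (in arbitrary directions). The Leibniz rule produces, besides the two ``top--order'' terms that we must close on the left, two cross terms of strictly lower order, which are harmless because the first--order regularity $H^{1,2}$ is already in hand by Lemma \ref{lem:DerivGO}.

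Fix $0<T<T_{\textnormal{g.o.}}(f_0)$, set $I=[0,T]$, and invoke Lemma \ref{lem:DerivGO} to secure the finite quantity
\begin{equation*}
C_1(T)\;:=\;\|h\|_{L^\infty(I,H^{1,2})}+\|Q^+(h,h)\|_{L^1(I,H^{1,2})}
\end{equation*}
For arbitrary unit vectors $\mathbf{e}_1,\mathbf{e}_2\in\mathbb{R}^2$ and nonzero reals $a_1,a_2$, write $D_i=D_{\mathbf{e}_i}^{a_i}$ and $\tau_i=\tau_{\mathbf{e}_i}^{a_i}$. Since $Q^+$ is $x$-local (hence commutes with $\tau_i$) and $D_i(fg)=(D_if)(\tau_ig)+f(D_ig)$, two applications of the product rule give
\begin{equation*}
\begin{aligned}
D_2D_1Q^+(h,h)\;=\;&Q^+(D_2D_1h,h)+Q^+(\tau_2D_1h,D_2h)\\
&\qquad+Q^+(\tau_1D_2h,D_1h)+Q^+(\tau_2\tau_1h,D_2D_1h)
\end{aligned}
\end{equation*}
Let $\zeta=(\partial_t+v\cdot\nabla_x)\{\langle v\rangle^2D_2D_1h\}=\langle v\rangle^2D_2D_1Q^+(h,h)$, and distribute the weight by the pointwise inequality $\langle v\rangle^2\le C(\langle v'\rangle^2+\langle v_*'\rangle^2)$ to move $\langle v\rangle^2$ inside the arguments of each $Q^+$.

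Now apply Proposition \ref{prop:QplusTimeDecomposition} with $g=\langle v\rangle^2 h$ and a small $\varepsilon>0$ to produce a partition $I=\bigcup_jI_j$ that depends on $h,T,\varepsilon$ but \emph{not} on $a_1,a_2$ (the relevant norms of $g$ are translation--invariant in $x$, so $\tau_2\tau_1g$ produces the same partition). On each $I_j$, the first and fourth terms above---those in which $\langle v\rangle^2D_2D_1h$ appears as one argument and the fixed function $\langle v\rangle^2 h$ (up to an $x$-translation) appears as the other---are controlled by the Proposition as
\begin{equation*}
C\,C_1(T)\bigl(\|(\langle v\rangle^2D_2D_1h)(t_j)\|_{L^2}+\varepsilon\,\|\zeta\|_{L^1(I_j,L^2)}\bigr)
\end{equation*}
The two cross terms involve $Q^+$ of two strictly first--order weighted difference quotients, each of which lies in $L^\infty(I,L^2)$ with forcing in $L^1(I,L^2)$, both bounded by $CC_1(T)$ uniformly in $a_1,a_2$ (via Lemma \ref{lem:DerivGO} combined with Lemma \ref{lem:diffQuotOne}); hence Lemma \ref{lem:timeDependentEstimate} bounds the cross terms by $CC_1(T)^2$, uniformly in $a_1,a_2,\mathbf{e}_1,\mathbf{e}_2$. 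Choosing $\varepsilon$ small depending on $C_1(T)$ absorbs the $\|\zeta\|_{L^1(I_j,L^2)}$ factor. A finite induction on $j$, bounding $\|(\langle v\rangle^2D_2D_1h)(t_j)\|_{L^2}$ by Duhamel in terms of the initial value $\|\langle v\rangle^2D_2D_1f_0\|_{L^2}\le C\|f_0\|_{H^{2,2}}$ and the previously established bounds on $I_0,\dots,I_{j-1}$, then yields
\begin{equation*}
\sup_{a_1,a_2\neq 0,\;\mathbf{e}_1,\mathbf{e}_2\in\mathbb{S}^1}\|\zeta\|_{L^1(I,L^2)}\;<\;\infty
\end{equation*}

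Two successive applications of Lemma \ref{lem:diffQuotThree}---first in direction $\mathbf{e}_2$ to the family $\{D_1(\langle v\rangle^2Q^+(h,h))\}_{a_1}$ (whose membership in $L^1(I,L^2)$ is provided, for each fixed $a_1$, by Lemma \ref{lem:DerivGO}), then in direction $\mathbf{e}_1$ to the resulting uniformly bounded family---promote the above estimate to $\langle v\rangle^2\nabla_{\mathbf{e}_1}\nabla_{\mathbf{e}_2}Q^+(h,h)\in L^1(I,L^2)$, with a bound independent of the directions. Combining with Lemma \ref{lem:DerivGO} gives $\langle v\rangle^2\langle\nabla_x\rangle^2Q^+(h,h)\in L^1(I,L^2)$, and Duhamel's formula then yields $\langle v\rangle^2\langle\nabla_x\rangle^2h\in L^\infty(I,L^2)$. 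The main obstacle is bookkeeping: verifying that the partition produced by Proposition \ref{prop:QplusTimeDecomposition} is genuinely uniform in $a_1,a_2$ (it is, because the partition depends only on the fixed function $\langle v\rangle^2 h$), so that the cross--term estimates do not degenerate as $a_1,a_2\to 0$.
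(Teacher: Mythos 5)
Your proposal is correct and mirrors the paper's own argument in all essentials: both rely on the already-established first-order bound $C_1(T)$ from Lemma~\ref{lem:DerivGO}, Leibniz-type decomposition of a second-order difference into two top-order terms plus cross terms, Proposition~\ref{prop:QplusTimeDecomposition} to close the top-order terms by a finite induction on the partition, and a Fatou/difference-quotient passage to pass to genuine derivatives. The one cosmetic divergence is that the paper takes one \emph{exact} derivative (setting $u_{\mathbf{e}'}=\mathbf{e}'\cdot\nabla_x h$, which exists by Lemma~\ref{lem:DerivGO}) and then applies a \emph{single} finite difference $D_{\mathbf{e}}^a$, collecting all cross terms into a remainder $F\in L^1(I,L^2)$; you instead apply two finite differences $D_2D_1$ directly and only pass to derivatives at the end via two successive applications of Lemma~\ref{lem:diffQuotThree}. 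The paper's route yields a slightly leaner ledger (one difference quotient, one exact derivative, cross terms already of the right type without a limiting argument), while yours is perhaps more systematic and spells out the uniformity-in-$(a_1,a_2)$ bookkeeping that the paper glosses with ``the conclusion then follows similarly.'' Your observation that translation-invariance of the $L^2$ and $L^4_{x,\eta}$ quantities used in Proposition~\ref{prop:QplusTimeDecomposition}'s partition-construction makes the partition uniform in $(a_1,a_2)$ is the right point to emphasize, and your treatment of the cross terms via Lemma~\ref{lem:timeDependentEstimate} together with Lemma~\ref{lem:diffQuotOne} is exactly what the paper compresses into ``$F\in L^1(I,L^2)$ by~(\ref{eq:HoneBound004}).''
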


\begin{proof}
Fixing any $0 < T < T_{\textnormal{g.o.}} \left( f_0 \right)$ with $I = \left[ 0, T \right]$ we have
\begin{equation}
\label{eq:HoneBound004}
 \left\Vert h \right\Vert_{L^\infty \left( I, H^{1,2} \right)}
+ \left\Vert Q^+ \left( h,h \right) \right\Vert_{L^1 \left( I, H^{1,2} \right)} < \infty
\end{equation}
which follows from Lemma \ref{lem:DerivGO}.

Let $\mathbf{e},\mathbf{e}^\prime \in \mathbb{R}^2$ be two orthogonal
 unit vectors, and let us denote
\[
u_{\mathbf{e}^\prime} = \mathbf{e}^\prime \cdot \nabla_x h
\]
Then we may write
\begin{equation*}
\begin{aligned}
& \left( \partial_t + v \cdot \nabla_x \right) \left\{
 \left< v \right>^2 D_{\mathbf{e}}^a u_{\mathbf{e}^\prime} \right\} \\
& \quad = \left< v \right>^2 Q^+ \left(
D_{\mathbf{e}}^a u_{\mathbf{e}^\prime} , \; h \right) +
\tau_{\mathbf{e}}^a \left\{ \left< v \right>^2 Q^+ \left( h, \;
\tau_{\mathbf{e}}^{-a} D_{\mathbf{e}}^a u_{\mathbf{e}^\prime} \right) \right\} + F
\end{aligned}
\end{equation*}
where by (\ref{eq:HoneBound004}) it holds
\[
F \in L^1 \left( I, L^2 \right)
\]
The conclusion then follows similarly to the proof of Lemma \ref{lem:DerivGO}.
\end{proof}

\subsection{The full Boltzmann equation.}

\begin{proposition}
\label{prop:regularityFullEqn}
Let $f$ be a ($*$)-solution of (\ref{eq:BE}) with initial data 
\[
0 \leq f \left( t=0 \right) = f_0
\]
Then provided
\[
f_0 \in H^{2,2}
\]
it follows that for each
\[
0 < T < T^* \left( f \right)
\]
it holds
\[
f \in L^\infty \left( \left[ 0, T \right], H^{2,2} \right)
\]
and
\[
Q^{\pm} \left( f,f \right) \in L^2 \left( \left[ 0, T \right], H^{2,2} \right)
\]
\end{proposition}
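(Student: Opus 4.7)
The strategy mirrors the propagation of regularity for the gain-only equation (Lemmas \ref{lem:DerivGO} and \ref{lem:secondDerivGO}), with the comparison principle (Proposition \ref{prop:comparisonStar}) supplying the pointwise control needed to tame the loss term $Q^-(f,f) = \rho_f f$.

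First, I reduce to a finite induction on short sub-intervals. Fix $0 < T < T^*(f)$; since $f \in C([0,T], L^2)$ (by subsection \ref{ssec:integrableAndContinuous}) and $T_{\textnormal{g.o.}}$ is lower semi-continuous (Theorem \ref{thm:lsc2}), there exists $r > 0$ with $T_{\textnormal{g.o.}}(f(t)) > r$ for every $t \in [0,T]$. Partition $[0,T]$ into finitely many intervals $I_j = [t_j, t_{j+1}]$ of length at most $r/2$, and show inductively that $f(t_j) \in H^{2,2}$ implies $f \in L^\infty(I_j, H^{2,2})$ and $Q^\pm(f,f) \in L^2(I_j, H^{2,2})$. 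On each $I_j$ let $g_j(t) = \mathfrak{Z}_{\textnormal{g.o.}}(f(t_j))(t - t_j)$; Lemma \ref{lem:secondDerivGO} supplies $g_j \in L^\infty(I_j, H^{2,2})$ together with $\langle v\rangle^2 \langle \nabla_x\rangle^2 Q^+(g_j, g_j) \in L^1(I_j, L^2)$, while Proposition \ref{prop:comparisonStar} gives $0 \leq f \leq g_j$ almost everywhere. Consequently $\rho_f \leq \rho_{g_j}$, and the two-dimensional Sobolev embedding $H^2_x \hookrightarrow L^\infty_x$ yields $\rho_f \in L^\infty(I_j \times \mathbb{R}^2_x)$; this is the pivotal pointwise bound on the loss coefficient.

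Second, I propagate the $x$-derivatives via finite differences (subsection \ref{ssec:regularityPreliminaries}). Applying $D^a_\mathbf{e}$ to (\ref{eq:BE}) and using the difference-quotient product rule, the function $w = D^a_\mathbf{e} f$ satisfies
\begin{equation*}
(\partial_t + v \cdot \nabla_x + \tau^a_\mathbf{e} \rho_f)\, w \;=\; D^a_\mathbf{e} Q^+(f,f) \;-\; f\, \rho_w .
\end{equation*}
Solving this in integrating-factor form (with the non-negative factor $\tau^a_\mathbf{e}\rho_f$) and applying Lemma \ref{lem:nonnegativeEstimate} in combination with the weighted decomposition estimates (Propositions \ref{prop:QplusWeighted} and \ref{prop:QplusTimeDecomposition}), exactly as in the proof of Lemma \ref{lem:DerivGO} but with $g_j$ playing the role of the controlled ``large'' factor, produces a bound on $\langle v\rangle^2 w$ in $L^\infty(I_j, L^2)$ uniform in $a$. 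Lemma \ref{lem:diffQuotThree} (applied via Fatou's lemma to pass from $L^1_t$ to $L^\infty_t$) then upgrades this to $\langle v\rangle^2 \nabla_x f \in L^\infty(I_j, L^2)$. A second iteration with another finite difference, mimicking Lemma \ref{lem:secondDerivGO}, closes $f \in L^\infty(I_j, H^{2,2})$ and $Q^+(f,f) \in L^1(I_j, H^{2,2})$. The $L^2$-in-time refinement for $Q^\pm(f,f)$ follows by combining this with the pointwise bound on $\rho_f$ and interpolating against the $L^\infty_t H^{0,2}$ control.

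The main obstacle is the cross term $(\nabla_x \rho_f)(\nabla_x f)$ that arises when $\rho_f f$ is differentiated twice, since it is not absorbed by the integrating factor. I expect to handle it by exploiting (i) the first-derivative bound established one step earlier, which yields $\nabla_x \rho_f \in L^\infty_t H^1_x \hookrightarrow L^\infty_t L^q_x$ for every $q < \infty$ in two dimensions, and (ii) Strichartz mixed-norm control on $\nabla_x f$ obtained by applying Proposition \ref{prop:CaP} to the Duhamel formula for $\nabla_x f$; these combine through H\"older to bound the cross term in $L^2_t L^2_{x,v}$. This is precisely the step where the Sobolev algebra property of $H^2$ in two dimensions, the $L^2$-critical weighted setting, and the strict ordering of the induction in the number of derivatives all play an essential role.
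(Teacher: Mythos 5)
Your proposal agrees with the paper's proof for the first-derivative step ($H^{1,2}$), including the use of the comparison principle with the gain-only flow to obtain $\rho_f \in L^\infty_{t,x}$ and $\langle v \rangle^2 f \in L^\infty_{t,x} L^2_v$, and the finite-difference argument on short intervals. The divergence, and the gap, is at the second-derivative step.

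You propose to run the finite-difference argument a second time directly on the full equation (\ref{eq:BE}), and you correctly identify the cross term $\bigl( \nabla_x \rho_f \bigr) \bigl( \nabla_x f \bigr)$ as the obstacle, but your proposed resolution does not hold up. You claim the first-derivative step gives $\nabla_x \rho_f \in L^\infty_t H^1_x$, but the first-derivative bound only gives $\langle v \rangle^2 \nabla_x f \in L^\infty_t L^2_{x,v}$, hence $\nabla_x \rho_f = \rho_{\nabla_x f} \in L^\infty_t L^2_x$; that is, $\rho_f \in L^\infty_t H^1_x$, not $\nabla_x \rho_f \in L^\infty_t H^1_x$. Upgrading $\nabla_x \rho_f$ to $H^1_x$ requires $\nabla_x^2 \rho_f = \rho_{\nabla_x^2 f} \in L^2_x$, which is precisely the quantity being propagated at this stage of the induction --- so the argument is circular. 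Your fallback via the Strichartz estimates (Proposition \ref{prop:CaP}) applied to $\nabla_x f$ improves $x$-integrability only at the cost of decreasing $v$-integrability (from $L^2_v$ to $L^{p'}_v$ with $p' < 2$), which does not give the $L^p_x L^2_v$ factor that H\"older would require to land in $L^1_t L^2_{x,v}$.

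The paper sidesteps the cross term entirely. Having established the first-derivative bound on $[0,t_j]$ (the statement $P^{j,1}$), it combines this with the bilinear estimate (\ref{eq:oldBilinBound}) from \cite{CDP2017} and the $p=2$ case of Lemma \ref{lem:Abd} to deduce $Q^{\pm}(f,f) \in L^2\bigl([0,t_j], H^{1,2}\bigr)$, which identifies $f$ on $[0,t_j]$ with the known local $H^{1,2}$ solution of \cite{CDP2017}. The propagation-of-regularity theorem of \cite{CDP2018} (Theorem 2.3\emph{(i)}) is then applied starting from $f(t_{j-1}) \in H^{2,2}$ to upgrade to $H^{2,2}$ on $I_{j-1}$. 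This route never differentiates $\rho_f f$ twice at the level of the renormalized solution, and it is the intended mechanism for closing the second-derivative induction. If you wish to avoid the appeal to \cite{CDP2017,CDP2018} and carry through a purely finite-difference argument, you would need a genuinely new estimate for the cross term, not one that is already available from the first-derivative bound alone.
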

\begin{remark}
Note carefully that \emph{both} $Q^+$ and $Q^-$ are placed in $H^{2,2}$.
\end{remark}
\begin{proof}
Let us recall, to start, the following bilinear estimate from the previous article \cite{CDP2017}:
for any $h_0, \tilde{h}_0 \in H^{\alpha,\beta}$, with $\alpha,\beta$ each real numbers strictly greater
than $\frac{1}{2} \left( = \frac{d-1}{2} \right)$, it holds
\begin{equation}
\label{eq:oldBilinBound}
\left\Vert Q^{\pm} \left( \mathcal{T} h_0, \mathcal{T} \tilde{h}_0 \right) \right\Vert_{L^2 \left(
\mathbb{R}, H^{\alpha,\beta}\right)} \leq C
\left\Vert h_0 \right\Vert_{H^{\alpha,\beta}} \left\Vert \tilde{h}_0
\right\Vert_{H^{\alpha,\beta}}
\end{equation}
Combining this estimate with the $p=2$ case of
 Lemma \ref{lem:Abd} immediately implies a free upgrade to $L^2$ in time
given a bound $L^1$ in time for any such $\alpha,\beta$: for example,
\begin{equation*}
\begin{aligned}
& \sum_{\mu \in \left\{ \pm \right\}}
\left\Vert Q^\mu \left( f,f \right) \right\Vert_{L^2 \left( \left[ 0, T \right], H^{2,2} \right)} \\
& \qquad \qquad \leq
C \left( \left\Vert f_0 \right\Vert_{H^{2,2}} +\sum_{\mu \in \left\{ \pm \right\}}
\left\Vert Q^\mu \left( f,f \right) \right\Vert_{L^1 \left( \left[ 0, T \right], H^{2,2}\right)}\right)^2
\end{aligned}
\end{equation*}
Therefore we will only concern ourselves with the $L^1$ estimate.

Fix $0 < T < T^* \left( f \right)$, 
and observe that by Proposition \ref{prop:propagationL2Velocity} it holds
\[
C_0 \left( T \right) =
\left\Vert f \right\Vert_{L^\infty \left( \left[ 0, T \right], H^{0,2} \right)} +
\left\Vert Q^+ \left( f,f \right) \right\Vert_{L^1 \left( \left[ 0,T \right], H^{0,2} \right)} < \infty
\]
Moreover, since $f \in C \left( \left[ 0, T \right], L^2 \right)$, we have
\begin{equation}
0 < \inf_{t \in \left[ 0, T \right]} T_{\textnormal{g.o.}} \left( f \left( t \right) \right)
\end{equation}
So let us pick a real number $\eta > 0$ such that
\begin{equation}
0 < \eta < \inf_{t \in \left[ 0, T \right]} T_{\textnormal{g.o.}} \left( f \left( t \right) \right)
\end{equation}
Fixing any $t_0 \in \left[ 0, T \right]$ let us define an interval $I$ \emph{based at $t_0$} via the formula
\[
I = I \left( t_0 \right) = \left[ t_0 , t_0 + \eta \right]
\]
and note that $I$ is guaranteed to be a sub-interval of $I^* \left( f \right)$. We are going to
show that if $t_0 \in \left[ 0, T \right]$ is chosen such that
\[
f \left( t_0 \right) \in H^{2,2}
\]
(which is true for $t_0 = 0$ in any case), then
\[
Q^{\pm} \left( f,f \right) \in L^1 \left( I, H^{2,2} \right)
\]
which, since $f \left( t_0 \right) \in H^{2,2}$, in turn implies
\[
f \in L^\infty \left( I, H^{2,2} \right)
\]
Since $\eta$ is independent of $t_0 \in \left[ 0,T \right]$, we can then conclude
\[
Q^{\pm} \left( f,f \right) \in L^1 \left( \left[ 0,T \right], H^{2,2} \right)
\]
and
\[
f \in L^\infty \left( \left[ 0, T \right], H^{2,2} \right)
\]
which implies the Proposition since $T \in \left( 0, T^* \left( f \right) \right)$ is chosen arbitrarily.

Before we begin, we need to use the gain-only equation. Indeed, since
$f \left( t_0 \right) \in H^{2,2}$, by Lemma \ref{lem:secondDerivGO} we have
\[
\mathfrak{Z}_{\textnormal{g.o.}} \left( f \left( t_0 \right) \right) \in
L^\infty \left( I, H^{2,2} \right)
\]
hence by Sobolev embedding
\[
\left< v \right>^2 \mathfrak{Z}_{\textnormal{g.o.}} \left( f \left( t_0 \right) \right) \in
L^\infty_{t} L^2_v L^\infty_x \left( I \times \mathbb{R}^2 \times \mathbb{R}^2 \right) \subset
L^\infty_{t,x} L^2_v \left( I \times \mathbb{R}^2 \times \mathbb{R}^2 \right)
\]
so by the comparison principle
\[
\left< v \right>^2 f \in
L^\infty_{t,x} L^2_v \left( I \times \mathbb{R}^2 \times \mathbb{R}^2 \right)
\]
thus by H{\" o}lder in $v$
\[
\rho_f \in L^\infty_{t,x} \left( I \times \mathbb{R}^2 \right)
\]
So let us define the real number $B$ by
\[
B = \left\Vert \left< v \right>^2 f \right\Vert_{L^\infty_{t,x} L^2_v 
\left( I \times \mathbb{R}^2 \times \mathbb{R}^2\right)} +
\left\Vert \rho_f \right\Vert_{L^\infty_{t,x} \left( I \times \mathbb{R}^2 \right)}
\]
which we may consider a \emph{constant} for the remainder of the proof.

So let us take $M \in \mathbb{N}$ sufficiently large and $\varepsilon > 0$ sufficiently small to be chosen
later (each
$\varepsilon , M$
possibly depending on each $T,B$),
and apply Corollary \ref{cor:QplusTimeDecompositionSecond} to partition (for some $N \geq M$)
\[
I = \left[ t_0, t_0 + \eta \right] = \bigcup_{j=0}^{N-1} I_j
\]
where $I_j = \left[ t_j, t_{j+1} \right]$ and
\[
t_0 < t_1 < t_2 < \dots < t_{N-1} < t_N = t_0 + \eta
\]
and for each $j$ it holds
\begin{equation}
\label{eq:MbdOne}
\left| t_{j+1} - t_j \right| < \frac{1}{M}
\end{equation}
and additionally the estimates of Proposition \ref{prop:QplusTimeDecomposition} hold with $\varepsilon$
on each $I_j$.

Let us denote by $P^{j,\alpha}$, $\alpha \in \left\{ 1, 2 \right\}$, the statement
\begin{equation*}
\forall \left( 0 \leq i < j \right) \qquad
Q^{\pm} \left( f,f \right) \in 
L^1 \left( I_i, H^{\alpha, 2} \right)
\end{equation*}
and note that
\begin{equation*}
\begin{aligned}
& \left\Vert f \left( t \right) \right\Vert_{H^{\alpha,2}} \\
& \qquad \leq
\left\Vert f_0 \right\Vert_{H^{\alpha,2}} + \int_0^t
\left( \left\Vert Q^+ \left( f,f \right) \left( s \right) \right\Vert_{H^{\alpha,2}} +
\left\Vert Q^- \left( f,f \right) \left( s \right) \right\Vert_{H^{\alpha,2}} \right) ds
\end{aligned}
\end{equation*}
Observe that $P^{0,1}$ and $P^{0,2}$ each trivially hold, since  there is no $i$ with
\[
0 \leq i < 0
\]
We are going to show that, under the hypotheses of the Proposition, 
\[
P^{j, 2} \implies P^{j+1,1}
\]
for each $j$, and 
\[
P^{j,1} + P^{j-1,2} \implies P^{j,2}
\]
for each $j \geq 1$.
The Proposition then follows after finitely many inductive steps.

\underline{$P^{j,2} \implies P^{j+1,1}$}
Since $f \left( t_0 \right) \in H^{2,2}$, we can deduce from $P^{j,2}$ that
\[
f \in L^\infty \left( \left[ t_0 , t_j \right], H^{2,2} \right)
\]
In particular,
\[
f \left( t_j \right) \in H^{2,2}
\]
We need to show that
\[
Q^{\pm} \left( f,f \right) \in L^1 \left( I_j, H^{1,2} \right)
\]
In fact, since $f$ solves (\ref{eq:BE}), it suffices to establish each
\[
Q^+ \left( f,f \right) \in L^1 \left( I_j, H^{1,2} \right)
\]
and
\[
\left( \partial_t + v \cdot \nabla_x \right) f \in L^1 \left( I_j, H^{1,2} \right)
\]
since the difference of these two is $Q^- \left( f,f \right)$.
But in fact the second assertion implies the first
(since $f \left( t_j \right) \in H^{2,2} \subset H^{1,2}$), so we need only show
\[
\left( \partial_t + v \cdot \nabla_x \right) f \in L^1 \left( I_j, H^{1,2} \right)
\]

Let $\mathbf{e} \in \mathbb{R}^2$ be a unit vector and define for $a \in \mathbb{R} \setminus \left\{ 0 \right\}$
\[
\zeta_{\mathbf{e}}^a = \left( \partial_t + v \cdot \nabla_x \right) \left\{
\left< v \right>^2 D_{\mathbf{e}}^a f \right\}
\]
noting that the right-hand side is identical to
\[
\left< v \right>^2 D_{\mathbf{e}}^a Q^+ \left( f,f \right) -
\left< v \right>^2 D_{\mathbf{e}}^a Q^- \left( f,f \right)
\]
We know that
\[
\zeta_{\mathbf{e}}^a \in L^1 \left( I_j, L^2 \right)
\]
and we only prove the uniformity in $a$ of this estimate.

Now let us observe
\begin{equation*}
\begin{aligned}
\zeta_{\mathbf{e}}^a 
 = \left< v \right>^2 Q^+ \left( D_{\mathbf{e}}^a f, f \right) & +
\tau_{\mathbf{e}}^a \left\{ \left< v \right>^2 Q^+ \left( f,
\tau_{\mathbf{e}}^{-a} D_{\mathbf{e}}^a f \right) \right\} \\
&  \qquad \qquad \quad + \left< v \right>^2 D_{\mathbf{e}}^a  Q^- \left( f,f \right)
\end{aligned}
\end{equation*}
so, as in the proof of Lemma \ref{lem:DerivGO}, we have
\begin{equation*}
\begin{aligned}
\left\Vert \zeta_{\mathbf{e}}^a \right\Vert_{L^1 \left( I_j, L^2 \right)}&  \leq
C C_0 \left( T \right) \times \left(
\left\Vert f \left( t_j \right) \right\Vert_{H^{1,2}} + \varepsilon
\left\Vert \zeta_{\mathbf{e}}^a \right\Vert_{L^1 \left( I_j, L^2 \right)} \right) \\
& \qquad \qquad\qquad\qquad \qquad + \left\Vert 
\left< v \right>^2 D_{\mathbf{e}}^a  Q^- \left( f,f \right)
\right\Vert_{L^1 \left( I_j, L^2 \right)}
\end{aligned}
\end{equation*}
so let us estimate the last term.

\begin{equation*}
\begin{aligned}
& \left\Vert \left< v \right>^2 D_{\mathbf{e}}^a  Q^- \left( f,f \right)
\right\Vert_{L^1 \left( I_j, L^2 \right)} \\
& \qquad \leq \frac{1}{M}
\left\Vert \rho_f \cdot \left< v \right>^2 D_{\mathbf{e}}^a f \right\Vert_{L^\infty \left( I_j, L^2 \right)}
+ \frac{1}{M} \left\Vert \left< v \right>^2 f \cdot \rho_{\left| D_{\mathbf{e}}^a f \right|}
\right\Vert_{L^\infty \left( I_j, L^2 \right)} \\
& \qquad \leq \frac{1}{M} \left\Vert \rho_f \right\Vert_{L^\infty_{t,x} \left(
I_j \times \mathbb{R}^2 \right)}
\left\Vert \left< v \right>^2 D_{\mathbf{e}}^a f \right\Vert_{L^\infty \left( I_j, L^2 \right)}\\
& \qquad \qquad \qquad
+ \frac{1}{M} \left\Vert \left< v \right>^2 f 
\right\Vert_{L^\infty_{t,x} L^2_v \left( I_j \times \mathbb{R}^2 \times \mathbb{R}^2 \right)} 
 \left\Vert  \rho_{\left| D_{\mathbf{e}}^a f \right|}
\right\Vert_{L^\infty_t L^2_x \left( I_j \times \mathbb{R}^2 \right)}\\
& \qquad \leq \frac{1}{M} \left\Vert \rho_f \right\Vert_{L^\infty_{t,x} \left(
I_j \times \mathbb{R}^2 \right)}
\left\Vert \left< v \right>^2 D_{\mathbf{e}}^a f \right\Vert_{L^\infty \left( I_j, L^2 \right)}\\
& \qquad \qquad \qquad
+ \frac{1}{M} \left\Vert \left< v \right>^2 f 
\right\Vert_{L^\infty_{t,x} L^2_v \left( I_j \times \mathbb{R}^2 \times \mathbb{R}^2 \right)} 
 \left\Vert \left< v \right>^2 D_{\mathbf{e}}^a f 
\right\Vert_{L^\infty \left( I_j , L^2 \right)}\\
& \qquad \leq B M^{-1}  \left\Vert \left< v \right>^2 D_{\mathbf{e}}^a f 
\right\Vert_{L^\infty \left( I_j , L^2 \right)}\\
& \qquad \leq C B M^{-1} \times \left( \left\Vert f \left( t_j \right) \right\Vert_{H^{1,2}}
+ \left\Vert \zeta_{\mathbf{e}}^a \right\Vert_{L^1 \left( I_j, L^2 \right)}\right)
\end{aligned}
\end{equation*}
Therefore we may write
\begin{equation*}
\begin{aligned}
\left\Vert \zeta_{\mathbf{e}}^a \right\Vert_{L^1 \left( I_j, L^2 \right)}&  \leq
C C_0 \left( T \right) \times \left(
\left\Vert f \left( t_j \right) \right\Vert_{H^{1,2}} + \left(
\varepsilon + B M^{-1} \right)
\left\Vert \zeta_{\mathbf{e}}^a \right\Vert_{L^1 \left( I_j, L^2 \right)} \right) \\
\end{aligned}
\end{equation*}
so the desired implication follows by taking $\varepsilon$ sufficiently small (depending on $T$)
and $M$ sufficiently
large (depending on $B$).

\underline{$P^{j,1} + P^{j-1,2} \implies P^{j,2}$}
Combining $P^{j,1}$ with the $\left(\alpha,\beta\right)=\left(1,2\right)$ case of (\ref{eq:oldBilinBound})
along with the $p=2$ case of Lemma \ref{lem:Abd} immediately implies
\[
Q^{\pm} \left( f,f \right) \in  L^2 \left( \left[ 0, t_j \right], H^{1,2} \right)
\]
This estimate implies, in turn, that $f$ coincides with the known local $H^{1,2}$ solution \cite{CDP2017} 
of (\ref{eq:BE})
on $\left[ 0, t_j \right]$. But, on the other hand, since we have $P^{j-1,2}$, we know
$f \left( t_{j-1} \right) \in H^{2,2}$, so the known theory of propagation of regularity
(\cite{CDP2018}, Theorem 2.3\emph{(i)}) immediately implies
\[
Q^{\pm} \left( f,f \right) \in L^2 \left( I_{j-1}, H^{2,2} \right)
\]
which was what we wanted.
\end{proof}

Known propagation of regularity results allow us to promote $H^{2,2}$ to $\mathcal{S}$, as follows:

\begin{theorem}
\label{thm:schwartzRegularity}
Let $f$ be a distributional solution of (\ref{eq:BE}) on a compact interval $J = \left[ 0, T \right]$, such that
\[
\left\Vert f \right\Vert_{L^\infty \left( J, H^{2,2} \right)} < \infty \quad \textnormal{ and }
\left\Vert Q^{\pm} \left( f, f \right) \right\Vert_{L^1 \left( J, H^{2,2} \right)} < \infty
\]
and $f_0 = f \left( t=0 \right)
 \in \mathcal{S}$. Then $f \in C^1 \left( J, \mathcal{S}\right)$. Moreover, the solution is unique on all of $J$
once its initial value $f_0$ is determined.
\end{theorem}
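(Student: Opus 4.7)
The plan is to bootstrap regularity upward through the full Sobolev scale using the bilinear estimate (\ref{eq:oldBilinBound}) together with the propagation-of-regularity machinery from \cite{CDP2017,CDP2018}. Since $f_0 \in \mathcal{S}$ is equivalent to $f_0 \in H^{\alpha,\beta}$ for all $\alpha,\beta \geq 0$, and since Sobolev embedding in $\mathbb{R}^4_{x,v}$ upgrades arbitrarily large $H^{\alpha,\beta}$ control to membership in $\mathcal{S}$ pointwise in time, the whole problem reduces to a two-parameter induction showing $H^{\alpha,\beta}$ propagates on $J$ for every $(\alpha,\beta)$, starting from the hypothesis $(\alpha,\beta)=(2,2)$.

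First I would set up the base case. The hypothesis $f \in L^\infty(J,H^{2,2})$ combined with $Q^\pm(f,f) \in L^1(J,H^{2,2})$ places $f$ in exactly the functional framework of \cite{CDP2018}; by Duhamel, $f$ thus coincides with the unique local $H^{2,2}$ solution produced there, and in particular $f \in C(J,H^{2,2})$. Uniqueness on $J$ follows immediately: any other distributional solution with the same $f_0$ that also satisfies $Q^\pm \in L^1_t H^{2,2}$ falls into the same uniqueness class, and more generally any ($*$)-solution with initial data $f_0$ is forced to coincide with $f$ by Theorem \ref{thm:unique}, whose hypotheses are verified via the embedding $H^{2,2} \hookrightarrow L^\infty_x L^2_v$ and the a.e.\ identity $\langle v \rangle^2 f \in L^\infty_t L^\infty_x L^2_v \subset L^2_t L^\infty_x L^2_v$ on compact $J$.

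For the inductive step, suppose $(\alpha,\beta)$ with $\alpha,\beta > \tfrac12$ has been established, meaning $f \in L^\infty(J,H^{\alpha,\beta})$ and $Q^\pm(f,f) \in L^1(J,H^{\alpha,\beta})$. I would increase $\beta$ by multiplying (\ref{eq:BE}) by $\langle v\rangle^{\beta+\delta}$ and using the pointwise collisional weight bound already employed before Proposition \ref{prop:QplusWeighted}, plus the finite-difference technique of subsection \ref{ssec:regularityPreliminaries} to bypass issues of weak differentiability; the $v$-weight commutes harmlessly with $\nabla_x$, so $x$-regularity is untouched. To raise $\alpha$ by one, I would follow the scheme of Lemma \ref{lem:DerivGO} and Lemma \ref{lem:secondDerivGO}: apply the finite difference operator $D^a_{\mathbf{e}}$ in $x$ to (\ref{eq:BE}), use the product rule to spread $D^a_{\mathbf e}$ across $Q^\pm$, and control the resulting bilinear terms through (\ref{eq:oldBilinBound}) and Proposition \ref{prop:QplusTimeDecomposition} for the gain, and via the $L^\infty_{t,x}$ bound on $\rho_f$ (obtained from the subcritical Sobolev embedding $H^{\alpha,\beta}\hookrightarrow L^\infty_x L^1_v$, valid as soon as $\alpha>1$, $\beta>1$) for the loss. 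Passing $a\to 0$ through Lemma \ref{lem:diffQuotThree} promotes the finite-difference bound to a genuine derivative bound in $H^{\alpha+1,\beta}$. Iterating this procedure finitely many times in each direction, exactly as in the proof of Proposition \ref{prop:regularityFullEqn} (which handled the delicate jump from $H^{1,2}$ to $H^{2,2}$), yields $f \in L^\infty(J,H^{\alpha,\beta})$ and $Q^\pm(f,f) \in L^1(J,H^{\alpha,\beta})$ for all $\alpha,\beta \geq 0$.

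Finally, time continuity in $\mathcal{S}$ follows from Duhamel's formula: for each $\alpha,\beta$ the map $t\mapsto \mathcal{T}(-t)f(t)$ is the integral of an $L^1(J,H^{\alpha,\beta})$ function and so lies in $C(J,H^{\alpha,\beta})$; composing with the free transport (which acts continuously on all $H^{\alpha,\beta}$) gives $f\in C(J,H^{\alpha,\beta})$, hence $f\in C(J,\mathcal{S})$. Differentiating (\ref{eq:BE}) in $t$ and re-inserting the same regularity yields $\partial_t f \in C(J,\mathcal{S})$, so $f\in C^1(J,\mathcal{S})$. The main obstacle, and the part I expect to consume most of the real work, is the inductive step for $\alpha$: the loss term $Q^-(f,f)=f\rho_f$ forces one to absorb every $x$-derivative into $\rho_f$, which only becomes pointwise bounded once a subcritical $x$-Sobolev threshold is crossed; the mechanism that makes this go through is precisely the $B$-constant bound exploited in the proof of Proposition \ref{prop:regularityFullEqn}, and replicating that argument uniformly in $(\alpha,\beta)$—especially tracking the $M^{-1}$ gain from the fine temporal partition of Corollary \ref{cor:QplusTimeDecompositionSecond}—is the technical heart of the proof.
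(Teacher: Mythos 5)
There is a genuine gap, and it is conceptual rather than technical. Your first sentence asserts that $f_0 \in \mathcal{S}$ is \emph{equivalent} to $f_0 \in H^{\alpha,\beta}$ for all $\alpha,\beta\geq 0$, and your strategy is then to propagate every $H^{\alpha,\beta}$ and declare victory. But the spaces $H^{\alpha,\beta}$ defined in (\ref{eq:SobolevNorms}) control only $x$-derivatives and $v$-moments: $\left\Vert h \right\Vert_{H^{\alpha,\beta}} = \left\Vert \left< v\right>^\beta \left< \nabla_x \right>^\alpha h \right\Vert_{L^2}$. Membership in $\bigcap_{\alpha,\beta} H^{\alpha,\beta}$ says nothing about $v$-derivatives or $x$-decay. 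Concretely, $h(x,v) = e^{-|x|^2} e^{-|v|^2}\left(1 + |v_1|\right)$ lies in every $H^{\alpha,\beta}$, since the kink sits in the $v$-variable where $H^{\alpha,\beta}$ sees only weights, yet $h \notin \mathcal{S}$. Likewise a function smooth and $L^2$ in $x$ but with only polynomial $x$-decay would also live in every $H^{\alpha,\beta}$ without being Schwartz. So propagating all $H^{\alpha,\beta}$, even if you carry out the two-parameter induction flawlessly, leaves you strictly short of the conclusion $f(t) \in \mathcal{S}$.

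The paper's proof is very short precisely because it outsources this issue: after propagating $H^{k,k}$ for all $k$ by Theorem~2.3 of \cite{CDP2018}, it invokes Theorem~2.2 of that reference, which \emph{trades} $x$-derivatives for $x$-moments and $v$-moments for $v$-derivatives. That trade is a genuine theorem about the transport structure of the equation (morally because $\partial_v$ applied along characteristics $x + vt$ produces $t\nabla_x + \partial_v$), not a soft function-space observation, and it is the piece your proposal lacks. If you wish to avoid black-boxing \cite{CDP2018}, you would need to formulate and prove such a trade lemma explicitly; without it the bootstrap through the $H^{\alpha,\beta}$ scale, together with the finite-difference machinery of subsection \ref{ssec:regularityPreliminaries} and the $M^{-1}$-gain partition, gives $f \in C^1\left(J, \bigcap_{\alpha,\beta} H^{\alpha,\beta}\right)$ but not $f \in C^1\left(J, \mathcal{S}\right)$. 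Your uniqueness argument, on the other hand, is fine and matches the paper in spirit: once $H^{2,2}$ regularity is available, the hypotheses of Theorem \ref{thm:unique} are satisfied via the embedding into $L^\infty_x L^2_v$, so uniqueness follows (the paper cites Proposition~2.5 of \cite{CDP2018} for the same purpose, but the route through Theorem \ref{thm:unique} works equally well).
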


\begin{proof}
By Theorem 2.3 \emph{(i) and (ii)} of \cite{CDP2018}, we have $f \in L^\infty \left( J, H^{k,k}\right)$ and
$Q^{\pm} \left( f, f \right) \in L^1 \left( J, H^{k,k} \right)$ for any natural number $k$; i.e.,
we propagate all derivatives in $x$ and moments in $v$. These can be traded in for moments in
$x$ and derivatives in $v$ by Theorem 2.2 \emph{(i) and (ii) (respectively)} of \cite{CDP2018}; indeed, since Theorem 2.2 of
\cite{CDP2018} is stated in terms of weights (whereas $H^{k,k}$ is defined purely by differentiation in \cite{CDP2018}
via the Wigner transform), we can also mix any number of moments in $x$ with any
number of derivatives in $v$, in any $H^{k,k}$, by the same theorem (direct analysis also suffices for the mixed case, in view
of the proof of the theorem). Hence $f \left( t \right) \in \mathcal{S}$ for every $t \in J$. Time regularity is proven
in Proposition 2.4 of \cite{CDP2018}, in $H^{k,k}$, for any natural number $k$; time derivatives of 
mixed moments and derivatives likewise
follow as discussed in Remark 2.5 of the same reference. The uniqueness assertion follows, for instance, from Proposition 2.5
of \cite{CDP2018}.
\end{proof}

\begin{theorem}
\label{thm:boltzSchwartz}
Let $f$ by a ($*$)-solution of (\ref{eq:BE}) corresponding to some Schwartz initial data
$0 \leq f_0 \in \mathcal{S}$. Then 
\[
f \in C^1 \left( I^* \left( f \right), \mathcal{S} \right)
\]
 Moreover, for any
($*$)-solution $\tilde{f}$ of (\ref{eq:BE}) corresponding to the same $f_0$, it holds that
$T^* \left( \tilde{f} \right) = T^* \left( f \right)$, and $\tilde{f} = f$ on $I^* \left( f \right)$.

\end{theorem}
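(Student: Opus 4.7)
The plan is to combine the propagation of $H^{2,2}$ from Proposition \ref{prop:regularityFullEqn} with the higher-regularity propagation machinery of \cite{CDP2018} packaged in Theorem \ref{thm:schwartzRegularity}, and then to invoke weak-strong uniqueness (Theorem \ref{thm:unique}) to eliminate any other ($*$)-solution with the same data.

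First I would observe that if $f_0\in\mathcal{S}$ then in particular $f_0\in H^{2,2}$, so Proposition \ref{prop:regularityFullEqn} applies: for every $0<T<T^*(f)$ we obtain $f\in L^\infty([0,T],H^{2,2})$ together with $Q^{\pm}(f,f)\in L^2([0,T],H^{2,2})$ (hence in $L^1$, which is what is needed). Since also $f$ is a ($*$)-solution, it is a distributional solution of (\ref{eq:BE}) on $[0,T]$ in the sense of subsection \ref{ssec:integrableAndContinuous}. Theorem \ref{thm:schwartzRegularity} then applies with $J=[0,T]$ and upgrades the regularity to $f\in C^1([0,T],\mathcal{S})$ for every such $T$. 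Taking a monotone exhaustion $T_n\uparrow T^*(f)$ and using the fact that the $\mathcal{S}$-topology is metrizable (so $C^1$ on each $[0,T_n]$ glues), we obtain $f\in C^1(I^*(f),\mathcal{S})$, as required for the first assertion.

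For uniqueness, let $\tilde{f}$ be any ($*$)-solution with the same initial data $f_0$. Because $f\in C^1(I^*(f),\mathcal{S})$, on every compact $J\subset I^*(f)$ we have the obvious bound
\[
\langle v\rangle^2 f\in L^\infty(J,L^\infty_x L^2_v)\subset L^2(J,L^\infty_x L^2_v),
\]
so the hypotheses of Theorem \ref{thm:unique} are met by $f$. The theorem then yields $T^*(\tilde{f})=T^*(f)$ and $\tilde{f}=f$ almost everywhere on $I^*(f)\times\mathbb{R}^2\times\mathbb{R}^2$, which is exactly the claimed uniqueness.

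I do not expect any significant obstacle: the real work has already been done in Proposition \ref{prop:regularityFullEqn} (getting to $H^{2,2}$ at the level of the full equation through the comparison with the gain-only flow) and in Theorem \ref{thm:schwartzRegularity} (propagating $H^{2,2}$ to all Schwartz seminorms by the mixed $x$-moment / $v$-derivative trade-off from \cite{CDP2018}). The only minor bookkeeping point is that $I^*(f)$ is half-open, so one must assemble the $C^1$-regularity from its exhaustion by compact sub-intervals rather than invoking it directly; this is purely formal. Once that is in place, weak-strong uniqueness does the rest.
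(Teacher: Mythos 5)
Your proof of the first assertion is exactly the paper's: apply Proposition \ref{prop:regularityFullEqn} to get the $H^{2,2}$ hypotheses on any compact $J\subset I^*(f)$, feed them into Theorem \ref{thm:schwartzRegularity} to obtain $C^1(J,\mathcal{S})$, and exhaust the half-open interval. That part is fine, including the unremarkable gluing step.

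For the uniqueness, you take a genuinely different route. You verify the hypotheses of the weak-strong uniqueness result, Theorem \ref{thm:unique}, using the Schwartz regularity of $f$ (indeed $f\in C^1(J,\mathcal{S})$ gives $\langle v\rangle^2 f\in L^\infty(J,L^\infty_x L^2_v)\subset L^2(J,L^\infty_x L^2_v)$ on compact $J$, and $\langle v\rangle^2 f_0\in L^2$ is immediate), and that theorem then returns $T^*(\tilde{f})=T^*(f)$ together with $\tilde{f}=f$ on $I^*(f)$ in one stroke. The paper instead applies Proposition \ref{prop:regularityFullEqn} and Theorem \ref{thm:schwartzRegularity} also to $\tilde{f}$, and reads off uniqueness from the final sentence of Theorem \ref{thm:schwartzRegularity}. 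Your route has two small structural advantages: it requires no regularity information about $\tilde{f}$ beyond its being a ($*$)-solution, whereas the paper must propagate regularity for $\tilde{f}$ as well; and Theorem \ref{thm:unique} directly asserts the equality of the breakdown times $T^*(\tilde{f})=T^*(f)$, whereas the paper's route via compact-interval uniqueness leaves that equality to be extracted (implicitly) from the maximality condition (\ref{eq:notinCondition}). The paper's route has the advantage of never invoking the weighted $L^2_t L^\infty_x L^2_v$ hypothesis, relying only on machinery already in play. Both arguments are correct and comparably short.
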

\begin{proof}
By Proposition \ref{prop:regularityFullEqn}, $f$ satisfies the conditions of Theorem \ref{thm:schwartzRegularity}
on any compact sub-interval $J \subset I^* \left(f \right)$. (Likewise, Proposition \ref{prop:regularityFullEqn} and 
Theorem \ref{thm:schwartzRegularity} also apply
to any other candidate ($*$)-solution $\tilde{f}$, so the uniqueness again follows from Theorem \ref{thm:schwartzRegularity}).
\end{proof}

\section{Proof of the main theorem: Part II}
\label{sec:mainTheoremTwo}

Let $f \in C \left( \left[ 0, \infty \right), L^2 \right)$ be as in Part I of the
main theorem, corresponding to some $0 \leq f_0 \in \mathcal{S}$
satisfying (\ref{eq:momentEpsilon}) and (\ref{eq:sqaureEpsilon}). Then by
Theorem \ref{thm:boltzSchwartz}, we have 
\[
f \in C^1 \left( I^* \left( f \right), \mathcal{S} \right)
\]
Then since
\[
T^* \left( f \right) = \infty
\]
we have
\[
f \in C^1 \left( \left[ 0, \infty \right), \mathcal{S} \right)
\]
Hence, by Theorem \ref{thm:unique}, if $\tilde{f}$ is any other ($*$)-solution corresponding to
the same initial data $f_0$, we find that $T^* \left( \tilde{f} \right) =
T^* \left( f \right) = \infty$ and $\tilde{f}$ coincides with $f$.

\appendix

\section{Well-posedness for the truncated equation}
\label{app:truncated}

All the content of this appendix can be found in \cite{DPL1989}, Section VIII; we recall the
proof of Theorem \ref{thm:truncated} below for the convenience of the reader.

\subsection{Global well-posedness in $L^1$.}

We will prove the global well-posedness in
\[
C \left( \left[ 0, \infty \right), L^1 \right)
\]
 for the equation
\begin{equation}
\label{eq:truncatedEquationSecond}
\left( \partial_t + v \cdot \nabla_x \right) f_n = \left( 1 + n^{-1} \rho_{\left| f_n \right|}\right)^{-1}
\left\{
Q_{b_n}^+ \left( f_n, f_n \right) - Q_{b_n}^- \left( f_n, f_n \right) \right\}
\end{equation}
and the proof will also imply the (local in time) Lipschitz estimate for the solution map, for any $T > 0$,
\begin{equation}
\label{eq:truncatedLipMap}
\left\Vert f_n  - \tilde{f}_n 
\right\Vert_{L^\infty \left( \left[ 0, T \right], L^1_{x,v} 
\left( \mathbb{R}^2 \times \mathbb{R}^2 \right) \right)} \leq e^{5 n T}
\left\Vert f_{n,0} - \tilde{f}_{n,0} \right\Vert_{L^1_{x,v}
\left( \mathbb{R}^2 \times \mathbb{R}^2 \right)}
\end{equation}
This subsection, in fact, only uses the fact that $b_{n} \in L^\infty$; the remaining subsections
of this appendix will make use of the other technical assumptions on $b_n$.

The proof of global well-posedness
is by a fixed point argument and controlled iteration in time.
Since the collision kernel $b_n$ is bounded pointwise by $\left( 2\pi \right)^{-1}$, by collision invariants
it holds
\[
\left\Vert Q^{\pm}_{b_n} \left(f,h\right) \right\Vert_{L^1_v \left( \mathbb{R}^2 \right)} \leq 
\left\Vert f \right\Vert_{L^1_v \left( \mathbb{R}^2 \right)} \left\Vert h 
\right\Vert_{L^1_v \left( \mathbb{R}^2 \right)}
\]
hence, due to the fact that $\rho_{\left|f\right|}$ is identified with the norm $L^1_v \left( \mathbb{R}^2 \right)$, we have
\[
\left\Vert \frac{Q^{\pm}_{b_n} \left(f,f\right)}{1+n^{-1} \rho_{\left|f_n\right|}}
\right\Vert_{L^1_{x,v} \left( \mathbb{R}^2 \times \mathbb{R}^2\right)} \leq 
n \left\Vert f \right\Vert_{L^1_{x,v} \left( \mathbb{R}^2 \times \mathbb{R}^2\right)}
\]

Next, consider that if $Q_{b_n} = Q_{b_n}^+ - Q_{b_n}^-$ then the quantity
\[
\frac{Q_{b_n} \left(f,f\right)}{1+ n^{-1} \rho_{\left|f\right|}} -
\frac{Q_{b_n} \left(h,h\right)}{1+n^{-1} \rho_{\left|h\right|}}
\]
may be re-written as the sum of
\[
\mathcal{I}_1 = \frac{Q_{b_n} \left(f,f\right)-Q_{b_n} \left(h,h\right)}{\left(1+ n^{-1} \left\Vert f \right\Vert_{L^1_v
\left( \mathbb{R}^2\right)}\right)
\left( 1 + n^{-1} 
\left\Vert h \right\Vert_{L^1_v \left( \mathbb{R}^2 \right)}\right)}
\]
and
\[
\mathcal{I}_2 = \frac{1}{n} \cdot \frac{Q_{b_n} \left(f,f\right)\left\Vert h \right\Vert_{L^1_v
\left( \mathbb{R}^2 \right)}-Q_{b_n} \left(h,h\right)
\left\Vert f \right\Vert_{L^1_v \left(\mathbb{R}^2\right)}}
{\left(1+n^{-1} \left\Vert f \right\Vert_{L^1_v \left( \mathbb{R}^2 \right)}\right)
\left( 1 + n^{-1} \left\Vert h \right\Vert_{L^1_v \left( \mathbb{R}^2 \right) }\right)}
\]
But 
\[
Q_{b_n}  \left(f,f\right) - Q_{b_n} \left(h,h\right) = Q_{b_n} \left(f,f-h\right) + Q_{b_n} \left(f-h,h\right)
\]
each of which is estimated in $L^1_v \left( \mathbb{R}^2 \right)$ (pointwise in $x$) as before,
and then controlled uniformly in $x$ by a factor
in the denominator of $\mathcal{I}_1$; hence,
\[
\left\Vert \mathcal{I}_1 \right\Vert_{L^1_{x,v}
\left( \mathbb{R}^2 \times \mathbb{R}^2 \right)} \leq 2 n \left\Vert f-h \right\Vert_{L^1_{x,v}
\left( \mathbb{R}^2 \times \mathbb{R}^2 \right)}
\]

As for $\mathcal{I}_2$, it is the sum of three terms,
\[
\mathcal{I}_{2,1} = \frac{1}{n} \cdot \frac{Q_{b_n} \left(f-h,f\right)\left\Vert h \right\Vert_{L^1_v \left( \mathbb{R}^2 \right)}}
{\left(1+n^{-1} \left\Vert f \right\Vert_{L^1_v \left( \mathbb{R}^2 \right)}\right)
\left( 1 + n^{-1} \left\Vert h \right\Vert_{L^1_v \left( \mathbb{R}^2 \right)}\right)}
\]
\[
\mathcal{I}_{2,2} = \frac{1}{n} \cdot \frac{Q_{b_n} \left(h,f\right)\left(\left\Vert h \right\Vert_{L^1_v
 \left( \mathbb{R}^2 \right)}
- \left\Vert f \right\Vert_{L^1_v \left( \mathbb{R}^2 \right)}\right)}
{\left(1+n^{-1} \left\Vert f \right\Vert_{L^1_v \left( \mathbb{R}^2 \right)}\right)
\left( 1 + n^{-1} \left\Vert h \right\Vert_{L^1_v \left( \mathbb{R}^2 \right)}\right)}
\]
\[
\mathcal{I}_{2,3} = \frac{1}{n} \cdot \frac{Q_{b_n} \left(h,f-h\right)\left\Vert f \right\Vert_{L^1_v
 \left( \mathbb{R}^2 \right)}}
{\left(1+ n^{-1} \left\Vert f \right\Vert_{L^1_v \left( \mathbb{R}^2 \right)}\right)
\left( 1 + n^{-1} \left\Vert h \right\Vert_{L^1_v \left( \mathbb{R}^2 \right)}\right)}
\]
each of which satisfies as before
\[
\left\Vert \mathcal{I}_{2,i} \right\Vert_{L^1_{x,v}
\left( \mathbb{R}^2 \times \mathbb{R}^2 \right)} \leq  n \left\Vert f-h \right\Vert_{L^1_{x,v}
\left( \mathbb{R}^2 \times \mathbb{R}^2 \right)}
\]

Altogether we have
\[
\left\Vert \frac{Q_{b_n} ^{\pm} \left(f,f\right)}{1+n^{-1} \rho_{\left|f\right|}}
\right\Vert_{L^1_{x,v} \left( \mathbb{R}^2 \times \mathbb{R}^2\right)}
\leq n \left\Vert f \right\Vert_{L^1_{x,v} \left( \mathbb{R}^2 \times \mathbb{R}^2\right)}
\]
\[
\left\Vert \frac{Q_{b_n}  \left(f,f\right)}{1+ n^{-1} \rho_{\left|f\right|}}
-\frac{Q_{b_n}  \left(h,h\right)}{1+n^{-1} \rho_{\left|h\right|}}
\right\Vert_{L^1_{x,v} \left( \mathbb{R}^2 \times \mathbb{R}^2 \right)} 
\leq 5 n \left\Vert f - h\right\Vert_{L^1_{x,v}
\left( \mathbb{R}^2 \times \mathbb{R}^2 \right)}
\]
so using Duhamel's formula and Banach's fixed point theorem we conclude
the existence of a unique local mild solution on a time of order
$\mathcal{O} \left( n^{-1} \right)$ irrespective of $f_0$.
Therefore the equation is
globally well-posed for each $n$ fixed. The Lipschitz estimate (\ref{eq:truncatedLipMap}),
for the solution map,
is immediate.

\subsection{$L^\infty$ bounds.} 

By a change of variables and using our technical \emph{support assumptions}
(\ref{eq:truncSptVel}) \emph{and} (\ref{eq:truncSptDefl}), one can show (estimating
in the velocity variable only):
\[
\left\Vert Q^{\pm}_{b_n} \left( f,f \right)
\right\Vert_{L^\infty_v \left( \mathbb{R}^2 \right)} \leq
C_n \left\Vert f \right\Vert_{L^1_v \left( \mathbb{R}^2 \right)}
\left\Vert f \right\Vert_{L^\infty_v \left( \mathbb{R}^2 \right)}
\]
Since we are \emph{dividing} $Q_{b_n}$ by
\[
1 + n^{-1} \left\Vert f_n \right\Vert_{L^1_v \left( \mathbb{R}^2 \right)}
\]
at each $\left( t,x \right)$, it follows
\[
\left\Vert \frac{Q^{\pm} \left( f_n , f_n \right)}
{1 + n^{-1} \rho_{\left| f_n \right|}}
\right\Vert_{L^\infty_{x,v} \left( \mathbb{R}^2 \times \mathbb{R}^2 \right)} \leq
\tilde{C}_n \left\Vert f_n \right\Vert_{L^\infty_{x,v} \left(\mathbb{R}^2 \times \mathbb{R}^2 \right)}
\]
Therefore, by Gronwall, for each $T > 0$,
\[
f_n \in L^\infty_{t,x,v} \left(
\left[ 0,T\right] \times \mathbb{R}^2 \times \mathbb{R}^2 \right)
\]

\subsection{Gaussian lower bounds.}

For a number $K_n$ to be chosen momentarily, let us define
\[
g_n \left( t,x,v \right) = c_n \exp \left( - K_n t -\frac{1}{2} \left| x - v t \right|^2
- \frac{1}{2} \left| v \right|^2 \right)
\]
where $c_n$ is as in (\ref{eq:dataLowerBd}).  Then it follows
\[
\left( \partial_t + v \cdot \nabla_x + K_n \right)
g_n = 0
\]

For the \emph{loss term} only, we have the estimate at every $\left( t,x,v\right)$,
\[
\frac{Q^- \left( \left| f_n \right|, \left| f_n \right| \right)}
{1 + n^{-1} \rho_{\left| f_n \right|}} \leq K_n
\left| f_n \right|
\]
which defines $K_n$. Therefore, \emph{if we assume} that $f_n$ is everywhere non-negative,
then it follows
\[
\left( \partial_t + v \cdot \nabla_x + K_n \right) f_n \geq Q^+ \left( f_n , f_n \right) \geq 0
\]
hence
\[
\left( \partial_t + v \cdot \nabla_x + K_n \right) \left( f_n - g_n \right) \geq 0
\]
and clearly $f_n - g_n \geq 0$ for $t = 0$. Hence, if the \emph{solution} $f_n$ 
is everywhere non-negative, then we
deduce a \emph{quantitative} lower bound $f_n \geq g_n$, which therefore acts as
an \emph{a priori} estimate (for $n$ fixed), which implies both that $f_n$ is everywhere non-negative
and that $f_n \geq g_n > 0$.

In particular, we can replace $\rho_{\left| f_n \right|}$ by
$\rho_{f_n}$, and the integrand in the instantaneous
entropy dissipation $\mathcal{D} \left( f_n \right)$ is everywhere finite.

\subsection{Collision invariants.}

For any smooth function $\varphi = \varphi \left( t,x,v \right)$
of at most polynomial growth, and any Schwartz function $h = h \left( t,x,v \right)$,
executing a pre-post change of variables on the gain term only, and using the symmetries
of $b_n$ (see e.g. \cite{CIP1994}), it holds
\[
\begin{aligned}
 \int_{\mathbb{R}^2} \varphi Q_{b_n} \left( h, h \right) dv & =
 \int_{\mathbb{R}^2} b_n \varphi \left( h^\prime h_*^\prime - h h_* \right) d\sigma dv \\
& = \int_{\mathbb{R}^2} b_n \left( \varphi^\prime - \varphi \right)
h h_* d\sigma dv \\
& = \frac{1}{2} \int_{\mathbb{R}^2} b_n \left( \varphi^\prime + \varphi_*^\prime -
\varphi - \varphi_* \right) h h_* dv
\end{aligned}
\]
If, at each $\left(t,x\right)$, $\varphi \left( t,x,\cdot \right) \in \textnormal{span} \left\{ 1, v_1, v_2,
\left| v \right|^2 \right\}$ (the implicit constants possibly depending on $\left(t,x\right)$),
then the quantity
\[
\varphi^\prime + \varphi_*^\prime -
\varphi - \varphi_*
\]
is everywhere vanishing (due to conservation of mass, momentum, and kinetic energy across a
collision). Such  functions $\varphi$ are referred to as \emph{collision invariants} (when expressed
 in $v$ only). Thus, assuming that the solution is Schwartz (to be discussed next), we immediately
obtain (\ref{eq:truncatedMass}) by taking $\varphi \equiv 1$, and
(\ref{eq:truncatedMomentum}) by taking separately $\varphi = v_1$ and $\varphi = v_2$ , and
(\ref{eq:truncatedVelMoment}) by taking $\varphi = \left| v \right|^2$.
For example,
\[
\frac{d}{dt} \int_{\mathbb{R}^2 \times \mathbb{R}^2} 
f_n dx dv = \int_{\mathbb{R}^2 \times \mathbb{R}^2} 1 \cdot Q_{b_n} \left( f_n, f_n \right) dx dv = 0
\]
yields (\ref{eq:truncatedMass}).
 Similarly
we obtain (\ref{eq:truncatedPosMoment}) by taking
$\varphi = \left| x - v t \right|^2$, and observing that this function is both an exact solution
of the free transport equation, \emph{and} a linear combination of collision invariants at each
$\left( t,x \right)$. We similarly obtain (\ref{eq:truncatedPosCross}) by taking
$\varphi = \left( x - v t \right) \cdot v$. We obtain (\ref{eq:truncatedEntropyIdentity}) similarly by letting
$\varphi = \log f$ in the above calculation and applying collision symmetries once more
(which replaces $\frac{1}{2}$ by $\frac{1}{4}$ and thereby provides an everywhere non-negative integrand).
Note that since, by the previous subsection,
 $f_n$ is bounded from below by a Gaussian jointly in $\left( x,v \right)$ for
$0 \leq t \leq T$, it follows that the negative part of
 $\log f$ grows at most quadratically, so there is no problem
in justifying the multiplication of the equation by $\log f$.

\subsection{Schwartz class.}

First we show that all moments in $x,v$ are finite, and then that all gradients are
finite, all in $L^1$. We freely make use of the fact that $f_n \in L^1 \bigcap L^\infty$,
and use differential inequalities without careful justification (which is routine).

The moment estimate is
\[
\begin{aligned}
& \frac{d}{dt} \int_{\mathbb{R}^2 \times \mathbb{R}^2} f_n
\left( \left| x \right|^k + \left| v \right|^k \right) dx dv \\
& \lesssim
\int_{\mathbb{R}^2 \times \mathbb{R}^2}
f_n \left| x \right|^{k-1} \left| v \right| dx dv  + \int_{\mathbb{R}^2 \times \mathbb{R}^2}
\left|\frac{Q_{b_n} \left( f_n, f_n \right)}{1 + n^{-1} \rho_{f_n}}\right|
\left| v \right|^k dx dv \\
& \lesssim \int_{\mathbb{R}^2 \times \mathbb{R}^2} 
f_n \left( \left| x \right|^k + \left| v \right|^k \right) dx dv
\end{aligned}
\]
where we have used integration by parts and
 that $\left| x \right|^k$ is a collision invariant in the first
step (as it is constant in $v$),
 and the fact that $f_n \in L^\infty \left( \left[ 0,T \right], L^1 \bigcap
L^\infty \right)$ for each $T > 0$ along with the boundedness and support conditions on $b_n$
 in the second step.

Finally we estimate the first derivatives in $x$; the derivatives in $v$, as well as
all higher derivatives in $x$ and $v$, are similar.
\[
\begin{aligned}
& \frac{d}{dt} \int_{\mathbb{R}^2 \times \mathbb{R}^2} 
\left| \nabla_x f_n \right| dx dv \\
& \lesssim \int_{\mathbb{R}^2 \times \mathbb{R}^2}
\frac{\left| Q_{b_n}^{\pm} \left( \left| \nabla_x f_n \right|, f_n \right) \right|+
\left| Q_{b_n}^{\pm} \left( f_n, \left| \nabla_x f_n \right|\right)\right|}
{1 + n^{-1} \rho_{f_n}} dx dv \\
& \qquad \qquad + \int_{\mathbb{R}^2 \times \mathbb{R}^2}
\frac{\left|Q_{b_n}^{\pm} \left( f_n, f_n \right)\right|}{\left( 1 + n^{-1} \rho_{f_n}\right)^2}
\left\Vert \nabla_x f_n \right\Vert_{L^1_v \left( \mathbb{R}^2 \right)} dx dv \\
& \lesssim \int_{\mathbb{R}^2 \times \mathbb{R}^2}
\left| \nabla_x f_n \right| dx dv
\end{aligned}
\]
Here we have again used that $f_n \in L^\infty \left( \left[ 0,T \right], L^1 \bigcap
L^\infty \right)$ for each $T > 0$ along with the boundedness and
 support assumptions on $b_n$.

\section*{Acknowledgements}

N.P.  gratefully acknowledges support by the NSF through grants DMS-1840314, DMS-2009549, DMS-2052789.

T.C. gratefully acknowledges support by the NSF through grants DMS-1151414 (CAREER), DMS-1716198, and DMS-2009800.

R.D. gratefully acknowledges Professor Hua Xu and the University of Texas Health 
Science Center at Houston for their moral support and encouragement in this ongoing research, 
and also thanks Diogo Arsenio, Irene Gamba, Xuwen Chen, Jin-Cheng Jiang, and Laure Saint-Raymond for numerous 
insightful discussions, as well as Nader Masmoudi, who provided the initial inspiration long ago 
and without whom this work would not have been possible.

\bibliography{ScalingCriticalBib}

@misc{2206.11931,
Author = {Xuwen Chen and Justin Holmer},
Title = {Well/ill-posedness bifurcation for the \uppercase{B}oltzmann equation with constant collision kernel \textnormal{(ar\uppercase{X}iv:2206.11931)}},
Year = {2022},
Eprint = {arXiv:2206.11931},
}

@article{AlonsoGamba2011,
title={A Revision on Classical Solutions to the \uppercase{C}auchy \uppercase{B}oltzmann Problem for Soft Potentials},
volume={143},
DOI={10.1007/s10955-011-0205-z},
number={4},
journal={Journal of Statistical Physics},
author={Alonso, Ricardo J. and Gamba, Irene M.},
pages={1572-9613},
year={2011}
}

@book{LCE2010,
author = {Evans, Lawrence C.},
publisher = {American Mathematical Society},
title = {Partial Differential Equations},
year = {2010},
series = {Graduate Studies in Mathematics},
volume = {19},
edition = {2}
}

@book{VIB2007,
author = {Bogachev, Vladimir I.},
publisher = {Springer Verlag, Heidelberg},
title = {Measure Theory},
year = {2007},
volume = {1},
edition = {1}
}

@Article{Alonso2009,
author="Alonso, Ricardo J.
and Gamba, Irene M.",
title={Distributional and Classical Solutions to the \uppercase{C}auchy \uppercase{B}oltzmann Problem for Soft Potentials with Integrable Angular Cross Section},
journal="Journal of Statistical Physics",
year="2009",
day="12",
volume="137",
number="5",
pages="1147",
issn="1572-9613",
doi="10.1007/s10955-009-9873-3",
url="https://doi.org/10.1007/s10955-009-9873-3"
}

@article{KM1993,
author = {Klainerman S. and Machedon M.},
title = {Space‐time estimates for null forms and the local existence theorem},
journal = {Communications on Pure and Applied Mathematics},
volume = {46},
number = {9},
year = {1993},
pages = {1221-1268},
doi = {10.1002/cpa.3160460902},
}

@book{CIP1994,
    author    = "Cercignani, C. and Illner, R. and Pulvirenti, M.",
    title     = "The Mathematical Theory of Dilute Gases",
    publisher = "Springer Verlag",
    year      = "1994",
}

@book{Vi2002,
    author    = "Villani, C.",
    title     = "A review of mathematical topics in collisional kinetic theory",
    year      = "2002",
    publisher = "North-Holland",
    series    = "Handbook of mathematical fluid dynamics",
    volume    = "1",
}

@article{DPL1989,
    author    = "DiPerna, R. J. and Lions, P.-L.",
    title     = "On the \uppercase{C}auchy problem for \uppercase{B}oltzmann equations: \uppercase{G}lobal existence and weak stability",
    year      = "1989",
    journal   = "Ann. Math.",
    volume    = "130",
    number    = "2",
    pages     = "321-366",
}

@article{DPL1991,
  author = "DiPerna, R. J. and Lions, P.-L.",
  title = "Global solutions of \uppercase{B}oltzmann's equation and the entropy inequality",
  year = "1991",
  journal = "Archive for Rational Mechanics and Analysis",
  volume = "114",
  pages = "47-55",
}

@article{L1994I,
    author    = "Lions, P.-L.",
    title     = "Compactness in \uppercase{B}oltzmann's equation via \uppercase{F}ourier integral operators and applications. \uppercase{I}",
    year      = "1994",
    journal   = "J. Math. Kyoto Univ.",
    volume    = "34",
    number    = "2",
    pages     = "391-427",
}

@article{L1994II,
    author    = "Lions, P.-L.",
    title     = "Compactness in \uppercase{B}oltzmann's equation via \uppercase{F}ourier integral operators and applications. \uppercase{II}",
    year      = "1994",
    journal   = "J. Math. Kyoto Univ.",
    volume    = "34",
    number    = "2",
    pages     = "429-461",
}

@article{RSH1987,
author = {Illner, R. and Shinbrot, M. and Neunzert, H.},
year = {1987},
title = {Blow‐up of solutions of the gain‐term‐only \uppercase{B}oltzmann equation},
volume = {9},
journal = {Mathematical Methods in the Applied Sciences},
doi = {10.1002/mma.1670090120}
}

@article{AC2010,
    author    = "Alonso, R. J. and Carneiro, E.",
    title     = "Estimates for the \uppercase{B}oltzmann collision operator via radial symmetry and \uppercase{F}ourier transform",
    year      = "2010",
    journal   = "Adv. Math.",
    volume    = "223",
    number    = "2",
    pages     = "511-528",
}

@article{ACG2010,
    author = "Alonso, R. and Carneiro, E. and Gamba, I. M.",
    title = "Convolution inequalities for the \uppercase{B}oltzmann collision operator",
    year = "2010",
    journal = "Commun. Math. Phys.",
    volume = "298",
    pages = "293-322",
}

@article{KM2008,
    author    = "Klainerman, S. and Machedon, M.",
    title     = "On the uniqueness of solutions to the \uppercase{G}ross-\uppercase{P}itaevskii hierarchy",
    year      = "2008",
    journal   = "Comm. Math. Phys.",
    volume    = "279",
    number    = "1",
    pages     = "169-185",
}

@article{PC2010,
    author    = "Chen, T. and Pavlovi{\' c}, N.",
    title     = "On the \uppercase{C}auchy problem for focusing and defocusing \uppercase{G}ross-\uppercase{P}itaevskii hierarchies",
    year      = "2010",
    journal   = "Discr. Contin. Dyn. Syst. A",
    volume    = "27",
    number    = "2",
    pages     = "715-739",
}

@article{CDP2017,
    author    = "Chen, T. and Denlinger, R. and Pavlovi{\' c}, N.",
    title     = "Local well-posedness for \uppercase{B}oltzmann's equation and the \uppercase{B}oltzmann hierarchy via \uppercase{W}igner transform",
    year      = "2019",
    journal = "Commun. Math. Phys.",
    volume = "368",
    pages = "427--465",
}

@article{CDP2018,
     author    = "Chen, T. and Denlinger, R. and Pavlovi{\' c}, N.",
    title     = "Moments and regularity for a \uppercase{B}oltzmann equation via \uppercase{W}igner transform",
    year      = "2019",
    journal   = "Discrete and Continuous Dynamical Systems Ser. A",
    volume = "39",
    number = "9",
}

@article{EY2001,
    author    = "Erd{\" o}s, L. and Yau, H.-T.",
    title     = "Derivation of the nonlinear \uppercase{S}chr{\" o}dinger equation form a many body \uppercase{C}oulomb system",
    year      = "2001",
    journal   = "Adv. Theor. Math. Phys.",
    volume    = "5",
    pages     = "1169-1205",
}

@article{ESY2007,
    author    = "Erd{\" o}s, L. and Schlein, B. and Yau, H.-T.",
    title     = "Derivation of the cubic non-linear \uppercase{S}chr{\" o}dinger equation from quantum dynamics of many-body systems",
    year      = "2007",
    journal   = "Invent. math.",
    volume    = "167",
    pages     = "515-614",
}

@article{ESY2006,
    author    = "Erd{\" o}s, L. and Schlein, B. and Yau, H.-T.",
    title     = "Derivation of the \uppercase{G}ross-\uppercase{P}itaevskii hierarchy for the dynamics of \uppercase{B}ose-\uppercase{E}instin condensate",
    year      = "2006",
    journal   = "Comm. Pure Appl. Math.",
    volume    = "59",
    number    = "12",
    pages     = "1659-1741",
}

@article{Ti1988,
    author    = "Toscani, G.",
    title     = "Global solutions of the initial value problem for the \uppercase{B}oltzmann equation near a local \uppercase{M}axwellian",
    year      = "1988",
    journal   = "Archive for Rational Mechanics and Analysis",
    volume    = "102",
    number    = "3",
    pages     = "231-241",
}

@article{Ar2011,
    author    = "Arsenio, D.",
    title     = "On the global existence of mild solutions to the \uppercase{B}oltzmann equation for small data in \uppercase{$L^D$}",
    year      = "2011",
    journal   = "Comm. Math. Phys.",
    volume    = "302",
    number    = "2",
    pages     = "453-476",
}

@ARTICLE{KT1998,
    author = "Keel, M. and Tao, T.",
    title = "Endpoint \uppercase{S}trichartz Estimates",
    journal = {Amer. J. Math},
    year = {1998},
    volume = {120},
    number = {5},
    pages = {955--980}
}

@article{CaP1996,
    author    = "Castella, F. and Perthame, B.",
    title     = "Estimations de \uppercase{S}trichartz pour les \'{e}quations de transport cin\'{e}tique",
    year      = "1996",
    journal   = "C. R. Acad. Sci. Paris S\'{e}r. I Math.",
    volume    = "322",
    number    = "6",
    pages     = "535-540",
}

@incollection{L1975,
    author    = "Lanford, O. E.",
    title     = "Time evolution of large classical systems",
    editor    = "J. Moser",
    series    = "Lecture Notes in Physics",
    booktitle = "Dynamical Systems, Theory and Applications",
    publisher = "Springer Berlin Heidelberg",
    year      = "1975",
    volume    = "38",
    pages     = "1-111",
}

@article{Bennettetal2014,
author = { Jonathan   Bennett  and  Neal   Bez  and  Susana   Gutiérrez  and  Sanghyuk   Lee },
title = {On the \uppercase{S}trichartz Estimates for the Kinetic Transport Equation},
journal = {Communications in Partial Differential Equations},
volume = {39},
number = {10},
pages = {1821-1826},
year  = {2014},
publisher = {Taylor & Francis},
doi = {10.1080/03605302.2013.850880}
}

@article{ovcharov2011strichartz,
  title={Strichartz estimates for the kinetic transport equation},
  author={Ovcharov, Evgeni Y},
  journal={SIAM Journal on Mathematical Analysis},
  volume={43},
  number={3},
  pages={1282--1310},
  year={2011},
  publisher={SIAM}
}

@article{CDP2019,
  title={Small data global well-posedness for a \uppercase{B}oltzmann equation via bilinear spacetime estimates},
  author={Chen, T. and Denlinger, R. and Pavlovi{\' c}, N.},
  year={2021},
  journal={Archive for Rational Mechanics and Analysis},
  volume={240},
  pages={327--381},
}

\end{document}